\newcommand{\parenthezises}[1]{\arabic{#1}}
\theoremstyle{plain} 
\newtheorem{theorem}{Theorem}[section]
\newtheorem{proposition}[theorem]{Proposition}
\newtheorem{lemma}[theorem]{Lemma}
\newtheorem{thm}[theorem]{Theorem} 
\newtheorem{cor}[theorem]{Corollary} 
\newtheorem{prop}[theorem]{Proposition} 
\newtheorem{lem}[theorem]{Lemma} 
\theoremstyle{definition} 
\newtheorem{definition}[theorem]{Definition}
\newtheorem{remark}[theorem]{Remark}
\newcommand{\J}{{(j)}}
\newcommand{\JJ}{{(j+1)}}
\newcommand{\JJJ}{{(j+2)}}
\newcommand{\I}{{(i)}}
\newcommand{\K}{{(k)}}
\newcommand{\KK}{{(k+1)}}
\newcommand{\KKK}{{(k+2)}}
\newcommand{\pt}{\partial_t}
\newcommand{\px}{\partial_x}
\newcommand{\pa}{\partial_\alpha}
\newcommand{\ps}{\partial_s}
\newcommand{\N}{\mathbb{N}}
\newcommand{\R}{\mathbb{R}}
\numberwithin{equation}{section}
\title[Curvature flow of networks with misorientations]{Long time behavior for a curvature flow of networks related to grain boundary motion with the effect of lattice misorientations}
\author{Takashi Kagaya}
\address[Takashi Kagaya]%
{Graduate school of Engineering, Muroran Institute of Thechnology, 27-1 Mizumoto-cho, Muroran-shi, Hokkaido 050-8585, JAPAN}
\email{kagaya@mmm.muroran-it.ac.jp}
\author{Masashi Mizuno}
\address[Masashi Mizuno]%
{Department of Mathematics, College of Science and Technology, Nihon
University, 1-8-14 Kanda-Surugadai, Chiyoda-Ku, Tokyo 101-8308, JAPAN}
\email{mizuno.masashi@nihon-u.ac.jp}
\author{Keisuke Takasao}
\address[Keisuke Takasao]%
{Department of Mathematics/Hakubi Center, Kyoto University, Kitashirakawa-Oiwakecho Sakyo, Kyoto 606-8502, JAPAN}
\email{k.takasao@math.kyoto-u.ac.jp}
\keywords{grain boundary motion, curve shortening equation, long time behavior}
\subjclass[2000]{Primary~53E10, Secondary~35B40, 35K51, 74N15}
\begin{document}

\begin{abstract}
The mathematical model of grain boundary motion, including lattice misorientations' effect, is considered. When time-dependent lattice misorientations are state variables of the surface tension of the grain boundary, to ensure the energy dissipation law, one can obtain a  curvature flow of networks with time-dependent mobilities. This paper studies the solvability and long-time asymptotic behavior of the curvature flow subjected to the Herring condition which ensures that the constituent grain boundary surface tensions are balanced at the triple junction.
\end{abstract}

\maketitle

 \section{Introduction}
 

There are many kinds of research about the curvature flow of networks
related to planar grain boundary motion. According to the theory by
Mullins and Herring \cite{doi:10.1007/978-3-642-59938-5_2,
doi:10.1063/1.1722511, doi:10.1063/1.1722742}, the curvature flow of
networks is well-known as a typical model for the evolution of grain
boundaries in polycrystalline materials as evolution law of local
interaction. In addition, the classical curvature flow of networks can
be derived from the principle of maximum dissipation for the total grain
boundaries, under the assumption that the total grain boundary energy
depends only on the surface tension of the grain boundaries and the
surface tension is constant. When the surface tension depends on the
grain boundary's normal vector in order to take into account the grain
boundary energy anisotropy, the anisotropic curvature flow of networks
can be derived similarly. These flows are well-studied as significant
geometric variational problems.

In this paper, we study the case that the grain boundary energy is
affected by the shape of the grain boundary and other grain boundary
structures. Namely, the surface tension has other state variables in our
case. A grain boundary is made from two or more single grains, which
have different lattice orientations.  A mismatch of the lattice
orientations neighboring grains is called a lattice
misorientation. Because the lattice orientation is discontinuous at the
grain boundary in general, one should include the lattice misorientation
in state variables of the grain boundary energy.  Kinderlehrer and
Liu~\cite{MR1833000} derived the governed equations, by the principle of
maximum dissipation, of the evolution of grain boundaries with the
lattice misorientations as a parameter and studied mathematical analysis
of the system. Epshteyn, Liu, and the second author \cite{MR4263432}
developed their work to assume that the lattice misorientations depend
on time. Here we briefly explain the case of one triple junction (see
figure \ref{fig:1.1}).

We consider the following grain boundary energy of the system
\begin{equation}
 \label{eq:1.1}
  E(t)
  =
  \sum_{j=1}^3
  \sigma(\Delta^{(j)}\alpha(t))L^{(j)}(t),
\end{equation}
where $\sigma=\sigma(\alpha):\mathbb{R}\rightarrow\mathbb{R}$ is a given
surface tension of a grain boundary,
$\alpha^{(j)}=\alpha^{(j)}(t):[0,\infty)\rightarrow\mathbb{R}$ is a
time-dependent orientation of the grains,
$\Delta^{(j)}\alpha:=\alpha^{(j-1)}-\alpha^{(j)}$ is a lattice
misorientation of the grain boundary $\Gamma^{(j)}_t$, and $L^{(j)}(t)$
is the length of $\Gamma_t^{(j)}$ for $j=1,2,3$. For simplicity of
notation, we write $\alpha^{(0)}=\alpha^{(3)}$.  Impose that three ends
of the grain boundaries $\Gamma^{(j)}$ are the same position
$\vec{a}(t)$, called a triple junction, and the other end of
$\Gamma^{(j)}$ is a given fixed point $P^{(j)}$.  In this work, we
assume that a grain boundary energy density $\sigma(\Delta^{(j)}\alpha)$
is independent of the normal vector of the grain boundary
$\Gamma_t^{(j)}$, namely, our system is isotropic with respect to
$\Gamma_t^{(j)}$. As a result of applying the principle of maximum
dissipation for the energy \eqref{eq:1.1}, the following model
was derived in \cite{MR4263432}:

 \begin{figure}
  \centering
  \includegraphics[height=7cm]{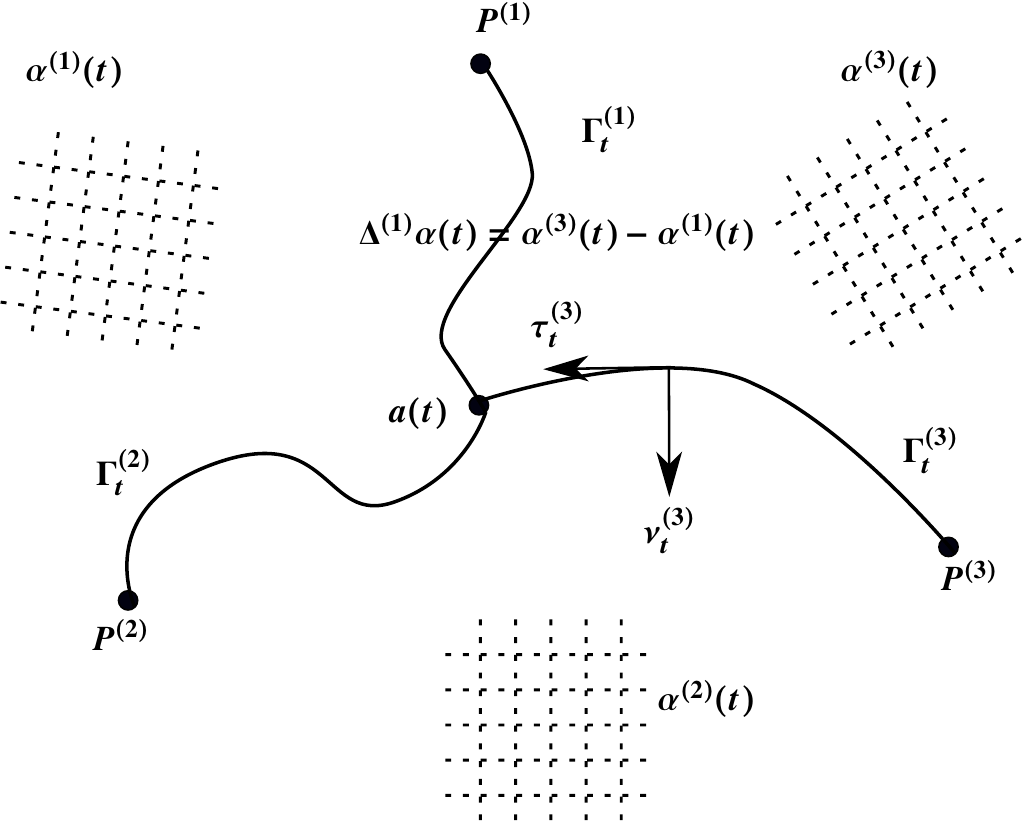}
  \caption{Grain boundaries/curves $\Gamma_t^{(j)}$ that meet at a
  triple junction $\vec{a}(t)$.  Lattice orientations are angles
  (scalars) $\alpha^{(j)}(t)$. Misorientation $\Delta^{(j)}\alpha(t)$ of
  $\Gamma^{(j)}_t$ is the difference between two lattice orientations of
  grains that share grain boundary $\Gamma^{(j)}_t$. The unit tangent
  vector of $\Gamma_t^{(j)}$ with the direction starting from
  $P^{(j)}_t$ and ending to $\vec{a}(t)$ is denoted by
  $\tau_t^{(j)}$. The unit normal vector, which is the 90 degree
  counterclockwise rotation of $\tau_t^{(j)}$, is denoted by
  $\nu_t^{(j)}$.}
  \label{fig:1.1}
 \end{figure}

\begin{equation}
 \label{eq:1.2}
  \left\{
  \begin{aligned}
   V_t^{(j)}
   &=
   \mu
   \sigma(\Delta^{(j)}\alpha)
   \kappa_t^{(j)},\quad\text{on}\ \Gamma_t^{(j)},\ t>0,\ j=1,2,3, \\
   \frac{d\alpha^{(j)}}{dt}
   &=
   -\gamma
   \Bigl(
   \partial_\alpha\sigma(\Delta^{(j+1)}\alpha)
   L^{(j+1)}(t)
   -
   \partial_\alpha\sigma(\Delta^{(j)}\alpha)
   L^{(j)}(t)
   \Bigr)
   ,\quad t>0, \ 
   j=1,2,3,
   \\
   \frac{d\vec{a}}{dt}(t)
   &=
   -\eta
   \sum_{k=1}^3
   \sigma(\Delta^{(k)}\alpha)
   \tau_t^{(k)},
   \quad t>0,\ \text{at}\ \vec{a}, \\
   \vec{a}(t)
   &=
   \vec{\xi}^{(1)}(1,t)
   =
   \vec{\xi}^{(2)}(1,t)
   =
   \vec{\xi}^{(3)}(1,t),
   \quad
   \text{and}
   \quad
   \vec{\xi}^{(j)}(0,t)=P^\J,\quad
   t>0, \ j=1,2,3,
  \end{aligned}
  \right.
\end{equation}
where $\xi^\J(\cdot,t): [0,1] \to \mathbb{R}^2$ is a parametrization of
$\Gamma_t^\J$ for $t\ge 0$, $V^\J_t$ is the normal velocity of
$\Gamma^\J_t$, $\kappa^\J_t$ is the curvature of $\Gamma^\J_t$, $\mu,
\gamma, \eta > 0$ are constants, $\tau_t^{(k)}$ is the unit tangent
vector on $\Gamma_t^{(k)}$ and $P^\J$ is a fixed point.  For the
sake of notational simplicity, we denote
$\Delta^{(4)}\alpha=\Delta^{(1)}\alpha$. From \eqref{eq:1.2}, we have
energy dissipation which took a form as presented below;
\begin{equation*}
 \frac{dE}{dt}
  =
  -
  \sum_{j=1}^3
  \left(
  \frac{1}{\mu}
  \int_{\Gamma_t^{(j)}}
  |V_t^{(j)}|^2 ds
  +
  \frac{1}{\gamma}
  \left|
   \frac{d\alpha^{(j)}}{dt}(t)
   \right|^2
  \right)
  -
  \frac{1}{\eta}
  \left|
   \frac{d\vec{a}}{dt}
   \right|^2,
\end{equation*}
where $s$ is the arc-length parameter of $\Gamma_t^\J$. Note that the
third equation of \eqref{eq:1.2} is a kind of dynamic boundary condition
for the differential equations. When we take the relaxation limit
$\eta\rightarrow\infty$, the third equation turns into a force balance
condition, known as the Herring condition, at the triple junctions.
More in-depth discussion and complete details of the derivation of the
model \eqref{eq:1.2} can be found in the recent paper by Epshteyn, Liu
and the second author \cite[Section 2]{MR4263432}.

In \cite{MR4283537,MR4263432}, they relaxed the curvature effect by taking
the limit $\mu\rightarrow\infty$ and derived an ODE system, and studied
wellposedness, long-time asymptotics, and numerical analysis of the
system. In \cite{arXiv:2106.14249}, they considered ensembles of triple
junctions and misorientations (without the curvature effect), and they
used white noise to describe interactions among the grain boundaries and
the triple junctions in a grain boundary network, including modeling of
critical/disappearance events, e.g., grain disappearance, facet/grain
boundary disappearance, facet interchange and splitting of unstable
junctions. However, the well-posedness and long-time asymptotics of the
system with the curvature effect is not still well-known.

On the other hand, the second and third authors \cite{MR4292952}
considered the well-posedness and long-time asymptotics of a curve
shortening equation related to the grain boundary motion
\eqref{eq:1.2}. The curve shortening equation was derived from the
principle of maximum dissipation of the energy
$\sigma(\Delta\alpha)|\Gamma_t|$ with the periodic boundary
condition. Since the misorientation is a state variable of the energy
and depends on time, the associated curve shortening equation has
time-dependent mobility. However, the interaction among the grain
boundaries at the triple junction, even though the limiting case
$\eta\rightarrow\infty$, is not well-known since the prior work studied
only one grain boundary.

In this paper, we consider \eqref{eq:1.2} in the case
$\eta\rightarrow\infty$, namely, curvature flow of networks with time
dependent mobility governed by the evolving lattice misorientations, and
with the Herring condition at the triple junction.  
We thus consider a motion of connected curves $\Gamma^\J_t \subset \mathbb{R}^2 (j=1,2,3)$ and anisotropy parameters $\alpha^\J(t) \in \mathbb{R}$ governed by
\begin{align}
&V^{\J}_t = \sigma(\Delta^\J \alpha(t)) \kappa^\J_t \quad \text{on} \ \Gamma_t^\J,  \label{eq-curve}\\
&\pt \alpha^\J(t) = -\gamma \{ \pa \sigma (\Delta^\JJ \alpha(t)) L^\JJ(t) - \pa \sigma (\Delta^\J \alpha(t)) L^\J(t) \}, \label{eq-alpha} \\
&\vec{a}(t)=\vec{\xi}^{(1)}(1,t)=\vec{\xi}^{(2)}(1,t)=\vec{\xi}^{(3)}(1,t), \label{bc-concurrency}\\
&\sum_{k=1}^3 \sigma(\Delta^\K \alpha(t)) \tau^\K_t = 0 \quad \text{at}\ \vec{a}, \label{bc-angle}\\
&\vec{\xi}^{(j)}(0,t)=P^\J. \label{bc-boundary}
\end{align}
Here, we let $\mu$ in \eqref{eq:1.2} be $1$ without loss of generality.
Throughout the paper, a unit normal vector $\nu^\J_t$ of $\Gamma^\J_t$ is chosen as the $90$ degree rotation of $\tau^\J_t$ counter-clockwise and $V^\J$ and $\kappa^\J_t$ are positive if the normal velocity vector and the curvature vector face the same direction of $\nu^\J_t$, respectively. 
Furthermore, the curves $\Gamma^\J_t$ are numbered counter-clockwise around the junction point. 
According to an argument as in \cite{BR}, the Herring condition \eqref{bc-angle} implies that 
\[ \frac{\sigma(\Delta^{(1)} \alpha(t))}{\sin \theta^{(2)}} = \frac{\sigma(\Delta^{(2)} \alpha(t))}{\sin \theta^{(3)}} = \frac{\sigma(\Delta^{(3)} \alpha(t))}{\sin \theta^{(1)}} \]
by letting $\theta^\J(t) \in (0,2\pi)$ be the angle between $\Gamma^\J_t$ and $\Gamma^\JJ_t$. 
Therefore, the angles at the junction changes with time evolution in our setting, which is an especially different behavior from the classical curvature flow, and they are always $120$ agree if $\sigma$ is constant. 
Note that, because of the time dependence of the misorientation parameters, the equilibriums are not unique in general (see Remark \ref{rk:equilibrium}). 
We thus intend, to study the existence and, for a convex $\sigma$ as a first step to analyze the geometric flow, long-time asymptotics of the geometric flow governed by the system \eqref{eq-curve}--\eqref{bc-boundary}. 

For the classical curvature flow of networks, in other word, when $\sigma$ is a constant, the well-poseness was studied by Bronzard-Reitich \cite{BR} for initial triod of class $C^{2+\beta}$ and the theory was developed by Mantegazza-Novaga-Tortorelli \cite{MR2075985} for initial network with four or more grains of same class. 
The regularity assumption on the initial network to obtain the uniqueness was improved by \cite{GMP} in Sobolev classes, which implies, in particular, the flow of networks of class $H^2$ is unique. 
In the all articles, they first re-formulate the geometric flow to a system of the parametrization $\xi^\J$ under a restriction of choice of variable $x$, since the tangent velocity will be changed depending on the change of variables while the normal velocity always coincides with the curvature, and apply a classical existence theory for the system. 
Mantegazza-Novaga-Tortorelli \cite{MR2075985} also proved that the flow become smooth immediately by deriving energy type estimates for the derivatives of the curvatures. 
In this argument, similar estimates for the tangent velocity is required since the regularity of the flow can be discussed from the smoothness of $\xi^\J$ and the system of $\xi^\J$ depends on not only the normal velocity but also the tangent velocity. 
We note that the smoothing effect was proved also in \cite{GMP} introducing a linearized operator around the unique solution and applying functional analysis. 
In this paper, we re-formulate from the geometric flow \eqref{eq-curve}--\eqref{bc-boundary} to a system of an angle $\Theta^\J$ of the tangent vector (see Remark \ref{rmk:construction-flow} for the relationship between $\Theta^\J$ and $\xi^\J$), the orientation $\alpha^\J$ and the length of the curves $L^\J$. 
Since the curvature coincides with the derivative of $\Theta^\J$ with respect to the arc-length, the re-formulation make it easier to discuss the relation between the regularity and also the asymptotics of $\Theta^\J$ and the curvature, and also we can skip to analyze the asymptotics of the tangent velocity when we modify the energy type estimates as in \cite{MR2075985} to discuss the convergence of the curvatures. 

First present result is the short time well-posedness for initial smooth triod under the following assumption. 
\begin{itemize}
\item[(A1)] $\gamma>0, \sigma \in C^{\infty}(\mathbb{R}), \sigma(\alpha) > 0$ for any $\alpha \in \mathbb{R}$. 
\end{itemize}
We apply a classical existence theory for systems as in Lady\v{z}enskaja-Solonnikov-Ural'ceva's book \cite{LSU} to a linearized system of $\Theta^\J$, $\alpha^\J$ and $L^\J$ almost in line with the arguments in \cite{BR, MR2075985}. 
The compatibility condition for the initial curves and the complementing condition for the boundary conditions (see Section \ref{subsec:linear} for the details of the definitions) will be required to apply the theory, and thus we obtain the well-posedness result as follows. 

\begin{thm}\label{thm:exists-flow}
Assume (A1). 
Let $k \in \mathbb{N}$ with $k \ge 3$ and $\beta \in (0,1)$. 
Assume that $\Gamma_0^\J$ is a $C^{k+\beta}$ curve with respect to the arc-length and connect the fixed point $P^\J$ and a junction point $\vec{a}(0)$ for $j \in \{1,2,3\}$. 
Assume also each angle between $\Gamma_0^\JJ$ and $\Gamma_0^\J$ at the junction point is less than $\pi$. 
Further assume that the family of $\{\Gamma_0^\J\}_{j \in \{1,2,3\}}$ and $\{\alpha_0^\J\}_{j \in \{1,2,3\}} \subset \mathbb{R}^3$ satisfies the compatibility condition of order $k$ for the geometric flow \eqref{eq-curve}--\eqref{bc-boundary}. 
Then, there exists $T>0$ such that a geometric flow governed by \eqref{eq-curve}--\eqref{bc-boundary} with initial datum $\{\Gamma_0^\J\}_{j \in \{1,2,3\}}$ and $\{\alpha_0^\J\}_{j \in \{1,2,3\}}$, which at every time the triod is of class $C^{k+\beta}$ with respect to the arc-length, uniquely exists until the time $T$. 

Furthermore, if $\Gamma_0^\J$ is smooth for $j \in \{1,2,3\}$ and the family of $\{\Gamma_0^\J\}_{j \in \{1,2,3\}}$ and $\{\alpha_0^\J\}_{j \in \{1,2,3\}}$ satisfies the compatibility condition of any order for the geometric flow \eqref{eq-curve}--\eqref{bc-boundary}, then a smooth geometric flow governed by \eqref{eq-curve}--\eqref{bc-boundary} with initial datum $\{\Gamma_0^\J\}_{j \in \{1,2,3\}}$ and $\{\alpha_0^\J\}_{j \in \{1,2,3\}}$ uniquely exists until some time $T'>0$. 
\end{thm}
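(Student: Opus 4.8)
The plan is to follow the parametrization-by-angle strategy outlined in the introduction: re-formulate the geometric flow \eqref{eq-curve}--\eqref{bc-boundary} as a quasilinear parabolic system for the unknowns $(\Theta^\J, \alpha^\J, L^\J)$, verify that this system is parabolic in the sense of Petrovskii and that its boundary conditions satisfy the complementing (Lopatinskii--Shapiro) condition, and then invoke the linear theory from \cite{LSU} in Hölder spaces together with a contraction-mapping argument to obtain short-time existence, uniqueness and the stated regularity.

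First I would set up the reduced system. Writing $\Theta^\J(x,t)$ for the angle of the unit tangent $\tau^\J_t$ along $\Gamma^\J_t$ parametrized so that, say, $x \in [0,1]$ corresponds to arc-length proportional to $xL^\J(t)$, one has $\kappa^\J = \partial_s \Theta^\J = (1/L^\J)\partial_x \Theta^\J$ and, since the normal velocity is $V^\J = \sigma(\Delta^\J\alpha)\kappa^\J$, a standard computation (as in \cite{MR2075985}) gives a quasilinear equation of the form $\partial_t \Theta^\J = \sigma(\Delta^\J\alpha)\,\partial_s^2 \Theta^\J + (\text{lower order, involving the tangential velocity and }L^\J)$. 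The ODEs \eqref{eq-alpha} for $\alpha^\J$ and the evolution equation for $L^\J$ (obtained by differentiating $L^\J = \int_{\Gamma^\J_t} ds$ and using $V^\J$) are appended; these couple only through zeroth-order and nonlocal-in-$x$ but smooth terms. The fixed-endpoint condition \eqref{bc-boundary} and the concurrency condition \eqref{bc-concurrency} must be re-expressed in terms of $\Theta^\J$ — this is where the geometry enters: $\vec{\xi}^\J(1,t) - \vec{\xi}^\J(0,t) = L^\J(t)\int_0^1 (\cos\Theta^\J, \sin\Theta^\J)\,dx$, so concurrency becomes a set of integral constraints linking the three $\Theta^\J$ and the $L^\J$, while the Herring condition \eqref{bc-angle} becomes an algebraic relation among the boundary values $\Theta^\J(1,t)$ and $\alpha^\J(t)$. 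At the fixed endpoints one imposes the natural (no-curvature-flux / $\partial_s$-type) second-order boundary condition coming from the fact that $P^\J$ is stationary.

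Next I would check the two structural conditions required by \cite{LSU}. Parabolicity is immediate from (A1), since the principal coefficient $\sigma(\Delta^\J\alpha)$ is strictly positive and the system is diagonal at top order. The complementing condition at the triple junction is the delicate algebraic check: one linearizes the boundary operators — the concurrency constraints, the Herring relation, and the continuity/compatibility relations among curvatures forced by differentiating \eqref{bc-angle} in time — around a constant state, passes to the half-line model problem, and verifies that the associated system of ODEs has, for each frequency, only the trivial bounded solution. Here the hypothesis that each angle at the junction is strictly less than $\pi$ (equivalently the three tangents are in "general position") is exactly what makes the relevant determinant nonvanishing, just as in \cite{BR}; I expect this verification, together with bookkeeping of the corresponding condition at the fixed endpoints $P^\J$, to be the main obstacle and the most computation-heavy part of the proof.

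Finally, granted parabolicity and the complementing condition, the linear theory of \cite{LSU} provides, for data satisfying the compatibility condition of order $k$, a unique solution of the linearized system in the parabolic Hölder space $C^{(k+\beta)/2,\,k+\beta}$ on a short time interval, with the appropriate estimates. A fixed-point argument in a ball of this space, using the smoothness of $\sigma$ from (A1) and the Lipschitz dependence of the (semilinear and nonlocal) lower-order terms on $(\Theta^\J,\alpha^\J,L^\J)$, then yields a unique solution of the full nonlinear reduced system on some $[0,T]$; reconstructing $\vec{\xi}^\J$ from $(\Theta^\J,L^\J)$ via the integral formula above (see Remark \ref{rmk:construction-flow}) gives the geometric flow, and the regularity $C^{k+\beta}$ with respect to arc-length is inherited from that of $\Theta^\J$. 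For the smooth case one runs the same scheme for every $k$ and uses uniqueness to patch the solutions together on a common time interval $T'>0$, or alternatively bootstraps regularity using the smoothing estimates for derivatives of $\Theta^\J$ in the spirit of \cite{MR2075985}; uniqueness across regularity classes follows from the uniqueness in the lowest class in which both competitors live.
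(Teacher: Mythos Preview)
Your overall strategy matches the paper's: re-formulate in terms of the angle functions $\Theta^\J$ under the constant-speed parametrization $|\partial_x\xi^\J|=L^\J(t)$, verify parabolicity and the complementing condition, apply the linear Hölder theory of \cite{LSU}, and close with a contraction mapping. The complementing-condition check does indeed reduce to the nonvanishing of a determinant whose positivity is exactly the hypothesis $\Theta_0^\JJ(1)-\Theta_0^\J(1)\in(0,\pi)$.

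There is, however, a genuine gap in how you propose to handle the concurrency condition \eqref{bc-concurrency}. You write that concurrency becomes ``a set of integral constraints linking the three $\Theta^\J$ and the $L^\J$'' and then list it among the boundary operators to be linearized. This does not work: nonlocal integral constraints are not covered by the Lopatinskii--Shapiro framework of \cite{LSU}, and in any case imposing concurrency \emph{and} Herring \emph{and} the curvature relations at $x=1$ would overdetermine a second-order system in three unknowns. The paper's resolution is to \emph{not} impose concurrency at all in the system for $(\vec\Theta,\vec L,\vec\alpha)$. Instead, at $x=1$ one imposes exactly three local conditions: the two scalar Herring relations (third and fourth lines of \eqref{system-theta}) and the single first-order condition $\sum_j (\sigma^\J)^2 L^\J{}^{-1}\partial_x\Theta^\J(1,t)=0$, which is a \emph{consequence} of concurrency plus Herring (Lemma~\ref{lem:pros-kappa}). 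At $x=0$ the condition is simply $\partial_x\Theta^\J(0,t)=0$ (i.e.\ $\kappa^\J=0$ at the fixed point), not a second-order flux condition. This gives the correct count (three conditions at each end for three second-order equations) and is purely local, so the LSU machinery applies. The price is that one must then \emph{verify a posteriori} that the reconstructed curves actually satisfy \eqref{bc-concurrency}; this is done by a direct computation showing $\partial_t\xi^\J(1,t)=\partial_t\xi^\JJ(1,t)$ using the explicit form of $\lambda^\J_t$ from \eqref{eq-lambda-V} and the imposed boundary conditions (see \eqref{const-wantprove} and the surrounding argument). You should expect this verification to require some care.

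For the smooth case, your first suggestion---run the scheme for every $k$ and patch by uniqueness---is incomplete as stated: the existence time $T_k$ furnished by the contraction argument may a priori shrink to zero as $k\to\infty$. The paper (and your ``alternatively'') resolves this by a bootstrap: setting $v^\J=\partial_t\Theta^\J$, one checks that $v$ satisfies a linear parabolic system of the same type on $[0,T_k]$, and the LSU estimate for \emph{that} system yields $\Theta^\J\in C^{k+1+\beta}$ on the \emph{same} interval, so $T_{k+1}\ge\min\{T_k,T_0\}$ where $T_0$ depends only on the angle condition remaining satisfied. This is the step that guarantees a common positive $T'$.
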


Note that the compatibility condition at least require that the initial triod and initial orientation parameters satisfy the Herring condition \eqref{bc-angle} at $t=0$ and, the initial angle condition at the junction point ensures that the linearized system satisfies the complimenting condition. 
The more detailed regularity of the geometric flow will be stated at the level of parametrization as in Proposition \ref{prop:short-time-system}. 
The extension of the well-posedness result as in \cite{GMP} is a future work to obtain the uniqueness of the geometric flow with initial triod of lower regularity classes. 

We now focus on the asymptotics of the geometric flow of triods. 
For the classical curvature flow, Magni, Mantegazza and Novaga \cite{MR3495423} proved the uniformly $L^2$-boundedness of the curvatures if the length of each curves is uniformly positive, combining blow-up argument and Huisken's monotonicity formula \cite{MR1030675}. 
This boundedness implies the convergence of the flow of triods to the Steiner triod connecting three endpoints $P^\J$ in the $C^\infty$ topology by means of the energy type estimates in \cite{MR2075985}. 
One of the key properties in this stability result is that the Steiner triod is a unique minimizer of \eqref{eq:1.1} with constant $\sigma$ if the following condition (A2) holds. 
\begin{itemize}
\item[(A2)] Each interior angle of the triangle generated by the fixed points $\{P^\J\}_{j \in \{1,2,3\}}$ is less than $2\pi/3$.
\end{itemize}
In our problem, as a first step to analyze the asymptotics, we further assume the following condition (A3), which yields the convexity of $\sigma$. 
\begin{itemize}
\item[(A3)] $\sigma$ satisfies $\partial_\alpha \sigma(0) = 0$, $\partial_\alpha^2 \sigma(\alpha) > 0$ for any $\alpha \in \mathbb{R}$. 
\end{itemize}
The equilibrium of our problem is then a family of the unique Steiner triod and orientations with $0$-misorientation. 
Since the monotonicity formula can not be easily extended to our problem because of the differently development of the surface tensions $\sigma(\Delta^\J \alpha)$, while the monotonicity formula for the flow of just one curve was proved in \cite{MR4292952} by applying a time-rescaling along with the development of the surface tension, we extend the exponential $L^2$-decay estimate of the curvatures as in \cite{GKS} for the classical curvature flow of triods with Neumann boundary condition and modify the dissipation estimate of the misorientation as in \cite{MR4263432} for the system \eqref{eq:1.2} with relaxed the curvature effect to obtain the exponential decay of the misorientations in our problem under a closeness of initial datum to one of the equilibriums. 
As a result, extending the energy type estimates as in \cite{MR2075985} for our problem, we obtain the smoothing effect, the global existence theory and the asymptotic result as follows.

\begin{thm}\label{thm:global-asymptotic}
Let $\gamma, \sigma, P^\J (j=1,2,3)$ satisfy the assumptions (A1)--(A3). 
Assume that each initial curve $\Gamma^\J$ is of class $H^2$ and connect the fixed point $P^\J$ and a junction point $\vec{a}(0)$ for $j \in \{1,2,3\}$. 
Further assume the family of the initial triod and the initial orientations $\{\alpha^\J_0\}_{j \in \{1,2,3\}}$ satisfies the Herring condition \eqref{bc-angle} at $t=0$. 
Then, there exist $m > 0$ and $\varepsilon > 0$ such that a geometric flow governed by \eqref{eq-curve}--\eqref{bc-boundary} with the initial datum $\{\Gamma_0^\J\}_{j \in \{1,2,3\}}$ and $\{\alpha_0^\J\}_{j \in \{1,2,3\}}$, which at every positive time the triod is smooth, exists globally in time if 
\begin{equation}\label{main:small-ini}
\begin{aligned}
&\sum_{j=1}^3 \sigma(\Delta^\J \alpha_0) L^\J(0) \le \sigma(0)m, \quad \sum_{j=1}^3 \left\{\left(\Delta^\J \alpha_0\right)^2 + \int_{\Gamma^\J_0} \left(\sigma(\Delta^\J \alpha_0)\right)^2 (\kappa_0^\J)^2 \; ds\right\} \le \varepsilon, 
\end{aligned}
\end{equation}
where $s$ is the arc-length for each initial curve $\Gamma^\J_0$. 
Furthermore, the triod $\cup_{j=1}^3 \Gamma^\J_t$ converges exponentially fast to the unique Steiner triod connecting the three fixed endpoints $P^\J$ in the $C^\infty$ topology and $\alpha^\J(t)$ converges exponentially fast to $(\alpha_0^{(1)} + \alpha_0^{(2)} + \alpha_0^{(3)})/3$ as $t \to \infty$ for any $j \in \{1,2,3\}$. 
\end{thm}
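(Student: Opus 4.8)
The plan is to combine the short-time existence and smoothing of Theorem \ref{thm:exists-flow} with a continuation argument driven by two coupled exponential decay estimates — one for the $L^2$-norm of the curvatures, one for the misorientations — and then to upgrade the resulting $L^2$-convergence to $C^\infty$-convergence by higher-order energy estimates in the spirit of \cite{MR2075985}. First I would reduce to smooth initial data: given an $H^2$ initial triod and orientations $\{\alpha_0^\J\}$ satisfying the Herring condition \eqref{bc-angle} at $t=0$, I would approximate it by smooth triods together with orientations satisfying the compatibility conditions of every order (the Herring condition being the zeroth-order one, the higher ones solved by prescribing the appropriate jets at the junction and at the $P^\J$), apply Theorem \ref{thm:exists-flow} to produce smooth flows on a common time interval, and pass to the limit using the a priori bounds below; the limiting flow is smooth for every $t>0$ by the parabolic smoothing effect obtained in the higher-order estimates (last paragraph). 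I would then set up a continuation: let $[0,T_{\max})$ be the maximal interval on which a smooth flow exists with $L^\J(t)\ge\ell_0$, $\mathcal{A}(t):=\sum_{j=1}^3(\Delta^\J\alpha(t))^2\le\delta_0$, every junction angle within $\delta_0$ of $2\pi/3$, and $\sum_{j=1}^3\int_{\Gamma^\J_t}((\kappa^\J)^2+(\partial_s\kappa^\J)^2)\,ds\le\delta_0$, for suitable small fixed $\ell_0,\delta_0$; the aim is to show that, for $m$ and $\varepsilon$ small enough, each of these quantities is \emph{strictly} improved on $[0,T_{\max})$ — in fact decays exponentially — which forces $T_{\max}=\infty$ and yields the asymptotics.

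For the geometric a priori bounds, the energy dissipation identity gives $E(t):=\sum_{j=1}^3\sigma(\Delta^\J\alpha)L^\J\le E(0)\le\sigma(0)m$ on $[0,T_{\max})$. By (A3) the function $\sigma$ is strictly convex with minimum at $0$, so $\sigma(0)=\min_\alpha\sigma(\alpha)$ and hence $E(t)\ge\sigma(0)\sum_j L^\J(t)$; if a branch collapsed, say $\vec{a}(t)\to P^{(j_0)}$, then $\sum_j L^\J(t)$ would tend to $|P^{(j_1)}-P^{(j_0)}|+|P^{(j_2)}-P^{(j_0)}|$, which by (A2) — the Steiner point lying in the interior — strictly exceeds the total length $\mathcal{L}_*$ of the unique Steiner triod. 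Choosing $m$ strictly between $\mathcal{L}_*$ and $\min_{j_0}(|P^{(j_1)}-P^{(j_0)}|+|P^{(j_2)}-P^{(j_0)}|)$ therefore rules out branch collapse and keeps $\ell_0$ available, while the first condition of \eqref{main:small-ini} stays satisfiable since near the Steiner triod with vanishing misorientation one has $E\approx\sigma(0)\mathcal{L}_*<\sigma(0)m$. The junction angles are kept within $\delta_0$ of $2\pi/3$ through the relation $\sigma(\Delta^{(1)}\alpha)/\sin\theta^{(2)}=\sigma(\Delta^{(2)}\alpha)/\sin\theta^{(3)}=\sigma(\Delta^{(3)}\alpha)/\sin\theta^{(1)}$ once $\mathcal{A}$ is small. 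Finally, $\xi^\J(0,t)\equiv P^\J$ forces the velocity, hence the normal velocity, to vanish at $P^\J$, so $\kappa^\J(0,t)=0$; this feeds Poincaré inequalities below.

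For the decay of the misorientations, the key observation is that, because $\partial_\alpha\sigma(0)=0$ and $\partial_\alpha^2\sigma>0$ by (A3), the functional $\Psi(t):=E(t)-\sigma(0)\sum_j L^\J(t)=\sum_j(\sigma(\Delta^\J\alpha)-\sigma(0))L^\J$ is comparable to $\mathcal{A}(t)$ uniformly on $[0,T_{\max})$, thanks to the two-sided length bounds. Differentiating $\Psi$, using the dissipation identity for $\dot E$ together with $\tfrac{d}{dt}L^\J=-\sigma(\Delta^\J\alpha)\int_{\Gamma^\J_t}(\kappa^\J)^2\,ds+\dot{\vec{a}}\cdot\tau^\J$ and the Herring condition \eqref{bc-angle} (which makes $|\sum_j\tau^\J|\le C\mathcal{A}$), one obtains $\tfrac{d\Psi}{dt}\le-\tfrac{1}{\gamma}\sum_j|\tfrac{d\alpha^\J}{dt}|^2+C\mathcal{A}\cdot(\text{small})$. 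Writing $b^\J:=\partial_\alpha\sigma(\Delta^\J\alpha)L^\J$, so that \eqref{eq-alpha} reads $\tfrac{d\alpha^\J}{dt}=-\gamma(b^\JJ-b^\J)$, the identity $\sum_j\Delta^\J\alpha=0$ and the strict monotonicity of $\partial_\alpha\sigma$ near $0$ yield the coercivity $\sum_j|b^\JJ-b^\J|^2\ge c\,\mathcal{A}$, with $c>0$ depending only on $\ell_0$ and the upper length bound (positive-definiteness of the linearized map on $\{v:\sum_j v^\J=0\}$, perturbed by the smallness of $\mathcal{A}$); hence $\tfrac{d\Psi}{dt}\le-c'\mathcal{A}$ and $\mathcal{A}(t)\le C\mathcal{A}(0)e^{-\lambda_1 t}$ for some $\lambda_1>0$. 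This is the analogue for our system of the dissipation estimate of \cite{MR4263432}. For the decay of the curvatures, set $\mathcal{K}(t):=\sum_j(\sigma(\Delta^\J\alpha))^2\int_{\Gamma^\J_t}(\kappa^\J)^2\,ds$; using $\partial_t\kappa^\J=\sigma(\Delta^\J\alpha)(\partial_s^2\kappa^\J+(\kappa^\J)^3)$ and $\partial_t(ds)=-\sigma(\Delta^\J\alpha)(\kappa^\J)^2\,ds$ along the flow, integration by parts, the interpolation bound $\int(\kappa^\J)^4\,ds\le\varepsilon_0\int(\partial_s\kappa^\J)^2\,ds+C_{\varepsilon_0}\mathcal{K}$ (valid as $\int(\kappa^\J)^2\,ds$ is small), and $\tfrac{d}{dt}(\sigma(\Delta^\J\alpha))^2=O(\mathcal{A})$, one gets $\tfrac{d\mathcal{K}}{dt}\le-c_0\sum_j\int_{\Gamma^\J_t}(\partial_s\kappa^\J)^2\,ds+2\sum_j(\sigma(\Delta^\J\alpha))^3[\kappa^\J\partial_s\kappa^\J]_{\vec{a}}+C(\mathcal{K}+\mathcal{A})$, the endpoint terms at $P^\J$ vanishing since $\kappa^\J(0,t)=0$. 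The junction sum is controlled by differentiating \eqref{bc-concurrency} and \eqref{bc-angle} in time — this is precisely where the angle condition of Theorem \ref{thm:exists-flow}, i.e.\ the complementing (Lopatinskii--Shapiro) condition for the linearized system, is used, exactly as in the Neumann case \cite{GKS} — and absorbed as $\varepsilon_0\sum_j\int(\partial_s\kappa^\J)^2\,ds+C_{\varepsilon_0}(\mathcal{K}+\mathcal{A})$. Since $\kappa^\J(0,t)=0$, the Poincaré inequality gives $\int(\partial_s\kappa^\J)^2\,ds\ge c\int(\kappa^\J)^2\,ds$, so $\sum_j\int(\partial_s\kappa^\J)^2\,ds\ge c''\mathcal{K}$, and choosing $\varepsilon_0$ small yields $\tfrac{d\mathcal{K}}{dt}\le-\lambda_2\mathcal{K}+C\mathcal{A}$; Gronwall's inequality together with the decay of $\mathcal{A}$ gives $\mathcal{K}(t)\le C(\mathcal{K}(0)+\mathcal{A}(0))e^{-\lambda t}$ for some $\lambda>0$. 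Thus the continuation bounds on $\ell_0$, $\mathcal{A}$, the angles and $\mathcal{K}$ are strictly improved for $\varepsilon$ small, and $T_{\max}=\infty$ once the higher-order control below is in force.

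To control $\int(\partial_s\kappa^\J)^2\,ds$ and all higher derivatives — closing the continuation for the remaining quantity and obtaining the $C^\infty$-convergence — I would adapt the energy estimates of \cite{MR2075985} in the $\Theta^\J$-formulation (recall $\kappa^\J=\partial_s\Theta^\J$): for each $k\ge1$, $\tfrac{d}{dt}\sum_j\int_{\Gamma^\J_t}(\partial_s^k\kappa^\J)^2\,ds\le-c_k\sum_j\int_{\Gamma^\J_t}(\partial_s^{k+1}\kappa^\J)^2\,ds+(\text{lower-order, exponentially small})$, with the junction boundary terms handled via the time-differentiated junction conditions and \eqref{eq-alpha}; interpolation propagates the exponential decay of $\mathcal{K}$ to every Sobolev norm of $\kappa^\J$, and Sobolev embedding gives exponential $C^\infty$-decay of $\kappa^\J$ and all its arc-length derivatives, so each $\Theta^\J(\cdot,t)$ converges in $C^\infty$ to a constant and $\Gamma^\J_t$ to a straight segment. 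Since $\mathcal{A},\mathcal{K}\to0$ exponentially, $\dot{\vec{a}}(t)\to0$ exponentially (its components at the junction are $\sigma(\Delta^\J\alpha)\kappa^\J$ and $|\sum_j\tau^\J|=O(\mathcal{A})$ by \eqref{bc-angle}), so $\vec{a}(t)$ converges exponentially to some $\vec{a}_\infty$; the limiting triod is made of the straight segments joining $P^\J$ to $\vec{a}_\infty$, meeting at $120^\circ$ (as $\sigma(\Delta^\J\alpha)\to\sigma(0)$ in \eqref{bc-angle}), hence is the Steiner triod of $\{P^\J\}$, unique by (A2). Finally $b^\J=O(\Delta^\J\alpha)$ decays exponentially, so by \eqref{eq-alpha} each $\alpha^\J(t)$ converges exponentially to some $\alpha^\J_\infty$ with $\Delta^\J\alpha_\infty=0$ for all $j$, i.e.\ all $\alpha^\J_\infty$ equal; since summing \eqref{eq-alpha} over $j$ gives $\tfrac{d}{dt}\sum_j\alpha^\J\equiv0$, the common value is $(\alpha_0^{(1)}+\alpha_0^{(2)}+\alpha_0^{(3)})/3$. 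The main obstacle is this coupled decay estimate: unlike the classical case, Huisken's monotonicity formula \cite{MR1030675} does not extend here because the mobilities $\sigma(\Delta^\J\alpha)$ develop independently on the three branches (the time-rescaling of \cite{MR4292952} works only for a single curve), so the decay must be extracted directly from the dissipation; the delicate parts are the control of the triple-junction boundary terms in $\tfrac{d\mathcal{K}}{dt}$ while the three surface tensions drift apart — which is where the complementing condition is essential — and the simultaneous closing of the bootstrap on the lengths, the angles and all the curvature norms, which is where the convexity (A3) and the smallness \eqref{main:small-ini} are used.
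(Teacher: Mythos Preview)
Your proposal is correct and follows essentially the same architecture as the paper: smooth approximation of the $H^2$ data, short-time existence via Theorem~\ref{thm:exists-flow}, a continuation argument driven by exponential decay of the misorientations and of the weighted $L^2$-curvature, higher-order energy estimates in the style of \cite{MR2075985} for smoothing and $C^\infty$-convergence, and Arzel\`a--Ascoli to pass to the limit. The only notable tactical differences are that the paper obtains the misorientation decay by computing $\tfrac{d}{dt}\sum_j(\Delta^\J\alpha)^2$ directly (Lemma~\ref{lem:ori-decrease} and Corollary~\ref{cor:exdecrease}), which decouples it entirely from the curvature bounds and is cleaner than your $\Psi$-functional route, and that the paper establishes the Poincar\'e inequality for $\kappa$ via a Rayleigh-quotient analysis of the coupled junction system (Section~\ref{subsec:Reyleigh}) rather than the one-sided Dirichlet condition $\kappa^\J|_{P^\J}=0$ alone---though your simpler version would also suffice here.
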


Here, we note that the uniqueness of the flow is unknown in our method except the case that the initial triod is of class $C^{3+\beta}$, while we can obtain the H\"{o}lder continuity (with respect to the time variable) of the moving triod at the initial time in $C^{1+\beta}$ topology for arbitrary $\beta \in (0,1/2)$ (see also Corollary \ref{cor:hi-estimate-theta}).

We also refer to other works related to our problem. 
For the classical curvature flow of triods with a homogenous Neumann boundary in bounded domain, the asymptotics was studied in \cite{GKS, MR1973276}. 
The former study \cite{MR1973276} show that the linear stability of stationary triod consisting of three line segment depends on the curvature of the domain at the three end-points of the triod, and the latter study \cite{GKS} gives a rigorous proof of the local exponential stability of the stationary triod when it has linear stability. 
Some arguments in the previous studies are adopted in this paper to derive the Poincar\'e type inequality, which is a key to prove the exponential $L^2$-decay of the curvatures. 
On the other hand, for the interior behavior of networks with a lot of grains, a part of grains may vanish in finite time. 
The shrinking result was first proved in \cite{MR0840401, MR0906392} for motion of Jordan curves and self similar shrinkers are constructed and classified in \cite{MR3649351, MR3804202, MR2340176} for motion of networks for example. 
Mantegazza-Novaga-Tortorelli \cite{MR2075985} also studied type I and type II singularities in addition to the existence theory and smoothing effect. 
Because of the singularities, weak solutions of the multi-phase mean curvature flow have also been studied well. 
We here refer to \cite{MR0485012, MR3612327, MR4184584} for the existence theory of a measure theoretic solution, which is so-called the Brakke flow, and \cite{MR3556529, MR4056816, MR3847750} for the construction of distributional solutions in the framework of BV functions. 
The existence of strong solutions without the Herring condition at the initial time is also important in view of the analysis of the singularities since the condition is lost when a grain vanish, and the existence theory was studied in \cite{MR3909904, LMPS}.

The rest of the paper is organized in the following way. 
We first introduce a re-formulation of the geometric flow and prove Theorem \ref{thm:exists-flow} in Section \ref{sec:local-exists}. 
The equilibriums will be studied in Section \ref{sec:equilibrium}. 
Section \ref{sec:orientation} lists some geometric properties for the computations in the sequel and the exponential decay of the misorientations is also prove in the section. 
The exponential $L^2$-decay of the curvatures is proved in Section \ref{sec:L2-estimate} and higher order estimates are derived in Section \ref{sec:H2-estimate} to continue to prove Theorem \ref{thm:global-asymptotic}.

 \section*{Acknowledgment}

The first author acknowledges partial support of JSPS KAKENHI Grant No.\ JP19K14572, JP20H01801 and JP21H00990. 
The second author acknowledges partial support of JSPS KAKENHI Grant No.\ JP18K13446.
The third author acknowledges partial support of JSPS KAKENHI Grant No.\ JP20K14343, and JSPS Leading Initiative for Excellent Young Researchers (LEADER) operated by Funds for the Development of Human Resources in Science and Technology. 
The first and third author acknowledge partial support of JSPS KAKENHI Grant No.\ JP18H03670. 

\section{Local existence theory in a smooth setting} \label{sec:local-exists}

In this section, we prove the local existence theory for the geometric flow in a smooth setting. 
We first introduce an angle function of $\Gamma_t^\J$ and re-formulate the problem to apply a classical theory for parabolic partial differential equations. 
We note that our re-formulation is different from it in \cite{BR, MR2075985} as we mentioned in the introduction. 

\subsection{Parametrization of $\Gamma_t^\J$ and re-formulation of the problem}\label{sec:re-formulation}

Let the curve $\Gamma^\J_t$ be parametrized by a smooth map $\xi^\J=\xi^\J(x,t): [0,1] \times [0,T) \to \mathbb{R}^2$ and define an angle function $\Theta^\J=\Theta^\J(x,t) : [0,1] \times [0,T)$ by
\begin{equation}\label{def-theta} 
(\tau^\J_t = )\frac{\partial_x \xi^\J}{|\partial_x \xi^\J|} = 
\begin{pmatrix}
\cos \Theta^\J \\
\sin \Theta^\J
\end{pmatrix} 
\end{equation} 
to re-formulate \eqref{eq-curve} and \eqref{bc-concurrency}--\eqref{bc-boundary}. 
Here, the angle $\Theta$ is chosen to be continuous with respect to $x$ and $t$, and to satisfy $\Theta^{(1)}(0,0) \in [0,\pi)$ and $\Theta^\JJ(1,0) - \Theta^\J(1,0) \in [0,2\pi)$. 
Therefore, the angle $\theta^\J(t)$ between $\Gamma^\J_t$ and $\Gamma^\JJ_t$ at the junction coincides with $\Theta^\JJ(1,t) - \Theta^\J(1,t)$ modulo $2\pi$. 
We let $\lambda^\J_t$ be the tangent velocity of $\Gamma^\J_t$ and thus 
\[ \partial_t \xi^\J = V^\J_t \nu^\J_t + \lambda^\J_t \tau^\J_t = \sigma(\Delta^\J \alpha(t)) \kappa_t^\J \nu^\J_t + \lambda^\J_t \tau^\J_t. \]
Recall that we have rescaled so that $\mu=1$ for \eqref{eq:1.2} when we introduce the geometric flow \eqref{eq-curve}--\eqref{bc-boundary}. 
Notice also that the arc-length $s$ of $\Gamma^\J_t$ satisfies 
\begin{equation}\label{para-s-x}
\partial_s = \frac{1}{|\partial_x \xi^\J|} \partial_x. 
\end{equation}
We discuss some geometric properties to re-formulate. 

\begin{lem}\label{lem:pros-kappa}
Any smooth geometric flow satisfying \eqref{eq-curve}--\eqref{bc-boundary} fulfills the following identities. 
\begin{align}
&\partial_t |\partial_x \xi^\J| = (-V^\J_t \kappa_t^\J + \partial_s \lambda^\J_t) |\partial_x \xi^\J| = (-\sigma(\Delta^\J \alpha)(\partial_s \Theta^\J)^2 + \partial_s \lambda_t^\J)|\partial_x \xi^\J|, \label{jacobi-deri-t}\\
&\partial_t \Theta^\J = \partial_s V^\J_t + \kappa_t^\J \lambda^\J_t= \sigma(\Delta^\J \alpha) \partial_s^2 \Theta^\J + (\partial_s \Theta^\J)\lambda_t^\J, \label{eq-theta1}
\end{align}
for any $(x,t) \in [0,1] \times [0,T)$ and $j \in \{1,2,3\}$. 
Furthermore, 
\begin{equation}
\kappa^\J_t = \partial_s \Theta^\J = \lambda^\J_t = 0 \quad \text{at} \; \; P^\J \label{bc-kappa}
\end{equation}
for any $j \in \{1,2,3\}$ and 
\begin{align}
\sum_{j=1}^3 \sigma(\Delta^\J\alpha) V^\J_t = \sum_{j=1}^3 (\sigma(\Delta^\J \alpha))^2 \partial_s \Theta^\J = 0 \quad \text{at} \; \; \vec{a}(t) \label{bc-sum-kappa}
\end{align}
for any $j \in \{1,2,3\}$. 
\end{lem}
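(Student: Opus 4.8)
\emph{The plan.} I would work throughout in the moving orthonormal frame $\{\tau^\J_t,\nu^\J_t\}$, where $\nu^\J_t=(-\sin\Theta^\J,\cos\Theta^\J)$ by \eqref{def-theta}, and first record the Frenet-type relations obtained by differentiating $\tau^\J_t=(\cos\Theta^\J,\sin\Theta^\J)$ in $s$, namely $\partial_s\tau^\J_t=(\partial_s\Theta^\J)\nu^\J_t$ and $\partial_s\nu^\J_t=-(\partial_s\Theta^\J)\tau^\J_t$; in particular the curvature vector is $(\partial_s\Theta^\J)\nu^\J_t$, so with the stated sign convention $\kappa^\J_t=\partial_s\Theta^\J$ and hence $V^\J_t=\sigma(\Delta^\J\alpha)\partial_s\Theta^\J$. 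I also note that $\sigma(\Delta^\J\alpha(t))$ is constant in $x$ (equivalently in $s$), which will be used when differentiating $V^\J_t$ along the curve. For \eqref{jacobi-deri-t} I would differentiate $|\partial_x\xi^\J|^2=\partial_x\xi^\J\cdot\partial_x\xi^\J$ in $t$ and commute $\partial_t$ with $\partial_x$ to get $\partial_t|\partial_x\xi^\J|=\tau^\J_t\cdot\partial_x(\partial_t\xi^\J)$; expanding $\partial_t\xi^\J=V^\J_t\nu^\J_t+\lambda^\J_t\tau^\J_t$ by the product rule, using $\partial_x=|\partial_x\xi^\J|\partial_s$ and the Frenet relations, and projecting onto $\tau^\J_t$, the term $V^\J_t\partial_x\nu^\J_t$ contributes $-V^\J_t\kappa^\J_t|\partial_x\xi^\J|$ and $\partial_x\lambda^\J_t\,\tau^\J_t$ contributes $(\partial_s\lambda^\J_t)|\partial_x\xi^\J|$, giving the first equality; the second follows from $V^\J_t\kappa^\J_t=\sigma(\Delta^\J\alpha)(\partial_s\Theta^\J)^2$.

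For \eqref{eq-theta1} I would differentiate $\tau^\J_t=(\cos\Theta^\J,\sin\Theta^\J)$ in $t$ to get $\partial_t\tau^\J_t=(\partial_t\Theta^\J)\nu^\J_t$, hence $\partial_t\Theta^\J=\nu^\J_t\cdot\partial_t\tau^\J_t$. Writing $\tau^\J_t=\partial_x\xi^\J/|\partial_x\xi^\J|$ and differentiating the quotient in $t$, the $\nu^\J_t$-component annihilates the part proportional to $\tau^\J_t$, so $\partial_t\Theta^\J=|\partial_x\xi^\J|^{-1}\,\nu^\J_t\cdot\partial_x(\partial_t\xi^\J)$. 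Expanding $\partial_x(\partial_t\xi^\J)$ as in the previous paragraph and projecting onto $\nu^\J_t$, the term $\partial_x V^\J_t\,\nu^\J_t$ contributes $\partial_s V^\J_t$ and $\lambda^\J_t\partial_x\tau^\J_t$ contributes $\kappa^\J_t\lambda^\J_t$, which is the first equality; since $\sigma(\Delta^\J\alpha)$ is $s$-independent, $\partial_s V^\J_t=\sigma(\Delta^\J\alpha)\partial_s^2\Theta^\J$, and $\kappa^\J_t\lambda^\J_t=(\partial_s\Theta^\J)\lambda^\J_t$, giving the second.

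For the boundary identities: at $P^\J$ we have $\xi^\J(0,t)\equiv P^\J$ by \eqref{bc-boundary}, so $\partial_t\xi^\J(0,t)=0$; orthonormality of $\{\tau^\J_t,\nu^\J_t\}$ forces $V^\J_t=\lambda^\J_t=0$ there, and since $\sigma>0$ by (A1), $V^\J_t=\sigma(\Delta^\J\alpha)\kappa^\J_t$ gives $\kappa^\J_t=\partial_s\Theta^\J=0$, which is \eqref{bc-kappa}. At $\vec{a}(t)$, by \eqref{bc-concurrency} we have $\xi^\J(1,t)=\vec{a}(t)$ for all $j$, so $\partial_t\xi^\J(1,t)=\tfrac{d\vec{a}}{dt}(t)$ is the same vector for all $j$, and its $\nu^\J_t$-component gives $V^\J_t=\nu^\J_t\cdot\tfrac{d\vec{a}}{dt}(t)$. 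Hence $\sum_{j=1}^3\sigma(\Delta^\J\alpha)V^\J_t=\bigl(\sum_{j=1}^3\sigma(\Delta^\J\alpha)\nu^\J_t\bigr)\cdot\tfrac{d\vec{a}}{dt}(t)$; since $\nu^\J_t$ is the $90^\circ$ counter-clockwise rotation of $\tau^\J_t$ and this rotation is linear, $\sum_{j=1}^3\sigma(\Delta^\J\alpha)\nu^\J_t$ is the rotation of $\sum_{j=1}^3\sigma(\Delta^\J\alpha)\tau^\J_t=0$ (the Herring condition \eqref{bc-angle}), so it vanishes, whence $\sum_{j=1}^3\sigma(\Delta^\J\alpha)V^\J_t=0$ at $\vec{a}(t)$; rewriting $\sigma(\Delta^\J\alpha)V^\J_t=(\sigma(\Delta^\J\alpha))^2\partial_s\Theta^\J$ yields \eqref{bc-sum-kappa}.

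I do not expect a genuine obstacle: the four identities are routine moving-frame computations. The only points that require attention are keeping the signs in the Frenet relations consistent with the chosen orientation of $\nu^\J_t$, using the $x$-independence of $\sigma(\Delta^\J\alpha(t))$ when differentiating $V^\J_t$ in $s$, and --- for \eqref{bc-sum-kappa} --- combining the concurrency condition \eqref{bc-concurrency} with the \emph{rotated} form of the Herring condition rather than \eqref{bc-angle} itself.
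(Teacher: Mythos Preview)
Your proposal is correct and follows essentially the same approach as the paper: the paper treats \eqref{jacobi-deri-t} and \eqref{eq-theta1} as standard moving-frame identities (referring to Chou--Zhu rather than writing them out), and for the boundary identities it differentiates $\xi^\J(0,t)=P^\J$ in time and uses linear independence of $\tau^\J_t,\nu^\J_t$ for \eqref{bc-kappa}, then pairs the common velocity $\partial_t\xi^\J(1,t)$ with the $90^\circ$-rotated Herring condition $\sum_j\sigma(\Delta^\J\alpha)\nu^\J_t=0$ for \eqref{bc-sum-kappa}, exactly as you do. Your write-up is in fact more detailed than the paper's on the first two identities, but the logic is the same throughout.
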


\begin{proof}
The equalities \eqref{jacobi-deri-t} and \eqref{eq-theta1} can be obtained by a standard argument, \eqref{eq-curve} and $\kappa^\J_t = \partial_s \Theta^\J$. 
We thus refer to \cite[Chapter 1]{CZ} for the details of the proof. 
We now prove only \eqref{bc-kappa} and \eqref{bc-sum-kappa}. 

From $\xi^\J(0,t) = P^\J(t)$ for any $t \in [0,T)$ and $j \in \{1,2,3\}$, taking the time derivatives on both sides, we have 
\[ V_t^\J \nu_t^\J + \lambda^\J_t \tau_t^\J = \partial_t \xi^\J(0,t) = 0 \]
at the boundary point. 
Since $\nu_t^\J$ and $\tau_t^\J$ are linearly independent, we have \eqref{bc-kappa}. 

At the junction point $\vec{a}(t)$, from $\xi^{(1)}(1,t) = \xi^{(2)}(1,t) = \xi^{(3)}(1,t)$, those time derivatives are also same. 
We thus have by taking inner product $\partial_t \xi^\I(1,t)$ with \eqref{bc-angle} rotated by $90$ degrees counterclockwise 
\[ \begin{aligned}
0 =&\; \left\langle \sum_{j=1}^3 \sigma(\Delta^\J\alpha) \nu_t^\J, \partial_t \xi^\I(1,t) \right\rangle \\
=&\; \sum_{j=1}^3 \langle \sigma(\Delta^\J\alpha) \nu_t^\J, V_t^\J \nu_t^\J + \lambda^\J_t \tau_t^\J \rangle = \sum_{j=1}^3 \sigma(\Delta^\J\alpha) V^\J_t 
\end{aligned}\]
at the junction point $\vec{a}(t)$. 
\end{proof}

We can adopt \eqref{eq-theta1} as the differential equation to be solved. 
In order to apply a classical theory to solve it, we will further continue that represent $\lambda^\J_t$ by the angle functions $\{\Theta^\I\}_{i=1,2,3}$ and some geometric values. 
The following lemma is to control $\lambda^\J_t$ by $\{\Theta^\I\}_{i=1,2,3}$ at $\vec{a}(t)$. 

\begin{lem}\label{lem:l-v-junction}
Any smooth geometric flow satisfying \eqref{eq-curve}--\eqref{bc-boundary} fulfills 
\begin{equation}\label{eq-lambda-V} 
\begin{pmatrix}
\lambda^{(1)}_t \\
\lambda^{(2)}_t \\
\lambda^{(3)}_t 
\end{pmatrix}
= - \frac{1}{1 - c^{(1)}c^{(2)}c^{(3)}}
\begin{pmatrix}
c^{(1)}c^{(2)}s^{(3)} & s^{(1)} & c^{(1)}s^{(2)} \\
c^{(2)}s^{(3)} & s^{(1)}c^{(2)}c^{(3)} & s^{(2)} \\
s^{(3)} & s^{(1)}c^{(3)} & c^{(1)}s^{(2)}c^{(3)}
\end{pmatrix}
\begin{pmatrix}
\sigma(\Delta^{(1)} \alpha) \partial_s \Theta^{(1)} \\ 
\sigma(\Delta^{(2)} \alpha) \partial_s \Theta^{(2)} \\
\sigma(\Delta^{(3)} \alpha) \partial_s \Theta^{(3)}
\end{pmatrix}
\end{equation}
at $\vec{a}(t)$ (or at $x=1$), where $c^\J = \cos (\Theta^\JJ - \Theta^\J)$ and $s^\J = \sin(\Theta^\JJ - \Theta^\J)$ for $j=1,2,3$. 
\end{lem}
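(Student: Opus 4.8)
The plan is to exploit the concurrency condition \eqref{bc-concurrency}, differentiated in time, together with the Herring balance \eqref{bc-angle}, to obtain a linear system for the three tangent velocities $\lambda^\K_t$ at $\vec a(t)$. First I would note that since $\xi^{(1)}(1,t)=\xi^{(2)}(1,t)=\xi^{(3)}(1,t)$, taking the time derivative gives $\partial_t\xi^{(1)}(1,t)=\partial_t\xi^{(2)}(1,t)=\partial_t\xi^{(3)}(1,t)$, i.e. $V^\J_t\nu^\J_t+\lambda^\J_t\tau^\J_t$ is the same vector $\vec w$ for $j=1,2,3$. Writing everything in the orthonormal frame $(\tau^\K_t,\nu^\K_t)$ attached to a chosen $k$, one gets, for $j\ne k$,
\begin{equation*}
\lambda^\K_t = \langle \vec w,\tau^\K_t\rangle = \langle V^\J_t\nu^\J_t+\lambda^\J_t\tau^\J_t,\tau^\K_t\rangle,
\end{equation*}
and similarly projecting onto $\nu^\K_t$ relates $V^\K_t$ to $V^\J_t$ and $\lambda^\J_t$. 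The inner products $\langle\tau^\J_t,\tau^\K_t\rangle$, $\langle\nu^\J_t,\tau^\K_t\rangle$, etc., are exactly $\cos$ and $\pm\sin$ of differences $\Theta^\I-\Theta^\J$ of the angle functions, so they are expressible through the $c^\J,s^\J$ of the statement (using that the curves are numbered counter-clockwise so $\Theta^{(2)}-\Theta^{(1)}+\Theta^{(3)}-\Theta^{(2)}+\Theta^{(1)}-\Theta^{(3)}=0$ modulo $2\pi$, whence the product/sum identities among the $c^\J,s^\J$). Using Lemma \ref{lem:pros-kappa}, in particular \eqref{bc-sum-kappa} and $\kappa^\J_t=\partial_s\Theta^\J$, one trades each $V^\J_t=\sigma(\Delta^\J\alpha)\kappa^\J_t=\sigma(\Delta^\J\alpha)\partial_s\Theta^\J$ for the quantities appearing on the right-hand side of \eqref{eq-lambda-V}.

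Concretely, I would set up the $3\times3$ linear system obtained by projecting the common velocity vector $\vec w$ onto each tangent direction: projecting $V^{(j+1)}_t\nu^{(j+1)}_t+\lambda^{(j+1)}_t\tau^{(j+1)}_t$ onto $\tau^\J_t$ expresses $\lambda^\J_t$ as a combination of $\lambda^{(j+1)}_t$ and $V^{(j+1)}_t$ with coefficients $c^\J$ and $s^\J$ (up to sign), giving three equations cyclically coupling $\lambda^{(1)}_t,\lambda^{(2)}_t,\lambda^{(3)}_t$. Solving this cyclic system — the coefficient matrix is $I$ minus a cyclic permutation weighted by the $c^\J$, so its determinant is $1-c^{(1)}c^{(2)}c^{(3)}$, which is nonzero precisely because each junction angle is less than $\pi$ so no $c^\J$ equals $1$ with the right sign configuration — produces the stated inverse matrix acting on the vector with entries built from $s^\J$ and $V^\J_t$. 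Substituting $V^\J_t=\sigma(\Delta^\J\alpha)\partial_s\Theta^\J$ yields \eqref{eq-lambda-V} exactly.

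The main obstacle I anticipate is purely bookkeeping: getting the signs and the precise pattern of which $c^\J,s^\J$ multiply which entry right, since the three frames $(\tau^\K_t,\nu^\K_t)$ are rotated from one another by the junction angles and one must be careful about orientation conventions (the counter-clockwise numbering, the $90^\circ$ counter-clockwise choice of $\nu^\J_t$, and the range conventions $\Theta^{(1)}(0,0)\in[0,\pi)$, $\Theta^\JJ(1,0)-\Theta^\J(1,0)\in[0,2\pi)$). A clean way to manage this is to fix the frame of curve $k=1$ as the reference, write $\tau^\J_t$ and $\nu^\J_t$ for $j=2,3$ via rotation matrices by the cumulative angles, expand all inner products, and then verify the cyclic symmetry of the resulting $3\times3$ system so that the three rows are consistent; after that, inverting a $3\times3$ cyclic matrix and simplifying with the $c^\J,s^\J$ identities is routine. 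The invertibility hypothesis $1-c^{(1)}c^{(2)}c^{(3)}\ne0$ should be recorded as following from the assumption that each interior junction angle is less than $\pi$.
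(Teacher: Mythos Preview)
Your approach is essentially identical to the paper's: differentiate the concurrency condition \eqref{bc-concurrency} in time, project onto each $\tau^\J_t$ to obtain the cyclic relations $\lambda^\J_t=-s^\J V^{(j+1)}_t+c^\J\lambda^{(j+1)}_t$, solve the resulting $3\times3$ system (determinant $1-c^{(1)}c^{(2)}c^{(3)}$), and substitute $V^\J_t=\sigma(\Delta^\J\alpha)\partial_s\Theta^\J$. One small remark: neither the Herring condition \eqref{bc-angle} nor the identity \eqref{bc-sum-kappa} is actually needed here --- the concurrency condition alone produces the three equations, and the system is solved purely algebraically.
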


\begin{proof}
We have by \eqref{bc-concurrency} 
\[ \partial_t \vec{a}(t) = V^\J_t \nu^\J_t + \lambda^\J_t \tau^\J_t = V^\JJ_t \nu^\JJ_t + \lambda^\JJ_t \tau^\JJ_t, \]
which implies by taking the inner product with $\tau^\J_t$ 
\begin{equation}\label{eq-lambda-V2} 
\lambda_t^\J = -s^\J V^\JJ_t + c^\J \lambda^\JJ_t. 
\end{equation}
We thus obtain by a simple calculation 
\[
\begin{pmatrix}
\lambda^{(1)}_t \\
\lambda^{(2)}_t \\
\lambda^{(3)}_t 
\end{pmatrix}
= - \frac{1}{1 - c^{(1)}c^{(2)}c^{(3)}}
\begin{pmatrix}
c^{(1)}c^{(2)}s^{(3)} & s^{(1)} & c^{(1)}s^{(2)} \\
c^{(2)}s^{(3)} & s^{(1)}c^{(2)}c^{(3)} & s^{(2)} \\
s^{(3)} & s^{(1)}c^{(3)} & c^{(1)}s^{(2)}c^{(3)}
\end{pmatrix}
\begin{pmatrix}
V^{(1)}_t \\ 
V^{(2)}_t \\
V^{(3)}_t
\end{pmatrix}.
\]
Apply \eqref{eq-curve} and $\partial_s \Theta^\J = \kappa^\J_t$ to obtain \eqref{eq-lambda-V}. 
\end{proof}

Since the tangent velocity $\lambda^\J_t$ depends on the choice of the parametrization in the interior of curves $\Gamma^\J_t$, we thus restrict the parametrization to satisfy
\begin{equation}\label{rest-para} 
|\partial_x \xi^\J (x,t)| = L^\J(t) \quad \text{for} \; \; x \in [0,1]. 
\end{equation}
Note that we will construct a geometric flow satisfying \eqref{rest-para} latter (see also Remark \ref{rmk:construction-flow}). 
Then, $\lambda^\J_t$ can be determine uniquely and we obtain the following formula. 

\begin{lem}\label{lem:re-lambda}
For any smooth geometric flow, let $\xi^\J: [0,1] \times [0,T)$ be a parametrization of $\Gamma^\J_t$ satisfying \eqref{rest-para}. 
Then, 
\begin{equation}\label{re-lambda}
\lambda^\J_t (x) = \frac{\sigma(\Delta^\J \alpha(t))}{L^\J(t)} \int_0^x (\partial_x \Theta^\J(\tilde{x},t))^2 \; d\tilde{x} + x \frac{d}{dt} L^\J(t) 
\end{equation}
for $(x,t) \in [0,1] \times [0,T)$ and $j \in \{1,2,3\}$. 
\end{lem}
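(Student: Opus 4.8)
The plan is to integrate, in the space variable $x$, the evolution equation for the Jacobian $|\partial_x\xi^\J|$ supplied by Lemma~\ref{lem:pros-kappa}. The restriction \eqref{rest-para} forces $|\partial_x\xi^\J(x,t)|\equiv L^\J(t)$, so the left-hand side of \eqref{jacobi-deri-t} is independent of $x$; reading \eqref{jacobi-deri-t} as an identity for $\partial_s\lambda^\J_t$ then yields an explicit first-order ODE for $\lambda^\J_t$ along $\Gamma^\J_t$, which integrates immediately once the boundary value $\lambda^\J_t=0$ at $P^\J$ from \eqref{bc-kappa} is invoked.

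\textbf{Key steps.} First I would substitute \eqref{rest-para} into the first equality of \eqref{jacobi-deri-t}: since $\partial_t|\partial_x\xi^\J|=\frac{d}{dt}L^\J(t)$, we get
\[
\frac{d}{dt}L^\J(t)=\bigl(-\sigma(\Delta^\J\alpha)(\partial_s\Theta^\J)^2+\partial_s\lambda^\J_t\bigr)L^\J(t),
\]
hence $\partial_s\lambda^\J_t=\sigma(\Delta^\J\alpha)(\partial_s\Theta^\J)^2+\frac{1}{L^\J(t)}\frac{d}{dt}L^\J(t)$. Second, using \eqref{para-s-x} together with \eqref{rest-para} to rewrite $\partial_s=\frac{1}{L^\J(t)}\partial_x$, this becomes
\[
\partial_x\lambda^\J_t=\frac{\sigma(\Delta^\J\alpha(t))}{L^\J(t)}(\partial_x\Theta^\J)^2+\frac{d}{dt}L^\J(t).
\]
Third, I would integrate this identity from $0$ to $x$, using $\lambda^\J_t(0)=0$ from \eqref{bc-kappa} (the boundary condition at the fixed point $P^\J$), which gives exactly \eqref{re-lambda}.

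\textbf{Main obstacle.} There is essentially no hard step here; the computation is a direct consequence of Lemma~\ref{lem:pros-kappa} and the gauge choice \eqref{rest-para}. The only point requiring a little care is that \eqref{rest-para} must be used in the full strength ``for all $x\in[0,1]$'' so that $\partial_t|\partial_x\xi^\J|$ is genuinely $x$-independent and equal to $\frac{d}{dt}L^\J(t)$, together with the fact that the boundary term at $x=0$ vanishes by \eqref{bc-kappa}; both are already established, so the proof is a short integration.
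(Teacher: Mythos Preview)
Your proposal is correct and follows essentially the same approach as the paper: both derive the identity $\partial_x\lambda^\J_t=\frac{\sigma(\Delta^\J\alpha)}{L^\J}(\partial_x\Theta^\J)^2+\frac{d}{dt}L^\J(t)$ from the evolution of $|\partial_x\xi^\J|$ under the constraint \eqref{rest-para}, then integrate using $\lambda^\J_t(0)=0$ from \eqref{bc-kappa}. The only cosmetic difference is that the paper re-derives the Jacobian evolution by differentiating $|\partial_x\xi^\J|^2=(L^\J)^2$ directly via Frenet--Serret, whereas you (more economically) invoke the already-established identity \eqref{jacobi-deri-t} from Lemma~\ref{lem:pros-kappa}.
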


\begin{proof}
Taking the time derivative on both sides of the square of \eqref{rest-para} and dividing the equality by $2$, we have by Frenet-Serret formulas 
\begin{align*}
L^\J(t) \frac{d}{dt} L^\J(t) =&\; \langle \partial_x \xi^\J, \partial_x \partial_t \xi^\J \rangle \\
=&\; \langle \partial_x \xi^\J, \partial_x (V^\J_t \nu^\J_t + \lambda^\J_t \tau^\J_t) \rangle \\
=&\; (L^\J(t))^2 \langle \partial_s \xi^\J, \partial_s (V^\J_t \nu^\J_t + \lambda^\J_t \tau^\J_t) \rangle \\
=&\; (L^\J(t))^2 (-\kappa^\J_t V^\J_t + \partial_s \lambda^\J_t), 
\end{align*}
which implies 
\[ \partial_x \lambda^\J_t = \frac{ \sigma(\Delta^\J \alpha(t))}{L^\J(t)} (\partial_x \Theta^\J)^2 + \frac{d}{dt} L^\J(t) \]
due to \eqref{eq-curve} and $\partial_s \Theta^\J = \kappa^\J_t$. 
Integrating it with respect to $x$ and applying \eqref{bc-kappa}, we have \eqref{re-lambda}. 
\end{proof}

We obtain a re-formulated differential equation by substituting \eqref{re-lambda} into \eqref{eq-theta1} including $L^\J(t)$ in the coefficients. 
We thus have to introduce a differential equation of $L^\J(t)$ to re-formulate the geometric flow into a system of $\Theta^\J, \alpha^\J$ and $L^\J$. 
We obtain immediately the equation by substituting $x=1$ into \eqref{re-lambda}. 

\begin{lem}\label{lem:deri-L-1}
Any smooth geometric flow satisfying \eqref{rest-para} fulfills 
\begin{equation}\label{deri-L-1}
\frac{d}{dt} L^\J(t) = - \frac{\sigma(\Delta^\J \alpha)}{L^\J} \int_0^1(\partial_x \Theta^\J)^2 \; dx + \lambda^\J_t (1). 
\end{equation}
\end{lem}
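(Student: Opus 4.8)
The plan is to obtain \eqref{deri-L-1} directly from Lemma \ref{lem:re-lambda}, which already contains the essential computation. Since the restriction \eqref{rest-para} forces $|\partial_x \xi^\J(x,t)| = L^\J(t)$ independently of $x$, the formula \eqref{re-lambda} for the tangent velocity $\lambda^\J_t(x)$ is valid on all of $[0,1]$; in particular it holds at $x=1$, i.e. at the junction point $\vec{a}(t)$. Substituting $x=1$ into \eqref{re-lambda} gives
\[ \lambda^\J_t(1) = \frac{\sigma(\Delta^\J \alpha(t))}{L^\J(t)} \int_0^1 (\partial_x \Theta^\J(\tilde x,t))^2 \, d\tilde x + \frac{d}{dt} L^\J(t), \]
and solving this linear identity for $\tfrac{d}{dt} L^\J(t)$ yields exactly \eqref{deri-L-1}. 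In this sense the lemma is just an immediate specialization of the previous one.

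Alternatively, one could argue from first principles without invoking Lemma \ref{lem:re-lambda}: write $L^\J(t) = \int_0^1 |\partial_x \xi^\J(x,t)| \, dx$, differentiate under the integral sign, and use \eqref{jacobi-deri-t} together with $\kappa^\J_t = \partial_s \Theta^\J$ to get $\tfrac{d}{dt} L^\J(t) = \int_0^1 \big(-\sigma(\Delta^\J\alpha)(\partial_s\Theta^\J)^2 + \partial_s\lambda^\J_t\big) |\partial_x \xi^\J| \, dx$. Converting the arc-length derivatives back to $x$-derivatives via \eqref{para-s-x} and using \eqref{rest-para}, the integrand becomes $-\tfrac{\sigma(\Delta^\J\alpha)}{L^\J}(\partial_x\Theta^\J)^2 + \partial_x\lambda^\J_t$; integrating the last term and using the boundary value $\lambda^\J_t = 0$ at $P^\J$ from \eqref{bc-kappa} produces \eqref{deri-L-1}. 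This is of course the same computation that underlies Lemma \ref{lem:re-lambda}, merely organized differently.

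Both routes are routine, so the only point requiring attention is the bookkeeping: one must ensure that \eqref{rest-para}, hence the $x$-independence of $|\partial_x\xi^\J|$, is in force so that $|\partial_x\xi^\J| = L^\J(t)$ may be pulled out of the integral and the endpoint evaluation at $x=1$ is meaningful. Since this is part of the hypotheses, I would simply present the one-line specialization of \eqref{re-lambda} at $x=1$ as the proof; I do not expect any genuine obstacle.
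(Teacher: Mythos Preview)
Your proposal is correct and matches the paper's own argument exactly: the paper states that \eqref{deri-L-1} is obtained ``immediately by substituting $x=1$ into \eqref{re-lambda}'', which is precisely your one-line specialization. Your alternative direct computation is just the derivation of Lemma~\ref{lem:re-lambda} reorganized, as you note, so there is nothing to add.
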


Summarizing the results so far, we obtain the following system. 
Hereafter, let $\vec{\Theta} = (\Theta^{(1)}, \Theta^{(2)}, \Theta^{(3)})$, $\vec{L}=(L^{(1)}, L^{(2)}, L^{(3)})$ and $\vec{\alpha} = (\alpha^{(1)}, \alpha^{(2)}, \alpha^{(3)})$ for simplicity. 

\begin{prop}
Any smooth geometric flow satisfying \eqref{eq-curve}--\eqref{bc-boundary} and \eqref{rest-para} fulfills the following system of $\vec{\Theta}, \vec{L}$ and $\vec{\alpha}$. 
\begin{equation}\label{system-theta}
\begin{cases}
\partial_t \Theta^\J = \frac{\sigma(\Delta^\J \alpha)}{(L^\J)^2}\partial_x^2 \Theta^\J + f^\J(\vec{\Theta}, \vec{L}, \vec{\alpha}), & (x,t) \in (0,1) \times (0,T), \; \; j=1,2,3, \\
\partial_x \Theta^\J (0,t) = 0, & t \in (0,T), \; \; j=1,2,3, \\
\sum_{j=1}^3 \sigma(\Delta^\J \alpha(t)) \cos \Theta^\J(1,t) = 0, & t \in (0,T), \\
\sum_{j=1}^3 \sigma(\Delta^\J \alpha(t)) \sin \Theta^\J(1,t) = 0, & t \in (0,T), \\
\sum_{j=1}^3 \frac{(\sigma(\Delta^\J \alpha(t)))^2}{L^\J(t)} \partial_x \Theta^\J(1,t) = 0, & t \in (0,T), \\
\pt L^\J  = - \frac{\sigma(\Delta^\J \alpha)}{L^\J} \int_0^1 (\partial_x \Theta^\J)^2 \; dx + g^\J(\vec{\Theta}, \vec{\alpha}), & t \in (0,T), \; \; j=1,2,3, \\
\pt \alpha^\J = -\gamma \{ \pa \sigma (\Delta^\JJ \alpha) L^\JJ - \pa \sigma (\Delta^\J \alpha) L^\J \}, & t \in (0,T), \; \; j =1,2,3,
\end{cases}
\end{equation}
where
\begin{equation}\label{functions-system}
\begin{aligned}
&\begin{aligned}f^\J(\vec{\Theta}, \vec{L}, \vec{\alpha})(x,t) =&\; \frac{\sigma(\Delta^\J \alpha)\partial_x \Theta^\J(x,t)}{(L^\J)^2} \left\{\int_0^x (\partial_x \Theta^\J(\tilde{x}, t))^2 \; d\tilde{x} - x \int_0^1 (\partial_x \Theta^\J(x,t))^2 \; dx \right\} \\
&\;+ \frac{x\px\Theta^\J(x,t)}{L^\J} g^\J(\vec{\Theta}, \vec{\alpha}), \end{aligned}\\
&\begin{aligned} (\lambda^\J_t(1) =) g^\J(\vec{\Theta}, \vec{\alpha})(t) = - \frac{1}{1-c^{(1)}c^{(2)}c^{(3)}} \Big\{&c^\J c^\JJ s^\JJJ \frac{\sigma(\Delta^\J \alpha)}{L^\J} \partial_x \Theta^\J(1,t) \\
&+ s^\J \frac{\sigma(\Delta^\JJ \alpha)}{L^\JJ}\partial_x \Theta^\JJ(1,t) \\
&+ c^\J s^\JJ \frac{\sigma(\Delta^\JJJ \alpha)}{L^\JJJ}\partial_x \Theta^\JJJ(1,t) \Big\}, \end{aligned} \\
&c^\J(t) = \cos(\Theta^\JJ(1,t) - \Theta^\J(1,t)), \quad s^\J(t) = \sin(\Theta^\JJ(1,t) - \Theta^\J(1,t)). 
\end{aligned}
\end{equation}
\end{prop}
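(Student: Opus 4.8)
The plan is to obtain the proposition purely by assembling the identities already established in Lemmas \ref{lem:pros-kappa}--\ref{lem:deri-L-1}; no new analysis is needed, only careful substitution, so I would go through the seven lines of \eqref{system-theta} in turn. The last line is literally \eqref{eq-alpha}. The second line follows from the vanishing of the curvature at the fixed endpoint, $\kappa^\J_t = \partial_s\Theta^\J = 0$ at $P^\J$ in \eqref{bc-kappa}, combined with $\partial_s = (L^\J)^{-1}\partial_x$, which is \eqref{para-s-x} under the normalization \eqref{rest-para}; hence $\partial_x\Theta^\J(0,t)=0$. The third and fourth lines are the two scalar components of the Herring condition \eqref{bc-angle}: by the definition \eqref{def-theta} of the angle function, $\tau^\J_t(1,t) = (\cos\Theta^\J(1,t),\sin\Theta^\J(1,t))$, so $\sum_{j}\sigma(\Delta^\J\alpha)\tau^\J_t = 0$ at $\vec{a}(t)$ splits into exactly these two equations. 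The fifth line is \eqref{bc-sum-kappa} rewritten with \eqref{para-s-x} evaluated at $x=1$, where $|\partial_x\xi^\J| = L^\J$ by \eqref{rest-para}, so that $\partial_s\Theta^\J(1,t) = (L^\J)^{-1}\partial_x\Theta^\J(1,t)$.

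For the sixth line I would start from \eqref{deri-L-1} and identify $\lambda^\J_t(1)$ with $g^\J(\vec{\Theta},\vec{\alpha})$. This is \eqref{eq-lambda-V} of Lemma \ref{lem:l-v-junction} evaluated at $x=1$: substituting $\kappa^\J_t = \partial_s\Theta^\J = (L^\J)^{-1}\partial_x\Theta^\J$ at $x=1$ into the right-hand side of \eqref{eq-lambda-V}, and reading off the three rows of the matrix with the cyclic index convention $\Theta^{(4)}=\Theta^{(1)}$, $L^{(4)}=L^{(1)}$, $\Delta^{(4)}\alpha=\Delta^{(1)}\alpha$ (and likewise $c^{(4)}=c^{(1)}$, $s^{(4)}=s^{(1)}$), yields precisely the expression for $g^\J$ displayed in \eqref{functions-system}. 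The only point requiring attention here is that the three rows of the matrix in \eqref{eq-lambda-V} correspond entry by entry to the three cyclic permutations appearing in the bracket defining $g^\J$; this is the sole place where the exact ordering of the terms matters.

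It remains to derive the first line. Starting from \eqref{eq-theta1}, $\partial_t\Theta^\J = \sigma(\Delta^\J\alpha)\partial_s^2\Theta^\J + (\partial_s\Theta^\J)\lambda^\J_t$, and using $\partial_s = (L^\J)^{-1}\partial_x$ under \eqref{rest-para}, one gets $\partial_t\Theta^\J = \frac{\sigma(\Delta^\J\alpha)}{(L^\J)^2}\partial_x^2\Theta^\J + \frac{1}{L^\J}(\partial_x\Theta^\J)\lambda^\J_t(x)$. Into this I would insert the explicit formula \eqref{re-lambda} for $\lambda^\J_t(x)$, and inside it replace $\frac{d}{dt}L^\J$ by its value from \eqref{deri-L-1}, namely $-\frac{\sigma(\Delta^\J\alpha)}{L^\J}\int_0^1(\partial_x\Theta^\J)^2\,dx + g^\J(\vec{\Theta},\vec{\alpha})$ (using again $\lambda^\J_t(1)=g^\J$). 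Collecting the two integral contributions produces the combination $\int_0^x(\partial_x\Theta^\J)^2\,d\tilde{x} - x\int_0^1(\partial_x\Theta^\J)^2\,dx$ multiplied by $\frac{\sigma(\Delta^\J\alpha)\partial_x\Theta^\J}{(L^\J)^2}$, while the term containing $x\,g^\J$ becomes $\frac{x\,\partial_x\Theta^\J}{L^\J}g^\J$; together these are exactly $f^\J(\vec{\Theta},\vec{L},\vec{\alpha})$ as in \eqref{functions-system}, which completes the verification.

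I do not expect a genuine obstacle: the statement is a reorganization of the preceding lemmas. The only points needing care are the bookkeeping of the cyclic indices when matching \eqref{eq-lambda-V} to the closed-form $g^\J$, and the cancellation that turns $\lambda^\J_t(x)$ into $f^\J$ after $\frac{d}{dt}L^\J$ is eliminated; both are routine once the substitutions are written out.
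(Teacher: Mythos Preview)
Your proposal is correct and follows essentially the same approach as the paper: the paper's proof simply states that all equalities except the third and fourth follow by combining Lemmas \ref{lem:pros-kappa}, \ref{lem:l-v-junction}, \ref{lem:re-lambda} and \ref{lem:deri-L-1}, while the remaining two come from \eqref{bc-angle} and the definition of $\Theta^\J$. You have spelled out exactly these substitutions line by line, which is precisely what the paper leaves implicit.
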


\begin{proof}
We have \eqref{system-theta} except the third and forth equalities by combining Lemma \ref{lem:pros-kappa}, Lemma \ref{lem:l-v-junction}, Lemma \ref{lem:re-lambda} and Lemma \ref{lem:deri-L-1}. 
The remained equalities follows from \eqref{bc-angle} and the definition of $\Theta^\J$. 
\end{proof}

\begin{remark}\label{rmk:construction-flow}
For any smooth functions $\Theta^\J: [0, 1] \times [0,T) \to \mathbb{R}$ and $L^\J: [0, T) \to (0, \infty)$, and also the fixed end-points $P^\J$, we can construct uniquely the a moving curve satisfying \eqref{def-theta} and \eqref{rest-para} as 
\[ \xi^\J(x,t) :=  \left(\int_0^x L^\J(t) \cos \Theta^\J(\tilde{x},t) \; d\tilde{x}, \int_0^x L^\J(t) \sin \Theta^\J(\tilde{x},t) \; d\tilde{x}\right) + P^\J, \]
where $\xi^\J$ is the parametrization of the moving curve. 
Therefore, our aim is to construct a geometric flow governed by \eqref{eq-curve}--\eqref{bc-boundary} from a solution to \eqref{system-theta} applying the above formula. 
\end{remark}

\subsection{Linearized system}\label{subsec:linear}
We introduce a linearized problem of $\vec{\Theta}$ to apply a standard existence theory for system as in \cite{LSU}. 
A solution to \eqref{system-theta} will be solved by constructing a contraction map from the linearized problem and the differential equations of $\vec{L}$ and $\vec{\alpha}$ in \eqref{system-theta}. 
We thus consider the following linearized problem around initial datum $\vec{\Theta}_0, \vec{L}_0$ and $\vec{\alpha}_0$ of $\vec{\Theta}, \vec{L}$ and $\vec{\alpha}$, respectively, for the differential equation of $\vec{\Theta}$ in \eqref{system-theta}. 
\begin{equation}\label{linear-theta}
\begin{cases}
\pt \Theta^\J - \frac{\sigma(\Delta^\J \alpha_0)}{(L^\J_0)^2} \px^2 \Theta^\J = F^\J(x,t), & (x,t) \in (0,1) \times (0,T), \; j \in \{1,2,3\}, \\
\px \Theta^\J(0,t) = 0, & t\in(0,T), \; \; j \in \{1,2,3\} \\
\sum_{j=1}^3 \left(\sigma(\Delta^\J \alpha_0) \sin \Theta^\J_0 (1)\right) \Theta^\J(1,t) = b_1(t), & t \in (0,T), \\
\sum_{j=1}^3 \left(\sigma(\Delta^\J \alpha_0) \cos \Theta^\J_0 (1)\right) \Theta^\J(1,t) = b_2(t), & t \in (0,T), \\
\sum_{j=1}^3 \frac{(\sigma(\Delta^\J \alpha_0))^2}{L_0^\J} \px \Theta^\J(1,t) = b_3(t), & t \in (0,T), 
\end{cases}
\end{equation}
where
\begin{equation}\label{want-linear}
\begin{aligned}
&F^\J (x,t) = \left(\frac{\sigma(\Delta^\J \alpha)}{(L^\J)^2} - \frac{\sigma(\Delta^\J \alpha_0)}{(L^\J_0)^2}\right) \px^2 \Theta^\J + f^\J(\vec{\Theta}, \vec{L}, \vec{\alpha}), \\
&b_1 (t) = \sum_{j=1}^3 \left\{\sigma(\Delta^\J \alpha) \cos \Theta^\J(1,t) + \left(\sigma(\Delta^\J \alpha_0) \sin \Theta^\J_0 (1)\right) \Theta^\J(1,t)\right\}, \\
&b_2 (t) = \sum_{j=1}^3 \left\{\left(\sigma(\Delta^\J \alpha_0) \cos \Theta^\J_0 (1)\right) \Theta^\J(1,t) - \sigma(\Delta^\J \alpha) \sin \Theta^\J(1,t)\right\}, \\
&b_3 (t) = \sum_{j=1}^3 \left(\frac{(\sigma(\Delta^\J \alpha_0))^2}{L_0^\J} - \frac{(\sigma(\Delta^\J \alpha))^2}{L^\J(t)}\right) \px \Theta^\J(1,t). 
\end{aligned}
\end{equation}

We here construct a solution to \eqref{linear-theta} for general functions $F^\J, b_i$ and initial datum $\vec{\Theta}_0, \vec{L}_0$ and $\vec{\alpha}_0$. 
In order to apply a standard existence theory for system as in \cite{LSU}, the parabolicity, complementing condition and compatibility condition should be satisfied. 
Since the parabolicity obviously holds, we will discuss the other conditions. 
We refer to \cite[p.601, Definition 4]{LSU} for the detail of the definition of the parabolicity for system. 
First, we show that the complementing condition is satisfied (see \cite[Section 9 in Chapter VII]{LSU} for the detail of the complementing condition). 

\begin{lem}
The boundary conditions in \eqref{linear-theta} satisfy the complementing condition if $\Theta^\JJ_0(1) - \Theta^\J_0(1) \in (0,\pi)$ for $j=1,2,3$. 
\end{lem}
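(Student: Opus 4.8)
The plan is to verify the complementing (Lopatinskii--Shapiro) condition of \cite[Section~9 in Chapter~VII]{LSU} directly at each of the two boundary points $x=0$ and $x=1$. Since the principal part of \eqref{linear-theta} is diagonal, the three heat operators $\pt - a^\J \px^2$, with $a^\J := \sigma(\Delta^\J \alpha_0)/(L_0^\J)^2 > 0$, decouple. One freezes coefficients, replaces $\pt$ by the covariable $\lambda$ dual to $t$ with $\operatorname{Re}\lambda \ge 0$ and $\lambda \ne 0$, and must check that the only solution of the resulting model system on the inward half-line that stays bounded (hence decays) at infinity and satisfies the homogeneous principal boundary conditions is the trivial one. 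The decaying solution of $\lambda v^\J = a^\J (v^\J)''$ is $v^\J(y) = c_j e^{-\omega_j y}$, where $\omega_j$ is the root of $\omega^2 = \lambda/a^\J$ with $\operatorname{Re}\omega_j > 0$; this branch is well defined because $\lambda \notin (-\infty, 0]$, and $\operatorname{Re}\omega_j > 0$ for every admissible $\lambda$ (including the purely imaginary ones, for which $\operatorname{Re}\omega_j = \sqrt{|\lambda|/(2 a^\J)} > 0$).

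First I would dispose of $x=0$: there the inward variable is $x$ itself, the boundary operator is $\px$, and $\px v^\J(0) = -\omega_j c_j$, so the homogeneous condition forces $c_j = 0$ since $\omega_j \ne 0$. Thus the complementing condition holds trivially at the fixed endpoint, and the substance lies at the junction $x=1$. There I put $y = 1-x$, so $\px = -\partial_y$ and, at $x=1$, $\Theta^\J \mapsto c_j$ while $\px \Theta^\J \mapsto \omega_j c_j$. Since the boundary operators in \eqref{linear-theta} contain no lower-order terms, substituting these into the three boundary conditions at $x=1$ gives the linear system $M(\lambda)\,(c_1,c_2,c_3)^\top = 0$ with
\[
M(\lambda) =
\begin{pmatrix}
\sigma_1 \sin\Theta_1 & \sigma_2 \sin\Theta_2 & \sigma_3 \sin\Theta_3 \\
\sigma_1 \cos\Theta_1 & \sigma_2 \cos\Theta_2 & \sigma_3 \cos\Theta_3 \\
D_1 \omega_1 & D_2 \omega_2 & D_3 \omega_3
\end{pmatrix},
\]
where $\sigma_j := \sigma(\Delta^\J \alpha_0) > 0$, $\Theta_j := \Theta_0^\J(1)$ and $D_j := \sigma_j^2/L_0^\J > 0$. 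The complementing condition at $x=1$ is then exactly the statement that $\det M(\lambda) \ne 0$ for every admissible $\lambda$.

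To finish, I would expand $\det M(\lambda)$ along the last row and use $\sin\Theta_i \cos\Theta_k - \cos\Theta_i \sin\Theta_k = \sin(\Theta_i - \Theta_k)$, which yields
\[
\det M(\lambda) = D_1 \omega_1\, \sigma_2 \sigma_3 \sin(\Theta_2 - \Theta_3) - D_2 \omega_2\, \sigma_1 \sigma_3 \sin(\Theta_1 - \Theta_3) + D_3 \omega_3\, \sigma_1 \sigma_2 \sin(\Theta_1 - \Theta_2).
\]
By the hypothesis, $\sin(\Theta_0^\JJ(1) - \Theta_0^\J(1)) > 0$ for $j=1,2,3$ (read cyclically, so for $j=3$ this is $\sin(\Theta_1 - \Theta_3) > 0$), i.e.\ $\sin(\Theta_2 - \Theta_1) > 0$, $\sin(\Theta_3 - \Theta_2) > 0$ and $\sin(\Theta_1 - \Theta_3) > 0$. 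Hence $\sin(\Theta_2 - \Theta_3) < 0$, $\sin(\Theta_1 - \Theta_2) < 0$ and $\sin(\Theta_1 - \Theta_3) > 0$, so
\[
\det M(\lambda) = -\bigl( \rho_1 \omega_1 + \rho_2 \omega_2 + \rho_3 \omega_3 \bigr), \qquad \rho_1 = D_1 \sigma_2 \sigma_3 \,|\sin(\Theta_2 - \Theta_3)| > 0,
\]
and cyclically $\rho_2, \rho_3 > 0$. Since $\operatorname{Re}\omega_j > 0$ for all admissible $\lambda$, this gives $\operatorname{Re}\det M(\lambda) < 0$, in particular $\det M(\lambda) \ne 0$, which is the complementing condition.

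The only genuinely delicate point is the sign bookkeeping in this last step: one must track the signs of the three $2\times 2$ minors and correctly match the cyclic differences $\Theta_i - \Theta_k$ with the junction angles $\Theta_0^\JJ(1)-\Theta_0^\J(1)$, so that the three sine terms recombine into a single positive linear combination of the $\omega_j$ rather than partially cancelling. Once that is in place, positivity of the $\rho_j$ together with $\operatorname{Re}\omega_j > 0$ closes the argument at once; everything else (the reduction to the half-line model problem, the case $x=0$, the identification of $M(\lambda)$) is routine.
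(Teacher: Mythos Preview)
Your proof is correct and follows essentially the same route as the paper: both set up the half-line model problem at $x=1$, identify the decaying exponentials, form the $3\times 3$ Lopatinskii matrix, and evaluate its determinant via the sine addition formula to get a sum of three terms with a common sign under the angle hypothesis. The only cosmetic difference is in the final step: the paper observes that $\omega_j = L_0^\J\sqrt{\lambda/\sigma_j}$ all share the factor $\sqrt{\lambda}$, so the determinant factors as $\sqrt{\lambda}$ times a strictly negative real number, whereas you keep the $\omega_j$ separate and use $\operatorname{Re}\omega_j>0$ to conclude $\operatorname{Re}\det M(\lambda)<0$ directly---both arguments are valid and equally short.
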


\begin{proof}
We demonstrate according to Bronsard and Reitich \cite{BR} (see also Eidelman and Zhitarashu \cite{EZ}). 
We will discuss the complementing condition at $x=1$ since the condition at $x=0$ is obviously satisfied. 
To describe the condition at $x=1$, let $\mathcal{L}(x,t, \px,\pt)$ and $\mathcal{B}(1,t,\px,\pt)$ be the $3\times 3$ matrix of the differential equation and the boundary conditions at $x=1$ in \eqref{linear-theta}, respectively, i.e., 
\[ \mathcal{L}(x,t,\px,\pt) \vec{\Theta} = (F^{(1)}, F^{(2)}, F^{(3)})^T, \quad \mathcal{B}(1,t,\px,\pt)\vec{\Theta} = (b_1, b_2, b_3)^T, \]
where $\vec{\Theta} = (\Theta^{(1)}, \Theta^{(2)}, \Theta^{(3)})^T$. 
Then, for $p \in \mathbb{C} \setminus \{0\}$ with ${\rm Re}(p) \ge 0$, 
\begin{align*} 
&\mathcal{L}(1,t,\px,p) = 
\begin{pmatrix}
p-\frac{\sigma(\Delta^{(1)} \alpha_0)}{(L^{(1)}_0)^2} \px^2 & 0 & 0 \\
0 & p-\frac{\sigma(\Delta^{(2)} \alpha_0)}{(L^{(2)}_0)^2} \px^2 & 0 \\
0 & 0 & p-\frac{\sigma(\Delta^{(3)} \alpha_0)}{(L^{(3)}_0)^2} \px^2
\end{pmatrix}, \\
&\mathcal{B}(1,t,\px,p) = 
\begin{pmatrix}
\sigma(\Delta^{(1)} \alpha_0) \sin\Theta^{(1)}_0(1) & \sigma(\Delta^{(2)} \alpha_0) \sin\Theta^{(2)}_0(1) & \sigma(\Delta^{(3)} \alpha_0) \sin\Theta^{(3)}_0(1) \\
\sigma(\Delta^{(1)} \alpha_0) \cos\Theta^{(1)}_0(1) & \sigma(\Delta^{(2)} \alpha_0) \cos\Theta^{(2)}_0(1) & \sigma(\Delta^{(3)} \alpha_0) \cos\Theta^{(3)}_0(1) \\
 \frac{(\sigma(\Delta^{(1)} \alpha_0))^2}{L^{(1)}_0} \px & \frac{(\sigma(\Delta^{(2)} \alpha_0))^2}{L^{(2)}_0} \px & \frac{(\sigma(\Delta^{(3)} \alpha_0))^2}{L^{(3)}_0} \px
\end{pmatrix}.
\end{align*} 
Let $z$ be the distance parameter from the boundary $x=1$ directing the interior of $[0,1]$. 
Then we have $\px = - \partial_z$. 
A general solution of $\mathcal{L}(1,t,-\partial_z,p)W(z; p) = 0$ for $z \in [0, \infty)$ satisfying $|W(z; p)| \to 0$ as $z \to \infty$ is 
\[ W(z; p) = \left(a_1 e^{- L_0^{(1)} \sqrt{\frac{p}{\sigma(\Delta^{(1)} \alpha_0)}} z}, a_2 e^{- L_0^{(2)} \sqrt{\frac{p}{\sigma(\Delta^{(2)} \alpha_0)}} z}, a_3 e^{- L_0^{(3)} \sqrt{\frac{p}{\sigma(\Delta^{(3)} \alpha_0)}} z} \right)^T \]
for any $\vec{a} = (a_1, a_2, a_3) \in \mathbb{C}^3$, where we choose the square roots so that the real part of them is  positive. 
Let the $3 \times 3$ matrix $\mathcal{D}(p)$ be defined by 
\[ \mathcal{D}(p) \vec{a}^T = \left(\mathcal{B}(1,t,-\partial_z,p) W(z; p) \Big|_{z=0}\right). \]
Applying \cite[Lemma I.1]{EZ}, the complementing condition is satisfied if ${\rm det} \mathcal{D}(p) \neq 0$ for $p \in \mathbb{C} \setminus \{0\}$ with ${\rm Re}(p) \ge 0$. 
By a simple calculation, we can see that 
\begin{align*}
{\rm det} \mathcal{D}(p) =&\; \sqrt{p} \left(\prod_{j=1}^3 \sigma(\Delta^\J \alpha_0)\right) {\rm det} 
\begin{pmatrix}
\sin \Theta^{(1)}_0(1) & \sin \Theta^{(2)}_0(1) & \sin \Theta^{(3)}_0(1) \\
\cos \Theta^{(1)}_0(1) & \cos \Theta^{(2)}_0(1) & \cos \Theta^{(3)}_0(1) \\
\sqrt{\sigma(\Delta^{(1)} \alpha_0)} & \sqrt{\sigma(\Delta^{(2)} \alpha_0)} & \sqrt{\sigma(\Delta^{(3)} \alpha_0)}
\end{pmatrix}\\
=&\;  -\sqrt{p} \left(\prod_{j=1}^3 \sigma(\Delta^\J \alpha_0)\right) \left(\sum_{j=1}^3 \sqrt{\sigma(\Delta^\JJJ \alpha_0)} \sin \Big(\Theta^\JJ_0(1) - \Theta^\J_0(1)\Big)\right) \neq 0
\end{align*}
due to the positivity of $\sigma$ and the assumption $\Theta^\JJ_0(1) - \Theta^\J_0(1) \in (0,\pi)$ for $j=1,2,3$. 
\end{proof}

We next discuss the compatibility conditions for the system \eqref{linear-theta}. 
The conditions is related to the regularity of the solution, we thus define the following standard H\"older spaces and the norms. 

\begin{definition}
Let $I$ and $J $ be open intervals in $\R$
and $l \in (0,\infty) \setminus \N$. We denote
\[
C^{l} ( \overline{I} ) = \{ f \in C^{[l]} (\overline{I}) \mid \| f \|_{C^{l} ( \overline{I} )}<\infty \},
\]
where 
\[
\| f \|_{C^{l} ( \overline{I} )} 
=
\| f \|_{C^{[l]} (\overline{I})} + \sup _{x,y \in I , x\not=y}
\frac{| \partial _x ^{[l]} f(x) -\partial _x ^{[l]} f(y) |}{|x-y|^{l-[l]}}.
\]
For simplicity, we denote $C^{l} ([0,1]) $ and $C^{l} ([0,T]) $ by 
$C^{l} _x $ and $C^{l} _t$, respectively. In addition,
we define
\[
C^{l, \frac{l}{2}} ( \overline{I \times J} )
= \{ f \in C (\overline{I \times J}) \mid 
\partial^r _t \partial^s _x f \in C (\overline{I \times J}) \ \text{if} \ 2r +s <l, \
\| f \|_{C^{l, \frac{l}{2}} ( \overline{I \times J} ) }
<\infty \},
\]
where
\[
\begin{aligned}
\| f \|_{C^{l, \frac{l}{2}} ( \overline{I \times J} )} 
=&
\sum _{j=0} ^{[l]} \sum _{2r +s =j }\| \partial^r _t \partial^s _x f \|_{C (\overline{I\times J})} 
+ \sum_{2r+s=[l] } \sup _{x,y \in I , t \in J,  x\not=y}
\frac{| \partial _t ^r \partial _x ^s f(x,t) -\partial _t ^r \partial _x ^s f(y,t) |}
{|x-y|^{l-[l]} } \\
& + \sum_{0< (l-2r-s)/2<1} \sup _{x \in I , t,s \in J,  t\not=s}
\frac{| \partial _t ^r \partial _x ^s f(x,t) -\partial _t ^r \partial _x ^s f(x,s) |}
{|t-s|^{(l-2r-s)/2} }.
\end{aligned}
\]
As before, we denote $C^{l, \frac{l}{2}} ( [0,1] \times [0,T] )$ by 
$C^{l, \frac{l}{2}} _{x,t}$, for simplicity.
\end{definition}

If a solution $\vec{\Theta}$ to \eqref{linear-theta} with $b_1, b_2 \in C^\beta_t([0,T'])$ is of class $(C^{2+\beta, \frac{2+\beta}{2}}_{x,t}([0,1] \times [0,T']))^3$ for $\beta \in (0,1)$ and $T' \in (0,T)$, then the solution should satisfy 
\begin{equation}\label{com-2-system}
\begin{aligned} 
0 =&\; \pt \left(b_1(t) - \sum_{j=1}^3 \left(\sigma(\Delta^\J \alpha_0) \sin \Theta^\J_0 (1)\right) \Theta^\J(1,t)\right)\Big|_{t=0} \\
=&\; \pt b_1(0) - \sum_{j=1}^3 \left(\sigma(\Delta^\J \alpha_0) \sin \Theta^\J_0 (1)\right) \left(\frac{\sigma(\Delta^\J \alpha_0)}{(L^\J_0)^2} \px^2 \Theta^\J_0(1) + F^\J(1,0)\right)
\end{aligned}
\end{equation}
due to \eqref{linear-theta}. 
We have similarly 
\[ \pt b_2(0) - \sum_{j=1}^3 \left(\sigma(\Delta^\J \alpha_0) \cos \Theta^\J_0 (1)\right) \left(\frac{\sigma(\Delta^\J \alpha_0)}{(L^\J_0)^2} \px^2 \Theta^\J_0(1) + F^\J(1,0)\right) = 0. \]
These equalities can be regarded as boundary conditions of the initial data $\vec{\Theta}_0$. 
Notice that the boundary conditions in \eqref{linear-theta} also should be satisfied at $t=0$. 
We say that an initial data $\vec{\Theta}_0 \in (C_x^{2+\beta}([0,1]))^3$ satisfies the compatibility condition of order $k \in \mathbb{N}$ for \eqref{linear-theta} if the initial data $\vec{\Theta}_0$ fulfills all boundary conditions of $\vec{\Theta}_0$ which a solution $\vec{\Theta} \in (C^{k+\beta, \frac{k+\beta}{2}}_{x,t}([0,1] \times [0,T']))^3$ to \eqref{linear-theta} starting from $\vec{\Theta}_0$ should satisfy. 
Note that we derived the formulas for the compatibility condition of order 2 above with given functions $F^\J$ and $b_j$. 
The compatibility conditions for \eqref{system-theta} and the geometric flow \eqref{eq-curve}--\eqref{bc-boundary} are similarly defined. 
We then obtain the existence theory for \eqref{linear-theta} due to \cite[Theorem 10.1 in Chapter VII]{LSU}. 

\begin{prop}\label{prop:exist-linear}
Let $T>0$, $\beta \in (0,1)$ and $k \in \mathbb{N}$ with $k \ge 2$. 
Assume that $\sigma$ is a positive smooth function and $\alpha_0^\J \in \mathbb{R}$ and $L_0^\J > 0$ are given constants for $j \in \{1,2,3\}$. 
Let $F^\J \in C^{(k-2) + \beta, \frac{(k-2) + \beta}{2}}_{x,t}([0,1] \times [0,T])$ for $j \in \{1,2,3\}$, $b_1, b_2 \in C^{\frac{k+\beta}{2}}([0,T])$ and $b_3 \in C^{\frac{(k-1)+\beta}{2}}_t([0,T])$. 
Assume also $\vec{\Theta}_0 = (\Theta_0^{(1)}, \Theta_0^{(2)}, \Theta_0^{(3)}) \in (C^{k+\beta}_x([0,1]))^3$ satisfy the compatibility condition of order $k$ for \eqref{linear-theta} and $\Theta^\JJ_0(1) - \Theta^\J_0(1) \in (0,\pi)$ for $j=1,2,3$. 
Then, there exists $\Cl[c]{c-short}>0$ such that \eqref{linear-theta} has a unique solution $\vec{\Theta} = (\Theta^{(1)}, \Theta^{(2)}, \Theta^{(3)}) \in (C^{k+\beta, \frac{k+\beta}{2}}_{x,t} ([0,1] \times [0,T]))^3$ with $\vec{\Theta}(\cdot,0) = \vec{\Theta}_0$ and the following estimate holds. 
\begin{equation}\label{maximum-reg}
\begin{aligned}
\sum_{j=1}^3 \|\Theta^\J\|_{C^{k+\beta, \frac{k+\beta}{2}}_{x,t}} \le \Cr{c-short} \Bigg(\sum_{j=1}^3 &\Big(\|F^\J\|_{C^{(k-2)+\beta, \frac{(k-2)+\beta}{2}}_{x,t}} + \|\Theta_0^\J\|_{C^{k+\beta}_x}\Big) \\
&\;+ \|b_1\|_{C^{\frac{k+\beta}{2}}_t} + \|b_2\|_{C^{\frac{k+\beta}{2}}_t} + \|b_3\|_{C^{\frac{(k-1)+\beta}{2}}_t} \Bigg).
\end{aligned}
\end{equation}
\end{prop}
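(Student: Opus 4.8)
The plan is to reduce the proposition to a direct application of the linear theory in \cite[Theorem 10.1 in Chapter VII]{LSU}. The system \eqref{linear-theta} is a linear parabolic system for the unknown vector $\vec{\Theta}$: the principal part is diagonal with positive coefficients $\sigma(\Delta^\J\alpha_0)/(L_0^\J)^2$ (constants, since $\alpha_0^\J$ and $L_0^\J$ are fixed), so the parabolicity in the sense of \cite[p.601, Definition 4]{LSU} is immediate. The boundary operator at $x=0$ is $\px\Theta^\J(0,t)=0$, which is of Neumann type and trivially satisfies the complementing condition; at $x=1$ the complementing condition has just been verified in the lemma above, using the hypothesis $\Theta_0^\JJ(1)-\Theta_0^\J(1)\in(0,\pi)$. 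The only remaining structural hypothesis of the cited theorem is the compatibility condition, which is exactly what we have imposed on $\vec{\Theta}_0$ (of order $k$, for \eqref{linear-theta}). Thus all hypotheses of the LSU existence and regularity theorem are met once we match up the function-space indices.

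The core of the argument is therefore bookkeeping on the H\"older exponents so that the right-hand data have the regularity demanded by \cite{LSU} for a solution in $(C^{k+\beta,\frac{k+\beta}{2}}_{x,t})^3$. For a second-order parabolic system the interior inhomogeneity must sit two derivatives below the solution, i.e. $F^\J\in C^{(k-2)+\beta,\frac{(k-2)+\beta}{2}}_{x,t}$, which is what we assume. The two Dirichlet-type boundary conditions (the third and fourth equations of \eqref{linear-theta}) are zeroth order in $\vec{\Theta}$, so their data $b_1,b_2$ must lie in $C^{\frac{k+\beta}{2}}_t$; the third boundary condition (the fifth equation) is first order in $\vec{\Theta}$, so its datum $b_3$ must lie in $C^{\frac{(k-1)+\beta}{2}}_t$, matching the hypotheses. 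The initial datum $\vec{\Theta}_0\in(C^{k+\beta}_x)^3$ has the trace regularity the theorem requires. With these identifications, \cite[Theorem 10.1 in Chapter VII]{LSU} yields a unique solution $\vec{\Theta}\in(C^{k+\beta,\frac{k+\beta}{2}}_{x,t}([0,1]\times[0,T]))^3$ with $\vec{\Theta}(\cdot,0)=\vec{\Theta}_0$, together with the a priori bound, from which \eqref{maximum-reg} follows directly (the constant $\Cr{c-short}$ depending only on $\beta$, $k$, $T$, $\sigma$, and the constants $\alpha_0^\J,L_0^\J$).

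The main obstacle—and the only genuinely nontrivial point—is confirming that the formulation in \eqref{linear-theta} genuinely falls under the hypotheses of the LSU theorem for \emph{systems} with boundary conditions of \emph{mixed orders} (two of order $0$ and one of order $1$ at the same boundary point), rather than the scalar or uniform-order case; one must check that the Lopatinskii--Shapiro (complementing) determinant computation performed above is exactly the algebraic condition appearing in \cite[Section 9 in Chapter VII]{LSU} and \cite[Lemma I.1]{EZ}, and that the weight structure of the norms (the $(k-1)$ versus $k$ shift for $b_3$) is the one dictated by the order of the corresponding boundary operator. Once this matching is in place the proof is a citation; no new estimates are needed. I would write the proof as: ``parabolicity is clear; the complementing condition holds by the previous lemma; compatibility of order $k$ is assumed; hence \cite[Theorem 10.1 in Chapter VII]{LSU} applies and gives the solution together with \eqref{maximum-reg}.''
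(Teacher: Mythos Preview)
Your proposal is correct and matches the paper's approach exactly: the paper does not give a separate proof of this proposition but simply states it as a direct consequence of \cite[Theorem 10.1 in Chapter VII]{LSU}, after having verified parabolicity (trivially) and the complementing condition in the preceding lemma. Your expanded discussion of the H\"older-index bookkeeping and the mixed-order boundary conditions is a faithful unpacking of what that citation entails.
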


\subsection{Short-time existence}

In this section, we develop Proposition \ref{prop:exist-linear} to obtain a solution to \eqref{system-theta}. 
In order to construct a contraction map to obtain the solution, we have to introduce compatibility conditions, 
which are different from the compatibility conditions defined above (re-note that $F^\J$ and $b_j$ are given functions in the above conditions for \eqref{linear-theta}). 
To consider the fixed point theorem, let $(\underline{\vec{\Theta}}, \underline{\vec{L}}, \underline{\vec{\alpha}})$ 
belong a suitable function space defined below and
$F^\J$ and $b_j$ ($j=1,2,3$) are functions defined by \eqref{want-linear} with
$(\underline{\vec{\Theta}}, \underline{\vec{L}}, \underline{\vec{\alpha}})$
instead of $(\vec{\Theta}, \vec{L}, \vec{\alpha})$.
Then, to solve \eqref{linear-theta}, 
the initial data $(\vec{\Theta} _0, \vec{L}_0, \vec{\alpha}_0)$ needs to satisfy the compatibility condition for \eqref{linear-theta} with $F^\J$ and $b_j$ chosen as before.
Since $F^\J$ and $b_j$ depend on $(\underline{\vec{\Theta}}, \underline{\vec{L}}, \underline{\vec{\alpha}})$ in the choice, 
for given initial data $(\vec{\Theta} _0, \vec{L}_0, \vec{\alpha}_0)$,
we must choose suitable functions 
$(\underline{\vec{\Theta}}, \underline{\vec{L}}, \underline{\vec{\alpha}})$
so that the initial data satisfies the compatibility conditions for \eqref{linear-theta}.
Let $k \in \mathbb{N}$ with $k \ge 2$ and $\beta \in (0,1)$.  
If $\vec{\Theta} \in (C_{x,t}^{k+\beta, \frac{k+\beta}{2}}([0,1]\times[0,T']))^3$, $\vec{L} \in (C_t^{\frac{k+\beta}{2}}([0,T']))^3$ and $\vec{\alpha} \in (C_t^{\frac{k+\beta}{2}}([0,T']))^3$ are solution to \eqref{system-theta} starting from initial datum $\vec{\Theta}_0, \vec{L}_0$ and $\vec{\alpha}_0$ for some $T'$, then $\partial_t^i \Theta^\J(\cdot,0)$, $\partial_t^i L^\J(0)$ and $\partial_t^i \alpha^\J(0)$ can be represented by the initial datum and the derivatives of $\vec{\Theta}_0$ for $j=1,2,3$ and $i=1,2, \cdots, [\frac{k}{2}]$ due to the first and last two equalities in \eqref{system-theta}.
For example, if $k=2$, the representations are 
\begin{equation}\label{compatibility-initial} 
\begin{aligned}
&\partial_t \Theta^\J(x, 0) = \frac{\sigma( \Delta^\J \alpha_0)}{(L^\J_0)^2} \partial_x^2 \Theta^\J_0(x) + f^\J(\vec{\Theta}_0, \vec{L}_0, \vec{\alpha}_0)(x), \\
&\partial_t L^\J(0) = -\frac{\sigma(\Delta^\J \alpha_0)}{L^\J_0} \int_0^1(\px \Theta^\J_0)^2 \; dx + g^\J(\vec{\Theta}_0, \vec{\alpha}_0), \\
&\pt \alpha^\J(0) = -\gamma\{\pa \sigma(\Delta^\JJ \alpha_0) L^\JJ_0 - \pa \sigma(\Delta^\J\alpha_0) L^\J_0\} 
\end{aligned}
\end{equation}
which can be obtained by substituting $t=0$ into the first and last two equalities in \eqref{system-theta}. 
We say that general functions, which are not  necessary to satisfy any differential equations, $\underline{\vec{\Theta}} \in (C_{x,t}^{k+\beta, \frac{k+\beta}{2}}([0,1]\times[0,T']))^3$, $\underline{\vec{L}} \in (C_t^{\frac{k+\beta}{2}}([0,T']))^3$ and $\underline{\vec{\alpha}} \in (C_t^{\frac{k+\beta}{2}}([0,T']))^3$ satisfy a compatibility condition of order $k$ for \eqref{system-theta} at the initial time with initial datum $\vec{\Theta}_0 \in (C_x^{k+\beta}([0,1]))^3, \vec{L}_0 \in \{(0,\infty)\}^3$ and $\vec{\alpha}_0 \in \mathbb{R}^3$ if the functions satisfy all representations of $\partial_t^i \underline{\Theta}^\J(\cdot,0)$, $\partial_t^i \underline{L}^\J(0)$ and $\partial_t^i \underline{\alpha}^\J(0)$ by the initial datum for $j=1,2,3$ and $i=1,2, \cdots, [\frac{k}{2}]$, which follow from the first and last two equalities in \eqref{system-theta} 
with 
$(\underline{\vec{\Theta}}, \underline{\vec{L}}, \underline{\vec{\alpha}})
=(\vec{\Theta}, \vec{L}, \vec{\alpha})$
as above (for example, functions $(\underline{\vec{\Theta}}, \underline{\vec{L}}, \underline{\vec{\alpha}})$
should satisfy 
$(\underline{\vec{\Theta}}, \underline{\vec{L}}, \underline{\vec{\alpha}})|_{t=0}
=(\vec{\Theta}_0, \vec{L}_0, \vec{\alpha}_0)$
and \eqref{compatibility-initial} with 
$(\underline{\vec{\Theta}}, \underline{\vec{L}}, \underline{\vec{\alpha}})
=(\vec{\Theta}, \vec{L}, \vec{\alpha})$ when $k=2$). 
We then discuss the relation between the compatibility conditions for \eqref{system-theta} at the initial time and the compatibility conditions for \eqref{linear-theta}. 
Note that we assume a higher regularity on $\underline{\vec{L}}$ and $\underline{\vec{\alpha}}$ since the regularity assumption will be needed to prove that a map constructed later is contraction. 

\begin{lem}\label{lem:compatibility}
Let $k \in \mathbb{N}$ with $k \ge 2$ and $\beta, \beta' \in (0,1)$ with $0 < \beta < \beta' < 1$. 
Assume $\vec{\Theta}_0 \in (C_x^{k+\beta}([0,1]))^3, \vec{L}_0 \in \{(0,\infty)\}^3$ and $\vec{\alpha}_0 \in \mathbb{R}^3$ satisfy the compatibility condition of order $k$ for \eqref{system-theta}. 
Let 
\begin{equation}\label{def-initial-const} 
\Cl[m]{c-initial-max} := \max_{j=1,2,3} \|\Theta^\J_0 \|_{C_x^{k+\beta}([0,1])} + \max_{j=1,2,3} L^\J_0 + \max_{j=1,2,3}|\alpha^\J_0|, \quad \Cl[m]{c-initial-min} := \min_{j=1,2,3} L^\J_0. 
\end{equation}
Then, there exists $\Cl[c]{c-short2} > 0$ depending only on $\gamma, \sigma, k, \beta, \Cr{c-initial-max}$ and $\Cr{c-initial-min}$ such that 
\begin{equation}\label{def-sets}
\begin{aligned}
X_T^M:= \{(\underline{\vec{\Theta}}, \underline{\vec{L}}, \underline{\vec{\alpha}}) &\in (C_{x,t}^{k+\beta, \frac{k+\beta}{2}}([0,1]\times[0,T]))^3 \times (C_t^{\frac{k+\beta'}{2}}([0,T]))^3 \times (C_t^{\frac{k+\beta'}{2}}([0,T]))^3: \\
& \text{$\underline{\vec{\Theta}}, \underline{\vec{L}}$ and $\underline{\vec{\alpha}}$ satisfy conditions {\em (i)}, {\em (ii)} and {\em (iii)}}\}, 
\end{aligned}
\end{equation}
where 
\begin{itemize}
\item[{\em (i)}] $\underline{\vec{\Theta}}(\cdot,0) = \vec{\Theta}_0, \; \; \underline{\vec{L}}(0) = \vec{L}_0, \; \; \underline{\vec{\alpha}}(0) = \vec{\alpha}_0$ 
\item[{\em (ii)}] $\displaystyle \max_{j=1,2,3} \|\underline{\Theta}^\J \|_{C_{x,t}^{k+\beta}} + \max_{j=1,2,3} \|\underline{L}^\J\|_{C_t^{\frac{k+\beta'}{2}}} + \max_{j=1,2,3}\|\underline{\alpha}^\J\|_{C_t^{\frac{k+\beta'}{2}}} \le M$ and $\displaystyle \min_{j=1,2,3, \; \; t \in [0,T]} \underline{L}^\J \ge \Cr{c-initial-min}/2$ 
\item[{\em (iii)}] $(\underline{\vec{\Theta}}, \underline{\vec{L}}, \underline{\vec{\alpha}})$ satisfy the compatibility condition of order $k$ for \eqref{system-theta} at the initial time with $(\vec{\Theta}_0, \vec{L}_0, \vec{\alpha}_0)$, 
\end{itemize}
is non-empty for any $T>0$ and $M>\Cr{c-short2}$. 
Furthermore, for any $(\underline{\vec{\Theta}}, \underline{\vec{L}}, \underline{\vec{\alpha}}) \in X_T^M$, the initial function $\vec{\Theta}_0$ satisfies the compatibility condition of order $k$ for \eqref{linear-theta} with 
\begin{equation}\label{map-Fb}
\begin{aligned}
&F^\J = F^\J (x,t; \vec{\underline{\Theta}}, \vec{\underline{L}}, \vec{\underline{\alpha}}) = \left(\frac{\sigma(\Delta^\J \underline{\alpha})}{(\underline{L}^\J)^2} - \frac{\sigma(\Delta^\J \alpha_0)}{(L^\J_0)^2}\right) \px^2 \underline{\Theta}^\J + f^\J(\vec{\underline{\Theta}}, \vec{\underline{L}}, \vec{\underline{\alpha}}), \\
&b_1 = b_1(t; \vec{\underline{\Theta}}, \vec{\underline{L}}, \vec{\underline{\alpha}}) = \sum_{j=1}^3 \left\{\sigma(\Delta^\J \underline{\alpha}) \cos \underline{\Theta}^\J(1,t) + \left(\sigma(\Delta^\J \alpha_0) \sin \Theta^\J_0 (1)\right) \underline{\Theta}^\J(1,t)\right\}, \\
&b_2 = b_2(t; \vec{\underline{\Theta}}, \vec{\underline{L}}, \vec{\underline{\alpha}}) = \sum_{j=1}^3 \left\{\left(\sigma(\Delta^\J \alpha_0) \cos \Theta^\J_0 (1)\right) \underline{\Theta}^\J(1,t) - \sigma(\Delta^\J \underline{\alpha}) \sin \underline{\Theta}^\J(1,t)\right\}, \\
&b_3 = b_3(t; \vec{\underline{\Theta}}, \vec{\underline{L}}, \vec{\underline{\alpha}}) = \sum_{j=1}^3 \left(\frac{(\sigma(\Delta^\J \alpha_0))^2}{L_0^\J} - \frac{(\sigma(\Delta^\J \underline{\alpha}))^2}{\underline{L}^\J(t)}\right) \px \underline{\Theta}^\J(1,t), 
\end{aligned}
\end{equation}
where $f^\J$ is the function defined by \eqref{functions-system}. 
\end{lem}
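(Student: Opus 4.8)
The plan is to establish the two assertions separately: the non-emptiness of $X_T^M$ for all large $M$ by an explicit construction, and the linear-compatibility claim by a bookkeeping argument resting on the design of the data in \eqref{want-linear}. For the first, I would begin by recording the ``initial jet''. Since $(\vec{\Theta}_0,\vec{L}_0,\vec{\alpha}_0)$ satisfies the compatibility condition of order $k$ for \eqref{system-theta}, differentiating the first and the last two equations of \eqref{system-theta} in $t$ and evaluating at $t=0$ defines recursively, for $i=0,1,\dots,[k/2]$, the values $\Psi^\J_i$, $\ell^\J_i$, $a^\J_i$ that $\pt^i\Theta^\J(\cdot,0)$, $\pt^i L^\J(0)$, $\pt^i\alpha^\J(0)$ are obliged to take. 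Each time-derivative costs two $x$-derivatives through the principal term $\sigma(\Delta^\J\alpha)(L^\J)^{-2}\px^2\Theta^\J$, so $\Psi^\J_i\in C^{k+\beta-2i}_x([0,1])$, and $\Psi^\J_i,\ell^\J_i,a^\J_i$ are explicit algebraic expressions in $\vec{\Theta}_0$ and its $x$-derivatives, $\vec{L}_0$, $1/\vec{L}_0$ and derivatives of $\sigma$ at the $\Delta^\J\alpha_0$ (the denominators $1-c^{(1)}c^{(2)}c^{(3)}$ being non-zero because the compatibility condition contains the Herring condition \eqref{bc-angle}, forcing $\Theta^\JJ_0(1)-\Theta^\J_0(1)\in(0,\pi)$). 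In particular $\|\Psi^\J_i\|_{C^{k+\beta-2i}_x}$, $|\ell^\J_i|$, $|a^\J_i|$ are bounded in terms of $\gamma,\sigma,k,\beta,\Cr{c-initial-max},\Cr{c-initial-min}$ alone.

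To produce a point of $X_T^M$, fix a mollifier $\phi$ and a time cutoff $\rho_\delta(t)=\rho(t/\delta)$ with $\rho\equiv1$ near $0$ and $\rho\equiv0$ on $[1,\infty)$, and put
\begin{gather*}
\underline{L}^\J(t)=L^\J_0+\rho_\delta(t)\sum_{i=1}^{[k/2]}\frac{t^i}{i!}\,\ell^\J_i,\qquad
\underline{\alpha}^\J(t)=\alpha^\J_0+\rho_\delta(t)\sum_{i=1}^{[k/2]}\frac{t^i}{i!}\,a^\J_i,\\
\underline{\Theta}^\J(x,t)=\Theta^\J_0(x)+\rho_\delta(t)\sum_{i=1}^{[k/2]}\frac{t^i}{i!}\bigl(\widetilde{\Psi^\J_i}\ast_x\phi_{\sqrt t}\bigr)(x),
\end{gather*}
where $\widetilde{\Psi^\J_i}$ is an extension of $\Psi^\J_i$ off $[0,1]$ taken even at $x=0$ so that $\px\underline{\Theta}^\J(0,\cdot)\equiv0$. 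The naive choice $\Theta^\J_0+\sum_i\frac{t^i}{i!}\Psi^\J_i$ is inadmissible, since $\pt^r\px^s$ of its $i$-th summand with $r<i$ would demand more $x$-regularity than $\Psi^\J_i$ possesses; mollification at the parabolic scale $\sqrt t$ repairs this, and a routine parabolic-scaling estimate gives $\underline{\Theta}^\J\in C^{k+\beta,\frac{k+\beta}{2}}_{x,t}$ with norm bounded by $\sum_i\|\Psi^\J_i\|_{C^{k+\beta-2i}_x}$ and $\delta$, while $\pt^i\underline{\Theta}^\J(\cdot,0)=\Psi^\J_i$ because only the top term of the Leibniz expansion of $\pt^i$ applied to $\frac{t^i}{i!}$ times a smooth factor survives at $t=0$. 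Since $\rho_\delta\equiv0$ on $[\delta,\infty)$, all three families coincide with their (constant) initial values for $t\ge\delta$, so their norms over $[0,T]$ equal those over $[0,\delta]$ and are bounded uniformly in $T$; choosing $\delta$ small in terms of $\Cr{c-initial-min}$ and the bound above secures $\underline{L}^\J\ge\Cr{c-initial-min}/2$ on $[0,\infty)$. Letting $\Cr{c-short2}$ be the resulting bound, which depends only on $\gamma,\sigma,k,\beta,\Cr{c-initial-max},\Cr{c-initial-min}$, the triple $(\underline{\vec{\Theta}},\underline{\vec{L}},\underline{\vec{\alpha}})$ lies in $X_T^M$ for every $M>\Cr{c-short2}$, since (i) and (ii) are immediate and (iii) holds by construction of the jet.

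For the second assertion, take any $(\underline{\vec{\Theta}},\underline{\vec{L}},\underline{\vec{\alpha}})\in X_T^M$ and the induced $F^\J,b_1,b_2,b_3$ from \eqref{map-Fb}. I would first prove, by induction on $i$, that the time-jet $\pt^i\Theta^\J(\cdot,0)$ forced by the linear equation $\pt\Theta^\J=\sigma(\Delta^\J\alpha_0)(L^\J_0)^{-2}\px^2\Theta^\J+F^\J$ coincides with $\Psi^\J_i$: the frozen-coefficient correction $\bigl(\sigma(\Delta^\J\underline{\alpha})(\underline{L}^\J)^{-2}-\sigma(\Delta^\J\alpha_0)(L^\J_0)^{-2}\bigr)\px^2\underline{\Theta}^\J$ in $F^\J$ vanishes at $t=0$, and---using (iii) to identify $\pt^m\underline{\Theta}^\J(\cdot,0)=\Psi^\J_m$, $\pt^m\underline{L}^\J(0)=\ell^\J_m$, $\pt^m\underline{\alpha}^\J(0)=a^\J_m$ for $m\le[k/2]$---its $t$-derivatives at $0$ precisely reconstitute the $t$-derivatives of the genuine variable coefficient $\sigma(\Delta^\J\alpha)(L^\J)^{-2}$ and of $f^\J$ that enter the recursion defining $\Psi^\J_i$ from \eqref{system-theta}. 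Second, by the design of $b_1,b_2,b_3$ in \eqref{want-linear}, each boundary relation of \eqref{linear-theta} differs from the corresponding boundary relation of \eqref{system-theta} only by a term linear in $\Theta^\J-\underline{\Theta}^\J$ evaluated at $x=1$ (it becomes exactly the relation of \eqref{system-theta} when $\underline{\vec{\Theta}},\underline{\vec{L}},\underline{\vec{\alpha}}$ is specialized to a solution); hence differentiating $i$ times in $t$ at $0$ and inserting $\pt^m\Theta^\J(\cdot,0)=\pt^m\underline{\Theta}^\J(\cdot,0)=\Psi^\J_m$ annihilates that difference and leaves the $i$-th $t$-derivative at $0$ of the boundary relation of \eqref{system-theta}, expressed through $\Psi^\J_m,\ell^\J_m,a^\J_m$, which holds because $\vec{\Theta}_0$ satisfies the compatibility condition of order $k$ for \eqref{system-theta}. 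The Neumann relation $\px\Theta^\J(0,t)=0$ and its $t$-derivatives are handled identically via $\px\Psi^\J_i(0)=0$, again part of that compatibility condition. This is precisely the compatibility condition of order $k$ for \eqref{linear-theta}.

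The step I expect to be the main obstacle is the construction of $\underline{\Theta}^\J$: fitting the prescribed time-jet $(\Psi^\J_0,\dots,\Psi^\J_{[k/2]})$---whose components lose two $x$-derivatives per time-derivative---into the parabolic H\"older class $C^{k+\beta,\frac{k+\beta}{2}}_{x,t}$ with a bound uniform in $T$ genuinely requires the parabolic-scale mollification (or an appeal to a trace/extension theorem of the type underlying \cite{LSU}) rather than a plain Taylor expansion in $t$, and one must be careful that the $x$-extension of the $\Psi^\J_i$ respects the Neumann condition at $x=0$. By contrast, once the algebraic structure of \eqref{want-linear} is exploited, the jet identification and the cancellation bookkeeping of the second assertion are routine, if notationally heavy.
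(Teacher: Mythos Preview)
Your approach is essentially the same as the paper's: for non-emptiness you build the extension by attaching the initial time-jet to $(\vec{\Theta}_0,\vec{L}_0,\vec{\alpha}_0)$ via parabolic-scale mollification in $x$, and for the second claim you observe that the design of $F^\J,b_i$ in \eqref{want-linear} makes the linear and nonlinear compatibility conditions coincide at $t=0$ once (iii) is used. The only cosmetic differences are that the paper writes the construction out only for $k=2$ (stating that general $k$ is analogous) and realizes the mollified extension as a time integral $\int_0^t \rho_{\tilde t^{1/2-\delta}}\!*\!(\cdot)\,d\tilde t$ with a slightly sub-parabolic scale $\tilde t^{1/2-\delta}$ rather than your Taylor-polynomial-times-$\phi_{\sqrt t}$ ansatz; the $\delta$-slack makes the H\"older bound come out with a positive power of $t$ and avoids the delicate Leibniz cancellation at $t=0$ that you flag as the main obstacle.
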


\begin{proof}
We construct a typical element of $X_T^M$ when $k=2$ since a similar argument works even if $k>2$. 
Let expand $\Theta^\J_0$ on $\mathbb{R}$ so that $\Theta^\J_0 \in C_x^{2+\beta}(\mathbb{R})$ in the following argument and denote $\rho_\varepsilon(x) = \rho(x/\varepsilon)/\varepsilon$ by the mollifier with positive parameter $\varepsilon>0$. 
Fix sufficiently small $\delta$. 
Letting 
\begin{align*} 
\underline{\Theta}^\J(x,t) := &\; \Theta^\J_0(x) + \int_0^t \rho_{\tilde{t}^{\frac{1}{2}-\delta}} * \left(\frac{\sigma( \Delta^\J \alpha_0)}{(L^\J_0)^2} \partial_x^2 \Theta^\J_0 + f^\J(\vec{\Theta}_0, \vec{L}_0, \vec{\alpha}_0)\right)(x) \; d\tilde{t} \\
\underline{L}^\J (t) :=& \; L^\J_0 + \left(-\frac{\sigma(\Delta^\J \alpha_0)}{L^\J_0} \int_0^1(\px \Theta^\J_0)^2 \; dx + g^\J(\vec{\Theta}_0, \vec{\alpha}_0)\right) t, \\
\underline{\alpha}^\J(t) :=&\; \alpha^\J_0 - \gamma \left(\pa \sigma(\Delta^\JJ \alpha_0) L^\JJ_0 - \pa \sigma(\Delta^\J\alpha_0) L^\J_0\right)t
\end{align*}
for sufficiently small $t>0$, we then see that $(\underline{\vec{\Theta}}, \underline{\vec{L}}, \underline{\vec{\alpha}})$ satisfies the compatibility condition at the initial time \eqref{compatibility-initial}, and the condition (i) in Lemma \ref{lem:compatibility} is obviously satisfied. 
Note that, due to the choice of the parameter of the mollifier, we can see $\underline{\Theta}^\J \in C_{x,t}^{2+\beta, \frac{2+\beta}{2}}$ in short time, for example, 
\begin{align*} 
\|\partial_x^2 \underline{\Theta}^\J(\cdot,t)\|_{C_x^{\beta}} \le&\; \|\Theta^\J_0\|_{C_x^{2+\beta}} \\
&\;+ \int_0^t \frac{1}{\tilde{t}^{1-2\delta}} \int_{\mathbb{R}} |\partial_x^2\rho(x)| \; dxd\tilde{t} \left\|\frac{\sigma( \Delta^\J \alpha_0)}{(L^\J_0)^2} \partial_x^2 \Theta^\J_0 + f^\J(\vec{\Theta}_0, \vec{L}_0, \vec{\alpha}_0) \right\|_{C_x^{\beta}} \\
\le &\; \|\Theta^\J_0\|_{C_x^{2+\beta}} + C \left\|\frac{\sigma( \Delta^\J \alpha_0)}{(L^\J_0)^2} \partial_x^2 \Theta^\J_0 + f^\J(\vec{\Theta}_0, \vec{L}_0, \vec{\alpha}_0) \right\|_{C_x^{\beta}} t^{2\delta}
\end{align*}
for $t>0$ small and some constant $C$ depending only on $\delta$ and $\rho$. 
Therefore, choosing $\Cr{c-short2}$ large and adjusting $(\underline{\vec{\Theta}}, \underline{\vec{L}}, \underline{\vec{\alpha}})$ away from $t=0$ to satisfy the condition (ii), we see that $X_T^M$ is non-empty for any $M>\Cr{c-short2}$ and $T>0$. 

The remained statement can be seen easily from the definition of the two kind of compatibility conditions and the introduction of \eqref{linear-theta}. 
Indeed, by means of (i) and (iii), we can see that 
\begin{align*} 
&F^\J(1,0; \underline{\vec{\Theta}}, \underline{\vec{L}}, \underline{\vec{\alpha}}) =\frac{\px \Theta^\J_0(1)}{L^\J_0} g^\J(\vec{\Theta}_0, \vec{\alpha}_0) \\
&\pt b_1(0; \underline{\vec{\Theta}}, \underline{\vec{L}}, \underline{\vec{\alpha}})= -\gamma \sum_{j=1}^3 \pa \sigma(\Delta^\J \alpha_0) (\pa \sigma(\Delta^\JJ \alpha_0) L^\JJ_0 - \pa \sigma(\Delta^\J \alpha_0) L^\J_0) \cos \Theta_0^\J(1),
\end{align*}
where $g^\J$ is the function defined by \eqref{functions-system}. 
Therefore, \eqref{com-2-system}, which is one of the compatibility condition of order $2$ for \eqref{linear-theta}, is equivalent to 
\begin{align*}
\sum_{j=1}^3 \Bigg\{& \left(\sigma(\Delta^\J \alpha_0) \sigma \Theta^\J_0\right) \left(\frac{\sigma(\Delta^\J \alpha_0)}{(L^\J_0)^2} \px^2 \Theta^\J_0(1) + \frac{\px \Theta^\J_0(1)}{L^\J_0} g^\J(\vec{\Theta}_0, \vec{\alpha}_0)\right) \\
&\;+ \gamma \pa \sigma(\Delta^\J \alpha_0) \left(\pa \sigma(\Delta^\JJ \alpha_0) L^\JJ_0 - \pa \sigma(\Delta^\J \alpha_0) L^\J_0\right) \cos \Theta_0^\J(1)\Bigg\} = 0 
\end{align*}
if $F^\J$ and $b_j$ are chosen as in \eqref{map-Fb}, which coincides with one of the compatibility condition of order $2$ for \eqref{system-theta} derived from time derivative of the third equality in \eqref{system-theta}. 
Similarly we can see that $\vec{\Theta}_0$ satisfies the compatibility condition of order $k$ for \eqref{linear-theta} with \eqref{map-Fb}. 
\end{proof}

We next construct a contraction map on $X_T^M$ to obtain a solution to \eqref{system-theta}. 

\begin{definition}\label{def-con-map}
Let $k \in \mathbb{N}$ with $k \ge 2$ and $\beta, \beta' \in (0,1)$ with $0 < \beta < \beta' < 1$. 
Assume $\vec{\Theta}_0 \in (C_x^{k+\beta}([0,1]))^3, \vec{L}_0 \in \{(0,\infty)\}^3$ and $\vec{\alpha}_0 \in \mathbb{R}^3$ satisfy the compatibility condition of order $k$ for \eqref{system-theta} and $\Theta_0^\JJ(1) - \Theta_0^\J(1) \in (0,\pi)$ for $j=1,2,3$. 
Let $\Cr{c-short2}$ be the constant in Lemma \ref{lem:compatibility}. 
Define $X_T$ and $X_T^M$ by 
\[ \begin{aligned}
X_T:= \{(\underline{\vec{\Theta}}, \underline{\vec{L}}, \underline{\vec{\alpha}}) &\in (C_{x,t}^{k+\beta, \frac{k+\beta}{2}}([0,1]\times[0,T]))^3 \times (C_t^{\frac{k+\beta'}{2}}([0,T]))^3 \times (C_t^{\frac{k+\beta'}{2}}([0,T]))^3: \\
& \underline{\vec{\Theta}}(\cdot,0) = \vec{\Theta}_0, \; \; \underline{\vec{L}}(0) = \vec{L}_0, \; \; \underline{\vec{\alpha}}(0) = \vec{\alpha}_0 \} 
\end{aligned} \]
and \eqref{def-sets}, respectively, for any $T>0$ and $M > \Cr{c-short2}$. 
Define also a map $\Phi: X_T^M \to X_T$ by, for $(\vec{\underline{\Theta}}, \vec{\underline{L}}, \vec{\underline{\alpha}}) \in X_T^M$, 
\[ \Phi(x,t;\vec{\underline{\Theta}}, \vec{\underline{L}}, \vec{\underline{\alpha}}):= \left(\vec{\Theta}(x,t; \vec{\underline{\Theta}}, \vec{\underline{L}}, \vec{\underline{\alpha}}), \vec{L}(t; \vec{\underline{\Theta}}, \vec{\underline{L}}, \vec{\underline{\alpha}}), \vec{\alpha}(t; \vec{\underline{\Theta}}, \vec{\underline{L}}, \vec{\underline{\alpha}})\right), \]
where $\vec{\Theta}$ is the solution to \eqref{linear-theta} with \eqref{map-Fb} obtained by Proposition \ref{prop:exist-linear}, and $\vec{L}, \vec{\alpha}$ are defined by 
\begin{align*}
&L^\J(t; \vec{\underline{\Theta}}, \vec{\underline{L}}, \vec{\underline{\alpha}}) := L^\J_0 + \int_0^t \left( g^\J(\vec{\underline{\Theta}}, \vec{\underline{\alpha}})(\tilde{t})- \frac{\sigma(\Delta^\J \underline{\alpha}(\tilde{t}))}{\underline{L}^\J(\tilde{t})} \int_0^1 (\partial_x \underline{\Theta}^\J(x,\tilde{t}))^2 \; dx \right) \; d\tilde{t} \\
&\alpha^\J(t; \vec{\underline{\Theta}}, \vec{\underline{L}}, \vec{\underline{\alpha}}) = \alpha^\J_0 - \gamma \int_0^t \left( \pa \sigma (\Delta^\JJ \underline{\alpha}(\tilde{t})) \underline{L}^\JJ(\tilde{t}) - \pa \sigma (\Delta^\J \underline{\alpha}(\tilde{t})) \underline{L}^\J(\tilde{t}) \right) \; d\tilde{t}. 
\end{align*}
Here, $g^\J$ is the function defined by \eqref{functions-system}. 
\end{definition}

We will show that $\Phi$ is contraction map on $X_T^M$ for sufficiently small $T>0$ and sufficiently large $M>0$. 
We then obtain a unique fixed point of $\Phi$ in $X_T^M$ and it is easily seen that the fixed point is a solution to \eqref{system-theta} due to the definition of $\Phi$. 

\begin{lem}
There exists $T>0$ and $M>0$ such that the map $\Phi$ defined in Definition \ref{def-con-map} satisfies $\Phi(X_T^M) \subset X_T^M$ and is contraction. 
\end{lem}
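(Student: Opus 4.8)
The plan is to apply the Banach fixed point theorem, so the task is to produce a fixed $M>\Cr{c-short2}$ and then a $T\in(0,1]$ (depending on that $M$, hence ultimately only on the initial data) for which $\Phi(X_T^M)\subset X_T^M$ and $\Phi$ is a contraction on $X_T^M$ in the norm of $(C_{x,t}^{k+\beta,\frac{k+\beta}{2}})^3\times(C_t^{\frac{k+\beta'}{2}})^3\times(C_t^{\frac{k+\beta'}{2}})^3$. Two structural facts drive the argument. First, the parabolic operator in \eqref{linear-theta} is \emph{frozen} at the initial data: its coefficients $\sigma(\Delta^\J\alpha_0)/(L^\J_0)^2$ do not depend on the argument of $\Phi$, so the constant $\Cr{c-short}$ in the Schauder estimate \eqref{maximum-reg} of Proposition \ref{prop:exist-linear} is one and the same for every element of $X_T^M$ (and may be taken uniform for $T\in(0,1]$). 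Second, every quantity formed as the difference of two elements of $X_T^M$, or as an element of $X_T^M$ minus a fixed reference element, vanishes at $t=0$ and is bounded in its natural top-order norm by $2M$; hence, by parabolic interpolation against that norm, it is $O\!\big(C(M)\,T^{\delta}\big)$ with some $\delta>0$ in every weaker norm in which it actually enters the nonlinearities $F^\J,b_i,f^\J,g^\J$. This is exactly where the strict inequality $\beta<\beta'$ and the extra time regularity $C_t^{\frac{k+\beta'}{2}}$ demanded of $\underline{\vec L},\underline{\vec\alpha}$ in \eqref{def-sets} are used.

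\emph{$\Phi$ is well defined into $X_T$.} Fix $u=(\underline{\vec\Theta},\underline{\vec L},\underline{\vec\alpha})\in X_T^M$. By condition (ii) one has $\underline L^\J\ge\Cr{c-initial-min}/2$, so the $F^\J,b_i$ of \eqref{map-Fb} have the regularity required in Proposition \ref{prop:exist-linear}; the complementing condition holds by the hypothesis $\Theta_0^\JJ(1)-\Theta_0^\J(1)\in(0,\pi)$; and, by the last assertion of Lemma \ref{lem:compatibility}, $\vec\Theta_0$ satisfies the order-$k$ compatibility condition for \eqref{linear-theta} with these $F^\J,b_i$ (here condition (iii) on $u$ is used). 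Hence Proposition \ref{prop:exist-linear} produces a unique $\vec\Theta\in(C_{x,t}^{k+\beta,\frac{k+\beta}{2}})^3$ with $\vec\Theta(\cdot,0)=\vec\Theta_0$, and the $\vec L,\vec\alpha$ of Definition \ref{def-con-map} are time primitives of functions assembled from $u$ and $\sigma$, hence of class $(C_t^{\frac{k+\beta'}{2}})^3$ on $[0,T]$ with $\vec L(0)=\vec L_0$, $\vec\alpha(0)=\vec\alpha_0$. Thus $\Phi(u)\in X_T$.

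\emph{Self-mapping.} Let $u_*=(\underline{\vec\Theta}_*,\underline{\vec L}_*,\underline{\vec\alpha}_*)\in X_T^M$ be the reference element constructed in the proof of Lemma \ref{lem:compatibility}; its norm is $\le\Cr{c-short2}$, a constant depending only on the initial data, so the $F^\J_*:=F^\J(\cdot;u_*)$, $b_{i,*}:=b_i(\cdot;u_*)$ and the integrands of Definition \ref{def-con-map} at $u_*$ are bounded by constants depending only on the initial data, and \eqref{maximum-reg} gives $\sum_j\|\Theta_*^\J\|_{C_{x,t}^{k+\beta,\frac{k+\beta}{2}}}\le N_0$ and $\sum_j(\|L_*^\J\|_{C_t^{\frac{k+\beta'}{2}}}+\|\alpha_*^\J\|_{C_t^{\frac{k+\beta'}{2}}})\le N_1$ with $N_0,N_1$ depending only on the initial data, where here $L_*^\J,\alpha_*^\J$ denote the $\vec L,\vec\alpha$-components of $\Phi(u_*)$. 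Now fix $M:=\Cr{c-short2}+N_0+N_1+\Cr{c-initial-max}+1$. For a general $u\in X_T^M$ put $w^\J:=\Theta^\J-\Theta_*^\J$; then $w^\J$ solves \eqref{linear-theta} with right-hand sides $F^\J(\cdot;u)-F^\J_*$, $b_i(\cdot;u)-b_{i,*}$ and with vanishing initial traces, because $u$ and $u_*$ agree at $t=0$ and both satisfy the compatibility condition (iii) for \eqref{system-theta}. Since $u\mapsto(F^\J,b_i)$ is locally Lipschitz on $X_T^M$ (being built from $\sigma\in C^\infty$, polynomials, and denominators bounded below by $\Cr{c-initial-min}/2$ and, via $1-c^{(1)}c^{(2)}c^{(3)}$, by the angle hypothesis), and since $u-u_*$ vanishes at $t=0$, the second structural fact gives $\sum_j\|F^\J(\cdot;u)-F^\J_*\|_{C_{x,t}^{(k-2)+\beta,\frac{(k-2)+\beta}{2}}}+\sum_i\|b_i(\cdot;u)-b_{i,*}\|\le C(M)T^{\delta}$, whence $\sum_j\|\Theta^\J\|\le N_0+\Cr{c-short}C(M)T^{\delta}$ by \eqref{maximum-reg}; similarly the explicit formulas of Definition \ref{def-con-map} yield $\sum_j(\|L^\J\|_{C_t^{\frac{k+\beta'}{2}}}+\|\alpha^\J\|_{C_t^{\frac{k+\beta'}{2}}})\le N_1+C(M)T^{\delta}$ and $\min_{[0,T]}L^\J\ge L_0^\J-C(M)T\ge\Cr{c-initial-min}/2$. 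Choosing $T$ small enough, all three bounds in condition (ii) are $\le M$; together with $\Phi(u)\in X_T$ and the fact that $\Phi(u)$ again satisfies (iii) — which follows from differentiating the first and last two lines of \eqref{system-theta} at $t=0$ and using $u|_{t=0}=(\vec\Theta_0,\vec L_0,\vec\alpha_0)$ — this gives $\Phi(u)\in X_T^M$.

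\emph{Contraction.} For $u_1,u_2\in X_T^M$ the difference of the $\vec\Theta$-components of $\Phi(u_1)$ and $\Phi(u_2)$ solves \eqref{linear-theta} with the \emph{same} frozen operator, right-hand sides $F^\J(\cdot;u_1)-F^\J(\cdot;u_2)$, $b_i(\cdot;u_1)-b_i(\cdot;u_2)$ and vanishing initial traces (both agree at $t=0$ and satisfy (iii)). As above, local Lipschitz continuity of $u\mapsto(F^\J,b_i)$ together with the vanishing of $u_1-u_2$ at $t=0$ forces a factor $T^{\delta}$, so \eqref{maximum-reg} gives $\sum_j\|\Theta_1^\J-\Theta_2^\J\|\le\Cr{c-short}C(M)T^{\delta}\|u_1-u_2\|$; the integral formulas in Definition \ref{def-con-map} give the same type of bound for the $\vec L$- and $\vec\alpha$-components directly. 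Shrinking $T$ so that $\Cr{c-short}C(M)T^{\delta}\le\tfrac12$ (and the analogous constants for the $\vec L,\vec\alpha$ parts are $\le\tfrac12$) makes $\Phi$ a contraction on $X_T^M$, which completes the proof. The only delicate point — used twice above — is the bookkeeping that every difference quantity genuinely gains a positive power of $T$; this rests on the frozen coefficients (uniform $\Cr{c-short}$), on the vanishing of all relevant initial traces, on parabolic interpolation, and on the design choice $\beta<\beta'$ with the higher time regularity $C_t^{\frac{k+\beta'}{2}}$ for $\underline{\vec L},\underline{\vec\alpha}$.
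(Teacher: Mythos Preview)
Your argument is correct and follows the same overall strategy as the paper: exploit the frozen Schauder constant $\Cr{c-short}$, verify condition (iii) is preserved by $\Phi$, and pick $M$ from an initial-data-only bound before shrinking $T$. Your organizational choice of anchoring at the reference element $u_*\in X_T^M$ from Lemma~\ref{lem:compatibility} and estimating $\Phi(u)-\Phi(u_*)$ is equivalent to the paper's direct decomposition $\|F^\J\|+\|b_i\|\le C(M)T^\gamma+C'$ with $C'$ independent of $M$; both isolate the same initial-data-only constant that fixes $M$.

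One point of your justification deserves sharpening. Your ``second structural fact'' says the gain $T^\delta$ comes from interpolation into \emph{weaker} norms in which the differences enter the nonlinearities. This is accurate for $F^\J$, $b_3$, and the integrands defining $\vec L,\vec\alpha$ (where the extra $\beta'>\beta$ regularity of $\underline{\vec L},\underline{\vec\alpha}$ and lower-order dependence on $\underline{\vec\Theta}$ supply the gap). It is \emph{not} literally accurate for $b_1,b_2$: these require the full $C_t^{(k+\beta)/2}$ norm of the trace $\underline\Theta^\J(1,\cdot)$, which is the top time regularity available, so no interpolation room exists. The $T^\delta$ gain there comes instead from the \emph{structure of the linearization}: in $b_1(u)-b_1(u_*)$ the dangerous linear term $(\sigma(\Delta^\J\alpha_0)\sin\Theta_0^\J(1))(\underline\Theta^\J-\underline\Theta_*^\J)(1,t)$ is cancelled to leading order by the Taylor expansion of $\sigma(\Delta^\J\underline\alpha)\cos\underline\Theta^\J(1,t)$ around the initial data, leaving a product of $(\underline\Theta^\J-\underline\Theta_*^\J)(1,t)$ with a coefficient that itself vanishes at $t=0$ and is bounded in $C_t^{1+\beta/2}$; that product then does pick up $T^\delta$ by the mechanism you describe. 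This cancellation is precisely why \eqref{linear-theta} was linearized at the initial data. The paper glosses over the same point (``by repeating the same argument''), so this is a refinement of presentation rather than a flaw in the approach.
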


\begin{proof}
For any $T>0$, $M> \Cr{c-short2}$ and $(\vec{\underline{\Theta}}, \vec{\underline{L}}, \vec{\underline{\alpha}}) \in X_T^M$, where $\Cr{c-short2}$ is the constant in Lemma \ref{lem:compatibility}, the function $\Phi(\vec{\underline{\Theta}}, \vec{\underline{L}}, \vec{\underline{\alpha}})$ obviously satisfies the condition (iii) in Lemma \ref{lem:compatibility} since the first and last two equalities in \eqref{system-theta} replaced $\Theta^\J, L^\J$ and $\alpha^\J$ on the right hand side by $\underline{\Theta}^\J, \underline{L}^\J$ and $\underline{\alpha}^\J$, respectively, and $\underline{\Theta}^\J(\cdot,0) = \Theta^\J_0, \underline{L}^\J(0) = L^\J_0$ and $\underline{\alpha}^\J(0) = \alpha^\J_0$. 
We thus prove that $\Phi(\vec{\underline{\Theta}}, \vec{\underline{L}}, \vec{\underline{\alpha}})$ satisfies the condition (ii) in Lemma \ref{lem:compatibility} for any $(\vec{\underline{\Theta}}, \vec{\underline{L}}, \vec{\underline{\alpha}}) \in X_T^M$ if $T>0$ is sufficiently small and $M> \Cr{c-short2}$ is sufficiently large to prove that $\Phi(X_T^M) \subset X_T^M$ and $\Phi$ is contraction. 

To show $\Phi (X_T^M) \subset X_T ^M$, we first prove that there exist $M>\Cr{c-short2}$ and $T>0$ such that
\[ \max_{j=1,2,3} \|\Theta^\J \|_{C_{x,t}^{k+\beta, \frac{k+\beta}{2}} ([0,1]\times [0,T])} \leq \frac{M}{3}. \]
Repeating the same argument as in \eqref{maximum-reg},
we need only prove
\begin{equation}
\Cr{c-short} \Bigg(\sum_{j=1}^3 \Big(\|F^\J\|_{C^{(k-2)+\beta, \frac{(k-2)+\beta}{2}}_{x,t}} + \|\Theta_0^\J\|_{C^{k+\beta}_x}\Big) 
+ \|b_1\|_{C^{\frac{k+\beta}{2}}_t} + \|b_2\|_{C^{\frac{k+\beta}{2}}_t} + \|b_3\|_{C^{\frac{(k-1)+\beta}{2}}_t} \Bigg)
\le \frac{M}{3}.
\label{eq:2.20}
\end{equation}
Now we prove \eqref{eq:2.20} for $k=2$. We compute that
\begin{equation*}
\begin{split}
&\left\| \left(\frac{\sigma(\Delta^\J \underline{\alpha})}{(\underline{L}^\J)^2} - \frac{\sigma(\Delta^\J \alpha_0)}{(L^\J_0)^2}\right) \px^2 \underline{\Theta}^\J 
\right\|_{C^{\beta,\frac{\beta}{2}} _{x,t} ([0,1]\times [0,T])} \\
\leq & \,
M \| \sigma(\Delta^\J \underline{\alpha}) \|_{C^{\frac{\beta}{2}} _{t} ([0,T])}
\left \| \frac{1}{(\underline{L} ^\J)^2} -\frac{1}{(L_0 ^\J)^2}\right \|_{C^{\frac{\beta}{2}} _{t} ([0,T])} 
+ \frac{M}{\Cr{c-initial-min} ^2} 
\| \sigma(\Delta^\J \underline{\alpha}) - \sigma(\Delta^\J \alpha _0) \|_{C^{\frac{\beta}{2}} _{t} ([0,T])}\\
\leq & \,
M C \Cr{c-initial-max}
\left \| \frac{1}{(\underline{L} ^\J)^2} -\frac{1}{(L_0 ^\J)^2}\right \|_{C^{\frac{\beta}{2}} _{t} ([0,T])} 
+ \frac{M}{\Cr{c-initial-min} ^2} 
\| \sigma(\Delta^\J \underline{\alpha}) - \sigma(\Delta^\J \alpha _0) \|_{C^{\frac{\beta}{2}} _{t} ([0,T])},
\end{split}
\end{equation*}
where $C>0$ is a constant depending only on $\sigma$ and $M$ (if $\sigma (x)\to \infty$ as $|x|\to \infty$, 
$C$ will depend on $M$).
By
\begin{equation*}
\begin{split}
\left | \frac{1}{(\underline{L} ^\J)^2} -\frac{1}{(L_0 ^\J)^2}\right |
\leq 
\frac{4}{\Cr{c-initial-min} ^4} | \underline{L} ^\J + L ^\J _0 | | \underline{L} ^\J - L ^\J _0 | 
\leq
\frac{4 (\Cr{c-initial-max}+M)}{\Cr{c-initial-min} ^4} | \underline{L} ^\J - L ^\J _0 |
\end{split}
\end{equation*}
and
\begin{equation*}
\begin{split}
\left | \frac{1}{(\underline{L} ^\J (s) )^2} - \frac{1}{(\underline{L} ^\J (t) )^2} \right |
\leq 
\frac{8}{\Cr{c-initial-min} ^4} | \underline{L} ^\J (s) + \underline{L} ^\J (t) | 
| \underline{L} ^\J (s) - \underline{L} ^\J (t) | 
\leq
\frac{16 M }{\Cr{c-initial-min} ^4} | \underline{L} ^\J (s) - \underline{L} ^\J (t) |,
\end{split}
\end{equation*}
we have
\begin{equation}\label{eq:2.23}
\begin{split}
& \left \| \frac{1}{(\underline{L} ^\J)^2} -\frac{1}{(L_0 ^\J)^2}\right \|_{C^{\frac{\beta}{2}} _{t} ([0,T])} \\
\leq & \, 
\frac{20 (\Cr{c-initial-max}+M)}{\Cr{c-initial-min} ^4} 
\left\{ \| \underline{L} ^\J - L ^\J _0 \|_{C^0 ([0,T])}
+ \sup _{t,s \in [0,T], t\not=s}\frac{ |\underline{L} ^\J (s) - \underline{L} ^\J (t)| }{|s-t|^{\frac{\beta}{2}}} \right\}\\
\leq & \,
\frac{20 (\Cr{c-initial-max}+M)}{\Cr{c-initial-min} ^4} 
\left\{ \| \underline{L} ^\J \|_{C^{\frac{\beta'}{2}} ([0,T])} T^{\frac{\beta'}{2}}
+ \sup _{t,s \in [0,T], t\not=s}\frac{ |\underline{L} ^\J (s) - \underline{L} ^\J (t)| }{|s-t|^{\frac{\beta'}{2}}} 
|s-t|^{\frac{\beta' -\beta}{2}}\right\} \\
\leq & \,
\frac{20 (\Cr{c-initial-max}+M)}{\Cr{c-initial-min} ^4} \| \underline{L} ^\J \|_{C^{\frac{\beta'}{2}} ([0,T])}
\left\{  T^{\frac{\beta'}{2}}
+  T^{\frac{\beta' -\beta}{2}}\right\}.
\end{split}
\end{equation}
Similarly, there exists $C=C(\Cr{c-initial-max} , \Cr{c-initial-min}, M,\sigma)>0$ such that
\[
\| \sigma(\Delta^\J \underline{\alpha}) - \sigma(\Delta^\J \alpha _0) \|_{C^{\frac{\beta}{2}} _{t} ([0,T])}
\leq C
\left\{  T^{\frac{\beta'}{2}}
+  T^{\frac{\beta' -\beta}{2}}\right\}.
\]
Therefore
\begin{equation*}
\begin{split}
\left\| \left(\frac{\sigma(\Delta^\J \underline{\alpha})}{(\underline{L}^\J)^2} - \frac{\sigma(\Delta^\J \alpha_0)}{(L^\J_0)^2}\right) \px^2 \underline{\Theta}^\J 
\right\|_{C^{\beta,\frac{\beta}{2}} _{x,t} ([0,1]\times [0,T])} 
\leq 
C
\left\{  T^{\frac{\beta'}{2}}
+  T^{\frac{\beta' -\beta}{2}}\right\},
\end{split}
\end{equation*}
where $C>0$ depends only on $\Cr{c-initial-max}$, $\Cr{c-initial-min}$, $M$, and $\sigma$.
By repeating the same argument for 
$f^\J$ and $b_i$, there exist
$C = C (\Cr{c-initial-max}, \Cr{c-initial-min}, M,\sigma)>0$, 
$C'=C' (\Cr{c-initial-max}, \Cr{c-initial-min}, \sigma)>0$,
and $\gamma = \gamma (\beta, \beta')>0$ such that
\[
\sum_{j=1}^3 \|F^\J\|_{C^{\beta, \frac{\beta}{2}}_{x,t}}  
+ \|b_1\|_{C^{\frac{\beta}{2}}_t} + \|b_2\|_{C^{\frac{\beta}{2}}_t} + \|b_3\|_{C^{\frac{\beta}{2}}_t} 
\leq C T^{\gamma} +C'.
\]
Note that $C'$ is independent of $M$. For the readers' convenience, 
we explain why $C'$ is necessary and is independent of $M$. 
For $t\in(0,T]$, in order to estimate the $C^{\beta}$ norm of $\partial _x \Theta ^\J (x,t) $ with respect to $x$,
we have
\begin{equation}\label{eq:2.22}
\begin{split}
& \frac{ | \partial _x \Theta ^\J (x,t) -\partial _x \Theta ^\J (y,t)| }{|x-y|^\beta} \\
\leq & \,  
\frac{ | \partial _x \Theta ^\J (x,0) -\partial _x \Theta ^\J (y,0)| }{|x-y|^\beta}
+
\frac{ | \partial _x \Theta ^\J (x,t) -\partial _x \Theta ^\J (x,0)| }{|x-y|^\beta}
+
\frac{ | \partial _x \Theta ^\J (y,t) -\partial _x \Theta ^\J (y,0)| }{|x-y|^\beta}\\
\leq & \, 
\Cr{c-initial-max} 
+
\frac{ | \partial _x \Theta ^\J (x,t) -\partial _x \Theta ^\J (x,0)| }{|x-y|^\beta}
+
\frac{ | \partial _x \Theta ^\J (y,t) -\partial _x \Theta ^\J (y,0)| }{|x-y|^\beta}
\end{split}
\end{equation}
for any $0\leq x<y\leq 1$. In the case of $y-x > \sqrt{t}$, we have
\[
\frac{ | \partial _x \Theta ^\J (x,t) -\partial _x \Theta ^\J (x,0)| }{|x-y|^\beta}
\leq M t^{\frac{1+\beta}{2} - \frac{\beta}{2}} = M t^{\frac{1}{2}},
\]
where we used
$
\sup _{0\leq t<s\leq T, x \in [0,1]}
\frac{ | \partial _x \Theta ^\J (x,s) -\partial _x \Theta ^\J (x,t)| }{|s-t|^{\frac{1+\beta}{2}}}
\leq M
$.
On the other hand, if $y-x \leq \sqrt{t}$,
\begin{equation*}
\begin{split}
\frac{ | \partial _x \Theta ^\J (x,t) -\partial _x \Theta ^\J (y,t)| }{|x-y|^\beta} 
\leq M |x-y| ^{1-\beta} \leq M t^{\frac{1-\beta}{2}},
\end{split}
\end{equation*}
where we used $\| \partial _{xx} \Theta ^\J \| _{C^0 _{x,t}} \leq M$.
Thus we obtain
\begin{equation*}
\begin{split}
\sup _{x,y \in [0,1], x\not =y}
\frac{ | \partial _x \Theta ^\J (x,t) -\partial _x \Theta ^\J (y,t)| }{|x-y|^\beta} 
\leq M (t^{\frac12} + t^{\frac{1-\beta}{2}}) + \Cr{c-initial-max} .
\end{split}
\end{equation*}
The second term of right hand side corresponds to $C'$ above.
For nonlinear terms, similar estimates hold.
Note that for sufficiently small $T>0$ 
there exists $C>0$ depending only on $M$ such that
$C<1 -c^{(1)}c^{(2)}c^{(3)}\leq 1$. 

Therefore, we obtain \eqref{eq:2.20} with $k=2$ for sufficiently small $T>0$ and sufficiently large $M>0$.
Repeating same argument above, we may obtain \eqref{eq:2.20} and
\[
\max_{j=1,2,3} \|\Theta^\J \|_{C_{x,t}^{k+\beta}} + \max_{j=1,2,3} \|L^\J\|_{C_t^{\frac{k+\beta'}{2}}} + \max_{j=1,2,3}\|\alpha^\J\|_{C_t^{\frac{k+\beta'}{2}}} \le M
\]
for the case of $k \geq 3$.
In addition, there exists $C>0$ depending only on $M$ such that
\[
\sup_{\tilde t \in [0,T)}
\left|
g^\J(\vec{\underline{\Theta}}, \vec{\underline{\alpha}})(\tilde{t})- \frac{\sigma(\Delta^\J \underline{\alpha}(\tilde{t}))}{\underline{L}^\J(\tilde{t})} \int_0^1 (\partial_x \underline{\Theta}^\J(x,\tilde{t}))^2 \; dx 
\right| \leq C.
\]
Therefore $\inf _{t \in [0,T]} L^\J (t) \geq \frac{\Cr{c-initial-min}}{2}$ for sufficiently small $T>0$.
Hence we obtain $\Phi (X_T^M) \subset X_T ^M$.

Finally we show that $\Phi$ is a contraction mapping.
Assume that $(\vec{\underline{\Theta _i}} , \vec{\underline{L_i}} , \vec{\underline{\alpha_i}}) \in X_T ^M$
and 
$
(\vec{\Theta _i} , \vec{L_i} , \vec{\alpha_i})
=
\Phi (\vec{\underline{\Theta _i}} , \vec{\underline{L_i}} , \vec{\underline{\alpha_i}})
$
for $i=1,2$. 
Set $
(\vec{\underline{\Theta }} , \vec{\underline{L}} , \vec{\underline{\alpha}})
:=
(\vec{\underline{\Theta _1}} , \vec{\underline{L_1}} , \vec{\underline{\alpha_1}}) 
-(\vec{\underline{\Theta _2}} , \vec{\underline{L_2}} , \vec{\underline{\alpha_2}})
$.
Then 
$
(\vec{\Theta } , \vec{L} , \vec{\alpha}):=
(\vec{\Theta _1} , \vec{L_1} , \vec{\alpha_1}) - (\vec{\Theta _2} , \vec{L_2} , \vec{\alpha_2})
$
is a solution of \eqref{linear-theta} with
$F^\J = F^\J (\vec{\Theta _1} , \vec{L_1} , \vec{\alpha_1}) - 
F^\J(\vec{\Theta _2} , \vec{L_2} , \vec{\alpha_2})$ and
$b_j = b_j (\vec{\Theta _1} , \vec{L_1} , \vec{\alpha_1}) - 
b_j (\vec{\Theta _2} , \vec{L_2} , \vec{\alpha_2})$
for $j=1,2,3$.
We need only prove that for sufficiently small $T>0$ we have
\begin{equation}\label{eq:2.21}
\begin{split}
\max_{j=1,2,3} \|\Theta^\J \|_{C_{x,t}^{k+\beta}} + \max_{j=1,2,3} \|L^\J\|_{C_t^{\frac{k+\beta'}{2}}} + \max_{j=1,2,3}\|\alpha^\J\|_{C_t^{\frac{k+\beta'}{2}}} 
\le 
\frac{M'}{2},
\end{split}
\end{equation}
where 
\[
M'=\max_{j=1,2,3} \| \underline{\Theta}^\J \|_{C_{x,t}^{k+\beta}} 
+ \max_{j=1,2,3} \|\underline{L}^\J\|_{C_t^{\frac{k+\beta'}{2}}}
+ \max_{j=1,2,3}\|\underline{\alpha}^\J\|_{C_t^{\frac{k+\beta'}{2}}}.
\]
By \eqref{maximum-reg}, we have
\begin{equation*}
\begin{aligned}
\sum_{j=1}^3 \|\Theta^\J\|_{C^{k+\beta, \frac{k+\beta}{2}}_{x,t}} \le \Cr{c-short} \Bigg(\sum_{j=1}^3 
\|F^\J\|_{C^{(k-2)+\beta, \frac{(k-2)+\beta}{2}}_{x,t}} 
+ \|b_1\|_{C^{\frac{k+\beta}{2}}_t} + \|b_2\|_{C^{\frac{k+\beta}{2}}_t} + \|b_3\|_{C^{\frac{(k-1)+\beta}{2}}_t} \Bigg),
\end{aligned}
\end{equation*}
where we used 
$(\vec{\Theta } , \vec{L} , \vec{\alpha}) |_{t=0} 
=\vec{0}$.
By 
$
(\vec{\underline{\Theta }} , \vec{\underline{L}} , \vec{\underline{\alpha}})|_{t=0}
=\vec{0},
$
there exists $\gamma >0$ such that
\begin{equation}\label{eq:2.24}
\sum_{j=1}^3 
\|F^\J\|_{C^{(k-2)+\beta, \frac{(k-2)+\beta}{2}}_{x,t}} 
+ \|b_1\|_{C^{\frac{k+\beta}{2}}_t} + \|b_2\|_{C^{\frac{k+\beta}{2}}_t} + \|b_3\|_{C^{\frac{(k-1)+\beta}{2}}_t}
\leq \Cl[c]{const:lem2.12} M' T^\gamma,
\end{equation}
where $\Cr{const:lem2.12}= \Cr{const:lem2.12} (\Cr{c-initial-max} , \Cr{c-initial-min}, M ,\sigma)>0$.
For example, in the case of $k=2$,
\begin{equation*}
\begin{split}
\|F^\J\|_{C^{\beta, \frac{\beta}{2}}_{x,t}}
= & \,
\| F^\J (\vec{\Theta _1} , \vec{L_1} , \vec{\alpha_1}) - 
F^\J(\vec{\Theta _2} , \vec{L_2} , \vec{\alpha_2})
\|_{C^{\beta, \frac{\beta}{2}}_{x,t}} \\
\leq & \,
\| F^\J (\vec{\Theta _1} , \vec{L_1} , \vec{\alpha_1}) - 
F^\J(\vec{\Theta _2} , \vec{L_1} , \vec{\alpha_1})
\|_{C^{\beta, \frac{\beta}{2}}_{x,t}}
+
\| F^\J (\vec{\Theta _2} , \vec{L_1} , \vec{\alpha_1}) - 
F^\J(\vec{\Theta _2} , \vec{L_2} , \vec{\alpha_1})
\|_{C^{\beta, \frac{\beta}{2}}_{x,t}}\\
& \, +
\| F^\J (\vec{\Theta _2} , \vec{L_2} , \vec{\alpha_1}) - 
F^\J(\vec{\Theta _2} , \vec{L_2} , \vec{\alpha_2})
\|_{C^{\beta, \frac{\beta}{2}}_{x,t}}.
\end{split}
\end{equation*}
Now we calculate 
\begin{equation*}
\begin{split}
\mathcal{F} := \left(\frac{\sigma(\Delta^\J \underline{\alpha_1} )}{(\underline{L_1}^\J )^2} 
- \frac{\sigma(\Delta^\J \alpha_0)}{(L^\J_0)^2}\right) \px^2 \underline{\Theta_1}^\J 
-
\left(\frac{\sigma(\Delta^\J \underline{\alpha_1} )}{(\underline{L_1}^\J )^2} 
- \frac{\sigma(\Delta^\J \alpha_0)}{(L^\J_0)^2}\right) \px^2 \underline{\Theta_2}^\J 
\end{split}
\end{equation*}
contained in $F^\J (\vec{\Theta _1} , \vec{L_1} , \vec{\alpha_1}) - 
F^\J(\vec{\Theta _2} , \vec{L_1} , \vec{\alpha_1})$. By an argument similar to that in 
\eqref{eq:2.23},
we compute
\begin{equation*}
\begin{split}
\|\mathcal{F} \|_{C^{\beta, \frac{\beta}{2}}_{x,t}}
= & \, 
\left \|
\left(\frac{\sigma(\Delta^\J \underline{\alpha_1} )}{(\underline{L_1}^\J )^2} 
- \frac{\sigma(\Delta^\J \alpha_0)}{(L^\J_0)^2}\right) 
(\px^2 \underline{\Theta_1}^\J -
\px^2 \underline{\Theta_2}^\J)
\right \|_{C^{\beta, \frac{\beta}{2}}_{x,t}} \\
\leq & \,
\left \| 
\frac{\sigma(\Delta^\J \underline{\alpha_1} )}{(\underline{L_1}^\J )^2} 
- \frac{\sigma(\Delta^\J \alpha_0)}{(L^\J_0)^2} 
\right \|_{C^{\frac{\beta}{2}}_{t}}
\left \|
\px^2 \underline{\Theta}^\J
\right \|_{C^{\beta, \frac{\beta}{2}}_{x,t}}
\leq C M' \left\{  T^{\frac{\beta'}{2}}
+  T^{\frac{\beta' -\beta}{2}}\right\}.
\end{split}
\end{equation*}
Similarly, we obtain \eqref{eq:2.24}. Then
we choose $T>0$ such that 
$
\Cr{const:lem2.12} T^\gamma
\leq \frac{M'}{10 \Cr{c-short}}
$.
Therefore we have
$
\max_{j=1,2,3} \|\Theta^\J\|_{C^{k+\beta, \frac{k+\beta}{2}}_{x,t}} \le \frac{M'}{10}.
$
Similarly we may obtain \eqref{eq:2.21}.
\end{proof}

The following existence of a solution to the system \eqref{system-theta} immediately follows since we constructed the contraction map associated to the system. 

\begin{prop}\label{prop:short-time-system}
Let $k \in \mathbb{N}$ with $k \ge 2$ and $\beta \in (0,1)$. 
Assume $\vec{\Theta}_0 \in (C_x^{k+\beta}([0,1]))^3, \vec{L}_0 \in \{(0,\infty)\}^3$ and $\vec{\alpha}_0 \in \mathbb{R}^3$ satisfy the compatibility condition of order $k$ for \eqref{system-theta} and $\Theta_0^\JJ(1) - \Theta_0^\J(1) \in (0,\pi)$ for $j=1,2,3$. 
Then, there exist $T>0$ such that the system \eqref{system-theta} have a unique solution $(\vec{\Theta}, \vec{L}, \vec{\alpha}) \in (C_{x,t}^{k+\beta, \frac{k+\beta}{2}}([0,1]\times[0,T]))^3 \times (C_t^{\frac{k+\beta}{2}}([0,T]))^3 \times (C_t^{\frac{k+\beta}{2}}([0,T]))^3$ starting from $(\vec{\Theta}_0, \vec{L}_0, \vec{\alpha}_0)$. 
\end{prop}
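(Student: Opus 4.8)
The plan is to obtain the solution as the unique fixed point of the map $\Phi$ of Definition~\ref{def-con-map} via Banach's contraction principle, and then to verify from the construction of $\Phi$ that the fixed point solves \eqref{system-theta}. Fix $k\ge 2$ and $\beta\in(0,1)$ as in the statement and choose $\beta'\in(\beta,1)$. By the previous lemma there are $T>0$ and $M>\Cr{c-short2}$ with $\Phi(X_T^M)\subset X_T^M$ and $\Phi$ a contraction on $X_T^M$ for the norm appearing on the left-hand side of \eqref{eq:2.21}, namely $\max_j\|\cdot\|_{C^{k+\beta,\frac{k+\beta}{2}}_{x,t}}+\max_j\|\cdot\|_{C^{\frac{k+\beta'}{2}}_t}+\max_j\|\cdot\|_{C^{\frac{k+\beta'}{2}}_t}$. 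I would first record that, with this norm, $X_T^M$ is a non-empty complete metric space: non-emptiness is Lemma~\ref{lem:compatibility}, and completeness holds because $X_T^M$ is a closed bounded subset of the Banach space $(C^{k+\beta,\frac{k+\beta}{2}}_{x,t}([0,1]\times[0,T]))^3\times(C^{\frac{k+\beta'}{2}}_t([0,T]))^3\times(C^{\frac{k+\beta'}{2}}_t([0,T]))^3$. The only mildly delicate point is closedness: the bound in condition (ii) and the constraint $\min_{j,t}\underline{L}^\J\ge\Cr{c-initial-min}/2$ are closed, condition (i) is closed, and the finitely many compatibility relations in (iii) are continuous functions of the $t=0$ traces of $\partial_t^i\underline{\Theta}^\J,\partial_t^i\underline{L}^\J,\partial_t^i\underline{\alpha}^\J$ for $i\le[k/2]$, which converge under convergence in the above norm since $2i\le k<k+\beta$ and $2i\le k<k+\beta'$. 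Banach's theorem then yields a unique fixed point $(\vec{\Theta},\vec{L},\vec{\alpha})\in X_T^M$, and since $\beta'>\beta$ it lies in the regularity class asserted in the statement.

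Next I would check that this fixed point solves \eqref{system-theta}. At the fixed point $\underline{(\cdot)}=(\cdot)$, so the functions $F^\J,b_i$ in \eqref{map-Fb} coincide with those of \eqref{want-linear} evaluated at $(\vec{\Theta},\vec{L},\vec{\alpha})$; substituting them back into \eqref{linear-theta} cancels the linear-correction terms and recovers exactly the $\partial_t\Theta^\J$-equation, the Neumann condition $\partial_x\Theta^\J(0,t)=0$, and the three junction conditions of \eqref{system-theta}, together with $\vec{\Theta}(\cdot,0)=\vec{\Theta}_0$ from Proposition~\ref{prop:exist-linear}. The fixed-point identities for $\vec{L}$ and $\vec{\alpha}$ are the Volterra integral equations of Definition~\ref{def-con-map} with $\underline{(\cdot)}=(\cdot)$; differentiating in $t$ gives the $\partial_t L^\J$- and $\partial_t\alpha^\J$-equations of \eqref{system-theta}, and evaluating at $t=0$ gives $\vec{L}(0)=\vec{L}_0$, $\vec{\alpha}(0)=\vec{\alpha}_0$. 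Hence $(\vec{\Theta},\vec{L},\vec{\alpha})$ is a solution in the stated class.

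Finally, for uniqueness, let $(\vec{\Theta},\vec{L},\vec{\alpha})$ be any solution of \eqref{system-theta} on $[0,T]$ in $(C^{k+\beta,\frac{k+\beta}{2}}_{x,t})^3\times(C^{\frac{k+\beta}{2}}_t)^3\times(C^{\frac{k+\beta}{2}}_t)^3$ with the prescribed initial data. A one-step bootstrap in the last two blocks of \eqref{system-theta}, using the smoothness of $\sigma$ and $\partial_x\Theta^\J(1,\cdot)\in C^{\frac{k-1+\beta}{2}}_t$, shows $\partial_t L^\J,\partial_t\alpha^\J\in C^{\frac{k-1+\beta}{2}}_t$, hence $\vec{L},\vec{\alpha}\in C^{\frac{k+1+\beta}{2}}_t\subset C^{\frac{k+\beta'}{2}}_t$. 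After shrinking $T$ so that $L^\J(t)\ge\Cr{c-initial-min}/2$ and the angle condition $\Theta^\JJ(1,t)-\Theta^\J(1,t)\in(0,\pi)$ persist on $[0,T]$ (both hold at $t=0$ and persist by continuity) and enlarging $M$, the solution lies in $X_T^M$ and, by construction of $\Phi$, is a fixed point of $\Phi$; uniqueness of the fixed point forces it to coincide with the solution built above on that interval, and a standard open–closed continuation argument on $[0,T]$ upgrades this to agreement throughout. Since the analytic heart of the matter, the contraction estimate \eqref{eq:2.21}, has already been secured, I expect no further obstacle beyond this bookkeeping — in particular the closedness of $X_T^M$ in the chosen norm and the self-improving time regularity of $\vec{L}$ and $\vec{\alpha}$, which is what makes the uniqueness argument apply in the slightly larger class stated here.
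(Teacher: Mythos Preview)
Your proposal is correct and follows essentially the same approach as the paper: the paper's proof is the single sentence ``immediately follows since we constructed the contraction map associated to the system,'' i.e., Banach's theorem applied to $\Phi$ on $X_T^M$. You have supplied the details the paper leaves implicit (closedness of $X_T^M$, verification that the fixed point solves \eqref{system-theta}, and the bootstrap $\vec L,\vec\alpha\in C_t^{\frac{k+\beta'}{2}}$ needed to place an arbitrary solution back into $X_T^M$ for uniqueness), which is entirely appropriate.
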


The system \eqref{system-theta} was derived from the geometric flow \eqref{eq-curve}--\eqref{bc-boundary} and, conversely, the geometric flow can be constructed from the solution to the system. 
We thus obtain the existence theory for the geometric flow as in Theorem \ref{thm:exists-flow}. 

\begin{proof}[Proof of Theorem \ref{thm:exists-flow}]
Let $L^\J_0$ be the length of $\Gamma^\J_0$ and parametrize $\Gamma^\J_0$ by $x= s/L^\J_0$, where $s$ is the arc-length of $\Gamma^\J_0$. 
Define the angle function $\Theta^\J_0$ of $\Gamma^\J_0$ as in \eqref{def-theta}. 
We then easily see that $\Theta^\J_0$ is of class $C_x^{(k-1)+\beta}([0,1])$, $\Theta^\JJ_0(1) - \Theta^\J_0(1)$ is positive and less than $\pi$ and $(\vec{\Theta}_0, \vec{L}_0, \vec{\alpha}_0)$ satisfies the compatibility condition of order $k-1$ for \eqref{system-theta}. 
Therefore, for any $\beta' \in (\beta, 1)$ and some $T>0$, the unique solution $(\vec{\Theta}, \vec{L}, \vec{\alpha})\in (C_{x,t}^{(k-1)+\beta, \frac{(k-1)+\beta}{2}}([0,1]\times[0,T]))^3 \times (C_t^{\frac{(k-1)+\beta'}{2}}([0,T]))^3 \times (C_t^{\frac{(k-1)+\beta'}{2}}([0,T]))^3$ to \eqref{system-theta} starting from $(\vec{\Theta}_0, \vec{L}_0, \vec{\alpha}_0)$ can be obtained due to Proposition \ref{prop:short-time-system}. 
Letting 
\[ \xi^\J(x,t) := \left(\int_0^x L^\J(t) \cos \Theta^\J(\tilde{x},t) \; d\tilde{x}, \int_0^x L^\J(t) \sin \Theta^\J(\tilde{x},t) \; d\tilde{x}\right) + P^\J \]
for $x \in [0,1]$ and $t \ge 0$, where $P^\J$ is the fixed point in \eqref{bc-boundary}, we can also easily see that the curves $\Gamma^\J_t := \xi^\J([0,1], t)$ start from $\Gamma^\J_0$. 
We now prove that $\{\Gamma^\J_t\}$ and $\vec{\alpha}$ satisfy the geometric flow equations \eqref{eq-curve}--\eqref{bc-boundary} until the time $T$. 
Since $f^\J$ defined by \eqref{functions-system} can be re-written as, by means of \eqref{system-theta},  
\[ f^\J = \left(\frac{\sigma(\Delta^\J \alpha)}{(L^\J)^2} \int_0^x (\px \Theta^\J(\tilde{x}, t))^2 \; d\tilde{x} + \frac{x}{L^\J} \pt L^\J \right) \px \Theta^\J, \]
we have by \eqref{system-theta} and integration by parts
\begin{align*}
&\; \pt \int_0^x L^\J(t) \cos \Theta^\J(\tilde{x}, t) \; d\tilde{x} \\
=&\; \int_0^x \pt L^\J \cos \Theta^\J - \left\{\frac{\sigma(\Delta^\J \alpha)}{L^\J} \px^2 \Theta + \left(\frac{\sigma(\Delta^\J \alpha)}{L^\J} \int_0^{\tilde{x}} (\px \Theta^\J)^2 \; d\hat{x} + \tilde{x} \pt L^\J \right) \px \Theta^\J\right\} \sin \Theta^\J \; d\tilde{x} \\
=&\; \int_0^x \pt L^\J \px(\tilde{x} \cos \Theta^\J) + \frac{\sigma(\Delta^\J \alpha)}{L^\J} \px \left(\int_0^{\tilde{x}} (\px \Theta^\J)^2 \; d\hat{x} \cos \Theta^\J\right) \; d\tilde{x} - \frac{\sigma(\Delta^\J \alpha)}{L^\J} \px \Theta^\J \sin \Theta^\J \\
=&\; - \frac{\sigma(\Delta^\J \alpha)}{L^\J} \px \Theta^\J \sin \Theta^\J + \left(\frac{\sigma(\Delta^\J \alpha)}{L^\J} \int_0^x (\px \Theta^\J)^2 \; d\hat{x} + x \pt L^\J\right) \cos \Theta^\J. 
\end{align*}
Since $\px \Theta^\J/ L^\J$ is the curvature $\kappa_t^\J$ of $\Gamma^\J_t$ and \eqref{def-theta} holds for $\Gamma^\J_t$, we can obtain similarly 
\[ \pt \xi^\J = \sigma(\Delta^\J \alpha) \kappa^\J_t \nu^\J_t + \lambda_t^\J \tau_t^\J, \]
where 
\begin{equation}\label{const-lambda}
\lambda^\J_t = \left(\frac{\sigma(\Delta^\J \alpha)}{L^\J} \int_0^x (\px \Theta^\J)^2 \; d\hat{x} + x \pt L^\J\right) 
\end{equation}
and $\tau_t^\J, \nu_t^\J$ are the unit tangent and normal vector of $\Gamma^\J_t$ respectively, which show that \eqref{eq-curve} is satisfied. 
The equalities \eqref{eq-alpha}, \eqref{bc-boundary} hold obviously. 
Note that the third and forth equalities in \eqref{system-theta} is equivalent to 
\begin{equation}\label{const-angle1} 
\sum_{j=1}^3 \sigma(\Delta^\J \alpha) \tau_t^\J = \sum_{j=1}^3 \sigma(\Delta^\J \alpha) (\cos\Theta, \sin \Theta) = 0 \quad \text{at} \; \; x=1, 
\end{equation}
and thus \eqref{bc-angle} is satisfied if \eqref{bc-concurrency} holds. 
We now thus prove \eqref{bc-concurrency}. 
By means of the form \eqref{const-lambda} of the tangent velocity and the differential equation of $L^\J$ in \eqref{system-theta}, we can see that $\lambda_t^\J$ satisfies \eqref{eq-lambda-V}.
Hereafter, we use $V^\J_t$ as the normal velocity of $\Gamma_t^\J$ and thus 
\[V^\J_t = \frac{\sigma(\Delta^\J \alpha)}{L^\J(t)} \px \Theta^\J = \sigma(\Delta^\J \alpha) \ps \Theta^\J = \sigma(\Delta^\J \alpha) \kappa_t^\J, \]
and also let $c^\J(t) = \cos(\Theta^\JJ(1,t)-\Theta^\J(1,t))$ and $s^\J(t) = \sin(\Theta^\JJ(1,t) - \Theta^\J(1,t))$ for simplicity. 
We then have by the fifth equality in \eqref{system-theta} and the addition theorem of trigonometric functions
\begin{align} 
&\sum_{j=1}^3 \sigma(\Delta^\J \alpha) V^\J_t = 0, \label{const-sum-V}\\
&\begin{aligned}c^\K(t) =&\; \cos\left((\Theta^\KK(1,t) - \Theta^\KKK(1,t)) + (\Theta^\KKK(1,t) - \Theta^\K(1,t))\right) \\
=&\;  c^\KK c^\KKK - s^\KK s^\KKK \quad \text{for} \; \; k \in \{1,2,3\}. \end{aligned} \label{const-add}
\end{align}
Furthermore, taking inner product of \eqref{const-angle1} and $\nu_t^\K$ at $x=1$, we have 
\begin{equation}\label{const-s1}
\sigma(\Delta^\KK \alpha) s^\K = \sigma(\Delta^\KKK \alpha) s^\KKK \quad \text{for} \; \; k \in \{1,2,3\}. 
\end{equation}
We now prove, for $j \in \{1,2,3\}$ and $t \ge 0$, 
\begin{equation}\label{const-wantprove} 
\pt \xi^\J - \pt \xi^\JJ = (V^\J_t \nu^\J_t + \lambda_t^\J \tau^\J_t) - (V^\JJ_t \nu^\JJ_t + \lambda^\JJ_t \tau^\JJ_t) = 0 \quad \text{at} \; \; x=1. 
\end{equation}
Taking inner product with $\tau^\J_t$, we have $\lambda^\J_t = -s^\J V^\JJ_t + c^\J \lambda_t^\JJ$ which obviously holds since \eqref{eq-lambda-V} is equivalent to the identities for $j \in \{1,2,3\}$. 
We next take inner product of the left hand side of \eqref{const-wantprove} and $\nu^\J_t$ at $x=1$ to obtain 
\begin{align*}
\langle \pt \xi^\J - \pt \xi^\JJ, \nu^\J_t \rangle =&\; V^\J_t - c^\J V^\JJ_t - s^\J \lambda^\JJ_t \\
=&\; \frac{1}{1-c^\J c^\JJ c^\JJJ} \Big( (1-c^\J c^\JJ c^\JJJ +s^\J c^\JJ s^\JJJ)V^\J \\
&\; \qquad + (-c^\J(1-c^\J c^\JJ c^\JJJ) + (s^\J)^2 c^\JJ c^\JJJ)V^\JJ + s^\J s^\JJ V^\JJJ \Big) 
\end{align*}
at $x=1$. 
Here, the following identity, which follows from \eqref{eq-lambda-V}, have been applied: 
\[ \lambda^\JJ_t = \frac{-1}{1-c^\J c^\JJ c^\JJJ}(c^\JJ s^\JJJ V_t^\J + s^\J c^\JJ c^\JJJ V^\JJ_t + s^\JJ V_t^\JJJ) \quad \text{at} \; \; x=1.\]
Since \eqref{const-add} implies 
\begin{align*}
&1-c^\J c^\JJ c^\JJJ +s^\J c^\JJ s^\JJJ = 1 - (c^\JJ)^2 = (s^\JJ)^2, \\
&-c^\J(1-c^\J c^\JJ c^\JJJ) + (s^\J)^2 c^\JJ c^\JJJ = c^\JJ c^\JJJ - c^\J = s^\JJ s^\JJJ, 
\end{align*}
by means of \eqref{const-sum-V} and \eqref{const-s1}, we have
\begin{align*} 
\langle \pt \xi^\J - \pt \xi^\JJ, \nu^\J_t \rangle =&\; \frac{1}{1-c^\J c^\JJ c^\JJJ} ( (s^\JJ)^2 V^\J_t + s^\JJ s^\JJJ V^\JJ_t + s^\J s^\JJ V^\JJJ_t) \\
=&\; \frac{s^\J s^\JJ}{\sigma(\Delta^\JJJ \alpha)(1- c^\J c^\JJ c^\JJJ)} \sum_{k=1}^3 \sigma(\Delta^\K \alpha) V^\K_t = 0 \quad \text{at} \; \; x=1. 
\end{align*}
Therefore, $\pt \xi^\J - \pt \xi^\JJ$ is zero at $x=1$ in each direction $\nu^\J_t$ and $\tau^\J_t$, which implies \eqref{const-wantprove} since $\nu^\J_t$ and $\tau^\J_t$ are linearly independent.
Since initial triod satisfies \eqref{bc-concurrency}, the identity \eqref{const-wantprove} show that the preservation of \eqref{bc-concurrency}. 

Next, assume a family of $\tilde{\Gamma}^\J_t$ and $\tilde{\alpha}^\J$, starting from $\Gamma^\J_0$ and $\alpha^\J_0$ respectively, is another solution to \eqref{eq-curve}--\eqref{bc-boundary} to prove the uniqueness of the geometric flow. 
We then see that the family of the angle function $\tilde{\Theta}^\J$ of $\tilde{\Gamma}^\J_t$, the length $\tilde{L}^\J$ of $\tilde{\Gamma}^\J_t$ and the orientation parameter $\tilde{\alpha}^\J$ satisfies \eqref{system-theta} with the variables $\tilde{x}=\tilde{s}/\tilde{L}^\J$ on $\tilde{\Gamma}^\J_t$, where $\tilde{s}$ is the arc-length of $\tilde{\Gamma}^\J_t$. 
Due to the uniqueness result for \eqref{system-theta}, we obtain $\tilde{\Theta}^\J \equiv \Theta^\J$, $\tilde{L}^\J \equiv L^\J$ and $\tilde{\alpha}^\J \equiv \alpha^\J$, which implies coincidence of two geometric flows. 

If $\Gamma^\J_0$ is smooth and the family of $\{\Gamma_0^\J\}_{j \in \{1,2,3\}}$ and $\{\alpha_0^\J\}_{j \in \{1,2,3\}}$ satisfies the compatibility condition of any order for the geometric flow \eqref{eq-curve}--\eqref{bc-boundary}, for any $k \in \mathbb{N}$ with $k \ge 2$, we can apply Proposition \ref{prop:short-time-system} with $T=T_k > 0$, which depends on $k$, and thus the angle function $\Theta^\J$ of $\Gamma^\J_t$ satisfies 
\begin{equation}\label{reg-k-theta} 
\Theta^\J \in C^{k+\beta, \frac{k+\beta}{2}}_{x,t}([0,1] \times [0, T_k]) \quad \text{for} \; \; j \in \{1,2,3\}, \; \; k \in \mathbb{N} \; \; \text{with} \; \; k \ge 2. 
\end{equation}
Note that, by means of the differential equations of $L^\J$ and $\alpha^\J$ in \eqref{system-theta}, we also see that 
\begin{equation}\label{reg-k-aL}
L^\J \in C^{\frac{k+1/2 + \beta}{2}}_t ([0,T_k]), \quad \alpha^\J \in C^{\frac{k+3/2+\beta}{2}}_t([0,T_k]), \quad \text{for} \; \; j \in \{1,2,3\}, \; \; k \in \mathbb{N} \; \; \text{with} \; \; k \ge 2. 
\end{equation}
Therefore, it is sufficient to prove that there exists $T'>0$ such that $T_k \ge T'$ for any $k$. 
Letting $v^\J := \pt \Theta^\J$, it follows from the regularity results \eqref{reg-k-theta}, \eqref{reg-k-aL} and the differential equations of $\Theta^\J$ in \eqref{system-theta} that $v$ satisfies
\begin{equation}\label{system-v} \begin{cases}
\pt v^\J = \frac{\sigma(\Delta^\J \alpha)}{(L^\J)^2} \px^2 v + \tilde{F}^\J, & (x,t) \in (0,1) \times (0,T_k], \; \; j=1,2,3, \\
\px v^\J(0,t) = 0, & t \in (0,T_k], \; \; j =1,2,3, \\
\sum_{j=1}^3 \left(\sigma(\Delta^\J \alpha(t)) \sin \Theta^\J(1,t)\right)v^\J(1,t) = \tilde{b}_1(t), & t \in (0,T_k], \\
\sum_{j=1}^3 \left(\sigma(\Delta^\J \alpha(t)) \cos \Theta^\J(1,t)\right)v^\J(1,t) = \tilde{b}_2(t), & t \in (0,T_k], \\
\sum_{j=1}^3 \frac{(\sigma(\Delta^\J \alpha(t)))^2}{L^\J(t)} \px v^\J(1,t) = \tilde{b}_3(t), & t \in (0,T_k], 
\end{cases} \end{equation}
where $\tilde{F}^\J, \tilde{b}_j$ are some functions of class 
\begin{align*}
&\tilde{F}^\J \in C^{k-3+\beta, \frac{k-3+\beta}{2}}_{x,t}([0,1] \times [0,T_k]) \quad \text{for} \; \; j \in \{1,2,3\}, \\
&\tilde{b}_j \in C^{\frac{k+\beta}{2}}_t ([0,T_k]) \quad \text{for} \; \; j \in \{1,2\}, \quad \tilde{b}_3 \in C^{\frac{k-1+\beta}{2}}_t([0,T_k]), 
\end{align*}
for any $k\ge 4$. 
While some coefficients in \eqref{system-v} depends on $t$, we can prove that, by a similar argument as in Section \ref{subsec:linear}, the system \eqref{system-v} satisfies the parabolicity and the complementing condition whenever $\Theta^\JJ(1,t) - \Theta^\J(1,t) \in (0,\pi)$, and the condition holds until some time $T_0>0$ since $\Theta^\J$ is continuous and the condition is satisfied at initial time. 
Furthermore, due to the assumptions on the initial datum for the geometric flow, $v(\cdot,0)$ satisfies the compatibility condition of any order for \eqref{system-v}. 
It is thus possible to apply \cite[Theorem 10.1 in Chapter VII]{LSU}, as in Proposition \ref{prop:exist-linear}, to conclude that $v^\J \in C^{k-1+\beta, \frac{k-1+\beta}{2}}_{x,t}([0,1] \times [0,T_k])$ for $j \in \{1,2,3\}$ and  $k \ge 4$. 
Since $\px^2 \Theta^\J$ can be represented by a summation of some terms contains $v^\J$ and $\px \Theta^\J$ as in the first differential equation in \eqref{system-theta}, $\px^2 \Theta^\J$ has same regularity with $v$, which yields $\Theta^\J \in C^{k+1+\beta, \frac{k+1+\beta}{2}}_{x,t}([0,1] \times [0,T_k])$ for $j \in \{1,2,3\}$ and $k \ge 4$ (see also \cite[Theorem 2.2]{MR1184027} for the regularity result of $\Theta^\J$ from it of $v^\J$ and $\px^2 \Theta^\J$).
Therefore, we have $T_{k+1} \ge \min\{T_k, T_0\}$ and further $T_k \ge \min\{T_4, T_0\}$ for any $k \ge 4$.
\end{proof}

\section{Equilibrium}\label{sec:equilibrium}

In this section, we study the equilibrium state for \eqref{eq-curve}--\eqref{bc-boundary} to continue to prove the local exponential stability of the steady state. 
In particular, we show that the steady state is unique up to constant difference for $(\alpha^{(1)}, \alpha^{(2)}, \alpha^{(3)})$ under the assumptions (A1)--(A3). 

\begin{proposition}\label{prop:unique-stationary}
Let $\gamma, \sigma, P^\J (j=1,2,3)$ satisfy the assumptions (A1)--(A3). 
Let also $(\Gamma_\infty^\J, \alpha_\infty^\J)$ be a stationary solution to \eqref{eq-curve}--\eqref{bc-boundary}. 
Then, $\cup_{j=1}^3 \Gamma_\infty^\J$ is the unique Steiner triod connectiong the three boundary points $P^\J$ and $\alpha_\infty^{(1)} = \alpha_\infty^{(2)} = \alpha_\infty^{(3)}$. 
\end{proposition}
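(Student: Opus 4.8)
The plan is to reduce the geometry by stationarity, pin down the misorientations with the convexity assumption (A3) together with the telescoping identity $\Delta^{(1)}\alpha+\Delta^{(2)}\alpha+\Delta^{(3)}\alpha=0$, and finally invoke the classical uniqueness of the Fermat--Steiner point under (A2).

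First I would note that for a stationary solution $V^\J_t\equiv 0$, so \eqref{eq-curve} and the positivity of $\sigma$ from (A1) force $\kappa^\J_t\equiv 0$; hence each $\Gamma^\J_\infty$ is the straight segment joining the fixed point $P^\J$ to the junction $\vec{a}$, of positive length $L^\J=|P^\J-\vec{a}|>0$. Stationarity also gives $\pt\alpha^\J\equiv 0$, so \eqref{eq-alpha} yields that $\pa\sigma(\Delta^\J\alpha)\,L^\J$ is independent of $j$; call this common value $c$. Under (A3) the function $\pa\sigma$ is strictly increasing with $\pa\sigma(0)=0$, hence $\sgn\pa\sigma(a)=\sgn a$. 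Since all $L^\J>0$, the three misorientations $\Delta^\J\alpha$ share the sign of $c$; but their sum vanishes (a telescoping computation using $\alpha^{(0)}=\alpha^{(3)}$), so they all vanish, i.e. $c=0$ and $\Delta^\J\alpha=0$ for every $j$, which is equivalent to $\alpha_\infty^{(1)}=\alpha_\infty^{(2)}=\alpha_\infty^{(3)}$.

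Next, with all misorientations zero we have $\sigma(\Delta^\J\alpha)=\sigma(0)$ for each $j$, so the Herring condition \eqref{bc-angle} collapses to $\sum_{k=1}^3\tau^\K=0$ for unit vectors, which forces the three segments to meet at $\vec{a}$ at equal angles $2\pi/3$; equivalently, $\vec{a}$ subtends angles $120^{\circ}$ between the three endpoints $P^\J$, so $\vec{a}$ is a Fermat (Steiner) point of the triangle with vertices $P^{(1)},P^{(2)},P^{(3)}$. Under (A2), each interior angle of that triangle is below $2\pi/3$, and then the Fermat point is unique and interior; thus $\cup_{j=1}^3\Gamma^\J_\infty$ is the unique Steiner triod connecting $P^{(1)},P^{(2)},P^{(3)}$.

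The only steps that need a little care are the middle one — where one must separately observe that $c=0$ is forced (the alternative $c\neq 0$ being incompatible with the vanishing of $\sum_j\Delta^\J\alpha$) and that positive length of each $\Gamma^\J_\infty$ is part of the data of an admissible triod — and the final appeal to the classical characterization and uniqueness of the Fermat point under (A2); everything else is elementary. I do not expect a serious obstacle here, which is consistent with this being the first preparatory step toward the long-time asymptotics.
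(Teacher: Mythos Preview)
Your proof is correct and follows essentially the same route as the paper: stationarity of \eqref{eq-curve} gives straight segments, stationarity of \eqref{eq-alpha} gives a common value $c=\pa\sigma(\Delta^\J\alpha)L^\J$, the convexity (A3) together with $\sum_j\Delta^\J\alpha=0$ forces all misorientations to vanish, and then \eqref{bc-angle} reduces to $\sum_k\tau^\K=0$ so that (A2) yields the unique Steiner triod. The only cosmetic difference is in the middle step: you argue directly via $\sgn\Delta^\J\alpha=\sgn c$ and the vanishing telescoping sum, while the paper multiplies the equalities \eqref{stationary-alpha} by $\Delta^{(2)}\alpha,\Delta^{(3)}\alpha$ and sums to obtain $\pa\sigma(\Delta^{(1)}\alpha)\Delta^{(1)}\alpha\le 0$, which combined with the opposite inequality from convexity gives $\Delta^{(1)}\alpha=0$; the two arguments use the same ingredients and your sign version is a slightly cleaner packaging.
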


The latter property for $\alpha_\infty^\J$ can be obtain by a quite similar argument in \cite[Proposition 5.1]{MR4283537}. 
We however give the proof of the property for self-contained arguments.

\begin{proof}
Let $L^\J_\infty$ be the length of $\Gamma_\infty^\J$. 
We first prove that $\alpha_\infty^{(1)} = \alpha_\infty^{(2)} = \alpha_\infty^{(3)}$. 
Since $(\Gamma_\infty^\J, \alpha_\infty^\J)$ is a stationary solution, we have 
 \begin{equation}\label{stationary-alpha}
  \pa \sigma (\Delta^{(1)} \alpha_\infty) L_\infty^{(1)}
   =
   \pa \sigma (\Delta^{(2)}\alpha_\infty) L_\infty^{(2)}
   =
   \pa \sigma (\Delta^{(3)}\alpha_\infty) L_\infty^{(3)}, 
 \end{equation}
which is derived from \eqref{eq-alpha}. 
We also note that the assumption (A3) implies 
\begin{equation}\label{convexity-sigma} 
\partial_\alpha \sigma(\alpha) \alpha \ge 0 \quad \text{for} \quad \alpha \in \mathbb{R}. 
\end{equation}
Multiplying $\Delta^{(2)}\alpha_\infty$ and $\Delta^{(3)}\alpha_\infty$ to \eqref{stationary-alpha} and using convexity of $\sigma$ as in \eqref{convexity-sigma}, we find
 \begin{equation}
  \pa \sigma (\Delta^{(1)}\alpha_\infty)\Delta^{(2)} \alpha_\infty L_\infty^{(1)} \geq0,
   \quad
   \text{and}
   \quad 
   \pa \sigma (\Delta^{(1)}\alpha_\infty)\Delta^{(3)} \alpha_\infty L_\infty^{(1)} \geq0, 
 \end{equation}
 which imply $\pa \sigma(\Delta^{(1)}\alpha_\infty) \Delta^{(1)}\alpha_\infty \le 0$ by taking the summation and dividing by $L_\infty^{(1)}$. 
 Applying the convexity of $\sigma$ as in \eqref{convexity-sigma} again to obtain $\pa \sigma(\Delta^{(1)}\alpha_\infty) \Delta^{(1)}\alpha_\infty \ge 0$, we thus obtain $\pa \sigma(\Delta^{(1)}\alpha_\infty) \Delta^{(1)} \alpha_\infty = 0$, which implies $\Delta^{(1)} \alpha_\infty = 0$ due to the assumption (A3). 
 As the similar argument, we obtain $\Delta^{(2)}\alpha_\infty=\Delta^{(3)}\alpha_\infty=0$ and thus $\alpha_\infty^{(1)}=\alpha_\infty^{(2)}=\alpha_\infty^{(3)}$ holds. 
 
 We next prove that the network $\cup_{j=1}^3 \Gamma_\infty^\J$ is the unique Steiner triod connecting the three boundary points $P^\J$. 
 Since $(\Gamma_\infty^\J, \alpha_\infty^\J)$ is a stationary solution, the curvature of $\Gamma_\infty^\J$ is $0$ for any $j \in \{1,2,3\}$ in view of \eqref{eq-curve}. 
 Therefore, $\Gamma_\infty^\J$ is a line segment for $j \in \{1,2,3\}$ and the line segments generate a network connecting $P^{(1)}, P^{(2)}, P^{(3)}$ and a junction point $p_\infty$. 
 Furthermore, due to $\Delta^{(1)}\alpha_\infty = \Delta^{(2)}\alpha_\infty=\Delta^{(3)}\alpha_\infty=0$, the each pair of line segments in $\{\Gamma_\infty^\J\}_{j \in \{1,2,3\}}$ generates $120$ degree contact angle at the junction point $p_\infty$ in view of \eqref{bc-angle}. 
 We thus conclude that the network is the unique Steiner triod connecting the three boundary points $P^\J$ due to the assumption (A2). 
\end{proof}

\begin{remark}\label{rk:equilibrium}
If the assumption (A3) is dropped in Proposition \ref{prop:unique-stationary}, we possibly obtain more than two networks of equilibriums. 
One of the most simple case losing the uniqueness of the network is that $\partial_\alpha \sigma$ has other zero points $a, b$ with $2a + b=0$ besides $\alpha =0$. 
In this case, we can choose the orientation parameters to satisfy $\Delta^{(1)} \alpha_\infty = \Delta^{(2)} \alpha_\infty = a$ and $\Delta^{(3)} \alpha_\infty = b$. 
Therefore, if we adjust well the value of $\sigma$ at $a, b$ and the fixed points $P^\J$, there is a network consists of line segments $\Gamma^\J_\infty$ between a junction point $\vec{a}_\infty$ and $P^\J$ satisfying 
\[ \sum_{j=1}^3 \sigma(\Delta^\J \alpha_\infty) \tau_\infty^\J = 0. \]
Since this triod is obviously different from the Steiner triod if $\sigma(a) \neq \sigma(b)$, we can see the non-uniqueness of the geometric form of the equilibriums. 
We note additionally that an equilibrium with $\partial_\alpha \sigma(\Delta^\J \alpha_\infty) \neq 0$ for $j \in \{1,2,3\}$ also can be constructed if we choose $\sigma$ and $P^\J$ well. 
\end{remark}

\section{Geometric properties and exponential decay of misorientations}\label{sec:orientation}

In this section, we assume that a smooth geometric flow governed by \eqref{eq-curve}--\eqref{bc-boundary} exists until a time $T>0$. 
Let $\xi^\J$ be the parametrization of $\Gamma^\J_t$ defined as in Section \ref{sec:re-formulation} without the restriction \eqref{rest-para}. 
According to the discussions in Section \ref{sec:local-exists}, the uniformly boundedness of $L^\J$ from below and above is required to ensure the uniformly parabolicity of the problem, and we thus first study the boundedness.
The boundedness of $L^\J$ from above can be obtained the energy dissipation. 

\begin{lem}\label{lem:dec-E}
Assume (A1). 
Let 
\[ E(t):= \sum_{j=1}^3 \int_{\Gamma^\J_t} \sigma(\Delta^\J\alpha(t)) \; ds. \]
Then, 
\begin{equation}\label{ene-decrease} 
\dfrac{d}{dt} E(t) = - \sum_{j=1}^3 \left(\int_{\Gamma^\J_t} (V^\J_t)^2 \; ds +  \dfrac{(\partial_t \alpha^\J(t))^2}{\gamma}\right). 
\end{equation}
\end{lem}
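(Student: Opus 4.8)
The statement is a standard energy-dissipation identity, so the plan is to differentiate $E(t)=\sum_{j=1}^3\int_{\Gamma_t^\J}\sigma(\Delta^\J\alpha(t))\,ds$ in time, splitting the contribution into the part coming from the time-derivative of the surface tension $\sigma(\Delta^\J\alpha(t))$ (which produces the $L^\J$ and hence the $\partial_t\alpha^\J$ terms) and the part coming from the motion of the curve (which produces the $-\int(V_t^\J)^2\,ds$ terms after an integration by parts). First I would use $\int_{\Gamma^\J_t}\sigma(\Delta^\J\alpha)\,ds=\sigma(\Delta^\J\alpha(t))\,L^\J(t)$, so that
\[
\frac{d}{dt}E(t)=\sum_{j=1}^3\Bigl(\pa\sigma(\Delta^\J\alpha)\,(\pt\Delta^\J\alpha)\,L^\J+\sigma(\Delta^\J\alpha)\,\frac{d}{dt}L^\J\Bigr).
\]

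The second key step is to compute $\frac{d}{dt}L^\J$ geometrically. From Lemma~\ref{lem:pros-kappa}, $\pt|\px\xi^\J|=(-V_t^\J\kappa_t^\J+\ps\lambda_t^\J)|\px\xi^\J|$, so integrating over $\Gamma^\J_t$ and using $\ds\int_{\Gamma^\J_t}\ps\lambda^\J_t\,ds=\lambda^\J_t\big|_{P^\J}^{\vec a}$ together with the boundary condition $\lambda^\J_t=0$ at $P^\J$ from \eqref{bc-kappa}, one gets $\frac{d}{dt}L^\J=-\int_{\Gamma^\J_t}V^\J_t\kappa^\J_t\,ds+\lambda^\J_t(\vec a)$, and since $V^\J_t=\sigma(\Delta^\J\alpha)\kappa^\J_t$ this is $-\frac1{\sigma(\Delta^\J\alpha)}\int_{\Gamma^\J_t}(V^\J_t)^2\,ds+\lambda^\J_t(\vec a)$. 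Hence the curve-motion contribution to $\frac{d}{dt}E$ is
\[
\sum_{j=1}^3\sigma(\Delta^\J\alpha)\frac{d}{dt}L^\J=-\sum_{j=1}^3\int_{\Gamma^\J_t}(V^\J_t)^2\,ds+\sum_{j=1}^3\sigma(\Delta^\J\alpha)\,\lambda^\J_t(\vec a).
\]
The boundary sum $\sum_j\sigma(\Delta^\J\alpha)\lambda^\J_t(\vec a)$ must vanish: taking the inner product of the Herring condition $\sum_j\sigma(\Delta^\J\alpha)\tau^\J_t=0$ (equation \eqref{bc-angle}) with the common junction velocity $\pt\vec a=V^\J_t\nu^\J_t+\lambda^\J_t\tau^\J_t$ gives $0=\sum_j\sigma(\Delta^\J\alpha)\langle\tau^\J_t,\pt\vec a\rangle=\sum_j\sigma(\Delta^\J\alpha)\lambda^\J_t(\vec a)$ (using orthonormality of $\tau^\J_t,\nu^\J_t$ on each curve). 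This is exactly the tangential analogue of the identity \eqref{bc-sum-kappa} already established in Lemma~\ref{lem:pros-kappa}.

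For the misorientation part, using $\Delta^\J\alpha=\alpha^{(j-1)}-\alpha^\J$ and relabeling the sum, $\sum_{j=1}^3\pa\sigma(\Delta^\J\alpha)(\pt\Delta^\J\alpha)L^\J=\sum_{j=1}^3(\pt\alpha^\J)\bigl(\pa\sigma(\Delta^\JJ\alpha)L^\JJ-\pa\sigma(\Delta^\J\alpha)L^\J\bigr)$; by the evolution law \eqref{eq-alpha} the bracket equals $-\frac1\gamma\pt\alpha^\J$, so this contribution is $-\sum_{j=1}^3\frac{(\pt\alpha^\J)^2}{\gamma}$. Adding the two contributions yields \eqref{ene-decrease}. \textbf{The main obstacle} is purely bookkeeping: making sure the cyclic relabeling in the misorientation sum is done correctly (the convention $\alpha^{(0)}=\alpha^{(3)}$, $\Delta^{(4)}\alpha=\Delta^{(1)}\alpha$) and correctly matching it against the form of \eqref{eq-alpha}, and verifying the junction boundary term cancels via the Herring condition rather than leaving a spurious term; both are routine once the geometric identities from Lemma~\ref{lem:pros-kappa} are invoked.
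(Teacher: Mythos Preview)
Your proposal is correct and follows essentially the same approach as the paper: both differentiate $E(t)$, use the length-evolution identity \eqref{jacobi-deri-t} together with the boundary condition $\lambda^\J_t=0$ at $P^\J$, kill the junction contribution $\sum_j\sigma(\Delta^\J\alpha)\lambda^\J_t(\vec a)$ via the Herring condition \eqref{bc-angle} paired with $\pt\vec a$, and handle the misorientation sum by the same cyclic relabeling against \eqref{eq-alpha}. The only cosmetic difference is that you first factor $\sigma(\Delta^\J\alpha(t))$ out of the integral (since it is constant along each curve) and work with $L^\J(t)$, whereas the paper keeps everything under the integral sign; the computations are otherwise identical.
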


\begin{proof}
Applying \eqref{jacobi-deri-t}, we have by $V_t^\J = \sigma(\Delta^\J \alpha(t)) \partial_s \Theta^\J$
\begin{equation}\label{ene-decrease1} 
\begin{aligned}
\dfrac{d}{dt} E(t) =&\; \sum_{j=1}^3 \dfrac{d}{dt} \int_0^1 \sigma(\Delta^\J \alpha(t)) |\px \xi^\J(x,t)| \; dx \\
=&\; \sum_{j=1}^3 \int_{\Gamma^\J_t} \partial_\alpha \sigma(\Delta^\J\alpha(t)) \partial_t (\Delta^\J\alpha(t)) - (V_t^\J)^2 + \sigma(\Delta^\J \alpha(t))\partial_s \lambda_t^\J  \; ds 
\end{aligned}
\end{equation}
It follows from the definition of $\Delta^\J\alpha = \alpha^{(j-1)} - \alpha^\J$ and \eqref{eq-alpha} that 
\begin{equation}\label{ene-decrease2}
\begin{aligned}
&\; \sum_{j=1}^3 \int_0^1 \partial_\alpha \sigma(\Delta^\J\alpha(t)) \partial_t (\Delta^\J\alpha(t)) \; ds \\
=&\; \sum_{j=1}^3 \left(\partial_\alpha \sigma(\Delta^{(j+1)}\alpha(t))L^{(j+1)}(t) - \partial_\alpha \sigma(\Delta^\J\alpha (t)) L^\J(t) \right) \partial_t \alpha^\J(t) \\
=&\; - \sum_{j=1}^3 \dfrac{(\partial_t \alpha^\J(t))^2}{\gamma}. 
\end{aligned}
\end{equation}
We also have by \eqref{bc-angle}, \eqref{bc-kappa} and $\lambda_t^\J = \langle \pt \vec{a}, \tau^\J_t \rangle$ at the junction point $\vec{a}$ 
\begin{equation}\label{ene-decrease3}
\sum_{j=1}^3 \int_{\Gamma^\J_t} \sigma(\Delta^\J \alpha(t))\partial_s \lambda_t^\J \; ds = \sum_{j=1}^3  \sigma(\Delta^\J\alpha(t)) \langle \pt \vec{a}, \tau^\J_t \rangle\lfloor_{\text{at} \; \vec{a}}  =0. 
\end{equation}
Combining \eqref{ene-decrease1}--\eqref{ene-decrease3}, we obtain \eqref{ene-decrease}. 
\end{proof}

We next give a sufficient condition to obtain the boundedness of $L^\J$ from below.

\begin{lem}\label{lem:min-length}
Assume (A1)--(A3).  
Then, there exist constants $L_{\rm min}>0$ and $\Cl[m]{min-length}>0$ such that 
\[ L^\J(t) \ge L_{\rm min} \quad \text{for} \quad j \in \{1,2,3\}, \; \; t \in[0,T) \]
if $E(0) \le \sigma(0) \Cr{min-length}$. 
\end{lem}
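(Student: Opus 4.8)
The plan is to derive the bound from the energy monotonicity of Lemma~\ref{lem:dec-E} together with two elementary facts: that each curve is at least as long as the chord joining its endpoints, and that (A3) forces $\sigma$ to be bounded below by $\sigma(0)$. No singularity analysis or parabolic estimate is needed.

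First I would record that, by (A3), the function $\sigma$ is strictly convex with its minimum at $0$ (so that \eqref{convexity-sigma} holds), whence $\sigma(\alpha)\ge\sigma(0)$ for every $\alpha\in\mathbb{R}$. Combining this with Lemma~\ref{lem:dec-E}, which gives $E(t)\le E(0)$ on $[0,T)$, and with $E(t)=\sum_{j=1}^3\sigma(\Delta^\J\alpha(t))L^\J(t)$ (valid because $\Delta^\J\alpha(t)$ is constant along $\Gamma^\J_t$), I obtain
\[
\sigma(0)\sum_{j=1}^3 L^\J(t)\ \le\ E(t)\ \le\ E(0),\qquad t\in[0,T).
\]
I would also use the elementary inequality $L^\J(t)\ge|\vec{a}(t)-P^\J|$, which holds because $\Gamma^\J_t$ joins the fixed point $P^\J$ to the junction $\vec{a}(t)$.

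The heart of the argument is the observation that $\vec{a}(t)$ cannot approach any fixed point $P^{(j_0)}$: if it did, the two curves $\Gamma^{(k)}_t$ with $k\neq j_0$ would be forced long (their far endpoints $P^{(k)}$ lying near $|P^{(j_0)}-P^{(k)}|$ away from $\vec{a}(t)$), making $E(t)$ large and contradicting the smallness of $E(0)$. To make this quantitative, I would set $d_P:=\min_{j\in\{1,2,3\}}\sum_{k\neq j}|P^{(j)}-P^{(k)}|>0$, choose $\Cr{min-length}$ to be any number in $(0,d_P)$, and put $L_{\rm min}:=\tfrac12\bigl(d_P-\Cr{min-length}\bigr)>0$. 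Then for every $t\in[0,T)$ and every $j_0\in\{1,2,3\}$, discarding the $j_0$-th term and applying the triangle inequality $L^{(k)}(t)\ge|\vec{a}(t)-P^{(k)}|\ge|P^{(j_0)}-P^{(k)}|-|\vec{a}(t)-P^{(j_0)}|$ for $k\neq j_0$,
\[
\sigma(0)\,\Cr{min-length}\ \ge\ E(0)\ \ge\ E(t)\ \ge\ \sigma(0)\Bigl(d_P-2|\vec{a}(t)-P^{(j_0)}|\Bigr),
\]
hence $L^{(j_0)}(t)\ge|\vec{a}(t)-P^{(j_0)}|\ge L_{\rm min}$, which is the claim.

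I do not foresee a genuine obstacle; the only delicate point is the bookkeeping of the constants, and in particular checking that the hypothesis $E(0)\le\sigma(0)\Cr{min-length}$ is not vacuous. This is where (A2) enters: $\sum_j L^\J(0)$ is bounded below by the length of the Steiner tree of $\{P^\J\}_{j\in\{1,2,3\}}$, which under (A2) is strictly smaller than $d_P$ (the degenerate value attained when the Steiner point sits on a vertex), so $\Cr{min-length}$ can indeed be chosen strictly between these two quantities.
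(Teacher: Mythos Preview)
Your argument is correct and follows the same overall strategy as the paper: combine the energy monotonicity of Lemma~\ref{lem:dec-E}, the lower bound $\sigma\ge\sigma(0)$ from (A3), and the chord inequality $L^{(j)}(t)\ge|\vec a(t)-P^{(j)}|$ to trap the junction $\vec a(t)$ in a region bounded away from the vertices $P^{(j)}$.

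The one difference is in how the final geometric step is executed. The paper introduces the functional $\tilde E(\vec a)=\sum_j|\vec a-P^{(j)}|$, notes that its unique minimizer (the Fermat point) is interior to the triangle by (A2), and then takes $\Cr{min-length}$ close enough to the minimum value so that the sublevel set $S_{\Cr{min-length}}=\{\tilde E\le\Cr{min-length}\}$ sits inside the triangle; $L_{\rm min}$ is then the distance from this sublevel set to the vertex set. Your version bypasses the Fermat point and the sublevel-set containment entirely, deriving the explicit bound $|\vec a(t)-P^{(j_0)}|\ge\tfrac12(d_P-\Cr{min-length})$ straight from the triangle inequality. This is slightly more elementary and yields concrete constants, at the cost of a marginally less geometric picture; both approaches are equally valid, and your remark that (A2) guarantees the Steiner length lies strictly below $d_P$ (so the hypothesis is non-vacuous) matches the role (A2) plays in the paper's choice of $\Cr{min-length}$.
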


\begin{proof}
First, we introduce a functional $\tilde{E}(\vec{a})$ for $\vec{a} \in \mathbb{R}^2$ as 
\[ \tilde{E}(\vec{a}) := |\vec{a} - P^{(1)}| + |\vec{a} - P^{(2)}| + |\vec{a} - P^{(3)}|. \]
Let $\vec{a}_0$ be the Fermat point of the triangle generated by the boundary points $\{P^\J\}_{j \in \{1,2,3\}}$, then $\vec{a}_0$ is the unique minimizer of $\tilde{E}$. 
Since $\tilde{E}(\vec{a}) \to \infty$ as $|\vec{a}| \to \infty$, $\tilde{E}$ is continuous and $\vec{a}_0$ is unique and an interior point in the triangle, we can choose a constant $\Cr{min-length} > \tilde{E}(\vec{a}_0)$ sufficiently close to $\tilde{E}(\vec{a}_0)$ so that 
\[ S_{\Cr{min-length}} := \{\vec{a} \in \mathbb{R}^2: \tilde{E}(\vec{a}) \le \Cr{min-length} \} \]
is contained in the triangle generated by $\{P^\J\}_{j \in \{1,2,3\}}$, and let $L_{\rm min} > 0$ be the distance between $S_{\Cr{min-length}}$ and $\{P^\J\}_{j \in \{1,2,3\}}$. 

Since $\sigma(\alpha) \ge \sigma(0)$ for any $\alpha \in \mathbb{R}$ and the distance between the junction point $\vec{a}(t)$ and the fixed point $P^\J$ is not larger than the length $L^\J(t)$, we can see 
\[ E(t) \ge \sigma(0) \tilde{E}(\vec{a}(t)). \] 
Thus, by the monotonicity of $E(t)$ as in \eqref{ene-decrease}, we have 
\[ \tilde{E}(\vec{a}(t)) \le E(0)/\sigma(0) \le \Cr{min-length} \]
if $E(0) \le \sigma(0) m_1$. 
By the choice of $m_1$, we can see that $\vec{a}(t) \in S_{m_1}$ and 
\[ L^\J(t) \ge |\vec{a}(t) - P^\J| \ge {\rm dist}(S_{\Cr{min-length}}, \{P^\J\}_{j \in \{1,2,3\}}) = L_{\rm min} \]
for any $t \in [0, T)$. 
\end{proof}

We next study the relation between the angles $\Theta^\JJ - \Theta^\J$ at the junction point $\vec{a}$ and the surface tensions $\sigma(\Delta^\J \alpha(t))$, where $\Theta^\J$ is the angle function defined as in \eqref{def-theta}, to make it easier to discuss the preservation of $\Theta^\JJ - \Theta^\J \in (0,\pi)$ at the junction point. 
The preservation ensures the complementarity of the boundary conditions when we will construct geometric flow to extend the maximum existence time.

\begin{lem}\label{lem:inner-tau}
For any geometric flow governed by \eqref{eq-curve}--\eqref{bc-boundary}, the equality 
\begin{equation}
\langle \tau^\J_t, \tau^{(j+1)}_t \rangle = \dfrac{(\sigma(\Delta^{(j-1)} \alpha(t)))^2 - (\sigma(\Delta^\J\alpha(t)))^2 - (\sigma(\Delta^{(j+1)}\alpha(t)))^2}{2 \sigma(\Delta^\J\alpha(t)) \sigma(\Delta^{(j+1)}\alpha(t))} \label{inner-tau} 
\end{equation}
holds at the junction point $\vec{a}(t)$ for $j \in \{1,2,3\}$ and $t >0$. 
\end{lem}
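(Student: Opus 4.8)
The identity is purely algebraic: it follows from the Herring condition \eqref{bc-angle} together with the fact that each $\tau^{(k)}_t$ is a unit vector, and no parabolic theory is needed. The plan is to isolate one term of the force balance, square, and solve.

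\textbf{Step 1 (rewrite the balance condition).} Fix $j\in\{1,2,3\}$ and work at the junction point $\vec a(t)$, reading all superscripts cyclically modulo $3$ (so $\tau^{(4)}_t=\tau^{(1)}_t$, $\tau^{(5)}_t=\tau^{(2)}_t$, and $\Delta^{(j+2)}\alpha=\Delta^{(j-1)}\alpha$ by the convention $\Delta^{(4)}\alpha=\Delta^{(1)}\alpha$, $\alpha^{(0)}=\alpha^{(3)}$). Moving the $k=j+2$ summand of \eqref{bc-angle} to the right-hand side gives
\[
\sigma(\Delta^{(j)}\alpha(t))\,\tau^{(j)}_t + \sigma(\Delta^{(j+1)}\alpha(t))\,\tau^{(j+1)}_t = -\,\sigma(\Delta^{(j-1)}\alpha(t))\,\tau^{(j+2)}_t \quad\text{at }\vec a(t).
\]

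\textbf{Step 2 (square both sides).} Taking the squared Euclidean norm and using $\langle\tau^{(k)}_t,\tau^{(k)}_t\rangle=|\tau^{(k)}_t|^2=1$ for every $k$, the left-hand side becomes
\[
(\sigma(\Delta^{(j)}\alpha(t)))^2 + (\sigma(\Delta^{(j+1)}\alpha(t)))^2 + 2\,\sigma(\Delta^{(j)}\alpha(t))\,\sigma(\Delta^{(j+1)}\alpha(t))\,\langle\tau^{(j)}_t,\tau^{(j+1)}_t\rangle,
\]
and the right-hand side is $(\sigma(\Delta^{(j-1)}\alpha(t)))^2$.

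\textbf{Step 3 (solve for the inner product).} By assumption (A1) we have $\sigma>0$, so the factor $2\,\sigma(\Delta^{(j)}\alpha(t))\,\sigma(\Delta^{(j+1)}\alpha(t))$ is nonzero; dividing through by it and rearranging yields exactly \eqref{inner-tau}. I expect no genuine obstacle here — the only thing to be attentive to is the index bookkeeping, namely keeping track of the identification $\Delta^{(j+2)}\alpha=\Delta^{(j-1)}\alpha$ so that the "third" misorientation appears with a plus sign in the numerator, and verifying that the same computation with $j$ cyclically permuted reproduces the stated formula for all three indices.
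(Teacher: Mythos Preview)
Your proof is correct, and arguably more direct than the paper's. The paper takes the Herring condition $\sum_k \sigma(\Delta^{(k)}\alpha)\tau^{(k)}_t=0$, dots it with $\tau^{(i)}_t$ for each $i=1,2,3$, and obtains a $3\times 3$ linear system for the three unknown inner products $\langle\tau^{(1)}_t,\tau^{(2)}_t\rangle$, $\langle\tau^{(2)}_t,\tau^{(3)}_t\rangle$, $\langle\tau^{(3)}_t,\tau^{(1)}_t\rangle$; it then inverts the coefficient matrix to read off \eqref{inner-tau}. Your approach is the law-of-cosines shortcut: isolate one summand and square, which hands you each inner product in one line without any matrix inversion. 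Both routes rest on exactly the same two ingredients (the Herring condition and $|\tau^{(k)}_t|=1$), so there is no conceptual gap between them; your version just avoids the linear-algebra bookkeeping. The paper's system formulation has a minor advantage if one later wants all three inner products simultaneously or to differentiate the relation in $t$, but for the present lemma your argument is cleaner.
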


\begin{proof}
For simplicity, we write $\sigma^\J$ as $\sigma(\Delta^\J\alpha(t))$ for $j \in \{1, 2, 3\}$. 
Multiplying the equality in (A4) by $\tau^\I_t$ and $i = 1, 2, 3$, we have 
\[ 
\begin{pmatrix}
\sigma^{(2)} & 0 & \sigma^{(3)} \\
\sigma^{(1)} & \sigma^{(3)} & 0 \\
0 & \sigma^{(2)} & \sigma^{(1)}
\end{pmatrix}
\begin{pmatrix}
\langle \tau_t^{(1)}, \tau_t^{(2)} \rangle \\
\langle \tau_t^{(2)}, \tau_t^{(3)} \rangle \\
\langle \tau_t^{(3)}, \tau_t^{(1)} \rangle
\end{pmatrix}
= -
\begin{pmatrix}
\sigma^{(1)} \\
\sigma^{(2)} \\
\sigma^{(3)}
\end{pmatrix} 
\]
at the junction point $\vec{a}(t)$. 
Thus, \eqref{inner-tau} can be obtained by calculating the inverse of the matrix. 
\end{proof}

We here note that formula of $\langle \tau^\J_t, \nu_t^{(j+1)} \rangle$ to apply in the $L^2$ or higher order estimate of curvatures latter, although it will not be applied to discuss the preservation of $\Theta^\JJ - \Theta^\J \in (0,\pi)$ at the junction point.

\begin{lem}\label{lem:inner-nu}
For any geometric flow governed by \eqref{eq-curve}--\eqref{bc-boundary}, if $\Theta^\JJ - \Theta^\J \in (0, \pi)$ at the junction point $\vec{a}(t)$ for $j \in \{1,2,3\}$ and $t>0$, the equality
\begin{equation}
\langle \tau^\J_t, \nu_t^{(j+1)} \rangle = - \dfrac{\sqrt{\left\{\sum_{i < k} 2(\sigma(\Delta^\I \alpha(t)))^2(\sigma(\Delta^\K\alpha (t)))^2\right\} - \left\{\sum_{i=1}^3 (\sigma(\Delta^\I \alpha (t)))^4\right\}}}{2 \sigma(\Delta^\J\alpha (t)) \sigma(\Delta^{(j+1)}\alpha (t))} \label{inner-nu} 
\end{equation}
holds at the junction point $\vec{a}(t)$ for $j \in \{1,2,3\}$ and $t >0$. 
\end{lem}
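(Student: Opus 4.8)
The plan is to reduce \eqref{inner-nu} to Lemma \ref{lem:inner-tau} together with the angle constraint $\Theta^\JJ - \Theta^\J \in (0,\pi)$. Writing $\tau^\J_t = (\cos\Theta^\J, \sin\Theta^\J)$ and $\tau^{(j+1)}_t = (\cos\Theta^{(j+1)}, \sin\Theta^{(j+1)})$ at $\vec{a}(t)$ and recalling that $\nu^{(j+1)}_t$ is the $90$ degree counterclockwise rotation of $\tau^{(j+1)}_t$, one computes directly
\[
\langle \tau^\J_t, \nu^{(j+1)}_t \rangle = \sin\Theta^\J \cos\Theta^{(j+1)} - \cos\Theta^\J \sin\Theta^{(j+1)} = -\sin\big(\Theta^{(j+1)} - \Theta^\J\big)
\]
at the junction. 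Since by hypothesis $\Theta^{(j+1)}-\Theta^\J \in (0,\pi)$ there, the sine is positive, so $\sin(\Theta^{(j+1)}-\Theta^\J) = \sqrt{1 - \cos^2(\Theta^{(j+1)}-\Theta^\J)} = \sqrt{1 - \langle \tau^\J_t, \tau^{(j+1)}_t\rangle^2}$; thus $\langle\tau^\J_t,\nu^{(j+1)}_t\rangle = -\sqrt{1 - \langle\tau^\J_t,\tau^{(j+1)}_t\rangle^2}$.

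It then remains to insert the value of $\langle\tau^\J_t,\tau^{(j+1)}_t\rangle$ from \eqref{inner-tau} and simplify. Abbreviating $\sigma^\I := \sigma(\Delta^\I \alpha(t))$, one gets
\[
1 - \langle\tau^\J_t,\tau^{(j+1)}_t\rangle^2 = \frac{4(\sigma^\J)^2(\sigma^{(j+1)})^2 - \big((\sigma^\J)^2 + (\sigma^{(j+1)})^2 - (\sigma^{(j-1)})^2\big)^2}{4(\sigma^\J)^2(\sigma^{(j+1)})^2}.
\]
The numerator is a difference of two squares, so it factors as $\big[(\sigma^\J+\sigma^{(j+1)})^2 - (\sigma^{(j-1)})^2\big]\big[(\sigma^{(j-1)})^2 - (\sigma^\J-\sigma^{(j+1)})^2\big]$; expanding this product (the Heron-type identity) yields $2\sum_{i<k}(\sigma^\I)^2(\sigma^\K)^2 - \sum_{i=1}^3(\sigma^\I)^4$, where we use $\{\sigma^{(1)},\sigma^{(2)},\sigma^{(3)}\} = \{\sigma^\J,\sigma^{(j+1)},\sigma^{(j-1)}\}$. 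Taking the positive square root and carrying the sign from the previous paragraph gives exactly \eqref{inner-nu}.

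No step here is genuinely difficult; the only point deserving a word is that the expression under the square root in \eqref{inner-nu} is automatically nonnegative, since by the computation above it equals $\big(2\sigma^\J\sigma^{(j+1)}\big)^2\sin^2(\Theta^{(j+1)}-\Theta^\J) \ge 0$ --- equivalently, the Herring condition \eqref{bc-angle} with unit tangent vectors forces $\sigma^{(1)},\sigma^{(2)},\sigma^{(3)}$ to obey the triangle inequalities. Everything else is the routine trigonometric and algebraic bookkeeping indicated above.
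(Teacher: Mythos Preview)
Your proof is correct and follows essentially the same route as the paper: both compute $\langle \tau^\J_t, \nu^{(j+1)}_t\rangle = -\sin(\Theta^{(j+1)}-\Theta^\J)$, invoke the hypothesis $\Theta^{(j+1)}-\Theta^\J\in(0,\pi)$ to rewrite this as $-\sqrt{1-\langle\tau^\J_t,\tau^{(j+1)}_t\rangle^2}$, and then substitute the expression from Lemma~\ref{lem:inner-tau}. You simply spell out the algebraic simplification (the Heron-type factorization) and the non-negativity of the radicand more explicitly than the paper does.
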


\begin{proof}
Since the curves $\Gamma^\J_t$ are numbered counter-clockwise around the junction point, we have by $\Theta^\JJ - \Theta^\J \in (0,\pi)$ at the junction point $\vec{a}(t)$ 
\begin{align*} 
\langle \tau^\J_t, \nu_t^{(j+1)} \rangle =&\; \cos (\Theta^\JJ - \Theta^\J + \pi/2) = - \sin(\Theta^\JJ - \Theta^\J)  \\
=&\; - \sqrt{1-\cos^2 (\Theta^\JJ - \Theta^\J)} = - \sqrt{1 - (\langle \tau^\J_t, \tau_t^{(j+1)} \rangle)^2} 
\end{align*}
due to the choice of the direction of $\tau^\J_t$ and $\nu^\J_t$. 
Substituting \eqref{inner-tau} into the above equality, we obtain \eqref{inner-nu}. 
\end{proof}

\begin{remark}\label{lem:linear-independent}
It immediately follows from Lemma \ref{lem:inner-tau} that 
\[ |\cos (\Theta^\JJ - \Theta^\J)| = |\langle \tau^\J_t, \tau^{(j+1)}_t \rangle| < 1 \] 
holds if and only if 
\begin{equation}\label{pre-complementarity} 
(\sigma(\Delta^\J \alpha(t)) - \sigma(\Delta^\JJ \alpha (t)))^2 < (\sigma(\Delta^{(j-1)} \alpha (t)))^2 < (\sigma(\Delta^\J\alpha(t)) + \sigma(\Delta^\JJ \alpha(t)))^2 
\end{equation}
holds for any $j \in \{1,2,3\}$ and $t \in [0,T]$. 
We thus see that a geometric flow governed by \eqref{eq-curve}--\eqref{bc-boundary} and starting from $\{\Gamma^\J_0\}_{j\in \{1,2,3\}}, \vec{\alpha}_0$ with $\Theta_0^\JJ - \Theta_0^\J \in (0,\pi)$ at the junction point preserves $\Theta^\JJ - \Theta^\J \in (0,\pi)$ if and only if $\vec{\alpha}$ satisfies \eqref{pre-complementarity}. 
\end{remark}

Since the right hand side of \eqref{inner-tau} consists of only $\sigma(\Delta^\J \alpha(t))$, we can estimate the angle $\Theta^\JJ - \Theta^\J$ at the junction point only by $\sigma(\Delta^\J \alpha(t))$. 
We thus continue to estimate the orientation parameters $\alpha^\J$ which can be applied to not only prove the preservation of $\Theta^\JJ - \Theta^\J \in (0,\pi)$ at the junction point but also obtain $L^2$ or higher order estimate of the curvatures. 
We first prove the dissipation of the orientations and misorientations.

\begin{lem}\label{lem:ori-decrease}
Assume (A1). 
Then, 
\begin{align}
& \dfrac{d}{dt} \sum_{j=1}^3 (\alpha^\J(t))^2 \le 0, \label{mono-a}\\
& \dfrac{d}{dt} \sum_{j=1}^3 (\Delta^\J\alpha(t))^2 \le 0 \label{mono-delta-a}
\end{align}
for any $t \in [0,T)$. 
\end{lem}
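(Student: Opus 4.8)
The plan is to differentiate each sum in $t$, insert the evolution law \eqref{eq-alpha}, and reduce both expressions --- after a cyclic reindexing --- to a positive multiple of the single quantity $\sum_{j=1}^3 \pa\sigma(\Delta^\J\alpha)\,\Delta^\J\alpha\,L^\J$, whose sign is then fixed by $L^\J(t)>0$ together with the convexity inequality \eqref{convexity-sigma}. Throughout I would abbreviate $q_j := \pa\sigma(\Delta^\J\alpha(t))\,L^\J(t)$, so that \eqref{eq-alpha} reads $\pt\alpha^\J = -\gamma(q_{j+1}-q_j)$ cyclically (with $q_4=q_1$ and $\alpha^{(0)}=\alpha^{(3)}$).

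For \eqref{mono-a}, I would compute $\frac{d}{dt}\sum_{j=1}^3(\alpha^\J)^2 = 2\sum_j\alpha^\J\pt\alpha^\J = -2\gamma\sum_j\alpha^\J(q_{j+1}-q_j)$. Shifting the index in the first term, $\sum_j\alpha^\J q_{j+1} = \sum_j\alpha^{(j-1)}q_j$, so the right-hand side equals $-2\gamma\sum_j(\alpha^{(j-1)}-\alpha^\J)q_j = -2\gamma\sum_j\Delta^\J\alpha\,q_j = -2\gamma\sum_{j=1}^3\pa\sigma(\Delta^\J\alpha)\,\Delta^\J\alpha\,L^\J$. Since each $L^\J(t)>0$ and $\pa\sigma(s)s\ge0$ for all $s$ by \eqref{convexity-sigma}, this is $\le 0$.

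For \eqref{mono-delta-a}, I would use $\pt(\Delta^\J\alpha) = \pt\alpha^{(j-1)}-\pt\alpha^\J = -\gamma(q_j-q_{j-1})+\gamma(q_{j+1}-q_j) = \gamma(q_{j-1}+q_{j+1}-2q_j)$; with only three cyclic indices one has $q_{j-1}+q_{j+1} = S-q_j$ where $S:=q_1+q_2+q_3$, hence $\pt(\Delta^\J\alpha)=\gamma(S-3q_j)$. Then $\frac{d}{dt}\sum_j(\Delta^\J\alpha)^2 = 2\gamma\sum_j\Delta^\J\alpha\,(S-3q_j) = 2\gamma S\sum_j\Delta^\J\alpha - 6\gamma\sum_j\Delta^\J\alpha\,q_j$, and the first sum vanishes because $\sum_j\Delta^\J\alpha=\sum_j(\alpha^{(j-1)}-\alpha^\J)=0$; so again $\frac{d}{dt}\sum_j(\Delta^\J\alpha)^2 = -6\gamma\sum_{j=1}^3\pa\sigma(\Delta^\J\alpha)\,\Delta^\J\alpha\,L^\J\le0$.

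The computation is elementary index bookkeeping, so there is no serious obstacle; the only substantive points are (i) exploiting the constraint $\sum_j\Delta^\J\alpha=0$ so that the mean term drops out of the second identity, and (ii) observing that the sign in both cases genuinely rests on the convexity of $\sigma$ through \eqref{convexity-sigma} --- under (A1) alone one obtains only the two identities, not the monotonicity. As a byproduct the same manipulation yields $\frac{d}{dt}\sum_j\alpha^\J = -\gamma\sum_j(q_{j+1}-q_j)=0$, i.e.\ $\alpha^{(1)}+\alpha^{(2)}+\alpha^{(3)}$ is conserved, which is what pins down the limit value in Theorem \ref{thm:global-asymptotic}.
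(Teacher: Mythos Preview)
Your proof is correct and follows essentially the same approach as the paper: both compute the time derivative, reindex cyclically, and reduce each inequality to the nonnegativity of $\sum_j \Delta^\J\alpha\,\pa\sigma(\Delta^\J\alpha)\,L^\J$, then invoke \eqref{convexity-sigma}. Your use of the abbreviation $q_j$ and the identity $q_{j-1}+q_{j+1}=S-q_j$ to handle the second inequality is a slightly cleaner packaging of the same computation, and your final remark that (A1) alone yields only the identities --- the sign genuinely requiring the convexity encoded in \eqref{convexity-sigma} --- is a valid observation about the lemma's hypotheses as stated.
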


\begin{proof}
Multiplying \eqref{eq-alpha} by $\alpha^\J$ and taking the sum for $j=1,2,3$, we obtain 
\[ \begin{aligned}
\dfrac{1}{2} \dfrac{d}{dt} \sum_{j=1}^3 (\alpha^\J(t))^2 =&\; - \gamma \sum_{j=1}^3 \left(\partial_\alpha \sigma(\Delta^\JJ\alpha(t)) L^\JJ(t) - \partial_\alpha \sigma(\Delta^\J\alpha(t)) L^\J(t)\right)\alpha^\J(t) \\
=&\; -\gamma \sum_{j=1}^3 \left(\partial_\alpha \sigma(\Delta^\J\alpha(t))L^\J(t) \right)(\alpha^{(j-1)}(t) - \alpha^\J(t)) \\
=&\; -\gamma \sum_{j=1}^3 \Delta^\J\alpha(t) \partial_\alpha \sigma(\Delta^\J\alpha(t)) L^\J(t) \le 0. 
\end{aligned} \]
Thus, \eqref{mono-a} holds. 

We have by a simple calculation and \eqref{eq-alpha}
\[ \dfrac{d}{dt} \Delta^\J\alpha(t) = \gamma\left\{-2 \partial_\alpha \sigma(\Delta^\J\alpha(t))L^\J(t) + \partial_\alpha \sigma(\Delta^{(j-1)}\alpha(t))L^{(j-1)}(t) + \partial_\alpha \sigma(\Delta^\JJ\alpha(t)) L^\JJ(t)\right\} \]
Multiplying the equality by $\Delta^\J\alpha$ and taking the sum for $j=1,2,3$ we obtain 
\begin{equation}\label{mono-delta-a1} 
\dfrac{1}{2} \dfrac{d}{dt} \sum_{j=1}^3 (\Delta^\J\alpha(t))^2 = -3 \gamma \sum_{j=1}^3 \Delta^\J\alpha \partial_\alpha(\Delta^\J\alpha(t)) L^\J(t) \le 0. 
\end{equation}
Thus, \eqref{mono-delta-a} holds. 
\end{proof}

The exponential decay of the misorientations and derivatives of the orientations or surface tensions can be obtained assuming additional conditions (A2) and (A3) as follows.

\begin{cor} \label{cor:exdecrease}
Assume (A1)--(A3).
Let $\Cr{min-length}$ be the constant defined in Lemma \ref{lem:min-length} and $\Cl[m]{m-exdeltaa}$ be an arbitrary positive constant. 
Then, there exist constants $\Cl[c]{l-ex-deltaa}, \Cl[c]{c-ex-deri-deltaa}, \Cl[c]{c-ex-deltaa} > 0$ such that if $E(0) \le \sigma(0) \Cr{min-length}$ and 
\begin{equation}\label{exdecrease-as}
\sum_{j=1}^3 (\Delta^\J\alpha(0))^2 \le  \Cr{m-exdeltaa}
\end{equation} 
then, 
\begin{align}
&\sum_{j=1}^3 (\Delta^\J\alpha(t))^2 \le e^{-\Cr{l-ex-deltaa} t} \sum_{j=1}^3 (\Delta^\J\alpha(0))^2 \quad \text{for} \quad t \in [0,T), \label{delta-a-ex-decrease} \\
&|\partial_\alpha \sigma(\Delta^\J\alpha (t))| \le \Cr{c-ex-deri-deltaa} e^{-\frac{\Cr{l-ex-deltaa} t}{2}}\sqrt{\sum_{j=1}^3 (\Delta^\J\alpha(0))^2} \quad \text{for} \quad t \in [0,T), \; \; j \in \{1,2,3\}, \label{delta-aa-ex-decrease} \\
&|\partial_t \alpha^\J (t)| \le \Cr{c-ex-deltaa} e^{-\frac{\Cr{l-ex-deltaa} t}{2}}\sqrt{\sum_{j=1}^3 (\Delta^\J\alpha(0))^2} \quad \text{for} \quad t \in [0,T), \; \; j \in \{1,2,3\}. \label{delta-at-ex-decrease}
\end{align} 
\end{cor}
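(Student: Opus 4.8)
The plan is to turn the dissipation identity behind Lemma~\ref{lem:ori-decrease} into a closed, linear differential inequality for $\sum_{j=1}^3(\Delta^\J\alpha)^2$ and then apply a Gronwall argument. Recall the identity \eqref{mono-delta-a1} obtained in the proof of Lemma~\ref{lem:ori-decrease}, namely
\[
\frac{1}{2}\frac{d}{dt}\sum_{j=1}^3(\Delta^\J\alpha(t))^2 = -3\gamma\sum_{j=1}^3 \Delta^\J\alpha(t)\,\partial_\alpha\sigma(\Delta^\J\alpha(t))\,L^\J(t).
\]
To read off exponential decay from this I need three ingredients: a uniform lower bound $L^\J(t)\ge L_{\rm min}$, a uniform upper bound $L^\J(t)\le \Cr{min-length}$, and a local coercivity estimate $\partial_\alpha\sigma(\alpha)\alpha\ge\delta\alpha^2$ on a fixed bounded interval. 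The first is exactly Lemma~\ref{lem:min-length}, applicable because $E(0)\le\sigma(0)\Cr{min-length}$. The second follows from the energy monotonicity of Lemma~\ref{lem:dec-E}: since $\sigma\ge\sigma(0)$ we have $\sigma(0)\sum_{j=1}^3 L^\J(t)\le E(t)\le E(0)\le\sigma(0)\Cr{min-length}$, hence $L^\J(t)\le\Cr{min-length}$ for all $j$ and all $t$.

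For the coercivity estimate I first pin down an a~priori $L^\infty$ bound on the misorientations. By Lemma~\ref{lem:ori-decrease} the quantity $\sum_{j}(\Delta^\J\alpha(t))^2$ is nonincreasing, so \eqref{exdecrease-as} gives $\sum_j(\Delta^\J\alpha(t))^2\le\Cr{m-exdeltaa}$ for all $t$, and in particular $|\Delta^\J\alpha(t)|\le R:=\sqrt{\Cr{m-exdeltaa}}$ for all $t$ and $j$. On the compact interval $[-R,R]$ the continuous function $\partial_\alpha^2\sigma$ is bounded below by a positive constant $\delta$ and above by a finite constant $\Lambda$ by (A3); integrating from $0$ and using $\partial_\alpha\sigma(0)=0$ yields $\partial_\alpha\sigma(\alpha)\alpha\ge\delta\alpha^2$ and $|\partial_\alpha\sigma(\alpha)|\le\Lambda|\alpha|$ whenever $|\alpha|\le R$. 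Plugging the lower coercivity bound and $L^\J\ge L_{\rm min}$ into the identity above gives
\[
\frac{d}{dt}\sum_{j=1}^3(\Delta^\J\alpha(t))^2\le -6\gamma L_{\rm min}\delta\sum_{j=1}^3(\Delta^\J\alpha(t))^2,
\]
so Gronwall's inequality yields \eqref{delta-a-ex-decrease} with $\Cr{l-ex-deltaa}:=6\gamma L_{\rm min}\delta$.

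The remaining two estimates are immediate consequences. From $|\partial_\alpha\sigma(\alpha)|\le\Lambda|\alpha|$ on $[-R,R]$ together with \eqref{delta-a-ex-decrease},
\[
|\partial_\alpha\sigma(\Delta^\J\alpha(t))|\le\Lambda|\Delta^\J\alpha(t)|\le\Lambda\Big(\sum_{i=1}^3(\Delta^\I\alpha(t))^2\Big)^{1/2}\le\Lambda\,e^{-\Cr{l-ex-deltaa}t/2}\Big(\sum_{i=1}^3(\Delta^\I\alpha(0))^2\Big)^{1/2},
\]
which is \eqref{delta-aa-ex-decrease} with $\Cr{c-ex-deri-deltaa}:=\Lambda$; and feeding this together with $L^\J\le\Cr{min-length}$ into \eqref{eq-alpha} gives $|\partial_t\alpha^\J(t)|\le\gamma\Cr{min-length}\big(|\partial_\alpha\sigma(\Delta^\JJ\alpha(t))|+|\partial_\alpha\sigma(\Delta^\J\alpha(t))|\big)$, hence \eqref{delta-at-ex-decrease} with $\Cr{c-ex-deltaa}:=2\gamma\Cr{min-length}\Lambda$. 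The only point requiring genuine care is the passage from the pointwise strict convexity of $\sigma$ to a uniform coercivity constant $\delta$: this is legitimate precisely because the smallness assumption \eqref{exdecrease-as} combined with the monotonicity of Lemma~\ref{lem:ori-decrease} confines $\Delta^\J\alpha(t)$ to a fixed compact set for all time, so that $\delta$ and $\Lambda$ depend only on $\sigma$ and $\Cr{m-exdeltaa}$ and the decay rate $\Cr{l-ex-deltaa}$ depends only on $\gamma$, $\sigma$, $\Cr{m-exdeltaa}$ and the geometry of $\{P^\J\}$ through $L_{\rm min}$; without the smallness hypothesis the argument would fail since $\partial_\alpha^2\sigma$ could degenerate as $|\alpha|\to\infty$.
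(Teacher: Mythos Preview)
Your proof is correct and follows essentially the same route as the paper: both start from the dissipation identity \eqref{mono-delta-a1}, use Lemma~\ref{lem:ori-decrease} to confine $\Delta^\J\alpha(t)$ to the compact interval $[-\Cr{m-exdeltaa}^{1/2},\Cr{m-exdeltaa}^{1/2}]$, extract a uniform lower bound $\delta=\min_{|\alpha|\le\Cr{m-exdeltaa}^{1/2}}\partial_\alpha^2\sigma$ from (A3), invoke Lemma~\ref{lem:min-length} for $L^\J\ge L_{\rm min}$, and close the Gronwall loop; the two subsequent bounds are likewise obtained via the Lipschitz estimate $|\partial_\alpha\sigma(\alpha)|\le\Lambda|\alpha|$ with $\Lambda=\max_{|\alpha|\le\Cr{m-exdeltaa}^{1/2}}\partial_\alpha^2\sigma$ and the length bound $L^\J\le\Cr{min-length}$ fed into \eqref{eq-alpha}. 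Your explicit identification of the constants $\Cr{l-ex-deltaa}=6\gamma L_{\rm min}\delta$, $\Cr{c-ex-deri-deltaa}=\Lambda$, $\Cr{c-ex-deltaa}=2\gamma\Cr{min-length}\Lambda$ matches what the paper leaves implicit.
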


\begin{proof}
First, we prove \eqref{delta-a-ex-decrease}. 
From \eqref{mono-delta-a} and \eqref{exdecrease-as}, we have $|\Delta^\I\alpha(t)| \le \Cr{m-exdeltaa}^{1/2}$ for $t>0$ and $i \in \{1,2,3\}$.
Due to the assumption (A3), we can see by the boundedness of $|\Delta^\I\alpha(t)|$ and the Taylor expansion 
\[ \partial_\alpha \sigma(\Delta^\J\alpha(t))\Delta^\J \alpha(t) \ge \left(\min_{|\alpha|\le \Cr{m-exdeltaa}^{1/2}} \partial_\alpha^2\sigma(\alpha)\right) (\Delta^\J\alpha(t))^2. \]
Note that $\min_{|\alpha|\le \Cr{m-exdeltaa}^{1/2}} \partial_\alpha^2\sigma(\alpha)$ is positive due to the assumption (A3). 
Applying this inequality and Lemma \ref{lem:min-length} to \eqref{mono-delta-a1}, we obtain 
\[ \dfrac{d}{dt} \sum_{j=1}^3 (\Delta^\J\alpha(t))^2 \le - \Cr{l-ex-deltaa} \sum_{j=1}^3 (\Delta^\J\alpha(t))^2 \]
for some $\Cr{l-ex-deltaa} >0$ and hence $\sum_{j=1}^3 (\Delta^\J\alpha(t))^2$ decreases exponentially. 

Next, we prove \eqref{delta-aa-ex-decrease} and \eqref{delta-at-ex-decrease}. 
Due to \eqref{ene-decrease} and $\sigma(\alpha) \ge \sigma(0)$, we have 
\begin{equation}\label{ex-deltaa-1} 
E(0) \ge E(t) \ge \sigma(0) \sum_{j=1}^3 L^\J(t).  
\end{equation}
It also follows from the boundedness of $|\Delta^\J\alpha(t)|$, the Tayler expansion, the assumption (A3) and the inequality \eqref{delta-a-ex-decrease} that 
\begin{equation}\label{ex-deltaa-2} 
|\partial_\alpha \sigma(\Delta^\I\alpha(t))| \le \left(\max_{|\alpha|\le \Cr{m-exdeltaa}^{1/2}} \partial_\alpha^2\sigma(\alpha)\right)e^{-\frac{\Cr{l-ex-deltaa} t}{2}} \sqrt{\sum_{j=1}^3 (\Delta^\J\alpha(0))^2} 
\end{equation}
for any $i \in \{1,2,3\}$. 
We thus obtain \eqref{delta-aa-ex-decrease} by letting $\Cr{c-ex-deri-deltaa} := \max_{|\alpha|\le \Cr{m-exdeltaa}^{1/2}} \partial_\alpha^2\sigma(\alpha)$. 
Applying \eqref{ex-deltaa-1} and \eqref{ex-deltaa-2} to \eqref{eq-alpha}, we obtain \eqref{delta-at-ex-decrease}. 
\end{proof}

\begin{remark}\label{rmk:bddness1}
The boundedness results follows from Lemma \ref{lem:ori-decrease} was extended to the exponential decay estimates in Corollary \ref{cor:exdecrease} due to the uniformly boundedness of $L^\J$ from below. 
The assumptions (A2)--(A3) and $E(0) \le \sigma(0) \Cr{m-exdeltaa}$ in Corollary \ref{cor:exdecrease}, which ensure the uniformly boundedness of $L^\J$ from below, are not required to obtain only the boundedness of $|\pa \sigma(\Delta^\J \alpha)|$ and $|\pt \alpha^\J|$. 
\end{remark}

Due to the exponential decay of the misorientations, as following lemma, we can see the exponential stability of the angle condition of the Steiner triod at the junction point, in other word, $\Theta^\JJ - \Theta^\J$ exponentially converges to $2\pi/3$ at the junction point if the initial misorientations is sufficiently small. 

\begin{lem}\label{lem:tri-ine-t}
Assume (A1)--(A3). 
Let $\Cr{min-length}$ be the constant in Lemma \ref{lem:min-length}.  
Then, there exists $\Cl[e]{e-ex-inner}, \Cl[c]{c-ex-inner}>0$ such that if $E(0) \le \sigma(0)\Cr{min-length}$ and 
\[ \sum_{j=1}^3 (\Delta^\J\alpha(0))^2 \le \Cr{e-ex-inner}, \] 
then the unit tangent vectors 
$\tau_t^\I$ and $\tau_t^\J$ of the geometric flow satisfy
\begin{equation}\label{bdd-sin}
\left| \langle \tau_t^\I, \tau_t^\J \rangle +\frac{1}{2} \right| \leq \Cr{c-ex-inner} e^{-\frac{\Cr{l-ex-deltaa}}{2} t} \sqrt{\sum_{j=1}^3 (\Delta^\J\alpha(0))^2} 
\quad \text{for vary} \; \; i, j \in \{1,2,3\}, \; \; t \in [0,T)
\end{equation}
at the junction point $\vec{a}$,
where $\Cr{l-ex-deltaa}$ is the constant defined in Corollary \ref{cor:exdecrease} replaced $\Cr{m-exdeltaa}$ by $\Cr{e-ex-inner}$. 
\end{lem}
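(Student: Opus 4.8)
The plan is to combine the explicit algebraic formula for $\langle \tau_t^{(i)}, \tau_t^{(j)}\rangle$ at the junction established in Lemma \ref{lem:inner-tau} with the exponential decay of the misorientations from Corollary \ref{cor:exdecrease}. The key observation is that when all misorientations $\Delta^{(j)}\alpha(t)$ are small, all three surface tensions $\sigma(\Delta^{(j)}\alpha(t))$ are close to the common value $\sigma(0)$, and feeding $\sigma^{(1)} = \sigma^{(2)} = \sigma^{(3)} = \sigma(0)$ into \eqref{inner-tau} gives exactly $\langle \tau^{(j)}_t, \tau^{(j+1)}_t\rangle = -1/2$. The lemma is therefore a quantitative continuity statement around this equilateral configuration, with the rate of closeness governed by the exponential decay rate $\varepsilon_{\ref{e-ex-deltaa}}$ (the constant $\Cr{l-ex-deltaa}$).

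First I would fix $\Cr{e-ex-inner}$ small enough that Corollary \ref{cor:exdecrease} applies with $\Cr{m-exdeltaa}$ replaced by $\Cr{e-ex-inner}$; since $E(0) \le \sigma(0)\Cr{min-length}$ is assumed, Lemma \ref{lem:min-length} gives the uniform lower bound on $L^{(j)}$, so the hypotheses of Corollary \ref{cor:exdecrease} are met. Next, from \eqref{delta-a-ex-decrease} we have $\sum_{j=1}^3 (\Delta^{(j)}\alpha(t))^2 \le e^{-\Cr{l-ex-deltaa} t} \sum_{j=1}^3 (\Delta^{(j)}\alpha(0))^2$, hence each $|\Delta^{(j)}\alpha(t)| \le e^{-\Cr{l-ex-deltaa} t/2}(\sum_{k}(\Delta^{(k)}\alpha(0))^2)^{1/2}$. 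By the smoothness of $\sigma$ (assumption (A1)) and boundedness of the argument, a Taylor expansion gives $|\sigma(\Delta^{(j)}\alpha(t)) - \sigma(0)| \le C\,|\Delta^{(j)}\alpha(t)| \le C\, e^{-\Cr{l-ex-deltaa} t/2}(\sum_k(\Delta^{(k)}\alpha(0))^2)^{1/2}$ for a constant $C$ depending only on $\sigma$ and $\Cr{e-ex-inner}$, and likewise for $(\sigma(\Delta^{(j)}\alpha(t)))^2 - (\sigma(0))^2$.

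Then I would substitute these estimates into \eqref{inner-tau}. Writing $\sigma^{(j)} = \sigma(\Delta^{(j)}\alpha(t))$ and using that the denominator $2\sigma^{(j)}\sigma^{(j+1)}$ is bounded below by $2(\sigma(0))^2 - C\Cr{e-ex-inner}^{1/2} > 0$ (shrinking $\Cr{e-ex-inner}$ further if needed) and that the numerator $(\sigma^{(j-1)})^2 - (\sigma^{(j)})^2 - (\sigma^{(j+1)})^2$ differs from $-(\sigma(0))^2$ by at most $C\,e^{-\Cr{l-ex-deltaa}t/2}(\sum_k(\Delta^{(k)}\alpha(0))^2)^{1/2}$, the quotient differs from $-(\sigma(0))^2/(2(\sigma(0))^2) = -1/2$ by $C\,e^{-\Cr{l-ex-deltaa}t/2}(\sum_k(\Delta^{(k)}\alpha(0))^2)^{1/2}$. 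This handles the pairs $(i,j) = (j, j+1)$; the remaining ordered pairs $(i,j)$ with $i \neq j$ are covered either by symmetry of the inner product or by relabelling the indices cyclically in \eqref{inner-tau}, and the case $i = j$ is trivial since then $\langle \tau_t^{(i)}, \tau_t^{(i)}\rangle = 1 \neq -1/2$, so we must (and do) restrict to $i \neq j$ — I would note this, taking $\Cr{c-ex-inner}$ to absorb all the constants. The only mild subtlety, which is not really an obstacle, is bookkeeping: ensuring $\Cr{e-ex-inner}$ is chosen once and for all small enough that simultaneously Corollary \ref{cor:exdecrease} applies, the denominators stay bounded below, and the misorientation arguments stay in the range where the Taylor bounds hold; everything else is a direct estimate with no analytic difficulty.
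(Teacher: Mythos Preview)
Your proposal is correct and follows essentially the same route as the paper: estimate $|\sigma(\Delta^{(j)}\alpha(t))-\sigma(0)|$ via Corollary~\ref{cor:exdecrease}, then feed this into the explicit formula \eqref{inner-tau} and read off that the quotient is within $Ce^{-\Cr{l-ex-deltaa}t/2}(\sum_k(\Delta^{(k)}\alpha(0))^2)^{1/2}$ of $-1/2$. The paper carries out the algebra on the quotient more explicitly (expanding $(\sigma(0)\pm Ce^{-\Cr{l-ex-deltaa}t/2}\tilde\varepsilon)^2$ by hand to get one-sided bounds), whereas you phrase it as a Lipschitz estimate for the rational function, but this is a cosmetic difference only.
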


\begin{proof}
Let 
\[ \tilde\varepsilon := \sqrt{\sum_{j=1}^3 (\Delta^\J\alpha(0))^2} \]
in this proof for simplicity. 
By \eqref{exdecrease-as} and \eqref{delta-a-ex-decrease}, we have
\begin{equation}\label{ex-inner-alpha}
|\sigma (0) - \sigma (\Delta^\J\alpha (t))| 
\leq \max _{|\alpha|\leq \Cr{e-ex-inner}^{1/2} } |\partial _\alpha \sigma (\alpha)| |\Delta^\J\alpha (t)|
\leq C e^{-\frac{\Cr{l-ex-deltaa}}{2} t} \tilde\varepsilon
\end{equation}
for any $j$ and some $C>0$. 
We thus obtain by \eqref{inner-tau}, \eqref{ex-inner-alpha} and $\tilde\varepsilon \le \Cr{e-ex-inner}^{1/2}$
\begin{align*}
& - \langle \tau^\J_t, \tau^{(j+1)}_t \rangle \\
= & \,
\frac{ (\sigma(\Delta^\J\alpha(t)))^2 + (\sigma(\Delta^{(j+1)}\alpha(t)))^2
-(\sigma(\Delta^{(j-1)}\alpha(t)))^2}
{2 \sigma(\Delta^\J\alpha(t)) \sigma(\Delta^{(j+1)}\alpha(t))}\\
\geq & \,
\frac{2(\sigma (0) - C e^{-\frac{\Cr{l-ex-deltaa}}{2} t}\tilde\varepsilon )^2 
- (\sigma (0) + C e^{-\frac{\Cr{l-ex-deltaa}}{2} t} \tilde\varepsilon)^2}
{ 2(\sigma (0) + C e^{-\frac{\Cr{l-ex-deltaa}}{2} t} \Cr{e-ex-inner}^{1/2})^2 }\\
= & \,
\frac{2\{ (\sigma (0) + C e^{-\frac{\Cr{l-ex-deltaa}}{2} t}\tilde\varepsilon) -2C e^{-\frac{\Cr{l-ex-deltaa}}{2} t}\tilde\varepsilon \}^2 
- (\sigma (0) + C e^{-\frac{\Cr{l-ex-deltaa}}{2} t} \tilde\varepsilon)^2}
{ 2(\sigma (0) + C e^{-\frac{\Cr{l-ex-deltaa}}{2} t}\Cr{e-ex-inner}^{1/2} )^2 } \\
= & \,
\frac{ (\sigma (0) + C e^{-\frac{\Cr{l-ex-deltaa}}{2} t}\tilde\varepsilon)^2 
- 8 (\sigma (0) + C e^{-\frac{\Cr{l-ex-deltaa}}{2} t}\tilde\varepsilon)C e^{-\frac{\Cr{l-ex-deltaa}}{2} t}\tilde\varepsilon + 8 ( C e^{-\frac{\Cr{l-ex-deltaa}}{2} t}\tilde\varepsilon)^2 }
{ 2(\sigma (0) + C e^{-\frac{\Cr{l-ex-deltaa}}{2} t} \Cr{e-ex-inner}^{1/2})^2 }\\
\ge & \, \frac12 
- \frac{4 C e^{-\frac{\Cr{l-ex-deltaa}}{2} t}\tilde\varepsilon}{\sigma (0) +C e^{-\frac{\Cr{l-ex-deltaa}}{2} t}\Cr{e-ex-inner}}
+\frac{4 (C e^{-\frac{\Cr{l-ex-deltaa}}{2} t})^2\tilde\varepsilon}{(\sigma (0) +C e^{-\frac{\Cr{l-ex-deltaa}}{2} t}\Cr{e-ex-inner}^{1/2})^2}\\
\geq & \, \frac12 - C' e^{-\frac{\Cr{l-ex-deltaa}}{2} t}\tilde\varepsilon,
\end{align*}
where $C'>0$ depends only on $\sigma (0)$ and $C$
(we may choose $\Cr{e-ex-inner}$ small so that $C'>0$ if necessary). 
Therefore we have $\langle \tau^\J_t, \tau^{(j+1)}_t \rangle 
\leq -\frac12 + C' e^{-\frac{\Cr{l-ex-deltaa}}{2} t}\tilde\varepsilon$. 
Similarly we can obtain $\langle \tau^\J_t, \tau^{(j+1)}_t \rangle \ge -\frac12 - C'' e^{-\frac{\Cr{l-ex-deltaa}}{2} t}\tilde\varepsilon$ for some constant $C''>0$. 
\end{proof}

Note that Lemma \ref{lem:tri-ine-t} also implies that $\Theta^\JJ_0 - \Theta^\J_0 \in (0, \pi)$ at the junction point $\vec{a}(0)$ whenever \eqref{bc-angle} at $t=0$ is satisfied and the initial misorientations is sufficiently small. 
Although we already obtain a sufficient condition to ensure the preservation of $\Theta^\JJ - \Theta^\J \in (0,\pi)$ at the junction point as in Lemma \ref{lem:tri-ine-t}, it will be applied also to the $L^2$ or higher order estimate of the curvatures that $|\langle \tau_t^\I, \tau_t^\J\rangle|$ is uniformly less than $1$ at the junction point. 
We thus note it as the following corollary to make it easier to cite. 

\begin{cor}\label{cor:bdd-sin2}
Assume (A1)--(A3). 
Let $\Cr{min-length}$ be the constant in Lemma \ref{lem:min-length} and $\Cl[m]{m-tri-ine-t} \in (1/2, 1)$ be arbitrary.  
Then, there exists $\Cl[e]{e-ex-inner2}$ such that if $E(0) \le \sigma(0)\Cr{min-length}$ and 
\begin{equation}\label{as-bdd-sin} 
\sum_{j=1}^3 (\Delta^\J\alpha(0))^2 \le \Cr{e-ex-inner2}, 
\end{equation} 
then the unit tangent vectors $\tau_t^\I$ and $\tau_t^\J$ of the geometric flow satisfy
\begin{equation}\label{bdd-sin2}
\left| \langle \tau_t^\I, \tau_t^\J \rangle \right| \leq \Cr{m-tri-ine-t}
\quad \text{for vary} \; \; i,j,k \in \{1,2,3\}, \; \; t \in [0,T)
\end{equation}
at the junction point $\vec{a}$.
\end{cor}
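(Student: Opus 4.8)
The plan is to read Corollary~\ref{cor:bdd-sin2} off directly from Lemma~\ref{lem:tri-ine-t}; the only point needing attention is the order in which the constants are fixed. First I would invoke Lemma~\ref{lem:tri-ine-t} to produce the constants $\Cr{e-ex-inner},\Cr{c-ex-inner}>0$ together with the decay rate $\Cr{l-ex-deltaa}>0$ (the rate of Corollary~\ref{cor:exdecrease} with its threshold parameter taken equal to $\Cr{e-ex-inner}$); at this stage all three are fixed numbers depending only on $\gamma$, $\sigma$ and $\{P^\J\}$. Given the prescribed target $\Cr{m-tri-ine-t}\in(1/2,1)$, I would then \emph{define}
\[
\Cr{e-ex-inner2}:=\min\!\left\{\Cr{e-ex-inner},\ \left(\frac{\Cr{m-tri-ine-t}-\tfrac12}{\Cr{c-ex-inner}}\right)^{\!2}\right\},
\]
which is strictly positive precisely because $\Cr{m-tri-ine-t}>1/2$.

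Next, assuming $E(0)\le\sigma(0)\Cr{min-length}$ and \eqref{as-bdd-sin}, the inequality $\Cr{e-ex-inner2}\le\Cr{e-ex-inner}$ lets me apply Lemma~\ref{lem:tri-ine-t}, so for all $i,j\in\{1,2,3\}$ with $i\ne j$ and all $t\in[0,T)$,
\[
\left|\langle\tau_t^\I,\tau_t^\J\rangle+\tfrac12\right|
\le \Cr{c-ex-inner}\,e^{-\frac{\Cr{l-ex-deltaa}}{2}t}\sqrt{\sum_{j=1}^3(\Delta^\J\alpha(0))^2}
\le \Cr{c-ex-inner}\sqrt{\Cr{e-ex-inner2}}
\le \Cr{m-tri-ine-t}-\tfrac12,
\]
using $e^{-\frac{\Cr{l-ex-deltaa}}{2}t}\le1$ and \eqref{as-bdd-sin} in the middle step and the definition of $\Cr{e-ex-inner2}$ in the last. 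Hence $\langle\tau_t^\I,\tau_t^\J\rangle\in[-\Cr{m-tri-ine-t},\,\Cr{m-tri-ine-t}-1]$, and since $\Cr{m-tri-ine-t}>1/2$ gives $|\Cr{m-tri-ine-t}-1|=1-\Cr{m-tri-ine-t}<\Cr{m-tri-ine-t}$, this interval is contained in $[-\Cr{m-tri-ine-t},\Cr{m-tri-ine-t}]$, i.e.\ \eqref{bdd-sin2} holds. (For $i=j$ the inner product equals $1$, so \eqref{bdd-sin2} is to be read for distinct indices, exactly as in Lemma~\ref{lem:tri-ine-t}.)

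There is no genuine obstacle here: the corollary is merely a restatement of Lemma~\ref{lem:tri-ine-t} in the form used later, namely that $|\langle\tau_t^\I,\tau_t^\J\rangle|$ at the junction is uniformly (in $t$) bounded away from $1$, which keeps $1-c^{(1)}c^{(2)}c^{(3)}$ bounded away from $0$ and thereby controls the tangent-velocity expressions of Lemmas~\ref{lem:l-v-junction}--\ref{lem:re-lambda} in the forthcoming $L^2$ and higher-order curvature estimates. The one step needing a little care — the closest thing to a ``main point'' — is the logical ordering above: since $\Cr{c-ex-inner}$ itself depends on the smallness threshold chosen inside Lemma~\ref{lem:tri-ine-t} (through $\Cr{l-ex-deltaa}$ and through $\max_{|\alpha|\le\Cr{e-ex-inner}^{1/2}}|\partial_\alpha\sigma(\alpha)|$), that threshold must be fixed before $\Cr{e-ex-inner2}$ is defined in terms of $\Cr{c-ex-inner}$ and $\Cr{m-tri-ine-t}$, so that no circularity arises.
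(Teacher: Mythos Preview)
Your proposal is correct and matches the paper's approach: the paper states Corollary~\ref{cor:bdd-sin2} without proof, treating it as an immediate consequence of Lemma~\ref{lem:tri-ine-t}, and your argument makes that deduction explicit by choosing $\Cr{e-ex-inner2}$ small enough that $\Cr{c-ex-inner}\sqrt{\Cr{e-ex-inner2}}\le \Cr{m-tri-ine-t}-\tfrac12$. Your attention to the order in which the constants are fixed, and the remark that \eqref{bdd-sin2} is meant for distinct indices, are both apt clarifications.
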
 


\section{Exponential $L^2$-decay of the curvatures} \label{sec:L2-estimate}

In this section, we prove the exponential $L^2$-decay of the curvatures assuming the closeness of the family of the initial datum to the unique Steiner triod, namely, assuming smallness of the misorientations and the $L^2$-norm of the curvatures. 
The exponential decay will be continued to obtain higher order estimate of the curvatures which corresponds to a smoothing effect and it also ensures the local exponential stability of the Steiner triod in $C^\infty$-topology.

We assume that a smooth geometric flow governed by \eqref{eq-curve}--\eqref{bc-boundary} exists until a time $T>0$ as in Section \ref{sec:orientation}. 
We note that the restriction \eqref{rest-para} is not assumed also in this section.
The basic idea for the $L^2$-estimate based on the energy method as in \cite{GKS}. 
In order to obtain energy type inequality, we apply the geometric identities in Lemma \ref{lem:pros-kappa} and Lemma \ref{lem:l-v-junction}, and also additional identities as follows.

\begin{lem}\label{lem:pros-kappa2}
Any smooth geometric flow governed by \eqref{eq-curve}--\eqref{bc-boundary} fulfills the following identities. 
\begin{align}
&\partial_t \tau_t^\J = (\sigma(\Delta^\J \alpha(t))\partial_s \kappa^\J_t + \kappa_t^\J \lambda^\J_t) \nu_t^\J, \label{tau-deri-t}\\
&\partial_t \kappa_t^\J = \sigma(\Delta^\J \alpha(t))\partial_s^2 \kappa^\J_t + \sigma(\Delta^\J \alpha(t)) (\kappa_t^\J)^3 + \lambda^\J \partial_s \kappa_t^\J \label{eq-kappa}
\end{align}
for any $(x,t) \in [0,1] \times [0,T)$ and $j \in \{1,2,3\}$. 
\end{lem}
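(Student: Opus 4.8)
The plan is to derive both identities from the already-established formulas \eqref{jacobi-deri-t}, \eqref{eq-theta1}, and the Frenet--Serret relations $\partial_s \tau^\J_t = \kappa^\J_t \nu^\J_t$ and $\partial_s \nu^\J_t = -\kappa^\J_t \tau^\J_t$. First I would compute $\partial_t \tau^\J_t$ using the representation $\tau^\J_t = (\cos\Theta^\J, \sin\Theta^\J)$ from \eqref{def-theta}, which gives immediately $\partial_t \tau^\J_t = (\partial_t \Theta^\J)(-\sin\Theta^\J, \cos\Theta^\J) = (\partial_t \Theta^\J)\nu^\J_t$. Then I would substitute the formula \eqref{eq-theta1} for $\partial_t\Theta^\J$, namely $\partial_t \Theta^\J = \sigma(\Delta^\J\alpha)\partial_s^2\Theta^\J + (\partial_s\Theta^\J)\lambda^\J_t$, and use $\kappa^\J_t = \partial_s\Theta^\J$ together with $\partial_s \kappa^\J_t = \partial_s^2 \Theta^\J$ to rewrite this as $(\sigma(\Delta^\J\alpha)\partial_s\kappa^\J_t + \kappa^\J_t\lambda^\J_t)\nu^\J_t$, which is exactly \eqref{tau-deri-t}.

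For \eqref{eq-kappa}, since $\kappa^\J_t = \partial_s\Theta^\J = \frac{1}{|\partial_x\xi^\J|}\partial_x\Theta^\J$ by \eqref{para-s-x}, I would differentiate in $t$, being careful that $\partial_t$ and $\partial_s$ do not commute: using \eqref{jacobi-deri-t} one has the commutator relation $\partial_t \partial_s = \partial_s \partial_t + (\sigma(\Delta^\J\alpha)(\partial_s\Theta^\J)^2 - \partial_s\lambda^\J_t)\partial_s = \partial_s\partial_t + (V^\J_t\kappa^\J_t - \partial_s\lambda^\J_t)\partial_s$. Applying this to $\Theta^\J$ and using $\partial_t\Theta^\J = \sigma(\Delta^\J\alpha)\partial_s^2\Theta^\J + \kappa^\J_t\lambda^\J_t$ together with $\partial_s\Theta^\J = \kappa^\J_t$, I would obtain
\[
\partial_t\kappa^\J_t = \partial_s\bigl(\sigma(\Delta^\J\alpha)\partial_s\kappa^\J_t + \kappa^\J_t\lambda^\J_t\bigr) + \bigl(\sigma(\Delta^\J\alpha)(\kappa^\J_t)^2 - \partial_s\lambda^\J_t\bigr)\kappa^\J_t.
\]
Expanding the $\partial_s$ (noting $\sigma(\Delta^\J\alpha)$ is independent of $s$, so $\partial_s\sigma(\Delta^\J\alpha)=0$) gives $\sigma(\Delta^\J\alpha)\partial_s^2\kappa^\J_t + (\partial_s\kappa^\J_t)\lambda^\J_t + \kappa^\J_t\partial_s\lambda^\J_t + \sigma(\Delta^\J\alpha)(\kappa^\J_t)^3 - \kappa^\J_t\partial_s\lambda^\J_t$, and the two $\kappa^\J_t\partial_s\lambda^\J_t$ terms cancel, leaving \eqref{eq-kappa}.

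I do not anticipate a serious obstacle here; these are standard computations for the curve shortening flow with variable mobility, and the paper itself signals this by citing \cite{CZ} for the analogous identities in Lemma \ref{lem:pros-kappa}. The one point requiring care is the noncommutativity of $\partial_t$ and $\partial_s$ and keeping track of the correct commutator term from \eqref{jacobi-deri-t}; a natural sanity check is to verify that setting $\sigma \equiv 1$ recovers the classical formulas $\partial_t\tau = (\partial_s\kappa)\nu$ (modulo the tangential reparametrization term) and $\partial_t\kappa = \partial_s^2\kappa + \kappa^3 + \lambda\partial_s\kappa$. Alternatively, one can avoid the commutator bookkeeping entirely by working in the fixed variable $x$: write $\kappa^\J_t = |\partial_x\xi^\J|^{-1}\partial_x\Theta^\J$, differentiate in $t$ using \eqref{jacobi-deri-t} for $\partial_t|\partial_x\xi^\J|$ and \eqref{eq-theta1} for $\partial_t\partial_x\Theta^\J = \partial_x\partial_t\Theta^\J$, and then translate back to arc-length derivatives; this is the route I would actually write out in detail.
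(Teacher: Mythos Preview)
Your proposal is correct. The paper does not actually write out a proof of this lemma at all: it says ``The proof is standard and thus we refer to \cite[Chapter 1]{CZ} for the details of the proof.'' Your derivation is exactly the standard one, and the commutator relation you derive, $\partial_t\partial_s = \partial_s\partial_t + (\sigma^\J(\kappa^\J_t)^2 - \partial_s\lambda^\J_t)\partial_s$, is precisely the identity the paper later records as \eqref{change-ts} and uses in the proof of Lemma~\ref{lem:pros-kappa-hi}. So your approach matches the implicit standard argument; you are simply supplying the details the paper omits.
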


The proof is standard and thus we refer to \cite[Chapter 1]{CZ} for the details of the proof. 
An additional boundary identity at the junction point will be needed to control the boundary terms in the energy type inequality. 

\begin{lem}\label{prop:dec-f}
For any smooth geometric flow governed by \eqref{eq-curve}--\eqref{bc-boundary}, there exists a smooth function $f^\J: [0,T) \to \mathbb{R}$ for $j \in \{1,2,3\}$ such that 
\begin{equation} \label{bc-diffe-kappa}
\begin{aligned}
&(\sigma(\Delta^\J \alpha(t))\partial_s \kappa^\J_t + \kappa_t^\J \lambda^\J_t) - (\sigma(\Delta^\JJ \alpha(t))\partial_s \kappa^\JJ_t + \kappa_t^\JJ \lambda^\JJ_t) \\
=&\; f^\J(\sigma(\Delta^{(1)}\alpha(t)), \sigma(\Delta^{(2)}\alpha(t)),\sigma(\Delta^{(3)}\alpha(t))) 
\end{aligned}
\end{equation}
at the junction point $\vec{a}(t)$. 
Furthermore, under the assumptions (A1)--(A3), there exist $\Cl[e]{e-ex-f}, \Cl[c]{c-ex-f}>0$ such that if $E(0) \le \sigma(0)\Cr{min-length}$, where $\Cr{min-length}$ is the constant in Lemma \ref{lem:min-length}, and 
\begin{equation}\label{dec-f-as} 
\sum_{j=1}^3 \left(\Delta^\J\alpha_0\right)^2 \le \Cr{e-ex-f}, 
\end{equation}
then 
\begin{equation}\label{ex-f-decrease}
|f^\J(\sigma(\Delta^{(1)}\alpha(t)), \sigma(\Delta^{(2)}\alpha(t)),\sigma(\Delta^{(3)}\alpha(t)))| \le \Cr{c-ex-f} e^{-\frac{\Cr{l-ex-deltaa}}{2}t } \sqrt{\sum_{j=1}^3 \left(\Delta^\J\alpha_0\right)^2} 
\end{equation}
for any $j \in \{1,2,3\}$ and $t \in [0,T)$, where $\Cr{l-ex-deltaa}$ is the constant defined in Corollary \ref{cor:exdecrease} replaced $\Cr{m-exdeltaa}$ by $\Cr{e-ex-f}$. 
\end{lem}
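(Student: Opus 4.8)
The plan is to recognize that the seemingly third-order combination on the left of \eqref{bc-diffe-kappa} is, at the junction, exactly $-\pt$ of the opening angle $\Theta^\JJ(1,t)-\Theta^\J(1,t)$, and that by the Herring condition this angle is a function of the three surface tensions $\sigma(\Delta^\K\alpha(t))$ alone. Indeed, \eqref{eq-theta1} together with $\kappa^\J_t=\ps\Theta^\J$ gives, at every $(x,t)$ and in particular at $x=1$,
\[
\sigma(\Delta^\J\alpha(t))\ps\kappa^\J_t+\kappa^\J_t\lambda^\J_t=\sigma(\Delta^\J\alpha(t))\ps^2\Theta^\J+(\ps\Theta^\J)\lambda^\J_t=\pt\Theta^\J(1,t),
\]
so the left-hand side of \eqref{bc-diffe-kappa} equals $-\pt\bigl(\Theta^\JJ(1,t)-\Theta^\J(1,t)\bigr)$. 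By \eqref{def-theta} one has $\langle\tau^\J_t,\tau^\JJ_t\rangle=\cos\bigl(\Theta^\JJ(1,t)-\Theta^\J(1,t)\bigr)$, so Lemma~\ref{lem:inner-tau} yields $\cos\bigl(\Theta^\JJ(1,t)-\Theta^\J(1,t)\bigr)=R^\J\bigl(\sigma(\Delta^{(1)}\alpha(t)),\sigma(\Delta^{(2)}\alpha(t)),\sigma(\Delta^{(3)}\alpha(t))\bigr)$, where $R^\J$ is the explicit rational function on the right of \eqref{inner-tau}, with denominator bounded below by $2\sigma(0)^2>0$. Differentiating this relation in $t$ and dividing by $\sin\bigl(\Theta^\JJ(1,t)-\Theta^\J(1,t)\bigr)=\langle\nu^\J_t,\tau^\JJ_t\rangle$ — which stays away from $0$ along the flow, the curves remaining transverse at the junction (in the regime of this lemma this is ensured by Lemma~\ref{lem:tri-ine-t}) — we obtain
\[
-\pt\bigl(\Theta^\JJ(1,t)-\Theta^\J(1,t)\bigr)=\frac{1}{\sin\bigl(\Theta^\JJ(1,t)-\Theta^\J(1,t)\bigr)}\sum_{k=1}^3\partial_{\sigma^\K}R^\J\cdot\pt\bigl(\sigma(\Delta^\K\alpha(t))\bigr).
\]
Since $\pt\bigl(\sigma(\Delta^\K\alpha)\bigr)=\pa\sigma(\Delta^\K\alpha)\,\pt(\Delta^\K\alpha)$ and $\pt\alpha^\J$ is, by \eqref{eq-alpha}, a smooth function of the misorientations and the lengths $L^\K$, the right-hand side is a smooth function of $t$ built from $\sigma(\Delta^\K\alpha(t))$ (and $\pa\sigma$, $L^\K$); this is the required $f^\J$ and proves the first assertion.

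Now assume (A1)--(A3), $E(0)\le\sigma(0)\Cr{min-length}$ and $\sum_{j=1}^3(\Delta^\J\alpha_0)^2\le\Cr{e-ex-f}$ with $\Cr{e-ex-f}>0$ to be fixed small. By Lemma~\ref{lem:ori-decrease} the misorientations stay bounded, so each $\sigma(\Delta^\K\alpha(t))$ lies in a fixed compact subset of $(0,\infty)$ and $|\partial_{\sigma^\K}R^\J|$ is bounded by a constant depending only on $\sigma$ and $\Cr{e-ex-f}$. By Lemma~\ref{lem:tri-ine-t}, shrinking $\Cr{e-ex-f}$, the quantity $\langle\tau^\J_t,\tau^\JJ_t\rangle$ stays within $1/4$ of $-1/2$, hence $\bigl|\sin\bigl(\Theta^\JJ(1,t)-\Theta^\J(1,t)\bigr)\bigr|\ge\sqrt{7}/4$. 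Finally, by \eqref{delta-aa-ex-decrease}--\eqref{delta-at-ex-decrease},
\[
\bigl|\pt\bigl(\sigma(\Delta^\K\alpha(t))\bigr)\bigr|=\bigl|\pa\sigma(\Delta^\K\alpha(t))\bigr|\,\bigl|\pt\alpha^{(k-1)}(t)-\pt\alpha^\K(t)\bigr|\le 2\Cr{c-ex-deri-deltaa}\Cr{c-ex-deltaa}\,e^{-\Cr{l-ex-deltaa}t}\sum_{j=1}^3(\Delta^\J\alpha_0)^2 .
\]
Inserting these three bounds into the formula above for $f^\J$ and using $\sum_j(\Delta^\J\alpha_0)^2\le\Cr{e-ex-f}^{1/2}\bigl(\sum_j(\Delta^\J\alpha_0)^2\bigr)^{1/2}$ together with $e^{-\Cr{l-ex-deltaa}t}\le e^{-\Cr{l-ex-deltaa}t/2}$ gives \eqref{ex-f-decrease} with $\Cr{c-ex-f}$ a constant multiple of $\Cr{e-ex-f}^{1/2}$.

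The one genuine point is the identification in the first paragraph: that the a priori third-order boundary term equals the time-derivative of a junction angle entirely governed by the Herring balance, hence a function of the zeroth-order surface tensions. After that, the decay is an immediate consequence of the exponential estimates of Corollary~\ref{cor:exdecrease} and of the uniform transversality at the junction from Lemma~\ref{lem:tri-ine-t}; the only technicality is keeping $\sin\bigl(\Theta^\JJ(1,t)-\Theta^\J(1,t)\bigr)$ and the coefficients $\partial_{\sigma^\K}R^\J$ uniformly bounded, which is precisely what the smallness of the initial misorientations ensures.
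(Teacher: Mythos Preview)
Your proof is correct and follows essentially the same route as the paper. The paper computes $\partial_t\langle\tau^\J_t,\tau^\JJ_t\rangle$ via the formula $\partial_t\tau^\J_t=(\sigma^\J\partial_s\kappa^\J_t+\kappa^\J_t\lambda^\J_t)\nu^\J_t$ and divides by $\langle\nu^\J_t,\tau^\JJ_t\rangle=\sqrt{1-\langle\tau^\J_t,\tau^\JJ_t\rangle^2}$, whereas you read off $\pt\Theta^\J$ directly from \eqref{eq-theta1} and divide by $\sin(\Theta^\JJ-\Theta^\J)$; since $\langle\tau^\J_t,\tau^\JJ_t\rangle=\cos(\Theta^\JJ-\Theta^\J)$ these are literally the same computation, and the decay step (bounding $\partial_{\sigma^\K}R^\J$, bounding $\sin$ away from zero via Lemma~\ref{lem:tri-ine-t}, and invoking Corollary~\ref{cor:exdecrease} for $\pt\sigma(\Delta^\K\alpha)$) matches the paper exactly.
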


\begin{proof}
Let $\sigma^\J = \sigma(\Delta^\J \alpha(t))$ for simplicity. 
Due to \eqref{tau-deri-t} at the junction point, we have for any $j \in \{1,2,3\}$ 
\[ \begin{aligned}
\partial_t \langle\tau_t^\J, \tau_t^\JJ \rangle =&\; (\sigma^\J \partial_s \kappa^\J_t + \kappa_t^\J \lambda^\J_t) \langle \nu_t^\J, \tau_t^\JJ \rangle +  (\sigma^\JJ \partial_s \kappa^\JJ_t + \kappa_t^\JJ \lambda^\JJ_t) \langle \tau_t^\J, \nu_t^\JJ \rangle \\
=&\; \left((\sigma^\J \partial_s \kappa^\J_t + \kappa_t^\J \lambda^\J_t) - (\sigma^\JJ \partial_s \kappa^\JJ_t + \kappa_t^\JJ \lambda^\JJ_t)\right) \langle \nu_t^\J, \tau_t^\JJ \rangle. 
\end{aligned}\]
We can see that $\Theta^\JJ - \Theta^\J \in (0, \pi)$ at the function point $\vec{a}$ for any $t>0$ from Lemma \ref{lem:tri-ine-t} and, additionally, \eqref{bdd-sin2} also implies
\begin{equation}\label{bdd-cos} 
\langle \nu_t^\J, \tau_t^\JJ \rangle = \sqrt{1-\langle \tau_t^\J, \tau_t^\JJ\rangle^2} \ge \sqrt{1-\Cr{m-tri-ine-t}^2} 
\end{equation}
for a fixed constant $\Cr{m-tri-ine-t} \in (1/2, 1)$ if $\Cr{e-ex-f}$ is sufficiently small. 
Letting 
\begin{equation}\label{def-f}
f^\J(\sigma^{(1)}, \sigma^{(2)},\sigma^{(3)}) := \frac{1}{\sqrt{1-\langle \tau_t^\J, \tau_t^\JJ\rangle^2}} \partial_t \left(\dfrac{(\sigma^{(j-1)})^2 - (\sigma^\J)^2 - (\sigma^{(j+1)})^2}{2 \sigma^\J \sigma^{(j+1)}}\right), 
\end{equation}
we have \eqref{bc-diffe-kappa} due to \eqref{inner-tau}. 
The time derivative term can be formulated as 
\begin{equation}\label{control-f-bdry1} 
\partial_t \left(\dfrac{(\sigma^{(j-1)})^2 - (\sigma^\J)^2 - (\sigma^{(j+1)})^2}{2 \sigma^\J \sigma^{(j+1)}}\right) = \sum_{k=1}^3 \frac{\hat{P}^\J_k(\sigma^{(1)}, \sigma^{(2)},\sigma^{(3)})}{\left(2 \sigma^\J \sigma^{(j+1)}\right)^2}\partial_\alpha \sigma(\Delta^\K\alpha(t)) \partial_t \Delta^\K\alpha(t), 
\end{equation}
where $\hat{P}^\J_k$ are polynomials in $\sigma^{(1)}, \sigma^{(2)}$ and $\sigma^{(3)}$ with finite degree. 
Since $\sigma^\K$ and $\partial_\alpha \sigma(\Delta^\K\alpha(t))$ are bounded and $\sigma^\K \ge \sigma(0)$, we thus have the boundedness of the absolute value of the coefficient of $\partial_t \Delta^\K\alpha(t)$ in \eqref{control-f-bdry1}, which yields \eqref{ex-f-decrease} due to \eqref{delta-at-ex-decrease} and \eqref{bdd-cos}. 
\end{proof}

Another key control to boundary terms in the energy type inequality is estimate of the tangent velocity by the curvatures at the junction point, which enable us to derive an energy type inequality consists of only geometric values independent of change of parametrizations.
The estimate of the tangent velocity can be obtained from Lemma \ref{lem:l-v-junction} and Corollary \ref{cor:bdd-sin2} immediately.

\begin{lem}\label{prop:con-tanv}
Assume (A1)--(A3). 
Let $\Cr{m-tri-ine-t} \in (1/2, 1)$ be an arbitrary constant. 
Let $\Cr{min-length}$ be the constant in Lemma \ref{lem:min-length} and $\Cr{e-ex-inner2}$ be the constant in Corollary \ref{cor:bdd-sin2}. 
If $E(0) \le \sigma(0)\Cr{min-length}$ and \eqref{as-bdd-sin} holds, then 
\begin{equation}\label{bdd-l-V} 
|\lambda_t^\J| \le \frac{3}{1- \Cr{m-tri-ine-t}^3} \sum_{i=1}^3 \sigma(\Delta^\I \alpha(t))|\kappa_t^\I| 
\end{equation}
at the junction point for any $j \in \{1,2,3\}$ and $t \in (0, T)$. 
\end{lem}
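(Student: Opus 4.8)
The plan is to obtain \eqref{bdd-l-V} directly from the explicit representation of the tangent velocities at the junction in Lemma \ref{lem:l-v-junction}, the only non-elementary ingredient being a uniform-in-time lower bound on the denominator $1-c^{(1)}c^{(2)}c^{(3)}$ appearing there, which is precisely what Corollary \ref{cor:bdd-sin2} supplies under the present hypotheses.

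Concretely, I would first invoke Corollary \ref{cor:bdd-sin2}: since (A1)--(A3) hold, $E(0)\le\sigma(0)\Cr{min-length}$, and \eqref{as-bdd-sin} is assumed, we have $|\langle\tau_t^\I,\tau_t^\J\rangle|\le\Cr{m-tri-ine-t}$ at $\vec a(t)$ for all $i,j\in\{1,2,3\}$ and all $t\in[0,T)$. Since $\tau^\K_t=(\cos\Theta^\K,\sin\Theta^\K)$, this says exactly that the quantities $c^\J=\cos(\Theta^\JJ-\Theta^\J)$ in \eqref{eq-lambda-V} satisfy $|c^\J|\le\Cr{m-tri-ine-t}<1$ at the junction, whence
\[ 1-c^{(1)}c^{(2)}c^{(3)}\ge 1-|c^{(1)}|\,|c^{(2)}|\,|c^{(3)}|\ge 1-\Cr{m-tri-ine-t}^3>0, \]
so \eqref{eq-lambda-V} is legitimate and its prefactor is bounded by $(1-\Cr{m-tri-ine-t}^3)^{-1}$. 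Next I would note that every entry of the $3\times3$ matrix in \eqref{eq-lambda-V} is a product of at most three of the numbers $c^{(k)},s^{(k)}$, hence has modulus at most $1$ (using $|c^{(k)}|\le 1$ and $|s^{(k)}|\le 1$). Taking moduli in the $j$-th row of \eqref{eq-lambda-V}, replacing $\sigma(\Delta^\I\alpha)\partial_s\Theta^\I$ by $\sigma(\Delta^\I\alpha)|\kappa^\I_t|$ (recall $\partial_s\Theta^\I=\kappa^\I_t$, and $V^\I_t=\sigma(\Delta^\I\alpha)\kappa^\I_t$ by \eqref{eq-curve}), and bounding each of the three resulting summands by the full sum $\sum_{i=1}^3\sigma(\Delta^\I\alpha(t))|\kappa^\I_t|$, yields \eqref{bdd-l-V} with the stated constant $3/(1-\Cr{m-tri-ine-t}^3)$.

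I do not expect any real obstacle: the lemma is a short bookkeeping consequence of Lemma \ref{lem:l-v-junction} and Corollary \ref{cor:bdd-sin2}. The only point deserving a moment's attention is verifying that the hypotheses listed in Lemma \ref{prop:con-tanv} are exactly those needed to apply Corollary \ref{cor:bdd-sin2}, so that the denominator $1-c^{(1)}c^{(2)}c^{(3)}$ stays bounded away from $0$ uniformly on $(0,T)$; the factor $3$ in \eqref{bdd-l-V} is simply the slack produced by estimating the three terms of each matrix row by the common upper bound $\sum_{i=1}^3\sigma(\Delta^\I\alpha(t))|\kappa^\I_t|$.
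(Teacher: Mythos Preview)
Your proposal is correct and matches the paper's approach exactly: the paper states that the estimate ``can be obtained from Lemma \ref{lem:l-v-junction} and Corollary \ref{cor:bdd-sin2} immediately'' and gives no further proof, so your write-up is precisely the intended argument.
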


\begin{remark}\label{rmk:bddness2}
The uniformly boundedness \eqref{bdd-sin2} was applied to Lemma \ref{prop:dec-f} and Lemma \ref{prop:con-tanv}. 
According to Remark \ref{rmk:bddness1}, we can obtain the boundedness of $|f^\J|$ and the estimate \eqref{bdd-l-V} whenever \eqref{bdd-sin2} holds without assuming (A2)--(A3) and $E(0) \le \sigma(0)\Cr{min-length}$. 
\end{remark}

We now derive an energy type equality from the geometric identities obtained above and clarify additional necessary estimates and discussions to obtain the $L^2$-decay of the curvatures. 
The formulation and the calculation will be long, and thus we simplify the notions $\sigma(\Delta^\J\alpha(t))$ and $\partial_\alpha \sigma(\Delta^\J\alpha(t))$ to $\sigma^\J$ and $\partial_\alpha \sigma^\J$, respectively, for any $j \in \{1,2,3\}$. 

\begin{lem}\label{lem:ene-ine-kappa}
Any smooth geometric flow fulfills the following energy type identity. 
\begin{equation}\label{ene-ine-kappa}
\begin{aligned}
&\dfrac{d}{dt} \sum_{j=1}^3 \int_{\Gamma^\J} (\sigma^\J)^2 (\kappa^\J_t)^2 \; ds \\
&= \sum_{j=1}^3 \int_{\Gamma^\J_t} (\sigma^\J)^3 \{ -2(\partial_s \kappa^\J_t)^2 + (\kappa^\J_t)^4\} + 2 \sigma^\J (\partial_\alpha \sigma^\J) (\partial_t \Delta^\J\alpha) (\kappa^\J_t)^2  \; ds \\
&\; \; + \sum_{j=1}^3 \left\{\frac{2}{3}\left((\sigma^\J)^2 \kappa_t^\J\right)\Big\lfloor_{\text{at} \; \vec{a}} f^{(j-1)} - \frac{2}{3}\left((\sigma^\J)^2 \kappa_t^\J\right)\Big\lfloor_{\text{at} \; \vec{a}} f^\J + \left((\sigma^\J)^2 (\kappa_t^\J)^2 \lambda_t^\J \right)\Big\lfloor_{\text{at} \; \vec{a}}\right\}
\end{aligned}
\end{equation}
\end{lem}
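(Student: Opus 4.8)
The plan is to differentiate the functional $t\mapsto\sum_{j=1}^3\int_{\Gamma^\J_t}(\sigma^\J)^2(\kappa^\J_t)^2\,ds$ directly, pulling each integral back to the reference interval $[0,1]$, substituting the evolution equations of Lemma~\ref{lem:pros-kappa} and Lemma~\ref{lem:pros-kappa2}, and then integrating by parts, after which the two endpoints $x=0$ and $x=1$ of each curve are treated separately. The structural fact used throughout is that $\sigma^\J=\sigma(\Delta^\J\alpha(t))$ depends only on $t$, hence is constant along each $\Gamma^\J_t$ and may be freely moved across $\partial_s$ and across the arc-length integral.

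First I would write $\int_{\Gamma^\J_t}(\sigma^\J)^2(\kappa^\J_t)^2\,ds=\int_0^1(\sigma^\J)^2(\kappa^\J_t)^2|\px\xi^\J|\,dx$ and differentiate under the integral. Three kinds of terms appear: the time derivative of $(\sigma^\J)^2$, which by the chain rule and $\pt\sigma^\J=(\pa\sigma^\J)\,\pt\Delta^\J\alpha$ produces the term $2\sigma^\J(\pa\sigma^\J)(\pt\Delta^\J\alpha)(\kappa^\J_t)^2$ verbatim; the time derivative of $(\kappa^\J_t)^2$, for which I substitute \eqref{eq-kappa} to obtain $2(\sigma^\J)^3\kappa^\J_t\partial_s^2\kappa^\J_t+2(\sigma^\J)^3(\kappa^\J_t)^4+2(\sigma^\J)^2\lambda^\J_t\kappa^\J_t\partial_s\kappa^\J_t$; and the time derivative of the length element, for which \eqref{jacobi-deri-t} together with $V^\J_t=\sigma^\J\kappa^\J_t$ from \eqref{eq-curve} gives $(\sigma^\J)^2(\kappa^\J_t)^2\big(-\sigma^\J(\kappa^\J_t)^2+\partial_s\lambda^\J_t\big)$. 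Summing these, the two quartic contributions collapse to $(\sigma^\J)^3(\kappa^\J_t)^4$ and the two tangent-velocity contributions assemble into the single total arc-length derivative $\partial_s\big((\sigma^\J)^2(\kappa^\J_t)^2\lambda^\J_t\big)$.

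I would then integrate the resulting expression. An integration by parts turns $\int_{\Gamma^\J_t}2(\sigma^\J)^3\kappa^\J_t\partial_s^2\kappa^\J_t\,ds$ into $-\int_{\Gamma^\J_t}2(\sigma^\J)^3(\partial_s\kappa^\J_t)^2\,ds$ plus the endpoint term $\big[2(\sigma^\J)^3\kappa^\J_t\partial_s\kappa^\J_t\big]$, while $\int_{\Gamma^\J_t}\partial_s\big((\sigma^\J)^2(\kappa^\J_t)^2\lambda^\J_t\big)\,ds$ equals the endpoint term $\big[(\sigma^\J)^2(\kappa^\J_t)^2\lambda^\J_t\big]$. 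At the fixed endpoint $P^\J$ (i.e.\ $x=0$) both endpoint contributions vanish by \eqref{bc-kappa}, where $\kappa^\J_t=\lambda^\J_t=0$, so after summing over $j$ only the interior integral of \eqref{ene-ine-kappa} survives, together with the junction contribution $\sum_{j=1}^3\big(2(\sigma^\J)^3\kappa^\J_t\partial_s\kappa^\J_t+(\sigma^\J)^2(\kappa^\J_t)^2\lambda^\J_t\big)$ evaluated at $\vec a$.

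It then remains to rewrite this junction contribution in the stated form, and this is the only delicate step. Setting $A^\J:=\sigma^\J\partial_s\kappa^\J_t+\kappa^\J_t\lambda^\J_t$ at $\vec a$ --- which, by \eqref{tau-deri-t}, is exactly $\langle\pt\tau^\J_t,\nu^\J_t\rangle$ --- one has $\sigma^\J\partial_s\kappa^\J_t=A^\J-\kappa^\J_t\lambda^\J_t$, so $2(\sigma^\J)^3\kappa^\J_t\partial_s\kappa^\J_t=2(\sigma^\J)^2\kappa^\J_t A^\J-2(\sigma^\J)^2(\kappa^\J_t)^2\lambda^\J_t$. By Lemma~\ref{prop:dec-f} the three differences $A^\J-A^\JJ=f^\J$ form a cyclic system with $f^{(1)}+f^{(2)}+f^{(3)}=0$, whose solution writes each $A^\J$ as the average $\bar A:=\frac13\sum_i A^\I$ plus the deviation $\frac13(f^\J-f^{(j-1)})$; since $\sum_{j=1}^3(\sigma^\J)^2\kappa^\J_t=0$ at $\vec a$ by \eqref{bc-sum-kappa}, multiplying by $(\sigma^\J)^2\kappa^\J_t$ and summing annihilates the $\bar A$ part and leaves $\sum_{j=1}^3 2(\sigma^\J)^2\kappa^\J_t A^\J$ expressed purely through the $f$'s with the coefficient $\frac23$. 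Collecting the leftover $\lambda^\J_t$ terms then reproduces the three boundary terms of \eqref{ene-ine-kappa}. I expect this junction bookkeeping --- the partial cancellation of the tangent-velocity terms, the cyclic index shifts, and the use of \eqref{bc-sum-kappa} to remove the average --- to be the main obstacle, while everything preceding it is a direct computation from Lemma~\ref{lem:pros-kappa} and Lemma~\ref{lem:pros-kappa2}.
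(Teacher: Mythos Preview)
Your proposal is correct and follows essentially the same route as the paper: differentiate under the integral using \eqref{jacobi-deri-t} and \eqref{eq-kappa}, integrate by parts, kill the $P^\J$ boundary terms with \eqref{bc-kappa}, and then handle the junction contribution by combining \eqref{bc-sum-kappa} with \eqref{bc-diffe-kappa}. Your explicit ``average plus deviation'' resolution of the cyclic system $A^\J-A^{(j+1)}=f^\J$ is exactly the content of the paper's algebraic identity \eqref{simple-sum}, just phrased slightly differently.
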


\begin{proof}
We have by \eqref{jacobi-deri-t}, \eqref{bc-kappa}, \eqref{eq-kappa} and integration by parts
\begin{equation} \label{ene-ine-kappa1}
\begin{aligned}
&\; \dfrac{d}{dt} \int_{\Gamma^\J_t} (\sigma(\Delta^\J\alpha))^2 (\kappa^\J_t)^2 \; ds \\
=&\; \int_{\Gamma^\J_t}  2 (\sigma^\J)^2 \kappa^\J_t \partial_t \kappa^\J_t  + 2\sigma^\J (\partial_\alpha \sigma^\J) (\partial_t \Delta^\J\alpha) (\kappa^\J_t)^2 \\
&\; + (\sigma^\J)^2 (\kappa^\J_t)^2 (-\sigma^\J(\kappa_t^\J)^2 + \partial_s \lambda_t^\J) \; ds \\
=&\; \int_{\Gamma^\J} (\sigma^\J)^3 \{2 \kappa^\J_t \partial_s^2 \kappa_t^\J + (\kappa^\J_t)^4 \} + 2 \sigma^\J (\partial_\alpha \sigma^\J) (\partial_t \Delta^\J\alpha) (\kappa^\J_t)^2 \\
&\; + (\sigma^\J)^2\partial_s ((\kappa^\J_t)^2 \lambda_t^\J) \; ds \\
=&\; \int_{\Gamma^\J} (\sigma^\J)^3 \{ -2(\partial_s \kappa^\J_t)^2 + (\kappa^\J_t)^4\} + 2 \sigma^\J (\partial_\alpha \sigma^\J) (\partial_t \Delta^\J\alpha) (\kappa^\J_t)^2 \; ds \\
&\; + (\sigma^\J)^2 (2 \kappa_t^\J \partial_s V^\J_t + (\kappa^\J_t)^2 \lambda_t^\J ) \lfloor_{\text{at} \; \vec{a}} \\
=&\; \int_{\Gamma^\J} (\sigma^\J)^3 \{ -2(\partial_s \kappa^\J_t)^2 + (\kappa^\J_t)^4\} + 2 \sigma^\J (\partial_\alpha \sigma^\J) (\partial_t \Delta^\J\alpha) (\kappa^\J_t)^2  \; ds \\
&\; + 2 \left((\sigma^\J)^2 \kappa_t^\J (\partial_s V_t^\J + \kappa_t^\J \lambda_t^\J)\right)\Big\lfloor_{\text{at} \; \vec{a}} - \left((\sigma^\J)^2 (\kappa_t^\J)^2 \lambda_t^\J \right)\Big\lfloor_{\text{at} \; \vec{a}}. 
\end{aligned} 
\end{equation}
The equalities \eqref{bc-sum-kappa} and \eqref{bc-diffe-kappa} imply 
\begin{equation}\label{ene-ine-kappa2} 
\begin{aligned}
&\;\sum_{j=1}^3 \left((\sigma^\J)^2 \kappa_t^\J (\partial_s V_t^\J + \kappa_t^\J \lambda_t^\J)\right)\Big\lfloor_{\text{at} \; \vec{a}} \\
=&\; \frac{1}{3}\sum_{j=1}^3 \left\{\left((\sigma^\J)^2 \kappa_t^\J\right)\Big\lfloor_{\text{at} \; \vec{a}} f^\J(\sigma^{(1)}, \sigma^{(2)},\sigma^{(3)}) - \left((\sigma^\J)^2 \kappa_t^\J\right)\Big\lfloor_{\text{at} \; \vec{a}} f^{(j-1)}(\sigma^{(1)}, \sigma^{(2)},\sigma^{(3)})\right\}. 
\end{aligned} 
\end{equation}
We here note that an identity for $A^\J, B^\J$ and $F^\J$ with $\sum_{j=}^3 A_j = 0$ and $B^\J = B^\JJ + F^\J$ for $j \in \{1,2,3\}$ as 
\begin{equation}\label{simple-sum}
\begin{aligned}
\sum_{j=1}^3 A^\J B^\J =&\; \frac{1}{3} (A^{(1)}B^{(1)} + A^{(2)}(B^{(1)} -F^{(1)}) + A^{(3)}(B^{(1)} + F^{(3)})) \\
&\; + \frac{1}{3} (A^{(1)} (B^{(2)} + F^{(1)}) + A^{(2)} B^{(2)}  + A^{(3)} (B^{(2)} - F^{(2)})) \\
&\; + \frac{1}{3} (A^{(1)} (B^{(3)} - F^{(3)}) + A^{(2)} (B^{(3)} + F^{(2)}) + A^{(3)} B^{(3)})) \\
=&\; \frac{1}{3} \sum_{j=1}^3 A^\J (F^\J - F^{(j-1)}) 
\end{aligned}
\end{equation}
has been applied. 
We thus have \eqref{ene-ine-kappa} by taking the summation of \eqref{ene-ine-kappa1} with respect to $j \in \{1,2,3\}$. 
\end{proof}

Our aim is to prove that the right hand side of \eqref{ene-ine-kappa} is bounded by some negative constant times the weighted $L^2$-norm of the curvature. 
Therefore, the important term is the negative term in the integration in \eqref{ene-ine-kappa} since a Poincar\'e type inequality can be applied to the term. 
Hereafter, we will introduce the Poincar\'e type inequality under the boundary conditions \eqref{bc-kappa} and \eqref{bc-sum-kappa} by applying the Reyleigh quotient. 

\subsection{Reyleigh quotient} \label{subsec:Reyleigh}

We consider the Reyleigh quotient in the following settings. 

\begin{definition}
Let $\tilde{L}^\J, \tilde{\sigma}^\J > 0$ be fixed constants for $j \in \{1,2,3\}$ and define the Hilbert spaces $H, V$ with inner products $\langle \cdot, \cdot \rangle_H$ and $\langle \cdot, \cdot \rangle_V$ as 
\begin{align*}
H :=&\; L^2(0,\tilde{L}^{(1)}) \times L^2(0,\tilde{L}^{(2)}) \times L^2(0,\tilde{L}^{(3)}), \\
V :=&\; \{\phi := (\phi^{(1)}, \phi^{(2)}, \phi^{(3)}) \in H^1(0,\tilde{L}^{(1)}) \times H^1(0,\tilde{L}^{(2)}) \times H^1(0,\tilde{L}^{(3)}) : \\
&\quad \quad \quad \phi^\J(0) = 0, \quad (\tilde{\sigma}^{(1)})^2 \phi^{(1)}(\tilde{L}^{(1)}) + (\tilde{\sigma}^{(2)})^2 \phi^{(2)}(\tilde{L}^{(2)}) + (\tilde{\sigma}^{(3)})^2 \phi^{(3)}(\tilde{L}^{(3)})= 0 \}, \\
\langle\phi, \psi \rangle_H :=&\; \sum_{j=1}^3 (\tilde{\sigma}^\J)^2 \int_0^{\tilde{L}^\J} \phi^\J(s) \psi^\J(s) \; ds, \\
\langle\phi, \psi \rangle_V :=&\; \sum_{j=1}^3 (\tilde{\sigma}^\J)^2 \left(\int_0^{\tilde{L}^\J} \phi^\J(s) \psi^\J(s) \; ds + \int_0^{\tilde{L}^\J} \partial_s \phi^\J(s) \partial_s \psi^\J(s) \; ds\right). 
\end{align*}
The norm of $H$ and $V$ are denoted by $\| \cdot \|_H$ and  $\| \cdot \|_V$, respectively. 
\end{definition}

We should introduce the following functionals to consider the Rayleigh quotient related to the negative term in \eqref{ene-ine-kappa}.

\begin{definition}\label{def:IJ}
We define a bilinear form $J: V \times V \to \mathbb{R}$ and a functional $I: V\setminus \{0\} \to \mathbb{R}$ by 
\begin{align*}
J(\phi, \psi) := &\; \sum_{j=1}^3 (\tilde{\sigma}^\J)^3\int_{0}^{\tilde{L}^\J} \partial_s \phi^\J \partial_s \psi^\J \; ds, \\
I(\phi) :=&\; \frac{J(\phi,\phi)}{\langle \phi, \phi\rangle_H}. 
\end{align*}
\end{definition}

The following linear operator $\mathcal{L}$ is naturally leaded from the Euler-Lagrange equation of the Rayleigh quotient and we consider the weak form of $\mathcal{L}$. 

\begin{definition}
For boundary conditions 
\begin{align}
&\phi^\J(0) = 0 \quad \text{for} \; \; j \in \{1,2,3\}, \label{Ray-bc-1}\\
&\tilde{\sigma}^{(1)} \partial_s \phi^{(1)} (\tilde{L}^{(1)}) = \tilde{\sigma}^{(2)} \partial_s \phi^{(2)} (\tilde{L}^{(2)}) = \tilde{\sigma}^{(3)} \partial_s \phi^{(3)} (\tilde{L}^{(3)}), \label{Ray-bc-2}\\
&(\tilde{\sigma}^{(1)})^2 \phi^{(1)}(\tilde{L}^{(1)}) + (\tilde{\sigma}^{(2)})^2 \phi^{(2)}(\tilde{L}^{(2)}) + (\tilde{\sigma}^{(3)})^2 \phi^{(3)}(\tilde{L}^{(3)})= 0, \label{Ray-bc-3}
\end{align}
we define a linear operator $\mathcal{L}$ in $H$ and its domain $\mathcal{D}(\mathcal{L})$ by 
\[ \begin{aligned}
\mathcal{L} \phi :=&\; (\tilde{\sigma}^{(1)}\partial_s^2 \phi^{(1)}, \tilde{\sigma}^{(2)}\partial_s^2 \phi^{(2)}, \tilde{\sigma}^{(3)}\partial_s^2 \phi^{(3)}), \\
\mathcal{D}(\mathcal{L}) :=&\; \{\phi = (\phi^{(1)}, \phi^{(2)}, \phi^{(3)}) : \\
&\; \quad \phi^\J \in H^2(0, \tilde{L}^\J) \; \; (j \in \{1,2,3\}) \; \; \text{and} \; \; \phi \; \; \text{satisfies \eqref{Ray-bc-1}--\eqref{Ray-bc-3}}\}. 
\end{aligned} \]
\end{definition}

\begin{lem}\label{lem:nonzero-eigen}
All eigenvalues of $\mathcal{L}$ are nonzero real values. 
\end{lem}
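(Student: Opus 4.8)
The claim is that the linear operator $\mathcal{L}\phi=(\tilde\sigma^{(1)}\partial_s^2\phi^{(1)},\tilde\sigma^{(2)}\partial_s^2\phi^{(2)},\tilde\sigma^{(3)}\partial_s^2\phi^{(3)})$ on $H$ with domain $\mathcal{D}(\mathcal{L})$ defined by the boundary conditions \eqref{Ray-bc-1}--\eqref{Ray-bc-3} has only nonzero real eigenvalues. The plan is to separate ``real'' from ``nonzero.'' For reality, the natural approach is to check that $\mathcal{L}$ is self-adjoint (or at least symmetric) with respect to the weighted inner product $\langle\cdot,\cdot\rangle_H$ on $\mathcal{D}(\mathcal{L})$; then all eigenvalues are automatically real. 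For nonvanishing, one shows directly that $\mathcal{L}\phi=0$ together with the boundary conditions forces $\phi=0$, i.e. the kernel is trivial.

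\textbf{Step 1: symmetry of $\mathcal{L}$.} For $\phi,\psi\in\mathcal{D}(\mathcal{L})$, compute $\langle\mathcal{L}\phi,\psi\rangle_H=\sum_{j=1}^3(\tilde\sigma^\J)^2\int_0^{\tilde L^\J}(\tilde\sigma^\J\partial_s^2\phi^\J)\psi^\J\,ds$. Integrate by parts twice. The interior terms produce $\sum_j(\tilde\sigma^\J)^3\int_0^{\tilde L^\J}\phi^\J\partial_s^2\psi^\J\,ds$, which is $\langle\phi,\mathcal{L}\psi\rangle_H$; what remains are boundary contributions of the form $(\tilde\sigma^\J)^3[\partial_s\phi^\J\psi^\J-\phi^\J\partial_s\psi^\J]_0^{\tilde L^\J}$. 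At $s=0$ these vanish by \eqref{Ray-bc-1}. At $s=\tilde L^\J$ one has to show $\sum_j(\tilde\sigma^\J)^3(\partial_s\phi^\J\psi^\J-\phi^\J\partial_s\psi^\J)(\tilde L^\J)=0$. Using \eqref{Ray-bc-2} to write $\tilde\sigma^\J\partial_s\phi^\J(\tilde L^\J)=c_\phi$ independent of $j$ (and similarly $c_\psi$ for $\psi$), the first sum becomes $c_\phi\sum_j(\tilde\sigma^\J)^2\psi^\J(\tilde L^\J)=0$ by \eqref{Ray-bc-3}, and symmetrically for the second; hence the boundary term vanishes and $\mathcal{L}$ is symmetric. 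Consequently every eigenvalue is real: if $\mathcal{L}\phi=\mu\phi$ with $\phi\neq0$, then $\mu\|\phi\|_H^2=\langle\mathcal{L}\phi,\phi\rangle_H=\langle\phi,\mathcal{L}\phi\rangle_H=\overline\mu\|\phi\|_H^2$, so $\mu=\overline\mu$. (If one prefers, work in the complexified space; the conjugates are harmless since the $\tilde\sigma^\J,\tilde L^\J$ are real.)

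\textbf{Step 2: triviality of the kernel.} Suppose $\mathcal{L}\phi=0$, i.e. $\partial_s^2\phi^\J=0$ on $(0,\tilde L^\J)$ for each $j$, so $\phi^\J(s)=a^\J s+b^\J$. Condition \eqref{Ray-bc-1} gives $b^\J=0$, so $\phi^\J(s)=a^\J s$ and $\partial_s\phi^\J=a^\J$. Condition \eqref{Ray-bc-2} says $\tilde\sigma^{(1)}a^{(1)}=\tilde\sigma^{(2)}a^{(2)}=\tilde\sigma^{(3)}a^{(3)}=:d$. Condition \eqref{Ray-bc-3} gives $\sum_j(\tilde\sigma^\J)^2\phi^\J(\tilde L^\J)=\sum_j(\tilde\sigma^\J)^2 a^\J\tilde L^\J=\sum_j(\tilde\sigma^\J)\tilde L^\J\cdot(\tilde\sigma^\J a^\J)=d\sum_j\tilde\sigma^\J\tilde L^\J=0$. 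Since $\tilde\sigma^\J,\tilde L^\J>0$, the sum $\sum_j\tilde\sigma^\J\tilde L^\J$ is strictly positive, forcing $d=0$, hence $a^\J=0$ for all $j$ and $\phi\equiv0$. Therefore $0$ is not an eigenvalue. Combining the two steps proves the lemma.

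\textbf{Main obstacle.} The only delicate point is the boundary-term computation in Step 1: one must carefully exploit the specific structure of \eqref{Ray-bc-2}--\eqref{Ray-bc-3} (the common value of $\tilde\sigma^\J\partial_s\phi^\J$ at the junction together with the weighted-sum-zero condition on the values) to cancel the two junction boundary terms against each other. This is a short but structure-sensitive calculation; everything else is routine. One should also note the domain $\mathcal{D}(\mathcal{L})$ is dense in $H$, so ``symmetric'' is meaningful, but for the eigenvalue statement symmetry on $\mathcal{D}(\mathcal{L})$ alone suffices and full self-adjointness need not be argued.
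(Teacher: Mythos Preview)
Your proof is correct and follows essentially the same approach as the paper: symmetry of $\mathcal{L}$ with respect to $\langle\cdot,\cdot\rangle_H$ gives reality of eigenvalues, and the affine-function analysis under the boundary conditions rules out $0$. Your Step 1 spells out the boundary-term cancellation that the paper merely asserts, and your Step 2 replaces the paper's sign-contradiction argument with the slightly cleaner observation that the common value $d=\tilde\sigma^\J a^\J$ satisfies $d\sum_j\tilde\sigma^\J\tilde L^\J=0$; both are equivalent.
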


\begin{proof}
Since $\mathcal{L}$ is self-adjoint with respect to the inner product $\langle \cdot, \cdot \rangle_H$ and its weight is real, $\mathcal{L}$ has only real eigenvalues. 
We thus suppose by contradiction that $\mathcal{L}$ has a zero eigenvalue. 
Then, its eigenfunction $\phi_0 \in \mathcal{D}(\mathcal{L}) \setminus \{0\}$ satisfies $\phi_0^\J \in C^\infty([0, \tilde{L}^\J])$ and is of the form 
\[ \phi_0^\J(s) = a^\J s + b^\J \quad \text{for} \; \; s \in [0, \tilde{L}^\J], \]
where $a^\J, b^\J \in \mathbb{R}$, since $\partial_s^2 \phi_0^\J = 0$ in $[0, \tilde{L}^\J]$ for $j \in \{1,2,3\}$. 
Applying the boundary conditions \eqref{Ray-bc-1}--\eqref{Ray-bc-3}, we have 
\begin{align} 
&b^\J = 0 \; \; \text{for} \; \;  j \in \{1,2,3\}, \quad \tilde{\sigma}^{(1)} a^{(1)} = \tilde{\sigma}^{(2)} a^{(2)} = \tilde{\sigma}^{(3)} a^{(3)}, \label{nonzero-eigen1}\\
&\sum_{j=1}^3 (\tilde{\sigma}^\J)^2 (a^\J \tilde L ^\J + b^\J )= 0. \label{nonzero-eigen2}
\end{align}
The equalities in \eqref{nonzero-eigen1} yield 
\[ {\rm sgn} a^\J = {\rm sgn} a^\I \quad \text{for} \; \; i,j \in \{1,2,3\}, \]
where ${\rm sgn}$ is the signum function, since $\tilde{\sigma}^\J$ is positive. 
It however contradicts \eqref{nonzero-eigen2} since $\tilde{L}^\J$ is positive. 
\end{proof}

\begin{lem}
The functional $I(\phi)$ has a minimizer $\overline{\phi} = (\overline{\phi}^{(1)}, \overline{\phi}^{(2)}, \overline{\phi}^{(3)}) \in V \setminus \{0\}$. 
Moreover, $\overline{\phi} \in \mathcal{D}(\mathcal{L})$, $\mathcal{L} \overline{\phi} = \zeta \overline{\phi}$, $\overline{\phi}^\J \in C^\infty([0, \tilde{L}^\J])$ for $j \in \{1,2,3\}$ and $\zeta < 0$, where $\zeta = - I(\overline{\phi})$. 
\end{lem}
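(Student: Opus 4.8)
The plan is to carry out the direct method of the calculus of variations for the Rayleigh quotient $I$ on the admissible space $V$, and then to upgrade the minimizer's regularity and identify it as an eigenfunction of $\mathcal{L}$ with negative eigenvalue. The whole argument is standard once the right compactness and coercivity facts are in place, so I will emphasize where the specific structure of $V$ (the Dirichlet condition at $s=0$ and the single linear constraint at the junction) is actually used.

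First I would establish that $\mu_0 := \inf_{\phi \in V \setminus \{0\}} I(\phi) > -\infty$, in fact $\mu_0 \ge 0$, since $J(\phi,\phi) \ge 0$ and $\langle \phi,\phi\rangle_H > 0$ for $\phi \neq 0$; homogeneity of degree zero lets me restrict to the set $\{\phi \in V : \langle \phi,\phi\rangle_H = 1\}$ and minimize $J(\phi,\phi)$ there. Take a minimizing sequence $\{\phi_n\}$. The condition $\phi_n^\J(0) = 0$ together with $J(\phi_n,\phi_n)$ bounded gives, via the one-dimensional Poincaré inequality on each interval $[0,\tilde{L}^\J]$, a uniform bound on $\|\partial_s \phi_n^\J\|_{L^2}$ and hence on $\|\phi_n\|_V$. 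By Rellich–Kondrachov (here just the compact embedding $H^1(0,\tilde{L}^\J) \hookrightarrow\hookrightarrow C([0,\tilde{L}^\J])$, which is the key compactness input) a subsequence converges weakly in $V$ and strongly in $H$ and in $C([0,\tilde{L}^\J])^3$ to some $\overline{\phi}$. Strong $C^0$ convergence preserves both boundary conditions \eqref{Ray-bc-1} and \eqref{Ray-bc-3}, so $\overline{\phi} \in V$; strong $H$-convergence gives $\langle\overline{\phi},\overline{\phi}\rangle_H = 1$, so $\overline{\phi} \neq 0$; and weak lower semicontinuity of $\phi \mapsto J(\phi,\phi)$ (a nonnegative quadratic form continuous on $V$) gives $J(\overline{\phi},\overline{\phi}) \le \liminf J(\phi_n,\phi_n) = \mu_0$. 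Hence $\overline{\phi}$ is a minimizer and $I(\overline{\phi}) = \mu_0$; set $\zeta := -I(\overline{\phi}) = -\mu_0 \le 0$.

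Next I would derive the Euler–Lagrange equation. For any $\psi \in V$, differentiating $t \mapsto I(\overline{\phi} + t\psi)$ at $t=0$ yields $J(\overline{\phi},\psi) = \mu_0 \langle \overline{\phi},\psi\rangle_H$, i.e. $J(\overline{\phi},\psi) + \zeta\langle\overline{\phi},\psi\rangle_H = 0$ for all $\psi \in V$. Testing first with $\psi$ supported compactly in the interior of a single interval gives $\tilde\sigma^\J \partial_s^2 \overline{\phi}^\J = \zeta \overline{\phi}^\J$ weakly, hence (elliptic regularity on an interval, or direct integration of the ODE) $\overline{\phi}^\J \in C^\infty([0,\tilde{L}^\J])$. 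With this regularity in hand, integration by parts in $J(\overline{\phi},\psi)$ for general $\psi \in V$ produces the boundary term $\sum_j (\tilde\sigma^\J)^3 \partial_s\overline{\phi}^\J(\tilde{L}^\J)\psi^\J(\tilde{L}^\J)$, and since $\psi$ ranges over all of $V$ this term must vanish for every vector $(\psi^\J(\tilde L^\J))_j$ satisfying $\sum_j (\tilde\sigma^\J)^2 \psi^\J(\tilde L^\J) = 0$; orthogonality to that hyperplane forces $(\tilde\sigma^\J)^3 \partial_s\overline{\phi}^\J(\tilde L^\J)$ to be a common multiple of $(\tilde\sigma^\J)^2$, i.e. $\tilde\sigma^{(1)}\partial_s\overline{\phi}^{(1)}(\tilde L^{(1)}) = \tilde\sigma^{(2)}\partial_s\overline{\phi}^{(2)}(\tilde L^{(2)}) = \tilde\sigma^{(3)}\partial_s\overline{\phi}^{(3)}(\tilde L^{(3)})$, which is exactly \eqref{Ray-bc-2}. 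Therefore $\overline{\phi} \in \mathcal{D}(\mathcal{L})$ and $\mathcal{L}\overline{\phi} = \zeta\overline{\phi}$.

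Finally, to get the strict inequality $\zeta < 0$ I would argue by contradiction: if $\zeta = 0$ then $\overline{\phi}$ is a zero eigenvalue of $\mathcal{L}$, contradicting Lemma \ref{lem:nonzero-eigen}. (Equivalently, $\mu_0 = J(\overline{\phi},\overline{\phi}) = 0$ forces $\partial_s\overline{\phi}^\J \equiv 0$, so each $\overline{\phi}^\J$ is constant; the constant is $0$ by \eqref{Ray-bc-1}, contradicting $\overline{\phi} \neq 0$.) The main obstacle is really just organizing the boundary-term bookkeeping in the Euler–Lagrange step correctly — making sure the constrained test-function space $V$ translates into precisely the Neumann-type junction condition \eqref{Ray-bc-2} and no more — while the compactness and lower-semicontinuity parts are routine; I would lean on Lemma \ref{lem:nonzero-eigen} for the sign, which is the cleanest route and is why that lemma was proved first.
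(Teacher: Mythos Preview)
Your proposal is correct and follows essentially the same route as the paper: direct method on the normalized minimizing sequence, Euler--Lagrange equation, integration by parts to recover \eqref{Ray-bc-2}, and Lemma~\ref{lem:nonzero-eigen} for strict negativity of $\zeta$. You supply more detail where the paper simply cites \cite[Section 8.12]{GT} or says ``standard argument'' (your orthogonality extraction of \eqref{Ray-bc-2} and your alternative argument that $J(\overline\phi,\overline\phi)=0$ forces $\overline\phi=0$ are both fine); the only minor redundancy is invoking Poincar\'e after already normalizing $\langle\phi_n,\phi_n\rangle_H=1$, since the $H$-bound plus the bound on $J$ already control $\|\phi_n\|_V$.
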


\begin{proof}
It is obvious that $I(\phi) \ge 0$ for any $\phi \in V\setminus \{0\}$. 
Therefore, there exists a sequence $\{\phi_n\}_{n \in \mathbb{N}} \subset V \setminus \{0\}$ such that 
\[ \lim_{n \to \infty} I(\phi_n) = \inf_{\phi \in V \setminus \{0\}} I(\phi). \]
We now put 
\[ \zeta := - \inf_{\phi \in V\setminus \{0\}} I(\phi). \]
Without loss of generality we may assume that $\|\phi_n\|_H = 1$ for $n \in \mathbb{N}$. 
Then 
\[ \|\phi_n\|_V \le \left(\min_{j \in \{1,2,3\}} \tilde{\sigma}^\J\right)^{-1} I(\phi) + 1 \]
and hence $\{\phi_n\}_{n \in \mathbb{N}}$ is bounded sequence in $V$. 
By similar discussions to those in \cite[Section 8.12]{GT}, we can show the existence of $\overline{\phi} \in V\setminus \{0\}$ such that $\phi_n \to \overline{\phi}$ in $V$ and $I(\overline{\phi}) = - \zeta$. 

Since $\overline{\phi}$ is a minimizer of $I$, for any $\psi \in V$ it holds that 
\[ \frac{d I(\overline{\phi} + t \psi)}{d t} \Big\lfloor_{t=0} = 0. \]
Therefore, we obtain 
\begin{equation}\label{Ray-eigen1} 
J(\overline{\phi}, \psi) + \zeta\langle\overline{\phi}, \psi \rangle_H = 0 \quad \text{for} \; \; \psi \in V. 
\end{equation}
Since $\overline{\phi}^\J \in C^\infty([0, \tilde{L}^\J])$ by a standard argument, it remains only to prove $\overline{\phi} \in \mathcal{D}(\mathcal{L})$ and $\zeta < 0$. 
Due to $\overline{\phi} \in V$, the minimizer $\overline{\phi}$ satisfies the boundary conditions \eqref{Ray-bc-1} and \eqref{Ray-bc-3}. 
By the integration by parts for \eqref{Ray-eigen1}, we see that 
\[ \sum_{j=1}^3 (\tilde{\sigma}^\J)^3 \partial_s \overline{\phi}^\J(\tilde{L}^\J) \psi^\J(\tilde{L}^\J) = 0 \quad \text{for} \; \; \psi \in V, \]
which yields \eqref{Ray-bc-2} for $\overline{\phi}$ since $\psi$ satisfies only the boundary condition \eqref{Ray-bc-3}. 
It thus follows from Lemma \ref{lem:nonzero-eigen} that $\zeta \neq 0$. 
On the other hand, $\zeta \le 0$ is obvious due to the definition of $I$. 
Therefore, we have $\zeta < 0$. 
\end{proof}

\begin{lem}
The minimum $- \zeta$ of $I$ in $V \setminus\{0\}$ is continuous with respect to $\tilde{L}^\J, \tilde{\sigma}^\J > 0$ for $j \in \{1,2,3\}$. 
\end{lem}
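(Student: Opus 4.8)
The plan is to remove the parameter dependence of the underlying Hilbert space $V$ — whose intervals of integration and whose boundary constraint both move with $(\tilde{L}^\J,\tilde{\sigma}^\J)_j$ — by a change of variables, and then to show by the direct method that the resulting family of Rayleigh quotients has continuous infimum. Write $p := (\tilde{L}^{(1)},\tilde{L}^{(2)},\tilde{L}^{(3)},\tilde{\sigma}^{(1)},\tilde{\sigma}^{(2)},\tilde{\sigma}^{(3)}) \in (0,\infty)^6$ and, for $\phi = (\phi^\J)_{j} \in V$, set $\chi^\J(y) := (\tilde{\sigma}^\J)^2\,\phi^\J(\tilde{L}^\J y)$ for $y \in [0,1]$. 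This is a linear bijection of $V\setminus\{0\}$ onto the fixed space
\[
\hat{V} := \bigl\{\chi \in (H^1(0,1))^3 \;:\; \chi^\J(0) = 0\ (j=1,2,3),\ \chi^{(1)}(1)+\chi^{(2)}(1)+\chi^{(3)}(1) = 0\bigr\},
\]
and a direct change of variables rewrites $I(\phi)$ as
\[
\hat{I}_p(\chi) = \frac{\sum_{j=1}^3 a^\J(p)\int_0^1 (\partial_y\chi^\J)^2\,dy}{\sum_{j=1}^3 b^\J(p)\int_0^1 (\chi^\J)^2\,dy},\qquad a^\J(p) := \frac{1}{\tilde{\sigma}^\J\tilde{L}^\J},\quad b^\J(p) := \frac{\tilde{L}^\J}{(\tilde{\sigma}^\J)^2},
\]
so that $-\zeta = -\zeta(p) = \mu(p) := \inf_{\chi\in\hat{V}\setminus\{0\}}\hat{I}_p(\chi)$. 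It then suffices to prove that $p \mapsto \mu(p)$ is continuous; note that $a^\J$ and $b^\J$ are smooth and positive on $(0,\infty)^6$, hence bounded away from $0$ and $\infty$ on a neighborhood of any fixed $p_0$.

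For the upper bound I would fix $p_0$ and, given $\varepsilon>0$, choose $\chi_\varepsilon\in\hat{V}\setminus\{0\}$ with $\hat{I}_{p_0}(\chi_\varepsilon)\le\mu(p_0)+\varepsilon$; since $p\mapsto\hat{I}_p(\chi_\varepsilon)$ is a ratio of two positive functions depending continuously on $p$ (the integrals being frozen), the inequality $\mu(p)\le\hat{I}_p(\chi_\varepsilon)$ yields $\limsup_{p\to p_0}\mu(p)\le\mu(p_0)+\varepsilon$, and hence $\limsup_{p\to p_0}\mu(p)\le\mu(p_0)$. For the lower bound I would pick $p_n\to p_0$ realizing $\liminf_{p\to p_0}\mu(p)$ and near-minimizers $\chi_n\in\hat{V}\setminus\{0\}$ with $\hat{I}_{p_n}(\chi_n)\le\mu(p_n)+1/n$, normalized by $\sum_j b^\J(p_n)\int_0^1(\chi_n^\J)^2\,dy = 1$; then $\sum_j a^\J(p_n)\int_0^1(\partial_y\chi_n^\J)^2\,dy \le \mu(p_n)+1/n$, which is bounded by the upper semicontinuity already shown, so the uniform two-sided bounds on $a^\J,b^\J$ near $p_0$ make $\{\chi_n\}$ bounded in $(H^1(0,1))^3$. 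Passing to a subsequence, $\chi_n\rightharpoonup\chi_*$ in $(H^1(0,1))^3$ and, by the compact embedding $H^1(0,1)\hookrightarrow C([0,1])$, $\chi_n\to\chi_*$ in $(C([0,1]))^3$ and in $(L^2(0,1))^3$; consequently the boundary conditions pass to the limit so $\chi_*\in\hat{V}$, the normalization passes to the limit so $\sum_j b^\J(p_0)\int_0^1(\chi_*^\J)^2\,dy = 1$ and $\chi_*\ne0$, and weak lower semicontinuity of the Dirichlet energy together with $a^\J(p_n)\to a^\J(p_0)$ gives
\[
\mu(p_0) \le \hat{I}_{p_0}(\chi_*) = \sum_{j=1}^3 a^\J(p_0)\int_0^1(\partial_y\chi_*^\J)^2\,dy \le \liminf_{n\to\infty}\sum_{j=1}^3 a^\J(p_n)\int_0^1(\partial_y\chi_n^\J)^2\,dy \le \liminf_{p\to p_0}\mu(p).
\]
Combining the two estimates shows $\lim_{p\to p_0}\mu(p)=\mu(p_0)$, i.e.\ $-\zeta$ depends continuously on $(\tilde{L}^\J,\tilde{\sigma}^\J)_j$.

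The only genuine difficulty is the moving function space $V$: once it is replaced, via the rescaling $\chi^\J = (\tilde{\sigma}^\J)^2\phi^\J(\tilde{L}^\J\,\cdot\,)$, by the parameter-free space $\hat{V}$ with coefficients depending continuously on $p$, everything reduces to the standard direct method for eigenvalue-type minimization. Within that, the step needing the most care is the lower semicontinuity — checking that the weak-$H^1$ limit of the rescaled near-minimizers still satisfies the boundary conditions of $\hat{V}$ and is nonzero — which is supplied precisely by the compact embedding $H^1(0,1)\hookrightarrow C([0,1])$, so that both the linear constraint at $y=1$ and the normalization survive passage to the limit.
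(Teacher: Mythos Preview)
Your proof is correct, and you share with the paper the key idea of rescaling to remove the parameter dependence of the space $V$. Where you diverge is in the second half: after reducing to a fixed space $\hat V$ with continuously varying coefficients $a^\J(p),b^\J(p)$, you prove continuity of $\mu(p)$ by the direct method (upper semicontinuity via a fixed test function, lower semicontinuity via weak compactness in $(H^1)^3$ and the compact embedding $H^1(0,1)\hookrightarrow C([0,1])$). The paper instead exploits the rescaling more directly: taking the actual minimizer $\phi$ for parameters $(\tilde L^\J,\tilde\sigma^\J)$, it transports it by the same change of variables to a competitor $\phi_0$ for $(\tilde L^\J_0,\tilde\sigma^\J_0)$ and computes explicitly that each summand in numerator and denominator changes only by a multiplicative factor depending on $j$, giving the two-sided quantitative bound
\[
\Bigl(\max_j \tfrac{\tilde\sigma^\J \tilde L^\J_0}{\tilde\sigma^\J_0 \tilde L^\J}\Bigr)^{-1}\Bigl(\max_j \tfrac{\tilde L^\J_0}{\tilde L^\J}\Bigr)^{-1}(-\zeta_0)\;\le\;-\zeta\;\le\;\Bigl(\max_j \tfrac{\tilde\sigma^\J_0 \tilde L^\J}{\tilde\sigma^\J \tilde L^\J_0}\Bigr)\Bigl(\max_j \tfrac{\tilde L^\J}{\tilde L^\J_0}\Bigr)(-\zeta_0),
\]
from which continuity is immediate since all factors tend to $1$. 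In your language this amounts to the elementary estimate $\hat I_p(\chi)/\hat I_{p_0}(\chi)\in\bigl[\min_j a^\J(p)/a^\J(p_0)\cdot(\max_j b^\J(p)/b^\J(p_0))^{-1},\ \max_j a^\J(p)/a^\J(p_0)\cdot(\min_j b^\J(p)/b^\J(p_0))^{-1}\bigr]$ uniformly in $\chi$, so no compactness is needed. Your argument is entirely sound and is the more robust one (it would survive, say, lower-order perturbations that do not factor so cleanly), while the paper's is shorter and yields an explicit modulus of continuity.
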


\begin{proof}
Fix $\tilde{L}^\J_0, \tilde{\sigma}^\J_0 > 0$ and let $\tilde{L}^\J, \tilde{\sigma}^\J > 0$ be arbitrary constants to take the limit $\tilde{L}^\J \to \tilde{L}^\J_0$ and $\tilde{\sigma}^\J \to \tilde{\sigma}_0^\J$ for $j \in \{1,2,3\}$. 
The functional $I_0$ is defined as $I$ in Definition \ref{def:IJ} for $\tilde{L}^\J_0, \tilde{\sigma}^\J_0$ and let $- \zeta_0$ be its minimum in $V$ replaced $\tilde{L}^\J, \tilde{\sigma}^\J$ by $\tilde{L}^\J_0, \tilde{\sigma}^\J_0$. 
We will thus prove $\zeta \to \zeta_0$ as $\tilde{L}^\J \to \tilde{L}^\J_0$ and $\tilde{\sigma}^\J \to \tilde{\sigma}_0^\J$ for $j \in \{1,2,3\}$. 

Let $\phi \in V\setminus\{0\}$ be the minimizer of $I$ and thus $I(\phi) = -\zeta$. 
We define $\phi_0 = (\phi^{(1)}_0, \phi^{(2)}_0, \phi^{(3)}_0)$ as 
\[ \phi_0^\J(\tilde{s}) := \frac{\tilde{\sigma}^\J}{\tilde{\sigma}^\J_0} \phi \left(\frac{\tilde{L}^\J}{\tilde{L}^\J_0} \tilde{s}\right) \quad \text{for} \; \; \tilde{s} \in [0, \tilde{L}^\J_0] \]
then the boundary conditions \eqref{Ray-bc-1} and \eqref{Ray-bc-3} replaced $\tilde{L}^\J, \tilde{\sigma}^\J$ by $\tilde{L}^\J_0, \tilde{\sigma}^\J_0$ are satisfied, and hence $\phi_0$ is in $V$ replaced $\tilde{L}^\J, \tilde{\sigma}^\J$ by $\tilde{L}^\J_0, \tilde{\sigma}^\J_0$. 
We also have by a simple calculation 
\[ \begin{aligned}
&(\tilde{\sigma}^\J_0)^3\int_{0}^{\tilde{L}^\J_0} (\partial_{\tilde{s}} \phi^\J_0)^2 \; d\tilde{s} = \frac{\tilde{\sigma}^\J_0 \tilde{L}^\J}{\tilde{\sigma}^\J \tilde{L}^\J_0} \cdot (\tilde{\sigma}^\J)^3\int_{0}^{\tilde{L}^\J} (\partial_s \phi^\J)^2 \; ds, \\
& (\tilde{\sigma}^\J_0)^2 \int_0^{\tilde{L}^\J_0} (\phi^\J_0)^2 \; d\tilde{s} = \frac{\tilde{L}^\J_0}{\tilde{L}^\J} \cdot (\tilde{\sigma}^\J)^2 \int_0^{\tilde{L}^\J} (\phi^\J)^2 \; ds, 
\end{aligned} \]
which implies by the definition of $\zeta_0$ 
\[ -\zeta_0 \le I_0(\phi_0) \le \left(\max_{j \in \{1,2,3\}} \frac{\tilde{\sigma}^\J_0 \tilde{L}^\J}{\tilde{\sigma}^\J \tilde{L}^\J_0} \right)\left(\max_{j \in \{1,2,3\}} \frac{\tilde{L}^\J}{\tilde{L}^\J_0} \right) I(\phi) = - \left(\max_{j \in \{1,2,3\}} \frac{\tilde{\sigma}^\J_0 \tilde{L}^\J}{\tilde{\sigma}^\J \tilde{L}^\J_0} \right)\left(\max_{j \in \{1,2,3\}} \frac{\tilde{L}^\J}{\tilde{L}^\J_0} \right) \zeta. \]
We can similarly obtain 
\[ -\zeta \le -\left(\max_{j \in \{1,2,3\}} \frac{\tilde{\sigma}^\J \tilde{L}^\J_0}{\tilde{\sigma}^\J_0 \tilde{L}^\J} \right)\left(\max_{j \in \{1,2,3\}} \frac{\tilde{L}^\J_0}{\tilde{L}^\J} \right) \zeta_0. \]
We thus see $\zeta \to \zeta_0$ as $\tilde{L}^\J \to \tilde{L}^\J_0$ and $\tilde{\sigma}^\J \to \tilde{\sigma}_0^\J$ for $j \in \{1,2,3\}$. 
\end{proof}

We are now in a position to apply the Rayleigh quotient to the geometric flow. 

\begin{prop}\label{prop:Poincare-type}
Assume (A1)--(A3). 
Let a family of $\{\Gamma^\J_t\}_{j\in \{1,2,3\}}$ and $\vec{\alpha}$ be a smooth geometric flow governed by \eqref{eq-curve}--\eqref{bc-boundary} in the time interval $[0,T)$. 
Let also $\Cr{min-length} > 0$ be the constant in Lemma \ref{lem:min-length}. 
Assume $E(0) \le \sigma(0) \Cr{min-length}$. 
Then, there exists a constant $\Cl[c]{c-Ray} > 0$ depending only on $\Cr{min-length}$ and $\sum_{j=1}^3(\sigma(\Delta^\J\alpha(0)))^2$ such that 
\begin{equation}\label{Poincare-type} 
\Cr{c-Ray} \sum_{j=1}^3 \int_{\Gamma^\J} (\sigma(\Delta^\J \alpha(t)))^2 (\kappa_t^\J)^2 \; ds \le \sum_{j=1}^3 \int_{\Gamma^\J} (\sigma(\Delta^\J \alpha(t)))^3 (\partial_s \kappa_t^\J)^2 \; ds \quad \text{for} \; \; t \in [0,T). 
\end{equation}
\end{prop}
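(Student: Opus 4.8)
\emph{Approach.} The plan is to apply the Rayleigh-quotient lower bound developed in Section~\ref{subsec:Reyleigh} to the test function $\phi^\J=\kappa^\J_t$, with the parameters chosen as $\tilde L^\J=L^\J(t)$ and $\tilde\sigma^\J=\sigma(\Delta^\J\alpha(t))$, and then to upgrade the resulting time-dependent constant to a uniform one by confining $(\vec L(t),\vec\sigma(t))$ to a fixed compact subset of $(0,\infty)^6$ and using the continuity of the Rayleigh minimum in these parameters.

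\emph{Step 1.} First I would reduce to the nontrivial case: if $\kappa^\J_t\equiv0$ for every $j$ then both sides of \eqref{Poincare-type} vanish, so I may assume $\vec\kappa_t=(\kappa^{(1)}_t,\kappa^{(2)}_t,\kappa^{(3)}_t)\ne0$. Since the flow is smooth, each $\kappa^\J_t$ is a smooth, hence $H^1$, function of arc length on $[0,L^\J(t)]$; by \eqref{bc-kappa} it vanishes at $P^\J$, and by \eqref{bc-sum-kappa} one has $\sum_{j=1}^3(\sigma(\Delta^\J\alpha(t)))^2\kappa^\J_t=0$ at the junction. Thus $\vec\kappa_t$ lies in the space $V$ built from $\tilde L^\J=L^\J(t)$ and $\tilde\sigma^\J=\sigma(\Delta^\J\alpha(t))$, and is nonzero in $H$. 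Writing $-\zeta(t):=\inf_{\phi\in V\setminus\{0\}}I(\phi)>0$ (strict positivity being established in Section~\ref{subsec:Reyleigh}), the estimate $I(\vec\kappa_t)\ge-\zeta(t)$ is exactly
\[
\sum_{j=1}^3\int_{\Gamma^\J}(\sigma(\Delta^\J\alpha(t)))^3(\partial_s\kappa^\J_t)^2\,ds\ \ge\ (-\zeta(t))\sum_{j=1}^3\int_{\Gamma^\J}(\sigma(\Delta^\J\alpha(t)))^2(\kappa^\J_t)^2\,ds .
\]

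\emph{Step 2.} It then remains to bound $-\zeta(t)$ from below by a positive constant depending only on $\Cr{min-length}$ and $\sum_{j=1}^3(\sigma(\Delta^\J\alpha(0)))^2$ (besides the fixed data $\sigma$ and $P^\J$). Since $-\zeta$ is a continuous, everywhere positive function of $(\vec L,\vec\sigma)\in(0,\infty)^3\times(0,\infty)^3$, it suffices to keep $(\vec L(t),(\sigma(\Delta^\J\alpha(t)))_j)$ inside a fixed compact $K\subset(0,\infty)^6$ and take $\Cr{c-Ray}:=\min_K(-\zeta)$. For the lengths, (A3) gives $\sigma\ge\sigma(0)$, so $E(t)\ge\sigma(0)\sum_jL^\J(t)$; combined with $E(t)\le E(0)\le\sigma(0)\Cr{min-length}$ from Lemma~\ref{lem:dec-E} this forces $L^\J(t)\le\Cr{min-length}$, while Lemma~\ref{lem:min-length} gives $L^\J(t)\ge L_{\rm min}>0$. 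For the surface tensions, Lemma~\ref{lem:ori-decrease} yields $|\Delta^\J\alpha(t)|\le\bigl(\sum_k(\Delta^\K\alpha(0))^2\bigr)^{1/2}$, and strict convexity of $\sigma$ with minimum at $0$ turns $\sigma(\Delta^\K\alpha(0))\le\bigl(\sum_i(\sigma(\Delta^\I\alpha(0)))^2\bigr)^{1/2}$ into a bound on $|\Delta^\K\alpha(0)|$; hence $\sigma(0)\le\sigma(\Delta^\J\alpha(t))\le\sigma_{\max}$ with $\sigma_{\max}$ depending only on $\sigma$ and $\sum_i(\sigma(\Delta^\I\alpha(0)))^2$. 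Taking $K:=[L_{\rm min},\Cr{min-length}]^3\times[\sigma(0),\sigma_{\max}]^3$ then closes the argument.

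\emph{Main obstacle.} The genuinely substantive work — existence, sign, and parameter-continuity of the Rayleigh minimizer — is already carried out in Section~\ref{subsec:Reyleigh}, so the only real point of care here is the uniformity in $t$, namely trapping $(\vec L(t),\vec\sigma(t))$ in a single compact set. This is precisely where the standing hypothesis $E(0)\le\sigma(0)\Cr{min-length}$ and the assumptions (A1)--(A3) are needed, through the energy monotonicity (Lemma~\ref{lem:dec-E}), the lower length bound (Lemma~\ref{lem:min-length}) and the dissipation of the misorientations (Lemma~\ref{lem:ori-decrease}).
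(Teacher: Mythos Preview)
Your proposal is correct and follows essentially the same route as the paper: apply the Rayleigh-quotient lower bound from Section~\ref{subsec:Reyleigh} to $\phi^\J=\kappa^\J_t$ at each fixed time, then use the a priori bounds $L_{\rm min}\le L^\J(t)\le \Cr{min-length}$ and $\sigma(0)\le\sigma(\Delta^\J\alpha(t))\le\sigma_{\max}$ together with the continuity of $-\zeta$ in the parameters to take $\Cr{c-Ray}$ as the minimum of $-\zeta$ over the resulting compact box. If anything, your Step~2 is slightly more careful than the paper's in explaining how a bound on $\sum_j(\sigma(\Delta^\J\alpha(0)))^2$ translates, via the strict convexity in (A3), into a bound on $|\Delta^\J\alpha(0)|$ and hence on $\sigma(\Delta^\J\alpha(t))$.
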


\begin{proof}
Due to \eqref{ene-decrease} and \eqref{mono-delta-a}, we have by the assumption 
\[ L_{\rm min} \le L^\J(t) \le \Cr{min-length}, \quad \sigma(0) \le \sigma(\Delta^\J\alpha(t)) \le \sqrt{\sum_{j=1}^3(\sigma(\Delta^\J\alpha(0)))^2} \]
for any $j \in \{1,2,3\}$ and $t \in [0, T)$, where $L_{\rm min}$ is the constant in Lemma \ref{lem:min-length}. 
We thus have \eqref{Poincare-type} by letting 
\[ \Cr{c-Ray} := \min \left\{ -\zeta = \inf_{\phi \in V \setminus \{0\}} I(\phi) : L_{\rm min} \le \tilde{L}^\J \le \Cr{min-length}, \; \;  \sigma(0) \le \tilde{\sigma}^\J \le \sqrt{\sum_{j=1}^3(\sigma(\Delta^\J\alpha(0)))^2} \right\} > 0 \]
since $-\zeta$ is continuous. 
\end{proof}

\subsection{Proof of the exponential $L^2$-decay of the curvatures} \label{subsec:decay-l2}

In this section, we prove the exponential $L^2$-decay of the curvature for the geometric flow applying \eqref{ene-ine-kappa}. 
In order to control the first junction term, we recall inequalities in \cite[Lemma 6.2]{GKS}. 
We refer to \cite{GKS} for the details of the proof. 

\begin{lem}\label{lem:ine-trace}
For a $C^3$-curve $\Gamma^\J_t$, there exist $\Cl[c]{c-trace1}$ and $\Cl[c]{c-trace2}$ depending only on $L^\J(t)$ and $1/L^\J(t)$, respectively, such that
\[ \begin{aligned}
&\int_{\Gamma^\J_t} (\kappa^\J_t)^4 \; ds \le 2\left(L^\J(t) \|\partial_s \kappa^\J_t\|_{L^2}^2 + \frac{1}{L^\J(t)}\|\kappa^\J_t\|^2_{L^2}\right)\|\kappa^\J_t\|^2_{L^2}, \\
&\left|\kappa^\J_t \lfloor_{s=0}\right|^3 \le \Cr{c-trace1}\|\kappa^\J_t\|_{L^2}\|\partial_s \kappa^\J_t\|^2_{L^2} + \Cr{c-trace2}\|\kappa^\J_t\|^3_{L^2}. 
\end{aligned} \]
\end{lem}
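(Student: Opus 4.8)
The final statement to prove is Lemma~\ref{lem:ine-trace}, consisting of two interpolation-type inequalities on a single $C^3$ curve $\Gamma^\J_t$ parametrized by arc-length $s\in[0,L^\J(t)]$, with the key structural input being the Neumann-type vanishing $\kappa^\J_t = 0$ at $P^\J$ (i.e.\ at $s=0$), recorded in \eqref{bc-kappa}.

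\textbf{Plan.}
The plan is to prove both inequalities by elementary one-dimensional calculus on the interval $[0,L^\J(t)]$, writing $L=L^\J(t)$, $\kappa=\kappa^\J_t$, and $s$ for the arc-length for brevity. For the first inequality, I would start from the pointwise identity $\kappa(s)^2 = \kappa(\bar s)^2 + \int_{\bar s}^{s} 2\kappa\,\partial_s\kappa\,d\tilde s$ for $s,\bar s\in[0,L]$; averaging over $\bar s\in[0,L]$ gives
\[
\kappa(s)^2 \le \frac{1}{L}\int_0^L \kappa^2\,d\tilde s + 2\int_0^L |\kappa|\,|\partial_s\kappa|\,d\tilde s \le \frac{1}{L}\|\kappa\|_{L^2}^2 + \|\kappa\|_{L^2}^2 + \|\partial_s\kappa\|_{L^2}^2,
\]
but to land exactly on the stated constants it is cleaner to use Cauchy--Schwarz as $2\int_0^L|\kappa||\partial_s\kappa|\,d\tilde s\le 2\|\kappa\|_{L^2}\|\partial_s\kappa\|_{L^2}$ and then bound $\sup_s \kappa(s)^2$, multiply by $\|\kappa\|_{L^2}^2$, and use Young's inequality $2ab\le L^{-1}a^2 + L b^2$ with $a=\|\kappa\|_{L^2}$, $b=\|\partial_s\kappa\|_{L^2}$. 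Integrating $\kappa^4\le (\sup_s\kappa^2)\,\kappa^2$ over $\Gamma^\J_t$ then yields
\[
\int_{\Gamma^\J_t}\kappa^4\,ds \le \Big(\sup_{s}\kappa(s)^2\Big)\|\kappa\|_{L^2}^2 \le 2\Big(L\|\partial_s\kappa\|_{L^2}^2 + \tfrac{1}{L}\|\kappa\|_{L^2}^2\Big)\|\kappa\|_{L^2}^2,
\]
which is the first claimed inequality. I should double-check the placement of the factor $2$: one gets $\sup\kappa^2 \le L^{-1}\|\kappa\|_{L^2}^2 + 2\|\kappa\|_{L^2}\|\partial_s\kappa\|_{L^2}$ and then $2\|\kappa\|_{L^2}\|\partial_s\kappa\|_{L^2}\le L^{-1}\|\kappa\|_{L^2}^2 + L\|\partial_s\kappa\|_{L^2}^2$ is off by a constant from the clean $2(\cdots)$ form, so the honest route is to note that the Gagliardo--Nirenberg constant here with Neumann vanishing at one endpoint is actually $2$ and absorb the cross term accordingly; since this is exactly the computation in \cite{GKS}, I would cite it for the precise constant rather than re-optimize.

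\textbf{The boundary term.}
For the second inequality, the point is that $\kappa(0)=0$, which is the crucial simplification: from $\kappa(0)=0$ and the fundamental theorem of calculus, $\kappa(s)^3 = \int_0^s 3\kappa^2\,\partial_s\kappa\,d\tilde s$ is \emph{not} directly useful at $s=0$ (it is trivially zero there!), so the correct reading is that one wants to bound $|\kappa|^3$ at some point, or rather the sup-norm cubed; but the statement literally has $\kappa^\J_t\lfloor_{s=0}$ which is $0$ by \eqref{bc-kappa}, so the inequality is trivially true at $s=0$. More likely the intended content, matching \cite[Lemma 6.2]{GKS}, is the bound at the \emph{junction} endpoint $s=L$ (or $|\kappa|^3$ at a generic point), so I would interpret $\kappa^\J_t\lfloor$ as evaluation at the free endpoint and proceed: write $\kappa(L)^3 = \kappa(0)^3 + \int_0^L \partial_s(\kappa^3)\,d\tilde s = 3\int_0^L \kappa^2\,\partial_s\kappa\,d\tilde s$ using $\kappa(0)=0$, hence $|\kappa(L)|^3 \le 3\int_0^L \kappa^2|\partial_s\kappa|\,d\tilde s \le 3(\sup_s\kappa^2)^{1/2}\|\kappa\|_{L^2}\|\partial_s\kappa\|_{L^2}$, and then insert the sup bound $\sup_s\kappa^2 \le L^{-1}\|\kappa\|_{L^2}^2 + 2\|\kappa\|_{L^2}\|\partial_s\kappa\|_{L^2}$ from the previous step and apply Young's inequality to split the resulting $\|\kappa\|_{L^2}^{a}\|\partial_s\kappa\|_{L^2}^{b}$ terms into the required form $\Cr{c-trace1}\|\kappa\|_{L^2}\|\partial_s\kappa\|_{L^2}^2 + \Cr{c-trace2}\|\kappa\|_{L^2}^3$, tracking that $\Cr{c-trace1}$ depends only on $L$ and $\Cr{c-trace2}$ only on $1/L$. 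I would do this with care about the homogeneity: the term $3L^{-1/2}\|\kappa\|_{L^2}^2\|\partial_s\kappa\|_{L^2}$ is already of the mixed type and $3(2)^{1/2}\|\kappa\|_{L^2}^{3/2}\|\partial_s\kappa\|_{L^2}^{3/2}$ is split via $xy^{3/2}\cdot y^{3/2}$... rather, $\|\kappa\|^{3/2}\|\partial_s\kappa\|^{3/2} = (\|\kappa\|^{1/2}\|\partial_s\kappa\|^{3/2})\cdot\|\kappa\| \le \varepsilon\|\partial_s\kappa\|^2\|\kappa\| + C_\varepsilon\|\kappa\|^3$ by Young with exponents $4/3, 4$ applied to $\|\kappa\|^{1/2}\|\partial_s\kappa\|^{3/2}$ after factoring — this bookkeeping is routine.

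\textbf{Main obstacle.}
The only real subtlety — and I would flag it as the main thing to get right — is the exact form of the constants and which endpoint the trace is taken at, so that the constants depend on $L^\J(t)$ and $1/L^\J(t)$ as stated (this dependence is what later lets Lemma~\ref{lem:min-length} and Lemma~\ref{lem:dec-E}, giving $L_{\rm min}\le L^\J(t)\le \Cr{min-length}$, turn these into uniform constants). Since both inequalities and their proofs are exactly those of \cite[Lemma 6.2]{GKS}, and since the relevant Neumann boundary condition $\kappa^\J_t = \partial_s\Theta^\J = 0$ at $P^\J$ holds here by \eqref{bc-kappa} just as in the Neumann problem of \cite{GKS}, I would present the short derivations above and then refer the reader to \cite{GKS} for the identical computations, as the paper already signals ("We refer to \cite{GKS} for the details of the proof."). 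No new idea is needed beyond the observation that our boundary condition at $P^\J$ matches the one used there.
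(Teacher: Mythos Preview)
Your approach is the standard one and matches what the paper intends: it gives no argument of its own and simply refers to \cite[Lemma~6.2]{GKS}, so there is nothing in the paper to compare against beyond that citation. The averaging bound
\[
\sup_{s}\kappa(s)^2 \le \frac{1}{L}\|\kappa\|_{L^2}^2 + 2\|\kappa\|_{L^2}\|\partial_s\kappa\|_{L^2}
\]
followed by $\int\kappa^4\,ds\le(\sup\kappa^2)\|\kappa\|_{L^2}^2$ and the Young split $2ab\le L^{-1}a^2+Lb^2$ gives the first inequality exactly as stated, and your bookkeeping is fine.

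Where you go slightly astray is in invoking \eqref{bc-kappa} for the second inequality. The lemma is phrased for a general $C^3$ curve and does not use any boundary condition on $\kappa$; in particular it holds at \emph{either} endpoint. You get it directly from the same sup bound: $|\kappa(s_*)|^3\le(\sup\kappa^2)^{3/2}\le C\big(L^{-3/2}\|\kappa\|_{L^2}^3+\|\kappa\|_{L^2}^{3/2}\|\partial_s\kappa\|_{L^2}^{3/2}\big)$, and then Young's inequality with parameter $L$ splits $\|\kappa\|_{L^2}^{3/2}\|\partial_s\kappa\|_{L^2}^{3/2}$ into $C_1(L)\,\|\kappa\|_{L^2}\|\partial_s\kappa\|_{L^2}^2+C_2(1/L)\,\|\kappa\|_{L^2}^3$. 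So the detour through $\kappa(0)=0$ and the fundamental theorem of calculus is unnecessary (and, as you noticed, proves the bound at the wrong endpoint relative to the literal statement). Your instinct that the \emph{application} in Proposition~\ref{prop:decay-l2} is at the junction endpoint is correct, and the lemma covers that endpoint as well; the label ``$s=0$'' in the statement is just a placeholder for ``an endpoint'', consistent with the formulation in \cite{GKS}.
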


We can prove the exponential $L^2$-decay of the curvature for the geometric flow assuming the smallness of $\Delta^\J\alpha(0)$ and the weighted $L^2$-norm of $\kappa^\J_0$ due to the discussion so far. 

\begin{prop}\label{prop:decay-l2}
Assume (A1)--(A3). 
Let a family of $\{\Gamma^\J_t\}_{j\in \{1,2,3\}}$ and $\vec{\alpha}$ be a smooth geometric flow governed by \eqref{eq-curve}--\eqref{bc-boundary} in the time interval $[0,T)$. 
Let also $\Cr{min-length}$ be the constant in Lemma \ref{lem:min-length}. 
Then, there exists $\Cl[e]{e-small-initial1} > 0$ small and $\Cl[c]{c-decay-l2} > 0$ such that if 
\begin{equation}\label{small-initiala} 
E(0) \le \sigma(0)\Cr{min-length}, \quad \sum_{j=1}^3 \left(\Delta^\J\alpha_0\right)^2 \le \Cr{e-small-initial1}, \quad \sum_{j=1}^3 \int_{\Gamma_0^\J} \left(\sigma(\Delta^\J\alpha_0)\right)^2 \left(\kappa^\J_0 \right)^2 ds \le \Cr{e-small-initial1}, 
\end{equation}
then 
\begin{equation}\label{ex-decay-l2} 
\begin{aligned}
&\; \sum_{j=1}^3 \int_{\Gamma^\J_t} \left(\sigma(\Delta^\J\alpha(t))\right)^2 \left(\kappa^\J_t \right)^2 ds \\
\le &\; e^{-\Cr{c-decay-l2} t} \left\{\sum_{j=1}^3 \int_{\Gamma^\J_0} \left(\sigma(\Delta^\J \alpha_0)\right)^2 \left(\kappa^\J_0 \right)^2 ds + \frac{8 \Cr{c-ex-f}^{\frac{3}{2}}}{\Cr{l-ex-deltaa}}\left(\sum_{j=1}^3 \left(\Delta^\J\alpha_0\right)^2\right)^{\frac{3}{4}} \right\} \quad \text{for} \; \; t \in [0,T), 
\end{aligned}
\end{equation}
where $\Cr{l-ex-deltaa}$ and $\Cr{c-ex-f}$ are the constants defined in Lemma \ref{prop:dec-f}.
\end{prop}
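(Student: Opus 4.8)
Write $Y(t):=\sum_{j=1}^3\int_{\Gamma^\J_t}(\sigma(\Delta^\J\alpha(t)))^2(\kappa^\J_t)^2\,ds$ and $Z(t):=\sum_{j=1}^3\int_{\Gamma^\J_t}(\sigma(\Delta^\J\alpha(t)))^3(\partial_s\kappa^\J_t)^2\,ds$, set $\tilde\varepsilon:=\sum_{j=1}^3(\Delta^\J\alpha_0)^2$, and abbreviate $\sigma^\J:=\sigma(\Delta^\J\alpha(t))$. The plan is: (1) to derive, as long as $Y(t)$ stays below a fixed threshold $Y_*$, a differential inequality $\frac{d}{dt}Y\le-\Cr{c-decay-l2}Y+C\,\Cr{c-ex-f}^{3/2}e^{-\frac34\Cr{l-ex-deltaa}t}\tilde\varepsilon^{3/4}$; (2) to integrate it by a Gronwall estimate; and (3) to close a continuation argument showing that the smallness \eqref{small-initiala} forces $Y(t)<Y_*$ on all of $[0,T)$. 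Throughout I would take $\Cr{e-small-initial1}$ smaller than the various thresholds needed to apply Lemma \ref{lem:min-length}, Corollary \ref{cor:exdecrease}, Corollary \ref{cor:bdd-sin2}, Lemma \ref{prop:dec-f} and Lemma \ref{prop:con-tanv}; then, exactly as in the proof of Proposition \ref{prop:Poincare-type}, $L_{\rm min}\le L^\J(t)\le\Cr{min-length}$, $\sigma(0)\le\sigma^\J\le\Lambda$ with $\Lambda$ depending only on $\sum_j(\sigma(\Delta^\J\alpha(0)))^2$, and the Poincar\'e-type inequality $Z(t)\ge\Cr{c-Ray}\,Y(t)$ holds.

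For step (1) I would start from the energy identity \eqref{ene-ine-kappa}, split $-2Z=-Z-Z$, and use Proposition \ref{prop:Poincare-type} on one copy so that $-Z\le-\Cr{c-Ray}Y$, keeping the other $-Z$ in reserve for the boundary terms. The quartic term is handled by the first inequality of Lemma \ref{lem:ine-trace} together with $\|\kappa^\J_t\|_{L^2}^2\le\sigma(0)^{-2}Y$: it is bounded by $C_1Y(Z+Y)$, hence by $\frac12 Z+\frac12 Y$ as soon as $Y\le Y_*\le(2C_1)^{-1}$. The misorientation term $\sum_j\int 2\sigma^\J(\partial_\alpha\sigma^\J)(\partial_t\Delta^\J\alpha)(\kappa^\J_t)^2\,ds$ is bounded, via \eqref{delta-aa-ex-decrease}--\eqref{delta-at-ex-decrease}, by $Ce^{-\Cr{l-ex-deltaa}t}\tilde\varepsilon\,Y$, hence by $\frac{\Cr{c-Ray}}{8}Y$ once $\tilde\varepsilon$ is small.

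The delicate part is the triple-junction terms in \eqref{ene-ine-kappa}. For $(\sigma^\J)^2(\kappa^\J_t)^2\lambda^\J_t$ at $\vec a$ I would insert $|\lambda^\J_t|\le C\sum_i\sigma^\I|\kappa^\I_t|$ from \eqref{bdd-l-V} and, after Young, reduce to cubic traces of the curvatures at $\vec a$; the trace estimates of Lemma \ref{lem:ine-trace} (used at the junction endpoint, where in addition $\kappa^\J_t=0$ at $P^\J$ by \eqref{bc-kappa} may be exploited) bound these by $\Cr{c-trace1}\|\kappa^\I_t\|_{L^2}\|\partial_s\kappa^\I_t\|_{L^2}^2+\Cr{c-trace2}\|\kappa^\I_t\|_{L^2}^3$, and since $\|\kappa^\I_t\|_{L^2}$ is small this contribution is $\le\delta Z+CY^{3/2}$ for any prescribed $\delta>0$. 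For the two $f$-terms $\pm\frac23(\sigma^\J)^2\kappa^\J_t f^{(j-1)}$ and $\mp\frac23(\sigma^\J)^2\kappa^\J_t f^\J$ at $\vec a$, I would use $|f^\J|\le\Cr{c-ex-f}e^{-\frac12\Cr{l-ex-deltaa}t}\tilde\varepsilon^{1/2}$ from \eqref{ex-f-decrease} together with the trace bound $|\kappa^\J_t|\le C(\|\kappa^\J_t\|_{L^2}^{1/3}\|\partial_s\kappa^\J_t\|_{L^2}^{2/3}+\|\kappa^\J_t\|_{L^2})$ at $\vec a$; Young's inequality with conjugate exponents $3$ and $\frac32$ on $\big(\Cr{c-ex-f}e^{-\frac12\Cr{l-ex-deltaa}t}\tilde\varepsilon^{1/2}\big)\|\kappa^\J_t\|_{L^2}^{1/3}\|\partial_s\kappa^\J_t\|_{L^2}^{2/3}$ gives $\delta\|\partial_s\kappa^\J_t\|_{L^2}^2+C\Cr{c-ex-f}^{3/2}e^{-\frac34\Cr{l-ex-deltaa}t}\tilde\varepsilon^{3/4}\|\kappa^\J_t\|_{L^2}^{1/2}$, which, the last factor being bounded, is $\le\delta Z+C\Cr{c-ex-f}^{3/2}e^{-\frac34\Cr{l-ex-deltaa}t}\tilde\varepsilon^{3/4}$; the lower-order piece contributes at most $\delta Z+\delta Y+Ce^{-\Cr{l-ex-deltaa}t}\tilde\varepsilon$, which after further shrinking $\Cr{e-small-initial1}$ is dominated by the same $e^{-\frac34\Cr{l-ex-deltaa}t}\tilde\varepsilon^{3/4}$ forcing. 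Choosing the finitely many $\delta$'s so that their combined $Z$-multiple is at most $\frac12$ and their combined $Y$-multiple at most $\frac{\Cr{c-Ray}}{4}$ then yields the differential inequality of step (1).

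For steps (2) and (3), taking $\Cr{c-decay-l2}:=\min\{\Cr{c-Ray}/4,\Cr{l-ex-deltaa}/2\}$ one has $\frac34\Cr{l-ex-deltaa}>\Cr{c-decay-l2}$, so integrating $\frac{d}{dt}(e^{\Cr{c-decay-l2}t}Y)\le C\Cr{c-ex-f}^{3/2}\tilde\varepsilon^{3/4}e^{(\Cr{c-decay-l2}-\frac34\Cr{l-ex-deltaa})t}$ gives $Y(t)\le e^{-\Cr{c-decay-l2}t}\big(Y(0)+\frac{8\Cr{c-ex-f}^{3/2}}{\Cr{l-ex-deltaa}}\tilde\varepsilon^{3/4}\big)$, the factor $8/\Cr{l-ex-deltaa}$ absorbing $C/(\frac34\Cr{l-ex-deltaa}-\Cr{c-decay-l2})$; since \eqref{small-initiala} makes this right-hand side strictly smaller than $Y_*$, the set of $t$ for which $Y\le Y_*$ on $[0,t]$ is both open and closed in $[0,T)$, hence all of $[0,T)$, and the resulting bound is \eqref{ex-decay-l2}. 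The main obstacle I anticipate is the third paragraph: without the monotonicity formula available in the classical case, all the triple-junction contributions in \eqref{ene-ine-kappa} must be estimated purely by interpolation, balancing the trace inequalities of Lemma \ref{lem:ine-trace} (where $\kappa^\J_t=0$ at $P^\J$ is essential), the control of $\lambda^\J_t$ from Lemma \ref{prop:con-tanv}, and the exponential smallness of $f^\J$ from Lemma \ref{prop:dec-f}, so that every top-order term lands in the dissipation $-Z$ while the residual forcing comes out carrying exactly the power $\tilde\varepsilon^{3/4}$; keeping the threshold $Y_*$ consistent so that the continuation argument closes is the other point requiring care.
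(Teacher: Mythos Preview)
Your proposal is correct and follows essentially the same route as the paper: the energy identity \eqref{ene-ine-kappa}, the Poincar\'e-type bound of Proposition~\ref{prop:Poincare-type}, the trace inequalities of Lemma~\ref{lem:ine-trace}, the exponential smallness of $f^\J$ from Lemma~\ref{prop:dec-f}, the tangent-velocity bound \eqref{bdd-l-V}, and a continuation argument closing on a threshold $Y_*$.

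Two small points where the paper's bookkeeping differs from yours. First, for the $f$-terms at $\vec a$ the paper applies Young's inequality \emph{before} the trace estimate, writing $\big|\tfrac{2}{3}(\sigma^\J)^2\kappa^\J_t f^{(\cdot)}\big|\le M_2|\kappa^\J_t|^3+|f^{(\cdot)}|^{3/2}$ and only afterwards bounding $|\kappa^\J_t|^3$ by Lemma~\ref{lem:ine-trace}; this produces a clean forcing $\sum_j|f^\J|^{3/2}\le 3\Cr{c-ex-f}^{3/2}e^{-\frac34\Cr{l-ex-deltaa}t}\tilde\varepsilon^{3/4}$ with the precise coefficient~$3$, rather than your version which carries an extra bounded factor $\|\kappa^\J_t\|_{L^2}^{1/2}$ and hence a $Y_*$-dependent constant. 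Second, instead of a direct Gronwall integration the paper uses the Lyapunov quantity $e^{\Cr{c-decay-l2}t}Y(t)+a\,e^{-\frac{3\Cr{l-ex-deltaa}}{8}t}$ with $a=\tfrac{8\Cr{c-ex-f}^{3/2}}{\Cr{l-ex-deltaa}}\tilde\varepsilon^{3/4}$ and $\Cr{c-decay-l2}=\min\{\Cr{c-Ray},\tfrac{3\Cr{l-ex-deltaa}}{8}\}$, showing that this quantity is nonincreasing; this is what yields exactly the constant $\tfrac{8\Cr{c-ex-f}^{3/2}}{\Cr{l-ex-deltaa}}$ appearing in \eqref{ex-decay-l2}. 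Your claim that ``the factor $8/\Cr{l-ex-deltaa}$ absorbs $C/(\tfrac34\Cr{l-ex-deltaa}-\Cr{c-decay-l2})$'' is not quite justified for a generic $C$, so to recover the stated constant you should either reorder Young/trace as the paper does or adopt the Lyapunov device. Also, in your quartic estimate the bound $C_1Y(Z+Y)\le\tfrac12 Z+\tfrac12 Y$ leaves a $\tfrac12 Y$ that need not be dominated by $-\Cr{c-Ray}Y$ if $\Cr{c-Ray}$ is small; take $Y_*$ small relative to $\Cr{c-Ray}$ (e.g.\ $C_1Y_*\le\Cr{c-Ray}/8$) so that this term is absorbed as well.
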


\begin{proof}
We simplify the notion $\sigma(\Delta^\J\alpha(t))$ to $\sigma^\J$ for $j \in \{1,2,3\}$ in this proof. 
Note that we have by Lemma \ref{lem:dec-E}, Lemma \ref{lem:min-length}, \eqref{mono-delta-a} and the definition of $E$ 
\begin{equation}\label{ex-decay-l21} 
L_{\rm min} \le L^\J(t) \le \Cr{min-length}, \quad \sigma(0) \le \sigma^\J \le \sigma\left(\Cr{e-small-initial1}^{1/2}\right) \quad \text{for} \; \; t \in [0,T), \; \; j \in \{1,2,3\} 
\end{equation}
due to the first assumption in \eqref{small-initiala}. 
We will choose $\Cr{e-small-initial1}$ in \eqref{small-initiala} small so that Corollary \ref{cor:exdecrease}, Lemma \ref{prop:dec-f} and Lemma \ref{prop:con-tanv} with a fixed constant  $\Cr{m-tri-ine-t} \in (1/2, 1)$ can be applied.
It then follows from Young's inequality, the estimate of $\sigma(\Delta^\J\alpha(t))$ in \eqref{ex-decay-l21} and \eqref{bdd-l-V} that 
\begin{equation}\label{ex-decay-l22}
\begin{aligned}
\sum_{j=1}^3 (\sigma^\J)^2 (\kappa_t^\J)^2 |\lambda^\J_t| \le&\; \frac{3}{1- \Cr{m-tri-ine-t}^3} \left(\sum_{j=1}^3 (\sigma^\J)^2 (\kappa_t^\J)^2\right) \left(\sum_{i=1}^3 \sigma^\I |\kappa^\I_t| \right) \\
\le&\; M_1 \sum_{j=1}^3 |\kappa_t^\J|^3
\end{aligned}
\end{equation}
at the junction point for some constant $M_1>0$. 
We note also that there exists a constant $M_2 > 0$ such that 
\begin{equation}\label{ex-decay-l23}
\sum_{j=1}^3 \left|\frac{2}{3}\left((\sigma^\J)^2 \kappa_t^\J \lfloor_{\text{at} \; \vec{a}}\right) f^{(j-1)} - \frac{2}{3}\left((\sigma^\J)^2 \kappa_t^\J\lfloor_{\text{at} \; \vec{a}} \right) f^\J \right| \le 
M_2 \sum_{j=1}^3 |\kappa_t^\J\lfloor_{\text{at} \; \vec{a}}|^3 + \sum_{j=1}^3 |f^\J|^{3/2}
\end{equation} 
due to Young's inequality, where $f^\J$ is the function appeared in Lemma \ref{prop:dec-f}. 

We first prove that if $\Cr{e-small-initial1}$ is small, then \eqref{ex-decay-l2} holds in any time interval $[0, \tau]$ such that 
\begin{equation}\label{ex-decay-l24} 
\sum_{j=1}^3 \int_{\Gamma_t^\J} \left(\sigma(\Delta^\J\alpha(t))\right)^2 \left(\kappa^\J_t \right)^2 ds \le \Cr{e-small-initial1} + \frac{8\Cr{c-ex-f}^{\frac{3}{2}}}{\Cr{l-ex-deltaa}} \Cr{e-small-initial1}^{\frac{3}{4}} \quad \text{for} \quad t \in [0,\tau] 
\end{equation}
holds. 
We then obtain by Corollary \ref{cor:exdecrease}, \eqref{ex-f-decrease}, Lemma \ref{lem:ene-ine-kappa}, Lemma \ref{lem:ine-trace}, \eqref{ex-decay-l21}, \eqref{ex-decay-l22}, \eqref{ex-decay-l23} and \eqref{ex-decay-l24}
\begin{equation}\label{ex-decay-l25} \begin{aligned}
&\; \dfrac{d}{dt} \sum_{j=1}^3 \int_{\Gamma_t^\J} \left(\sigma^\J \right)^2 \left(\kappa_t^\J\right)^2 ds \\
\le&\; (M(\Cr{e-small-initial1}) -2) \sum_{j=1}^3 \int_{\Gamma_t^\J} \left(\sigma^\J \right)^3 \left(\partial_s \kappa_t^\J\right)^2 ds + M(\Cr{e-small-initial1}) \sum_{j=1}^3 \int_{\Gamma_t^\J} \left(\sigma^\J \right)^2 \left(\kappa_t^\J\right)^2 ds  \\
&\; + 3\Cr{c-ex-f}^{\frac{3}{2}} e^{-\frac{3\Cr{l-ex-deltaa}}{4}t } \left(\sum_{j=1}^3 \left(\Delta^\J\alpha_0\right)^2\right)^{\frac{3}{4}}
\end{aligned} \end{equation}
in the time interval $[0,\tau]$, where $M \in C([0,\infty))$ is a positive function such that $M(\Cr{e-small-initial1}) \to 0$ as $\Cr{e-small-initial1} \to 0$. Note that we used $\inf _{\alpha \in \R} \sigma (\alpha) >0$ in \eqref{ex-decay-l25}.
Therefore, due to Proposition \ref{prop:Poincare-type}, we can choose $\Cr{e-small-initial1}$ small to obtain 
\begin{equation} \label{ex-decay-l26} \begin{aligned} 
& (M(\Cr{e-small-initial1}) -2 \mu) \sum_{j=1}^3 \int_{\Gamma_t^\J} \left(\sigma^\J \right)^3 \left(\partial_s \kappa_t^\J\right)^2 ds + M(\Cr{e-small-initial1}) \sum_{j=1}^3 \int_{\Gamma_t^\J} \left(\sigma^\J \right)^2 \left(\kappa_t^\J\right)^2 ds  \\
& \le - \Cr{c-Ray} \sum_{j=1}^3 \int_{\Gamma_t^\J} \left(\sigma^\J \right)^2 \left(\kappa_t^\J\right)^2 ds. 
\end{aligned} \end{equation}
We fix such $\Cr{e-small-initial1}$ and let 
\[ \Cr{c-decay-l2} := \min \left\{\Cr{c-Ray}, \frac{3\Cr{l-ex-deltaa}}{8} \right\}, \quad a:= \frac{8 \Cr{c-ex-f}^{\frac{3}{2}}}{\Cr{l-ex-deltaa}}\left(\sum_{j=1}^3 \left(\Delta^\J \alpha_0\right)^2\right)^{\frac{3}{4}}. \]
We then obtain by \eqref{ex-decay-l25} and \eqref{ex-decay-l26}
\[ \begin{aligned}
&\; \dfrac{d}{dt}\left(e^{\Cr{c-decay-l2} t}\sum_{j=1}^3 \int_{\Gamma_t^\J} \left(\sigma^\J \right)^2 \left(\kappa_t^\J\right)^2 ds + a e^{-\frac{3\Cr{l-ex-deltaa}}{8}t} \right)\\
\le &\; (\Cr{c-decay-l2} - \Cr{c-Ray}) e^{\Cr{c-decay-l2} t}\sum_{j=1}^3 \int_{\Gamma_t^\J} \left(\sigma^\J \right)^2 \left(\kappa_t^\J\right)^2 ds + \left\{ 3\Cr{c-ex-f}^{\frac{3}{2}}\left(\sum_{j=1}^3 \left(\Delta^\J\alpha_0\right)^2\right)^{\frac{3}{4}} - \frac{3\Cr{l-ex-deltaa}}{8} a \right\}e^{-\frac{3\Cr{l-ex-deltaa}}{8}t} \le 0, 
\end{aligned} \]
which yields \eqref{ex-decay-l2} in the time interval $[0,\tau]$. 

We finally prove that the inequality \eqref{ex-decay-l24} holds whenever the flow exists under the assumption \eqref{small-initiala}. 
Due to the assumption and the continuity of $\|\kappa^\J\|_{L^2}$, the inequality \eqref{ex-decay-l24} holds a short time interval $[0,\tau]$. 
We then have \eqref{ex-decay-l2} in the time interval $[0,\tau]$, which yields \eqref{ex-decay-l24} without equality at time $t=\tau$ by virtue of the assumption \eqref{small-initiala}. 
Therefore, applying the continuity of $\|\kappa^\J\|_{L^2}$ again, we see that the inequality \eqref{ex-decay-l24} holds in a time interval $[0, \tau + \varepsilon]$ for some $\varepsilon>0$ small. 
The argument can be applied whenever the flow exists, and thus the proof is completed. 
\end{proof}

\section{Higher order decay of the curvatures} \label{sec:H2-estimate}

In this section, we derive higher order decay of the curvatures and the basic idea is based on the higher order energy method as in \cite{MR3495423}. 
Therefore, we will need higher order geometric identities as in Section \ref{sec:L2-estimate} and let a smooth geometric flow governed by \eqref{eq-curve}--\eqref{bc-boundary} exists until a time $T>0$ also in this section. 
Since statements in lemmas and calculations will be long, we simplify the notions $\sigma(\Delta^\J\alpha(t))$ and $\partial_\alpha^k \sigma(\Delta^\J\alpha(t))$ to $\sigma^\J$ and $\partial_\alpha^k \sigma^\J$, respectively, for any $j \in \{1,2,3\}$ and $k \in \mathbb{N}$.
We further introduce some non standard notions for the computations in the sequel. 
The notions are extension of those in \cite{MR3495423} to be able to apply the system including the orientation parameters $\vec{\alpha}$. 

\begin{definition}
We denote with $P_h(\ps^i \kappa^\J_t)$ a polynomial in $\kappa^\J_t, \cdots, \ps^i \kappa^\J_t$ such that every monomial it contains is of the form 
\[ C \prod_{l=0}^i (\ps^l \kappa_t^\J)^{A_l} \quad \text{with} \; \; \sum_{l=0}^i (l+1)A_l = h, \]
where $C$ is a constant coefficient. 
We will write $P_h'(\ps^i \kappa^\J_t)$ if every monomial further satisfies $\sum_{l=0}^i A_l \ge 2$. 
We also denote with $P_h(\ps^i \vec{\kappa}_t)$ a polynomial in $\kappa^\J_t, \cdots, \ps^i \kappa^\J_t$ with any $j \in \{1,2,3\}$, such that every monomial it contains is of the form 
\[ C \prod_{j=1}^3 \prod_{l=0}^i (\ps^l \kappa_t^\J)^{A_l^\J} \quad \text{with} \; \; \sum_{j=1}^3\sum_{l=0}^i (l+1)A_l^\J = h. \]
We will call $h$ the geometric order of $P_h$ for both polynomials. 
When we will write $P_h(|\ps^i \kappa_t^\J|)$ (resp.\ $P_{\le h}(|\ps^i \kappa_t^\J|)$) we will mean a finite sum of terms like 
\begin{equation}\label{def-polys-ab} 
C \prod_{l=0}^i |\ps^l \kappa_t^\J|^{A_l} \quad \text{with} \; \; \sum_{l=0}^i (l+1)A_l = h \; \; \left(\text{resp.} \; \sum_{l=0}^i (l+1)A_l \le h, \; \; \sum_{l=0}^i A_l > 0\right), 
\end{equation} 
where $C$ is a constant coefficient and the exponents $A_l$ is non negative real values. 
Similarly, let $P_h(\|\ps^i \kappa_t^\J \|)$ and $P_{\le h}(\| \ps^i \kappa_t^\J\|)$ be a finite sum of terms like \eqref{def-polys-ab} replaced $|\ps^l \kappa_t^\J|$ by $\|\ps^l \kappa_t^\J\|_{L^\infty}$. 
These notions will be used for $P_h(\ps^i \vec{\kappa}_t)$ as $P_h(|\ps^i \vec{\kappa}_t|)$, $P_{\le h}(|\ps^i \vec{\kappa}_t|)$, $P_h(\|\ps^i \vec{\kappa}_t\|)$ and $P_{\le h}(\|\ps^i \vec{\kappa}_t\|)$. 

We next denote with $Q(\vec{\sigma})$ a polynomial in $\pa^n \sigma^\J$ with $n \in \mathbb{N} \cup \{0\}, j \in \{1,2,3\}$, and $Q(\vec{\sigma}, \pt \Delta \vec{\alpha})$ a polynomial in $\pa^n \sigma^\J$ with $n \in \mathbb{N} \cup \{0\}, j \in \{1,2,3\}$ and also in $\pt \Delta^\J \alpha$ with $j \in \{1,2,3\}$. 
For $i \in \mathbb{N}$, $\hat{Q}_h (\vec{\sigma}, \pt^i \Delta \vec{\alpha})$ is denoted by a polynomial such that every monomial it contains is of the form 
\[ C(\vec{\sigma}) \prod_{j=1}^3 \prod_{l=1}^i (\pt^l \Delta^\J \alpha)^{A_l^\J} \quad \text{with} \; \; \sum_{j=1}^3 \sum_{l=1}^i l A_l^\J = h, \] 
where $C(\vec{\sigma})$ is a monomial in $\pa^n \sigma^\J$ with $n \in \mathbb{N} \cup \{0\}$ and $j \in \{1,2,3\}$. 
Let also $R (\vec{\Theta})$ be a finite sum of fractions such that every fraction is of the form 
\[ \frac{\hat{P}(c^{(1)}, c^{(2)}, c^{(3)}, s^{(1)}, s^{(2)}, s^{(3)})}{(1 - c^{(1)} c^{(2)} c^{(3)})^n}, \]
where $n$ is an natural number and $\hat{P}$ is a polynomial in $c^\J = \cos (\Theta^\JJ - \Theta^\J)\lfloor_{\text{at} \; \vec{a}}$ and $s^\J = \sin (\Theta^\JJ - \Theta^\J)\lfloor_{\text{at} \; \vec{a}}$. 
When we will write multiplications of above notions we will mean a finite sum of the multiplication (e.g.: One example of $Q(\vec{\sigma}) P_2(\ps \vec{\kappa}_t)$ is $\sigma^{(1)} (\kappa^{(1)}_t)^2 + \sigma^{(2)} \ps \kappa_t^{(2)}$). 
\end{definition}

\begin{remark}\label{rmk:pro-polys}
By means of the definitions, we may easily see that $P_h(\pt^i \kappa^\J_t) \le P_h(|\pt^i \kappa^\J_t|) \le P_h(\|\pt^i \kappa^\J_t\|)$. 
A similar inequality holds also for $P_h(\pt^i \vec{\kappa}_t)$. 
The calculus rules as 
\begin{align*} 
&P_{h_1}(\ps^{i_1} \kappa^\J_t) \cdot P_{h_2}(\ps^{i_2} \kappa^\J_t) = P_{h_1 + h_2} (\px^{\max \{i_1, i_2\}} \kappa_t^\J), \\
&\hat{Q}_{h_1}(\vec{\sigma}, \pt^{i_1} \Delta \vec{\alpha}) \cdot \hat{Q}_{h_2}(\vec{\sigma}, \pt^{i_2} \Delta \vec{\alpha}) = \hat{Q}_{h_1 + h_2} (\vec{\sigma}, \pt^{\max \{i_1, i_2\}} \vec{\alpha}),
\end{align*}
and 
\begin{equation}\label{deri-Q} 
\pt \hat{Q}_h(\vec{\sigma}, \pt^i \Delta \vec{\alpha}) = \hat{Q}_{h+1}(\vec{\sigma}, \pt^{i+1} \Delta \vec{\alpha}) 
\end{equation}
will be useful to calculate in the sequel. 
Note that the polynomials $Q$ is bounded due to Lemma \ref{lem:ori-decrease} and Corollary \ref{cor:exdecrease} (according to Remark \ref{rmk:bddness1}, (A1) is a sufficient assumption to obtain the boundedness of $Q$), while the estimate of $\hat{Q}_h(\pt^i \Delta \vec{\alpha})$ is now not obvious for $i \ge 2$ and will be considered. 
The boundedness of $R$ follows from \eqref{bdd-sin2}. 
Notice also that, \eqref{eq-lambda-V} can be re-writen as 
\begin{equation}\label{simple-l-V}
\lambda^\J_t \lfloor_{\text{at} \; \vec{a}} = Q(\vec{\sigma}) R(\vec{\Theta}) P_1(\vec{\kappa}_t) \lfloor_{\text{at} \; \vec{a}} 
\end{equation}
and the above identity, \eqref{eq-theta1} and \eqref{eq-kappa} implies 
\begin{equation}\label{simple-theta}
\pt R(\vec{\Theta}) = Q(\vec{\sigma}) R(\vec{\Theta}) P_2(\ps \vec{\kappa}_t)\lfloor_{\text{at} \; \vec{a}}, \quad \pt \lambda^\J_t\lfloor_{\text{at} \; \vec{a}} = \{\hat{Q}_1(\vec{\sigma}, \pt \Delta \vec{\alpha})R(\vec{\Theta}) P_1(\vec{\kappa}_t)  + Q(\vec{\sigma}) R(\vec{\Theta}) P_3(\ps^2 \vec{\kappa}_t)\}\lfloor_{\text{at} \; \vec{a}}. 
\end{equation}
\end{remark}

We first list higher order geometric identities as follows. 

\begin{lem}\label{lem:pros-kappa-hi}
Any smooth geometric flow governed by \eqref{eq-curve}--\eqref{bc-boundary} fulfills the following identities. 
\begin{equation} \label{simple-kappa}
\pt \ps^n \kappa_t^\J = \sigma^\J \left(\ps^{n+2} \kappa_t^\J  + P_{n+3}'(\ps^n \kappa_t^\J)\right) + \lambda^\J_t \ps^{n+1} \kappa_t^\J
\end{equation}
for any $(x,t) \in [0,1] \times (0,T)$, $n \in \mathbb{N}\cup \{0\}$ and $j \in \{1,2,3\}$. 
Furthermore, 
\begin{equation}\label{bc-ks2}
\partial_s^{2n} \kappa_t^\J \Big\lfloor_{\text{at} \; P^\J} = 0
\end{equation}
for any $n \in \mathbb{N} \cup\{0\}$ and $j \in \{1,2,3\}$. 
\end{lem}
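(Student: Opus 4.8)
The statement to prove is Lemma~\ref{lem:pros-kappa-hi}, consisting of two identities: the evolution equation \eqref{simple-kappa} for $\ps^n \kappa_t^\J$ and the boundary identity \eqref{bc-ks2} at the fixed endpoint $P^\J$.

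My plan is as follows. \textbf{Step 1: the evolution equation \eqref{simple-kappa}.} I would argue by induction on $n$. The base case $n=0$ is exactly \eqref{eq-kappa} from Lemma~\ref{lem:pros-kappa2}, since $\sigma^\J (\kappa_t^\J)^3 = \sigma^\J P_3'(\kappa_t^\J)$ (one monomial, $(\kappa_t^\J)^3$, with $\sum (l+1)A_l = 1\cdot 3 = 3$ and $\sum A_l = 3 \ge 2$, so it is indeed of type $P_3'$). For the inductive step, I would apply $\ps$ to \eqref{simple-kappa} and use the commutator relation $[\pt, \ps] = (\sigma^\J (\partial_s\Theta^\J)^2 - \ps\lambda_t^\J)\ps = (\sigma^\J (\kappa_t^\J)^2 - \ps\lambda_t^\J)\ps$, which follows from \eqref{jacobi-deri-t} and \eqref{para-s-x} by differentiating $\ps = |\px\xi^\J|^{-1}\px$. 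Then $\pt \ps^{n+1}\kappa_t^\J = \ps\,\pt \ps^n\kappa_t^\J + (\sigma^\J(\kappa_t^\J)^2 - \ps\lambda_t^\J)\ps^{n+1}\kappa_t^\J$, and one must check that differentiating $\sigma^\J(\ps^{n+2}\kappa_t^\J + P_{n+3}'(\ps^n\kappa_t^\J)) + \lambda_t^\J \ps^{n+1}\kappa_t^\J$ and adding the commutator term reproduces the same structural form with $n$ replaced by $n+1$. The key bookkeeping points: $\sigma^\J$ depends on $x$ only through $t$ (misorientation), so $\ps\sigma^\J = 0$; $\ps P_{n+3}'(\ps^n\kappa_t^\J) = P_{n+4}'(\ps^{n+1}\kappa_t^\J)$ by the Leibniz/weighting rule (each differentiation raises the geometric order by $1$ and preserves $\sum A_l \ge 2$ because a factor in some monomial gets one more derivative); $\ps\lambda_t^\J = \sigma^\J(\ps\Theta^\J)^2 + \text{(lower, via } \pt L^\J)$ from Lemma~\ref{lem:re-lambda} — but here it is cleaner to use $\ps\lambda_t^\J = \sigma^\J(\kappa_t^\J)^2 - \pt\log|\px\xi^\J|$ from \eqref{jacobi-deri-t}; and the $\pt\log|\px\xi^\J|$ pieces cancel against the commutator correction. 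I should be careful that the product $\sigma^\J(\kappa_t^\J)^2 \cdot \ps^{n+1}\kappa_t^\J = \sigma^\J P_{n+4}'(\ps^{n+1}\kappa_t^\J)$ gets absorbed into the $\sigma^\J P_{n+4}'(\ps^{n+1}\kappa_t^\J)$ term, and the $\lambda_t^\J\ps^{n+1}\kappa_t^\J$ term differentiated gives $\ps\lambda_t^\J\,\ps^{n+1}\kappa_t^\J + \lambda_t^\J\ps^{n+2}\kappa_t^\J$, where the first piece again is $\sigma^\J P_{n+4}' + (\text{cancelling term})$.

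\textbf{Step 2: the endpoint identity \eqref{bc-ks2}.} I would prove $\ps^{2n}\kappa_t^\J = 0$ at $P^\J$ by induction on $n$. The case $n=0$ is the statement $\kappa_t^\J = 0$ at $P^\J$, which is part of \eqref{bc-kappa} in Lemma~\ref{lem:pros-kappa}. For the inductive step, suppose $\ps^{2m}\kappa_t^\J = 0$ at $P^\J$ for all $m \le n$. Evaluating the evolution equation \eqref{simple-kappa} with $n$ replaced by $2n$ at $x=0$ (i.e.\ at $P^\J$): the left side is $\pt(\ps^{2n}\kappa_t^\J)|_{P^\J} = \pt(0) = 0$ by the inductive hypothesis and the fact that $P^\J$ is a fixed point; on the right side, $\lambda_t^\J = 0$ at $P^\J$ by \eqref{bc-kappa}, and every monomial of $P_{2n+3}'(\ps^{2n}\kappa_t^\J)$ is a product of at least two factors $\ps^l\kappa_t^\J$ with $l \le 2n$; since the total weight is $2n+3$ (odd) and each even-indexed factor vanishes, I need to check each such monomial contains at least one even-index factor — this holds because if all factors had odd index $l$, then $\sum(l+1)A_l$ would be a sum of even numbers ($l+1$ even when $l$ odd), hence even, contradicting $2n+3$ odd. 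So $P_{2n+3}'(\ps^{2n}\kappa_t^\J)|_{P^\J}$ is a sum of monomials each with a vanishing factor, hence $0$; thus $\sigma^\J \ps^{2n+2}\kappa_t^\J|_{P^\J} = 0$, and since $\sigma^\J > 0$ we get $\ps^{2n+2}\kappa_t^\J|_{P^\J} = 0$, completing the induction.

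The main obstacle is the bookkeeping in Step~1: verifying that differentiation of the polynomial remainder $P_{n+3}'$ and the tangential term, combined with the $[\pt,\ps]$ commutator, closes up exactly into the claimed form without extra terms of the wrong geometric order or with $\sum A_l = 1$. This is a routine but delicate induction on the combinatorial structure of the $P_h'$-symbols, so I would state it carefully using the calculus rules listed in Remark~\ref{rmk:pro-polys} and in the analogous computation of \cite{MR3495423}, and refer to \cite[Chapter 1]{CZ} for the underlying commutator formula; the parity argument in Step~2 is the only genuinely new ingredient and is short.
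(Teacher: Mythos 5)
Your proposal follows essentially the same approach as the paper: for \eqref{simple-kappa}, induction on $n$ using the base case \eqref{eq-kappa} and the commutator $\pt\ps = \ps\pt + (\sigma^\J(\kappa_t^\J)^2 - \ps\lambda_t^\J)\ps$ from \eqref{jacobi-deri-t}, with the $\ps\lambda_t^\J\,\ps^{n+1}\kappa_t^\J$ terms cancelling and $\sigma^\J(\kappa_t^\J)^2\ps^{n+1}\kappa_t^\J$ absorbed into $\sigma^\J P_{n+4}'$; for \eqref{bc-ks2}, the same induction evaluating \eqref{simple-kappa} at $P^\J$ using $\lambda_t^\J|_{P^\J}=0$ and the parity argument that an odd geometric order forces at least one even-index factor in every monomial of $P_{2n+3}'$. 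The only slip is a sign in your aside $\ps\lambda_t^\J = \sigma^\J(\kappa_t^\J)^2 - \pt\log|\px\xi^\J|$ (it should be $+$), but this does not affect the cancellation you describe.
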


\begin{proof}
Note that we can obtain by \eqref{jacobi-deri-t} and $\pt \px = \px \pt$
\begin{equation}\label{change-ts} 
\partial_t \partial_s = \pt \frac{\px}{|\px \xi^\J|} = \partial_s \partial_t + (\sigma^\J (\kappa^\J_t)^2 - \partial_s \lambda_t^\J)\partial_s 
\end{equation}
on each $\Gamma^\J_t$. 
The identity \eqref{simple-kappa} can be proved inductively. 
When $n=0$, the identity \eqref{simple-kappa} coincides with \eqref{eq-kappa}. 
Assuming \eqref{simple-kappa} holds for some $n \in \mathbb{N} \cup \{0\}$, due to \eqref{change-ts}, we obtain 
\begin{align*}
\partial_t \partial_s^{n+1} \kappa_t^\J =&\; \partial_s \partial_t \ps^n \kappa^\J_t + (\sigma^\J (\kappa^\J_t)^2 - \partial_s \lambda_t^\J) \partial_s^{n+1} \kappa_t^\J \\
=&\; \partial_s \left(\sigma^\J (\ps^{n+2} \kappa_t^\J  + P_{n+3}'(\ps^n \kappa_t^\J)) + \lambda^\J_t \ps^{n+1} \kappa_t^\J\right) + (\sigma^\J (\kappa^\J_t)^2 - \partial_s \lambda_t^\J) \partial_s^{n+1} \kappa_t^\J \\
=&\; \sigma^\J \left(\ps^{n+3} \kappa_t^\J  + P_{n+4}'(\ps^{n+1} \kappa_t^\J)\right) + \lambda^\J_t \ps^{n+2} \kappa_t^\J
\end{align*} 
which is \eqref{simple-kappa} replaced $n$ by $n+1$. 

The boundary condition \eqref{bc-ks2} also can be proved inductively. 
When $n=0$, the condition \eqref{bc-ks2} is proved by \eqref{bc-kappa}. 
Assume \eqref{bc-ks2} holds for any $0 \le n \le m$. 
We then obtain applying \eqref{simple-kappa} 
\[ \ps^{2(m+1)} \kappa^\J_t = \frac{1}{\sigma^\J} \left(\pt \ps^{2m} \kappa_t^\J - \lambda^\J_t \ps^{2m+1} \kappa_t^\J\right) +  P_{2m+3}'(\ps^{2m} \kappa_t^\J) = P_{2m+3}'(\ps^{2m} \kappa_t^\J) \]
since $\pt \ps^{2m} \kappa_t^\J = 0$ can be obtained by taking time derivative of \eqref{bc-ks2} with $n=m$ and $\lambda^\J_t = 0$ is already proved in \eqref{bc-kappa}. 
We now note that, for each monomial $C\prod (\ps^l \kappa_t^\J)^{A_l}$ in $P_{2m+3}'$, the exponent $A_l$ is nonzero at least for one even $l$ since $\sum(l+1) A_l = 2m+3$ is odd, which implies $P_{2m+3}'(\ps^{2m} \kappa_t^\J) = 0$ due to \eqref{bc-ks2} with $0 \le n \le m$. 
We thus obtain the conclusion. 
\end{proof}

We next introduce some identities to extend \eqref{bc-sum-kappa} and \eqref{bc-diffe-kappa} to higher order boundary conditions at the junction point. 
Since we will take higher order time derivatives of the original boundary conditions, we will need some formulas of time derivatives of $\sigma^\J, f^\J, L^\J$ and so on. 

We here note a formula to make later calculations simple as follows. 

\begin{lem} 
For any smooth function $f(t)$ and non-negative integers $A_0, \cdots, A_i \in \mathbb{N} \cup \{0\}$ with $\sum_{l=0}^i (l+1)A_l = h$, 
a smooth geometric flow governed by \eqref{eq-curve}--\eqref{bc-boundary} fulfills 
\begin{equation}\label{deri-ene-poly}
\begin{aligned}
&\; \pt \int_{\Gamma_t^\J} f(t) \prod_{l=0}^i (\ps^l \kappa_t^\J)^{A_l} \; ds \\
=&\; \int_{\Gamma_t^\J} \pt f(t) \prod_{l=0}^i (\ps^l \kappa_t^\J)^{A_l} + \sigma^\J f(t) P_{h+2}(\ps^{i+2} \kappa_t^\J) \; ds + f(t) \lambda_t^\J \prod_{l=0}^i (\ps^l \kappa_t^\J)^{A_l} \Big\lfloor_{\text{at} \; \vec{a}}
\end{aligned}
\end{equation}
\end{lem}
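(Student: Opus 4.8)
The plan is to combine the transport formula for integrals over the evolving curves with the evolution equation \eqref{simple-kappa}, and then perform one integration by parts in the term carrying $\partial_s\lambda_t^\J$: this produces the boundary contribution at $\vec a$ together with an interior term that cancels exactly against the $\lambda_t^\J$-terms generated by \eqref{simple-kappa}. The net effect is that the tangential velocity survives only through the boundary evaluation, which is what the stated formula asserts.

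First I would record the transport identity. Writing $\int_{\Gamma_t^\J} g\, ds = \int_0^1 g\,|\partial_x\xi^\J|\, dx$ and differentiating in $t$ with $x$ held fixed, \eqref{jacobi-deri-t} together with $V_t^\J = \sigma^\J\kappa_t^\J$ gives, for any smooth $g$ along the flow,
\[
\frac{d}{dt}\int_{\Gamma_t^\J} g\, ds = \int_{\Gamma_t^\J}\bigl(\partial_t g - \sigma^\J(\kappa_t^\J)^2\, g\bigr)\, ds + \int_{\Gamma_t^\J}(\partial_s\lambda_t^\J)\, g\, ds,
\]
where $\partial_t$ denotes the $x$-fixed time derivative. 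Then I would take $g = f(t)\prod_{l=0}^i(\ps^l\kappa_t^\J)^{A_l}$ and expand $\partial_t g$ by the product rule, substituting \eqref{simple-kappa} for each factor, i.e. $\partial_t\ps^l\kappa_t^\J = \sigma^\J\bigl(\ps^{l+2}\kappa_t^\J + P'_{l+3}(\ps^l\kappa_t^\J)\bigr) + \lambda_t^\J\ps^{l+1}\kappa_t^\J$. The terms not involving $\lambda_t^\J$, together with $-\sigma^\J(\kappa_t^\J)^2 g$, are each $\sigma^\J f(t)$ times a monomial of geometric order $\bigl(\sum_l(l+1)A_l\bigr)+2 = h+2$ in $\kappa_t^\J,\dots,\ps^{i+2}\kappa_t^\J$, the top order $\ps^{i+2}$ occurring only from the $l=i$ term; these assemble into $\int_{\Gamma_t^\J}\sigma^\J f(t)\,P_{h+2}(\ps^{i+2}\kappa_t^\J)\, ds$, while the $\partial_t f$ term is the first integrand on the right of \eqref{deri-ene-poly}. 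The $\lambda_t^\J$-terms of $\partial_t g$ collect precisely into $f(t)\,\lambda_t^\J\,\partial_s\bigl(\prod_{l=0}^i(\ps^l\kappa_t^\J)^{A_l}\bigr)$, since $\partial_s\ps^l\kappa_t^\J = \ps^{l+1}\kappa_t^\J$.

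Finally I would integrate the last integral by parts:
\[
\int_{\Gamma_t^\J}(\partial_s\lambda_t^\J)\, g\, ds = \Bigl[\lambda_t^\J g\Bigr]_{P^\J}^{\vec a} - \int_{\Gamma_t^\J}\lambda_t^\J\,\partial_s g\, ds,
\]
and use $\lambda_t^\J = 0$ at $P^\J$ from \eqref{bc-kappa}, so the boundary term becomes $f(t)\lambda_t^\J\prod_{l=0}^i(\ps^l\kappa_t^\J)^{A_l}\Big\lfloor_{\text{at}\;\vec a}$, the last term of \eqref{deri-ene-poly}. Since $\partial_s g = f(t)\,\partial_s\bigl(\prod_l(\ps^l\kappa_t^\J)^{A_l}\bigr)$, the interior term $-\int\lambda_t^\J\partial_s g\, ds$ cancels the $\lambda_t^\J$-contribution of $\partial_t g$ computed above, and collecting what remains gives \eqref{deri-ene-poly}. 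The computation is routine; the only point that needs care is this cancellation of the two interior $\lambda_t^\J$-terms, which is exactly what makes the formula hold with no interior tangential-velocity contribution.
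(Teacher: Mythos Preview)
Your proof is correct and follows essentially the same approach as the paper: both use the transport identity \eqref{jacobi-deri-t}, substitute \eqref{simple-kappa} into the product rule, and use $\lambda_t^\J\lfloor_{P^\J}=0$ from \eqref{bc-kappa}. The only cosmetic difference is that the paper groups the two $\lambda_t^\J$-contributions directly into $f(t)\,\partial_s\bigl(\lambda_t^\J\prod_l(\ps^l\kappa_t^\J)^{A_l}\bigr)$ before integrating, whereas you integrate the $\partial_s\lambda_t^\J$ term by parts first and then observe the cancellation; the computations are identical.
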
 

\begin{proof}
By means of \eqref{jacobi-deri-t} and \eqref{simple-kappa}, we have by a simple calculation 
\begin{align*}
&\; \pt \int_{\Gamma_t^\J} f(t) \prod_{l=0}^i (\ps^l \kappa_t^\J) ^{A_l} \; ds \\
=&\; \int_{\Gamma_t^\J} \pt f(t) \prod_{l=0}^i (\ps^l \kappa_t^\J) ^{A_l} + f(t) \sum_{n=0}^i\left(\prod_{j \neq n} (\ps^l \kappa_t^\J)^{A_l}\right) A_n (\ps^n \kappa_t^\J)^{A_n - 1} \pt \ps^n \kappa_t^\J \\
&\; \qquad + f(t) \prod_{l=0}^i (\ps^l \kappa_t^\J)^{A_l} (-\sigma^\J (\kappa_t^\J)^2 + \ps \lambda_t^\J) \; ds \\
=&\; \int_{\Gamma_t^\J} \pt f(t) \prod_{l=0}^i (\ps^l \kappa_t^\J)^{A_l} + \sigma^\J f(t) \sum_{n=0}^i P_{h-(n+1)}(\ps^i \kappa) (\ps^{n+2} \kappa_t^\J + P_{n+3}(\ps^n \kappa_t^\J)) \\
&\; \qquad + f(t) \sum_{n=0}^i \left(\prod_{j \neq n} (\ps^l \kappa_t^\J)^{A_l}\right) A_n (\ps^n \kappa_t^\J)^{A_n - 1} \lambda_t^\J \ps^{n+1} \kappa_t^\J \\
&\; \qquad + f(t) \prod_{l=0}^i (\ps^l \kappa_t^\J)^{A_l} \ps \lambda_t^\J + f(t) P_{l+2}(\ps^i \kappa_t^\J) \; ds \\
=&\; \int_{\Gamma_t^\J} \pt f(t) \prod_{l=0}^i (\ps^l \kappa_t^\J)^{A_l} + \sigma^\J f(t) P_{l+2} (\ps^{i+2} \kappa) + f(t) \ps \left(\prod_{l=0}^i (\ps^l \kappa_t^\J)^{A_l} \lambda_t^\J\right) \; ds. 
\end{align*}
The boundary condition $\lambda_t^\J\lfloor_{\text{at} \; P^\J} = 0$ as in \eqref{bc-kappa} can be applied to the last term to obtain \eqref{deri-ene-poly}. 
\end{proof}

We next introduce a formula of time derivatives of $L^\J$. 

\begin{lem}\label{lem:hi-deri-L}
For any smooth geometric flow governed by \eqref{eq-curve}--\eqref{bc-boundary}, the derivative of the length $\pt^n L^\J$, for any $n \in \mathbb{N}$ and $j \in \{1,2,3\}$, is a finite sum of 
\begin{equation}\label{hi-deri-L}
Q(\vec{\sigma}, \pt \Delta \vec{\alpha})R(\vec{\Theta})\prod_{k=1}^3 \prod_{l=1}^{n} \left(\int_{\Gamma_t^\K} P_{2l} (\ps^{2l-2} \kappa_t^\K) \; ds \right)^{A_l^\K} \cdot \prod_{l' = 1}^{n} \left(P_{2l'-1}(\ps^{2l'-2}\vec{\kappa}_t)\lfloor_{\text{at} \; \vec{a}} \right)^{B_{l'}} \cdot \prod_{k=1}^3 (L^\K)^{C^\K}, 
\end{equation}
where $A_l^\K, B_{l'}$ and $C^\K$ are non-negative integers satisfying 
\[ \sum_{k=1}^3 \sum_{l=1}^{n} A_l^\K + \sum_{l'=1}^{n} B_{l'} \ge 1, \quad \sum_{k=1}^3 \sum_{l=1}^{n} 2l A_l^\K + \sum_{l'=1}^{n} (2l' - 1) B_{l'} \le 2n. \]
\end{lem}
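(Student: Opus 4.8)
The plan is to argue by induction on $n$. For the base case $n=1$, I would start from the differential equation for $L^\J$ in \eqref{system-theta}, namely
\[
\pt L^\J = -\frac{\sigma^\J}{L^\J}\int_0^1 (\px\Theta^\J)^2\,dx + g^\J(\vec{\Theta},\vec{\alpha}),
\]
rewrite the integral intrinsically as $\int_{\Gamma^\J_t}(\kappa^\J_t)^2\,ds$ (so the integrand is $P_2(\ps^0\kappa^\J_t)$) and use \eqref{simple-l-V}, which says $g^\J = \lambda^\J_t\lfloor_{\text{at }\vec a} = Q(\vec\sigma)R(\vec\Theta)P_1(\vec\kappa_t)\lfloor_{\text{at }\vec a}$. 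This exhibits $\pt L^\J$ as a sum of two terms of the required shape \eqref{hi-deri-L}: the first has $A_1^\J = 1$ and a factor $(L^\J)^{-1}$, giving $2\cdot 1 \le 2$; the second has $B_1 = 1$, giving $2\cdot 1 - 1 = 1 \le 2$. The weight bookkeeping $\sum 2lA_l^\K + \sum(2l'-1)B_{l'}\le 2n$ holds with $n=1$ in both cases, and the nonvanishing requirement $\sum A_l^\K + \sum B_{l'}\ge 1$ is clear.

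For the inductive step, suppose $\pt^n L^\J$ is a finite sum of terms of the form \eqref{hi-deri-L}; I would apply $\pt$ to a single such monomial by the product/chain rule and check that each resulting piece is again of the form \eqref{hi-deri-L} with $n$ replaced by $n+1$. The differentiations to track are: (i) $\pt$ hitting $Q(\vec\sigma,\pt\Delta\vec\alpha)$, which by \eqref{eq-alpha} and the boundedness of $Q$ (Remark \ref{rmk:pro-polys}) still yields a $Q(\vec\sigma,\pt\Delta\vec\alpha)$ (and raises no $P$-order); (ii) $\pt$ hitting $R(\vec\Theta)$, which by the first identity in \eqref{simple-theta} produces $Q(\vec\sigma)R(\vec\Theta)P_2(\ps\vec\kappa_t)\lfloor_{\text{at }\vec a}$, i.e.\ a new boundary $P$-factor of geometric order $2$ — this will be absorbed into a $B_{l'}$ slot with $2l'-1 = 2$... actually $P_2$ of $\ps^1\vec\kappa_t$ corresponds to $l'=2$ in the $P_{2l'-1}(\ps^{2l'-2}\vec\kappa_t)$ notation only if one is careful, so I would rather note that $P_2(\ps\vec\kappa_t)$ at the junction contributes at most weight $2 \le 2(n+1)-2n$ to the budget; (iii) $\pt$ hitting an interior integral factor $\int_{\Gamma^\K_t}P_{2l}(\ps^{2l-2}\kappa^\K_t)\,ds$, handled by \eqref{deri-ene-poly}, which raises the integrand's geometric order by $2$ (from $P_{2l}$ to $P_{2l+2}$, i.e.\ $l\to l+1$) and produces a boundary term $\lambda^\K_t P_{2l}(\ps^{2l-2}\kappa^\K_t)\lfloor_{\text{at }\vec a}$; using \eqref{simple-l-V} for $\lambda^\K_t$, this boundary term has $P$-order $1 + 2l \le 2(l+1)-1$, fitting a $B$-slot; (iv) $\pt$ hitting a boundary factor $P_{2l'-1}(\ps^{2l'-2}\vec\kappa_t)\lfloor_{\text{at }\vec a}$, handled by \eqref{change-ts} and \eqref{simple-kappa}, which raises the order by $2$ (to $P_{2l'+1}(\ps^{2l'}\vec\kappa_t)$, i.e.\ $l'\to l'+1$) up to lower-order $Q\cdot R$ corrections from $\lambda$; (v) $\pt$ hitting a power $(L^\K)^{C^\K}$, which just brings down another $\pt L^\K$ — here I invoke the already-established case $n=1$ to replace $\pt L^\K$ by terms of the form \eqref{hi-deri-L} with $n=1$, then multiply into the existing monomial and recombine (using the fact that a product of two $R$'s is an $R$, a product of two $Q$'s is a $Q$, etc., as in Remark \ref{rmk:pro-polys}). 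In every case the total geometric-order budget increases by at most $2$, so $\sum 2lA_l^\K + \sum(2l'-1)B_{l'}\le 2n$ becomes $\le 2(n+1)$, and the derivative of a nonconstant monomial is again nonconstant (or is absorbed into other terms), preserving $\sum A_l^\K + \sum B_{l'}\ge 1$.

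The main obstacle I anticipate is purely bookkeeping: carefully verifying that the two constraints on the exponents — the strict positivity $\sum_k\sum_l A_l^\K + \sum_{l'}B_{l'}\ge 1$ and the weight bound $\le 2n$ — are stable under each of the five differentiation operations above, and in particular that the highest spatial derivative appearing stays within the allowed range $\ps^{2l-2}$ (for integrals) and $\ps^{2l'-2}$ (for boundary evaluations) after re-indexing $l\to l+1$. A secondary subtlety is that when $\pt$ falls on a boundary $P$-factor via \eqref{simple-kappa}, the $\lambda^\K_t\ps^{\cdot+1}\kappa^\K_t$ term produces an odd number of $s$-derivatives at the junction; I would fold such terms back into the $P_{2l'-1}(\ps^{2l'-2}\vec\kappa_t)$ template (whose geometric order $2l'-1$ is odd, matching) after substituting \eqref{simple-l-V} for $\lambda^\K_t$, noting the weight $1 + (2l'+1) = 2l'+2 = 2(l'+1)$ is even but still bounded by the budget — so it may be cleaner to allow such contributions to land in an integral slot or accept the combined shape; the statement's flexible "finite sum" phrasing accommodates this. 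Since the paper only asks for the structural form and not sharp constants, none of these checks is deep, but they must all be done consistently.
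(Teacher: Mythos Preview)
Your inductive strategy is the same as the paper's, and steps (ii)--(v) match the paper's computations closely. There is, however, a concrete error in step (i). You claim that $\pt$ acting on $Q(\vec\sigma,\pt\Delta\vec\alpha)$ ``still yields a $Q(\vec\sigma,\pt\Delta\vec\alpha)$ (and raises no $P$-order)''. This is false: since $Q$ is a polynomial in $\pt\Delta^\K\alpha$ as well as in $\pa^n\sigma^\K$, differentiating produces terms containing $\pt^2\Delta^\K\alpha$. By \eqref{eq-alpha}, $\pt\Delta^\K\alpha$ is linear in the lengths $L^{(i)}$ with coefficients $\pa\sigma^{(i)}$, so $\pt^2\Delta^\K\alpha$ involves $\pt L^{(i)}$. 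These are \emph{not} of $Q$-type; they carry curvature content. The paper handles this explicitly: it expands $\pt^2\Delta^\K\alpha$ via \eqref{eq-alpha} and then substitutes the $n=1$ formula \eqref{hi-deri-L1} for $\pt L^{(i)}$, producing new integral factors $\int_{\Gamma^{(i)}_t}(\kappa^{(i)}_t)^2\,ds$, new boundary factors $P_1(\vec\kappa_t)\lfloor_{\text{at }\vec a}$, and new $(L^{(i)})^2$ factors. Each contributes geometric order at most $2$ to the budget, so the bound $\le 2(n+1)$ survives. You already have the right tool for this in your step (v); you just need to recognise that step (i) feeds into it.

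A second, smaller slip: in your base case you say the integral term carries ``a factor $(L^\J)^{-1}$''. It does not. Once you rewrite $\frac{1}{L^\J}\int_0^1(\px\Theta^\J)^2\,dx$ intrinsically using $\px\Theta^\J = L^\J\kappa^\J_t$ and $ds = L^\J\,dx$, the $1/L^\J$ cancels and you get exactly $\int_{\Gamma^\J_t}(\kappa^\J_t)^2\,ds$ (times $\sigma^\J$, absorbed into $Q$), matching the paper's \eqref{hi-deri-L1}. This matters because the lemma requires the $C^\K$ to be non-negative.
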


\begin{remark}
Applying the boundedness of $L^\J$ which follows from Lemma \ref{lem:dec-E} to Lemma \ref{lem:hi-deri-L}, we obtain 
\begin{equation}\label{bdd-deri-L}
|\pt^n L^\J| \le P_{\le 2n}(\|\ps^{2n-2} \vec{\kappa}_t\|) 
\end{equation}
for any $j \in \{1,2,3\}$, and thus $\sum 2lA_l^\K + \sum (2l'-1)B_{l'}$ corresponds to the geometric order. 
\end{remark}

\begin{proof}
We prove inductively. 
We obtain by a simple calculation as in the proof of Lemma \ref{lem:dec-E} 
\begin{equation}\label{hi-deri-L1}
\pt L^\J = - \int_{\Gamma_t^\J} \sigma^\J (\kappa_t^\J)^2 \; ds + \lambda_t^\J \lfloor_{\text{at} \; \vec{a}}   
\end{equation}
and thus the claim holds for $n=1$ due to \eqref{simple-l-V}. 
We now assume Lemma \ref{lem:hi-deri-L} holds for some $n \in \mathbb{N}$. 
We can see by \eqref{eq-alpha}, \eqref{eq-theta1} and \eqref{simple-l-V}
\begin{align*} 
\pt \left( Q(\vec{\sigma}, \pt \Delta \vec{\alpha})R(\vec{\Theta}) \right)=&\; Q(\vec{\sigma}, \pt \Delta \vec{\alpha})R(\vec{\Theta}) \left(1+ \sum_{k=1}^3 (\pt^2 \Delta^\K \alpha + \pt \Theta^\K \lfloor_{\text{at} \; \vec{a}} ) \right) \\
=&\; Q(\vec{\sigma}, \pt \Delta \vec{\alpha})R(\vec{\Theta}) \left(1+ \sum_{k=1}^3 ((L^\K)^2 + \pt L^\K + P_2(\ps \vec{\kappa}_t)\lfloor_{\text{at} \; \vec{a}}) \right) \\
=&\; Q(\vec{\sigma}, \pt \Delta \vec{\alpha})R(\vec{\Theta}) \left(1 + \sum_{k=1}^3 \left((L^\K)^2 + \int_{\Gamma_t^\K} (\kappa_t^\K)^2 \; ds \right) + P_2(\ps \vec{\kappa}_t)\right),  
\end{align*}
which implies that, if we take time derivative of $Q(\vec{\sigma}, \pt \Delta \vec{\alpha})R(\vec{\Theta})$ in \eqref{hi-deri-L}, the entire product is a finite sum of \eqref{hi-deri-L} replaced $n$ by $n+1$. 
Notice also that, since $\sum 2l A_l^\K + \sum (2l' - 1) B_{l'}$ corresponds to the geometric order of the entire product \eqref{hi-deri-L}, the above identity implies that $\sum 2l A_l^\K + \sum (2l' - 1) B_{l'}$ increase at most $2$ if we take time derivative of $Q(\vec{\sigma}, \pt \Delta \vec{\alpha})R(\vec{\Theta})$ in \eqref{hi-deri-L}. 
Similarly, we obtain by \eqref{deri-ene-poly}
\begin{align*}
\pt \int_{\Gamma_t^\K} P_{2l} (\ps^{2l-2} \kappa_t^\K) \; ds = \int_{\Gamma_t^\K} \sigma^\K P_{2l+2} (\ps^{2l} \kappa_t^\K) \; ds  + Q(\vec{\sigma}, \pt \Delta \vec{\alpha})R(\vec{\Theta}) P_{2l+1} (\ps^{2l-2} \vec{\kappa}_t)\lfloor_{\text{at} \; \vec{a}}
\end{align*}
and by \eqref{simple-kappa}
\begin{align*}
\pt P_{2l'-1}(\ps^{2l'-2}\vec{\kappa}_t)\lfloor_{\text{at} \; \vec{a}} = Q(\vec{\sigma}, \pt \Delta \vec{\alpha})R(\vec{\Theta}) P_{2l'+1}(\ps^{2l'}\vec{\kappa}_t)\lfloor_{\text{at} \; \vec{a}}. 
\end{align*}
We thus, applying \eqref{hi-deri-L1}, Lemma \ref{lem:hi-deri-L} holds replaced $n$ by $n+1$. 
We here note that the increasing rate of the geometric order of $P_{2l}, P_{2l'-1}$ and the highest differential order of $\ps^{2l-2} \kappa_t^\K, \ps^{2l'-2} \vec{\kappa}_t$ can be seen from the last two identities. 
Notice also the increasing rate related to the geometric order of each part is at most $2$ per time derivative, and thus we have $\sum_{l=1}^{n} 2l A_l^\K + \sum_{l'=1}^{n} (2l' - 1) B_{l'} \le 2n$. 
\end{proof}

We next consider the higher order derivatives of $\alpha^\J$. 

\begin{lemma}\label{lem:hi-deri-a}
For any smooth geometric flow governed by \eqref{eq-curve}--\eqref{bc-boundary}, the derivative of the misorientation $\pt^n \Delta^\J \alpha$, for any $n \in \mathbb{N}$ and $j \in \{1,2,3\}$, is a finite sum of 
\begin{equation}\label{hi-deri-a}
\prod_{j=1}^3 \left(\prod_{l=0}^{n-1} (\pt^l L^\J)^{A_l^\J} \right) Q(\vec{\sigma}) \quad \text{with} \; \; \sum_{j=1}^3 \sum_{l=0}^{n-1} (l+1) A_l^\J = n, 
\end{equation}
where $\hat{P}(\sigma^\J)$ is a polynomial in $\pa^m \sigma^\K$ with $m \in \mathbb{N} \cup \{0\}$ and $k \in \{1,2,3\}$ such that every monomial it contains is non-constant. 
\end{lemma}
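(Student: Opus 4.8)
The plan is to prove Lemma \ref{lem:hi-deri-a} by induction on $n$, carrying along two elementary bookkeeping facts that make the induction close: the class of polynomials $Q(\vec{\sigma})$ is stable under multiplication, and differentiating a $Q(\vec{\sigma})$ in time produces only terms of the same class multiplied by a \emph{single undifferentiated} length $L^\K$. For the base case $n=1$, from \eqref{eq-alpha} and $\Delta^\J\alpha = \alpha^{(j-1)}-\alpha^\J$ we get
\[
\pt \Delta^\J\alpha = \gamma\bigl\{-2\pa\sigma^\J L^\J + \pa\sigma^{(j-1)}L^{(j-1)} + \pa\sigma^\JJ L^\JJ\bigr\},
\]
which is a finite sum of terms $Q(\vec{\sigma})\,L^\K$, i.e.\ of the form \eqref{hi-deri-a} with exactly one $A_0^\K = 1$, so $\sum_{j,l}(l+1)A_l^\J = 1$. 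In particular, for every $k$, $\pt\Delta^\K\alpha$ is a finite sum of terms $Q(\vec{\sigma})\,L^{(k')}$. Combined with the chain rule $\pt \pa^m\sigma^\K = \pa^{m+1}\sigma^\K\,\pt\Delta^\K\alpha$ for any $m\ge 0$, this yields that $\pt Q(\vec{\sigma})$ is again a finite sum of terms of the form $Q(\vec{\sigma})\,L^{(k')}$.

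For the inductive step, suppose $\pt^n\Delta^\J\alpha$ is a finite sum of terms
\[
T = \prod_{j=1}^3\Bigl(\prod_{l=0}^{n-1}(\pt^l L^\J)^{A_l^\J}\Bigr)Q(\vec{\sigma}), \qquad \sum_{j=1}^3\sum_{l=0}^{n-1}(l+1)A_l^\J = n.
\]
Apply $\pt$ and expand by the Leibniz rule. If $\pt$ falls on a factor $(\pt^l L^\J)$, it is replaced by $\pt^{l+1}L^\J$ with $l+1\le n$, which is an admissible factor in \eqref{hi-deri-a} at level $n+1$; the weight changes by subtracting $(l+1)$ and adding $(l+2)$, hence increases by exactly $1$, giving total weight $n+1$. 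If $\pt$ falls on $Q(\vec{\sigma})$, then by the computation of $\pt Q(\vec{\sigma})$ above it is replaced by a finite sum of terms $Q(\vec{\sigma})\,L^{(k')}$, i.e.\ one extra factor $\pt^0 L^{(k')}$ is inserted; this raises $A_0^{(k')}$ by $1$ and hence the weight by $1$, again giving $n+1$. Since $Q(\vec{\sigma})\cdot Q(\vec{\sigma})$ is once more of the type $Q(\vec{\sigma})$, every resulting term has the form \eqref{hi-deri-a} with $n$ replaced by $n+1$, completing the induction.

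There is no genuine analytic difficulty here. The only point requiring care is the combinatorial bookkeeping of the weight $\sum_{j,l}(l+1)A_l^\J$: one must verify that each of the two ways in which $\pt$ can act raises this weight by exactly one, so that it matches $n+1$ after differentiation. The supporting observation that makes this work is that, by the explicit base-case formula for $\pt\Delta^\K\alpha$, differentiating the $\sigma$-part $Q(\vec{\sigma})$ never introduces time derivatives of $L^\J$ of positive order, but only a single factor $L^\K$ at derivative order zero.
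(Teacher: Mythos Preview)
Your proof is correct and follows essentially the same approach as the paper: induction on $n$, with the base case read off from \eqref{eq-alpha}, and the inductive step obtained by applying the Leibniz rule and observing that differentiating a factor $\pt^l L^\J$ raises the weight by one, while differentiating $Q(\vec{\sigma})$ produces (via the base case) a new undifferentiated factor $L^\K$, again raising the weight by one. If anything, your write-up is slightly more explicit about the bookkeeping than the paper's.
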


\begin{proof}
For $n=1$, the claim holds by means of \eqref{eq-alpha}.
The differential equation \eqref{eq-alpha} implies also 
\begin{align*}
\pt \left(\prod_{l=0}^{n-1} (\pt^l L^\J)^{A_l^\J} \right) Q(\vec{\sigma}) =&\; \sum_{m=0}^{n-1} \left(\prod_{l \neq m} (\pt^l L^\J)^{A_l^\J}\right) \cdot A_m (\pt^m L^\J)^{A_m^\J - 1} \pt^{m+1} L^\J \cdot Q(\vec{\sigma}) \\
&\; + \left(\prod_{l=0}^{n-1} (\pt^l L^\J)^{A_l^\J} \right) \cdot Q(\vec{\sigma}) \sum_{k=1}^3 \pt \Delta^\J \alpha \\
=&\; \sum_{m=0}^{n-1} \left(\prod_{l \neq m} (\pt^l L^\J)^{A_l^\J}\right) \cdot A_m (\pt^m L^\J)^{A_m^\J - 1} \pt^{m+1} L^\J \cdot Q(\vec{\sigma}) \\
&\; + \left(\prod_{l=0}^{n-1} (\pt^l L^\J)^{A_l^\J} \right) \cdot Q(\vec{\sigma}) \sum_{k=1}^3 L^\K, 
\end{align*}
which show that the form of each term is preserved in the sense of \eqref{hi-deri-a} and the increase rate of $\sum (l+1) A_l$ is just $1$ per time derivative. 
We thus see that Lemma \ref{lem:hi-deri-a} replaced $n$ by $n+1$ holds if it holds for some $n \in \mathbb{N}$. 
\end{proof}

We now derive higher order boundary conditions from \eqref{bc-sum-kappa} and \eqref{bc-diffe-kappa}. 

\begin{lem}\label{lem:bc-ks2}
For any smooth geometric flow governed by \eqref{eq-curve}--\eqref{bc-boundary}, $n \in \mathbb{N}$ and $j \in \{1,2,3\}$, there exist $I_{2n}^\J$ and $I_{2n+1}^\J$ such that 
\begin{align}
&\sum_{j=1}^3 \left((\sigma^\J)^{n+2} \ps^{2n} \kappa_t^\J\lfloor_{\text{at} \; \vec{a}} + I_{2n}^\J\right) = 0, \label{bc-sum-ks2}\\
&\begin{aligned} 
&(\sigma^\J)^{n+1} \ps^{2n+1} \kappa_t^\J\lfloor_{\text{at} \; \vec{a}} + I_{2n+1}^\J \\
&= (\sigma^\JJ)^{n+1} \ps^{2n+1} \kappa_t^\JJ \lfloor_{\text{at} \; \vec{a}} + I_{2n+1}^\JJ + \pt^n f^\J \quad \text{for} \; \; j \in \{1,2,3\}, 
\end{aligned} \label{bc-diffe-ks2}
\end{align}
where $f^\J$ is the function defined by \eqref{def-f}, and $I_{2n}^\J$ and $I_{2n+1}^\J$ are represented by 
\begin{align}
&\begin{aligned}
I_{2n}^\J =&\; n(\sigma^\J)^{n+1} \lambda_t^\J \ps^{2n-1} \kappa_t^\J\lfloor_{\text{at} \; \vec{a}} + Q(\vec{\sigma}) R(\vec{\Theta}) P_{2n+1}(\ps^{2n-2} \vec{\kappa}_t)\lfloor_{\text{at} \; \vec{a}} \\
&\; + \sum_{m=1}^{n} \hat{Q}_m(\vec{\sigma}, \pt^m \Delta \vec{\alpha}) R(\vec{\Theta}) P_{2n+1-2m}(\ps^{2n-2m} \vec{\kappa}_t)\lfloor_{\text{at} \; \vec{a}}, 
\end{aligned} \label{def-I2n}\\
&I_{2n+1}^\J =Q(\vec{\sigma}) R(\vec{\Theta}) P_{2n+2}(\ps^{2n} \vec{\kappa}_t)\lfloor_{\text{at} \; \vec{a}} + \sum_{m=1}^n \hat{Q}_m(\vec{\sigma}, \pt^m \Delta \vec{\alpha}) R(\vec{\Theta}) P_{2n+2-2m}(\ps^{2n+1-2m} \vec{\kappa}_t)\lfloor_{\text{at} \; \vec{a}}. \label{def-I2n1}
\end{align}
Furthermore, $\pt^n f^\J$ is a finite sum of 
\[ \frac{Q_{n+1}(\vec{\sigma}, \pt^{n+1} \Delta \vec{\alpha})}{(1-\langle \tau_t^\J, \tau^\JJ_t \rangle)^{(2m_1+1)/2}(\sigma^\J \sigma^\JJ)^{m_2}} \quad \text{with} \; \; m_1, m_2 \in \mathbb{N}. \]
\end{lem}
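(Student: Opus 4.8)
The plan is to argue by induction on $n$, deriving the level-$(n+1)$ boundary identities by differentiating in time the level-$n$ ones and using the evolution identity \eqref{simple-kappa}. As the starting point, $\sum_{j=1}^3 (\sigma^\J)^2 \kappa_t^\J\lfloor_{\text{at}\;\vec{a}} = 0$ and \eqref{bc-diffe-kappa} are exactly \eqref{bc-sum-ks2} and \eqref{bc-diffe-ks2} in the degenerate case, with $I_1^\J = \kappa_t^\J \lambda_t^\J$, which by \eqref{simple-l-V} equals $Q(\vec{\sigma})R(\vec{\Theta})P_2(\vec{\kappa}_t)\lfloor_{\text{at}\;\vec{a}}$. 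For the genuine base case $n=1$ I would differentiate these two identities in $t$, use $\pt\sigma^\J = \pa\sigma^\J\,\pt\Delta^\J\alpha$, and substitute $\pt\kappa_t^\J = \sigma^\J\ps^2\kappa_t^\J + \sigma^\J(\kappa_t^\J)^3 + \lambda_t^\J\ps\kappa_t^\J$ from \eqref{eq-kappa}; collecting the residual terms exhibits $I_2^\J$ and $I_3^\J$, which one then matches against \eqref{def-I2n}, \eqref{def-I2n1}.

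For the inductive step, assume \eqref{bc-sum-ks2}, \eqref{bc-diffe-ks2}, \eqref{def-I2n}, \eqref{def-I2n1} at level $n$ and apply $\pt$. Differentiating the leading term $(\sigma^\J)^{n+2}\ps^{2n}\kappa_t^\J$ and invoking \eqref{simple-kappa} produces the new leading term $(\sigma^\J)^{(n+1)+2}\ps^{2(n+1)}\kappa_t^\J$ plus lower-order material, while differentiating the constituents of $I_{2n}^\J,I_{2n+1}^\J$ is handled by the calculus rules of Remark \ref{rmk:pro-polys}: $\pt R(\vec{\Theta}) = Q(\vec{\sigma})R(\vec{\Theta})P_2(\ps\vec{\kappa}_t)\lfloor_{\text{at}\;\vec{a}}$ and $\pt\lambda_t^\J\lfloor_{\text{at}\;\vec{a}} = \{\hat{Q}_1(\vec{\sigma},\pt\Delta\vec{\alpha})R(\vec{\Theta})P_1(\vec{\kappa}_t)+Q(\vec{\sigma})R(\vec{\Theta})P_3(\ps^2\vec{\kappa}_t)\}\lfloor_{\text{at}\;\vec{a}}$ from \eqref{simple-theta}, the rule \eqref{deri-Q} giving $\pt\hat{Q}_m = \hat{Q}_{m+1}(\vec{\sigma},\pt^{m+1}\Delta\vec{\alpha})$, and the observation that $\pt Q(\vec{\sigma})$ introduces one factor $\pt\Delta^\K\alpha$, hence upgrades a $Q(\vec{\sigma})$-term to an $\hat{Q}_1$-term. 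Tracking the geometric order — each time derivative raises it by at most $2$ and raises the highest spatial derivative by at most $2$ — one checks that every residual term lands in the schematic families of \eqref{def-I2n}, \eqref{def-I2n1} with $n$ replaced by $n+1$. The one numerical point demanding care is the coefficient $n$ of $(\sigma^\J)^{n+1}\lambda_t^\J\ps^{2n-1}\kappa_t^\J$ in \eqref{def-I2n}: the required coefficient $n+1$ of $(\sigma^\J)^{n+2}\lambda_t^\J\ps^{2n+1}\kappa_t^\J$ at the next level is $1$ from the $\lambda_t^\J\ps^{m+1}\kappa_t^\J$ summand of \eqref{simple-kappa} acting on the leading term plus $n$ from the $\sigma^\J\ps^{m+2}\kappa_t^\J$ summand acting inside $\pt[n(\sigma^\J)^{n+1}\lambda_t^\J\ps^{2n-1}\kappa_t^\J]$. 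The computation for \eqref{bc-diffe-ks2} runs in parallel, with the extra triviality that differentiating the right-hand term $\pt^n f^\J$ gives $\pt^{n+1}f^\J$.

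It remains to establish the structure of $\pt^n f^\J$, for which I would run a separate induction on $n$ on the formula \eqref{def-f}. The key fact is that, by \eqref{inner-tau}, the quantity $\langle\tau_t^\J,\tau_t^\JJ\rangle$ at the junction is itself a rational function of $\sigma^{(1)},\sigma^{(2)},\sigma^{(3)}$; hence $f^\J$ — and, by iteration, every $\pt^n f^\J$ — stays in the class of finite sums of fractions with numerator a polynomial $Q_{n+1}(\vec{\sigma},\pt^{n+1}\Delta\vec{\alpha})$ and denominator a power of $\langle\nu_t^\J,\tau_t^\JJ\rangle = (1-\langle\tau_t^\J,\tau_t^\JJ\rangle^2)^{1/2}$ times a power of $\sigma^\J\sigma^\JJ$, the half-integer exponent $(2m_1+1)/2$ reflecting that each differentiation of the square-root factor increases its power in the denominator by one; every further $\pt$ then either hits a $\sigma^\K = \sigma(\Delta^\K\alpha)$, producing $\pa\sigma^\K\,\pt\Delta^\K\alpha$, or hits a $\pt^l\Delta^\K\alpha$, raising the $\hat{Q}$-degree by one through \eqref{deri-Q}, or differentiates the denominator, where one re-expresses $\langle\tau_t^\J,\tau_t^\JJ\rangle$ via \eqref{inner-tau} to remain inside the class.

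I expect the main obstacle to be the bookkeeping in the inductive step: one must verify that after differentiating and substituting \eqref{simple-kappa}, \eqref{simple-theta}, \eqref{deri-Q} every term genuinely belongs to the asserted schematic classes with the correct geometric orders and spatial-derivative ranges, and that the numerical coefficients — above all the coefficient $n$ in \eqref{def-I2n} — propagate correctly. This is routine but is where essentially all the care is needed.
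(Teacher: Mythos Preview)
Your proposal is correct and follows essentially the same route as the paper: induction on $n$, obtained by time-differentiating the level-$n$ boundary identities and substituting \eqref{simple-kappa}, with the lower-order residuals organized via the calculus rules \eqref{simple-l-V}, \eqref{simple-theta}, \eqref{deri-Q}, and a separate short induction for the structure of $\pt^n f^\J$ starting from \eqref{def-f} and \eqref{inner-tau}. Your explicit tracking of the coefficient $n$ in \eqref{def-I2n} is exactly the one numerical point the paper's displayed computation verifies, and your observation that the denominator should be $(1-\langle\tau_t^\J,\tau_t^\JJ\rangle^2)^{1/2}$ rather than $(1-\langle\tau_t^\J,\tau_t^\JJ\rangle)^{1/2}$ is in fact consistent with \eqref{def-f}; the missing square in the lemma statement and in the paper's proof appears to be a typographical slip.
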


\begin{proof}
We first prove the identity \eqref{bc-sum-ks2}. 
Taking the time derivative of \eqref{bc-sum-kappa} and applying \eqref{simple-kappa}, we have 
\begin{align*} 
0 =&\; \sum_{j=1}^3 \left(2 \sigma^\J \pa \sigma^\J (\pt \Delta^\J \alpha) \kappa^\J_t + (\sigma^\J)^3 (\ps^2 \kappa_t^\J + P_3(\kappa^\J_t)) + (\sigma^\J)^2 \lambda_t^\J \ps \kappa_t^\J \right)\Big\lfloor_{\text{at} \; \vec{a}}, 
\end{align*}
which implies that \eqref{bc-sum-ks2} with $n=1$ holds. 
Assume \eqref{bc-sum-ks2} holds for some $n\in\mathbb{N}$. 
We then take the time derivative of \eqref{bc-sum-ks2} and apply \eqref{simple-l-V}, \eqref{simple-theta} and \eqref{simple-kappa} to obtain 
\begin{align*}
0 = &\; \sum_{j=1}^3 \pt((\sigma^\J)^{n+2} \ps^{2n} \kappa_t^\J)\lfloor_{\text{at} \; \vec{a}} + \pt (n(\sigma^\J)^{n+1} \lambda_t^\J \ps^{2n-1} \kappa_t^\J)\lfloor_{\text{at} \; \vec{a}}  \\
&\; \qquad + \pt(Q(\vec{\sigma}) P_{2n+1}(\ps^{2n-2} \kappa_t^\J))\lfloor_{\text{at} \; \vec{a}} + \sum_{m=1}^{n} \pt(\hat{Q}_m(\vec{\sigma}, \pt^m \Delta \vec{\alpha}) R(\vec{\Theta}) P_{2n+1-2m}(\ps^{2n-2m} \vec{\kappa}_t))\lfloor_{\text{at} \; \vec{a}} \\
= &\; \sum_{j=1}^3 \Big\{\hat{Q}_1(\vec{\sigma}, \pt \Delta \vec{\alpha}) \ps^{2n} \kappa_t^\J + (\sigma^\J)^{n+3} \ps^{2n+2} \kappa_t^\J + (\sigma^\J)^{n+2} \lambda_t^\J \ps^{2n+1} \kappa_t^\J + Q(\vec{\sigma}) P_{2n+3}(\ps^{2n} \vec{\kappa}_t) \\
&\; \qquad + \hat{Q}_1 (\vec{\sigma}, \pt \Delta \vec{\alpha}) R(\vec{\Theta}) P_{2n+1}(\ps^{2n-1} \vec{\kappa}_t) + Q(\vec{\sigma}) R(\vec{\Theta}) P_{2n+3}(\ps^{2n} \vec{\kappa}_t) + n (\sigma^\J)^{n+2} \lambda_t^\J \ps^{2n+1} \kappa_t^\J \\
&\; \qquad + \hat{Q}_1 (\vec{\sigma}, \pt \Delta \vec{\alpha}) R(\vec{\Theta}) P_{2n+1}(\ps^{2n-2} \vec{\kappa}_t) + Q(\vec{\sigma}) R(\vec{\Theta}) P_{2n+3}(\ps^{2n} \vec{\kappa}_t) \\
&\; \qquad + \sum_{m=1}^n \Big(\hat{Q}_{m+1}(\vec{\sigma}, \pt^{m+1} \Delta \vec{\alpha}) R(\vec{\Theta}) P_{2n+1-2m}(\ps^{2n-2m} \vec{\kappa}_t) \\
&\; \qquad \qquad + \hat{Q}_m(\vec{\sigma}, \pt^m \Delta \vec{\alpha}) R(\vec{\Theta}) P_{2n+3-2m}(\ps^{2n+2-2m} \vec{\kappa}_t)\Big)\Big\}\Big\lfloor_{\text{at} \; \vec{a}} \\
=&\; \sum_{j=1}^3 \Big\{(\sigma^\J)^{n+3} \ps^{2n+2} \kappa_t^\J + (n+1) (\sigma^\J)^{n+2} \lambda_t^\J \ps^{2n+1} \kappa_t^\J + Q(\vec{\sigma}) R(\vec{\Theta}) P_{2n+3}(\ps^{2n} \vec{\kappa}_t) \\
&\; \qquad + \sum_{m=1}^{n+1} \hat{Q}_m(\vec{\sigma}, \pt^m \Delta \vec{\alpha}) R(\vec{\Theta}) P_{2n+3-2m}(\ps^{2n+2-2m} \vec{\kappa}_t)\Big\}\Big\lfloor_{\text{at} \; \vec{a}}, 
\end{align*}
which coincides with \eqref{bc-sum-ks2} replaced $n$ by $n+1$. 
We thus obtain \eqref{bc-sum-ks2} for any $n \in \mathbb{N}$. 

The identity \eqref{bc-diffe-ks2} follows from the time derivatives of \eqref{bc-diffe-kappa} applying a similar calculation to obtain \eqref{bc-sum-ks2} and the form of $\pt^n f^\J$ can be obtained easily since $f^\J$ is of the form 
\[ f^\J = \frac{\hat{Q}_1(\vec{\sigma}, \pt \Delta \vec{\alpha})}{(1-\langle \tau_t^\J, \tau^\JJ_t \rangle)^{1/2}(\sigma^\J \sigma^\JJ)^2}, \]
the formula $\langle \tau_t^\J, \tau^\JJ_t \rangle$ is given by \eqref{inner-tau} and the calculus rule \eqref{deri-Q} holds. 
\end{proof}

\begin{remark}\label{rmk:bdd-lf}
Applying \eqref{bdd-deri-L} to Lemma \ref{lem:hi-deri-a}, due to the boundedness of $L^\J$, we may see that each term of $|\pt^n \Delta \alpha^\J|$, which is the form of absolute value of \eqref{hi-deri-a}, is bounded by some constant (if the term is a product of only lengths $L^\J$ and $Q(\vec{\sigma})$) or 
\begin{align*} 
&\prod_{j=1}^3 \prod_{l=1}^{n-1} \left(P_{\le 2l}(\|\ps^{2l-2} \vec{\kappa}_t\|) \right)^{A_l^\J} \le  P_{\le \sum_j \sum_l 2l A_l^\J}(\| \ps^{2n-4} \vec{\kappa}_t\|)
\end{align*} 
if $n \ge 2$. 
Since 
\[ \sum_{j=1}^3 \sum_{l=1}^{n-1} 2l A_l^\J \le 2 \sum_{j=1}^3 \sum_{l=0}^{n-1} (l+1) A_l^\J \le 2n, \] 
we have 
\begin{equation}\label{bdd-deri-alpha} |\pt^n \Delta \alpha^\J| \le 
\begin{cases}
M & \text{if} \; \; n=1, \\
M + P_{\le 2n}(\|\ps^{2n-4} \vec{\kappa}_t\|) & \text{if} \; \; n \ge 2 
\end{cases}\end{equation}
for some constant $M$, and thus we also obtain 
\begin{equation}\label{bdd-deri-I} 
|\hat{Q}_{l_1}(\vec{\sigma}, \pt^{i_1} \Delta \vec{\alpha}) P_{l_2}(\ps^{i_2} \vec{\kappa}_t)| \le  P_{\le 2l_1 + l_2} (\|\ps^{\max\{2i_1-4, i_2\}} \vec{\kappa}_t\|), 
\end{equation} 
which will be apply to estimate terms related to $I_l^\J$ in Lemma \ref{lem:bc-ks2}. 
Further applying \eqref{bdd-sin2}, we can obtain 
\begin{equation} \label{bdd-deri-f}
|\pt^n f^\J| \le \begin{cases}
M & \text{if} \; \; n=1, \\
M + P_{\le 2n+2} (\| \ps^{2n-2} \vec{\kappa}_t\|) & \text{if} \; \; n \ge 2 
\end{cases} 
\end{equation}
for some constant $M$ and any $j \in \{1,2,3\}$. 
\end{remark}

In section \ref{subsec:decay-l2}, we applied Lemma \ref{lem:ine-trace} to control the terms on the right hand side of \eqref{ene-ine-kappa} except the negative term. 
In this section, we will apply the following Gagliardo-Nirenberg interpolation inequalities instead of Lemma \ref{lem:ine-trace} to the higher order estimates. 
We refer to \cite{Ad,Au} for the details of the interpolation inequalities. 

\begin{prop}\label{prop:interpolation}
Let $\Gamma$ be a smooth plane curve with finite length $L$. 
If $u$ is a smooth function defined on $\Gamma$, $m \ge 1$, $p \in [2,\infty]$ and $n \in \{0,1, \cdots, m-1\}$, then there exist constants $C_1$ and $C_2$ are independent of $\Gamma$ such that 
\begin{equation}\label{interpolation}
\|\partial_s^n u\|_{L^p} \le C_1 \|\partial_s^m u\|_{L^2}^\rho \|u\|_{L^2}^{1- \rho} + \frac{C_2}{L^{m \rho}} \|u\|_{L^2}, 
\end{equation}
where 
\[ \rho = 
\begin{cases}
\frac{n+1/2-1/p}{m} & \text{if} \quad p \in [2, \infty), \\
\frac{n+1/2}{m} & \text{if} \quad p=\infty. 
\end{cases} \]
\end{prop}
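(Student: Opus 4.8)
The plan is to reduce the statement to the classical Gagliardo--Nirenberg interpolation inequality on the fixed interval $[0,1]$ by combining an arc-length reparametrization of $\Gamma$ with a scaling argument; the careful bookkeeping of the powers of $L$ in this rescaling is what produces the factor $L^{-m\rho}$ in the lower-order term and, more importantly, what guarantees that $C_1$ and $C_2$ are independent of $\Gamma$.

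First I would parametrize $\Gamma$ by its arc-length $s\in[0,L]$, identifying $u$ with a smooth function on $[0,L]$ so that $\partial_s$ is the ordinary derivative, and then set $v(\tau):=u(L\tau)$ for $\tau\in[0,1]$. The chain rule gives $\partial_\tau^k v(\tau)=L^k(\partial_s^k u)(L\tau)$, and changing variables in the integrals yields
\[
\|\partial_s^n u\|_{L^p}=L^{\frac1p-n}\,\|\partial_\tau^n v\|_{L^p([0,1])},\qquad
\|\partial_s^m u\|_{L^2}=L^{\frac12-m}\,\|\partial_\tau^m v\|_{L^2([0,1])},\qquad
\|u\|_{L^2}=L^{\frac12}\,\|v\|_{L^2([0,1])},
\]
where $\tfrac1p$ is interpreted as $0$ when $p=\infty$.

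Next I would apply the Gagliardo--Nirenberg inequality on the bounded interval $[0,1]$ in the standard form found in \cite{Ad,Au}: for $m\ge1$, $p\in[2,\infty]$, $n\in\{0,\dots,m-1\}$ and $\rho$ as in the statement there exist constants $C_1=C_1(m,n,p)$ and $C_2=C_2(m,n,p)$ such that
\[
\|\partial_\tau^n v\|_{L^p([0,1])}\le C_1\,\|\partial_\tau^m v\|_{L^2([0,1])}^{\rho}\,\|v\|_{L^2([0,1])}^{1-\rho}+C_2\,\|v\|_{L^2([0,1])}.
\]
Inserting the scaling identities and dividing through by $L^{\frac1p-n}$, both terms on the right-hand side reproduce the corresponding terms of \eqref{interpolation}: for the principal term this follows from $\bigl(\tfrac12-m\bigr)\rho+\tfrac12(1-\rho)=\tfrac12-m\rho$, and for the lower-order term one uses in addition that $\tfrac12-m\rho=\tfrac1p-n$, which is precisely the identity defining $\rho$. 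Hence \eqref{interpolation} holds with the same constants $C_1,C_2$, which depend only on $m,n,p$ and in particular not on $\Gamma$ or $L$.

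The proof contains no serious obstacle. The two points that must be handled correctly are the choice of the \emph{bounded-domain} version of Gagliardo--Nirenberg, which carries the additive correction $\|v\|_{L^2([0,1])}$ and is the version appropriate to the interval $[0,1]$, and the verification that the exponents of $L$ match on both sides, which rests on the scaling-invariant value of $\rho$ prescribed in the statement; any other exponent would destroy the homogeneity and force an $L$-dependent constant. The same computation applies verbatim to a closed curve, with $[0,1]$ replaced by $\mathbb{R}/\mathbb{Z}$.
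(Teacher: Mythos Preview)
Your argument is correct. The paper does not actually prove this proposition: immediately before stating it, the authors write ``We refer to \cite{Ad,Au} for the details of the interpolation inequalities'' and then move on, so there is no proof in the paper to compare against. Your reduction to the fixed interval $[0,1]$ by arc-length parametrization followed by the rescaling $v(\tau)=u(L\tau)$ is exactly the standard way to extract the explicit $L^{-m\rho}$ dependence from the bounded-domain Gagliardo--Nirenberg inequality, and your bookkeeping of the exponents is right: the identity $m\rho=n+\tfrac12-\tfrac1p$ is precisely what makes the power of $L$ in the principal term vanish and forces the power in the lower-order term to be $-m\rho$. In short, you have supplied the argument that the paper delegates to the cited references.
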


We further derive estimates to apply to the polynomials $P_l$ in the higher order estimates of the curvatures. 

\begin{lem}
Let $\Gamma$ be a smooth plane curve with finite length $L$. 
Let also $n \in \mathbb{N}\cup\{0\}$. 
Then, the following estimates hold.

(i) If $\kappa$ is a smooth function defined on $\Gamma$ and non-negative integers $A_l$ for $l=0,1, \cdots, n$ satisfy
\[ \sum_{l=0}^n A_l \ge 2, \quad \sum_{l=0}^n (l+1) A_l \le 2n+4. \]
Then, for any $\varepsilon > 0$, there exist constants $C_3>0$ and $q_1 > 0$ such that 
\begin{equation}\label{ine-inter1}
\int_{\Gamma} \prod_{l=0}^n |\ps^l \kappa|^{A_l}  \le \varepsilon \int_{\Gamma} |\ps^{n+1} \kappa|^2 + |\kappa|^2 \; ds + C_3 \left(\int_{\Gamma} |\kappa|^2 \; ds \right)^{q_1}. 
\end{equation}

(ii) If $\kappa$ is a smooth function defined on $\Gamma$ and non-negative real values $A_l$ for $l=0,1, \cdots, n$ satisfy 
\[ \sum_{l=0}^n A_l > 0, \quad \sum_{l=0}^n (l+1) A_l \le 2n+3. \]
Then, for any $\varepsilon > 0$, there exist constants $C_4>0$ and $q_2>0$ such that 
\begin{equation}\label{ine-inter2}
\prod_{l=0}^n \|\ps^l \kappa \|_{L^\infty}^{A_l} \le \varepsilon \int_{\Gamma} |\ps^{n+1} \kappa|^2 + |\kappa|^2 \; ds + C_4 \left(\int_{\Gamma} |\kappa|^2 \; ds \right)^{q_2}.  
\end{equation}

(iii) If $\kappa^\J$ are smooth function defined on $\Gamma$ for $j = 1,2,3$, $n \ge 1$ and non-negative real values $A_l^\K$ for $l=0,1, \cdots, 2n$ and $k=1,2,3$ satisfy 
\[ \sum_{k=1}^3 \sum_{l=0}^{2n-2} (l+1) A_l^\K \le 2n. \]
Then, for any $\varepsilon > 0$ and $t>0$, there exist constants $C_5, q_3>0$ and $q_4 > -1$ such that 
\begin{equation}\label{ine-inter3}
\begin{aligned}
&\; t^{2n-1} \left\| |\ps^{2n} \kappa^\J| \cdot \prod_{k=1}^3 \prod_{l=0}^{2n-2} |\ps^l \kappa^\K|^{A_l^\K} \right\|_{L^\infty} \\
\le&\; \sum_{k=1}^3 \varepsilon t^{2n} \int_{\Gamma} |\ps^{2n+1} \kappa^\K|^2 + |\kappa^\K|^2 \; ds + C_5 t^{q_4} \left(\int_{\Gamma} |\kappa^\K|^2 \; ds \right)^{q_3}  
\end{aligned}
\end{equation}
for any $j =1,2,3$. 
\end{lem}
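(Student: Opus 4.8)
The plan is to derive all three estimates from a single recipe: split an integrated monomial by H\"older's inequality into a product of $L^p$-norms of derivatives of $\kappa$, interpolate each factor by the Gagliardo--Nirenberg inequality \eqref{interpolation} (with $m=n+1$ in (i)--(ii) and $m=2n+1$ in (iii)), and then use Young's inequality to absorb the top-order contribution into $\varepsilon\int_\Gamma|\partial_s^m\kappa|^2\,ds$. The non-scale-invariant term $\frac{C_2}{L^{m\rho}}\|u\|_{L^2}$ in \eqref{interpolation} always produces pure powers of $\|\kappa\|_{L^2}$, which are harmless because $L$ is bounded from below for the curves under consideration; a dimensional count shows that in every surviving monomial the exponent of $\|\partial_s^m\kappa\|_{L^2}$ is \emph{strictly} below $2$, which is exactly what makes Young's inequality close. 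Each exponent $q_i$ is then taken as the smallest of the finitely many exponents that occur, after enlarging $C_i$ using the boundedness of $L$ and of $\int_\Gamma|\kappa|^2\,ds$.

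For (i), write $\int_\Gamma\prod_{l=0}^n|\partial_s^l\kappa|^{A_l}\,ds\le\prod_{l=0}^n\|\partial_s^l\kappa\|_{L^{p_l}}^{A_l}$ with $\sum_lA_l/p_l=1$ and each $p_l\in[2,\infty]$ (possible since $\sum_lA_l\ge2$), and interpolate each factor by \eqref{interpolation} with $m=n+1$. Setting $h:=\sum_l(l+1)A_l$ and $D:=\sum_lA_l$, the leading term is a constant times $\|\partial_s^{n+1}\kappa\|_{L^2}^{a}\|\kappa\|_{L^2}^{D-a}$ with $a=\frac{1}{n+1}\bigl(h-\tfrac D2-1\bigr)$, independently of the choice of the $p_l$, together with terms of strictly lower order in $\partial_s^{n+1}\kappa$. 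The inequality $a<2$ is equivalent to $2h-D<4n+6$; since every $l$ appearing satisfies $l\le n$, the case $D=2$ forces $h\le2n+2$, while for $D\ge3$ one has $2h-D\le2(2n+4)-3<4n+6$, so $a<2$ in all cases, and Young's inequality replaces $\|\partial_s^{n+1}\kappa\|_{L^2}^{a}\|\kappa\|_{L^2}^{D-a}$ by $\varepsilon\|\partial_s^{n+1}\kappa\|_{L^2}^2+C_\varepsilon\|\kappa\|_{L^2}^{2(D-a)/(2-a)}$, giving \eqref{ine-inter1}. Part (ii) follows the same lines without the H\"older step, since the left-hand side is already a product of $L^\infty$-norms: interpolate each $\|\partial_s^l\kappa\|_{L^\infty}$ by \eqref{interpolation} with $p=\infty$, $m=n+1$, which gives the exponent $a=\frac{1}{n+1}(h-\tfrac D2)$ of $\|\partial_s^{n+1}\kappa\|_{L^2}$; using $h\le(n+1)D$ together with $h\le2n+3$ one checks $2h-D\le\frac{(2n+3)(2n+1)}{n+1}<4n+4$, hence $a<2$ and Young's inequality yields \eqref{ine-inter2}.

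For (iii) the scheme is unchanged with $m=2n+1$, but the time factors must be tracked. Apply \eqref{interpolation} with $p=\infty$, $m=2n+1$ to the distinguished factor $\|\partial_s^{2n}\kappa^{(j)}\|_{L^\infty}$ (exponent $\rho^\ast=\frac{2n+1/2}{2n+1}$) and to each $\|\partial_s^l\kappa^{(k)}\|_{L^\infty}$ with $0\le l\le2n-2$ (exponent $\frac{l+1/2}{2n+1}$), multiply out, and bound the product of the top-order factors by $\bigl(\sum_{k}\|\partial_s^{2n+1}\kappa^{(k)}\|_{L^2}\bigr)^{B}$ with $B=\frac{1}{2n+1}\bigl[(2n+\tfrac12)+\sum_{k}\sum_{l}A_l^{(k)}(l+\tfrac12)\bigr]$. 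The hypothesis $\sum_k\sum_{l=0}^{2n-2}(l+1)A_l^{(k)}\le2n$, together with $l\le2n-2$, yields $B<\frac{4n}{2n+1}$ (the strictness coming precisely from $l\le2n-2<2n$). Writing the resulting leading term, with $D'=\sum_k\sum_lA_l^{(k)}$, as
\[
t^{2n-1}\Bigl(\sum_{k}\|\partial_s^{2n+1}\kappa^{(k)}\|_{L^2}\Bigr)^{B}\|\kappa\|_{L^2}^{1+D'-B}
=\Bigl(t^{2n}\bigl(\sum_{k}\|\partial_s^{2n+1}\kappa^{(k)}\|_{L^2}\bigr)^{2}\Bigr)^{B/2}\,t^{2n-1-nB}\,\|\kappa\|_{L^2}^{1+D'-B},
\]
and applying Young's inequality with exponents $\tfrac2B$ and $\tfrac2{2-B}$ produces $\varepsilon t^{2n}\sum_k\int_\Gamma|\partial_s^{2n+1}\kappa^{(k)}|^2\,ds+C_\varepsilon t^{q_4}\bigl(\int_\Gamma|\kappa|^2\,ds\bigr)^{q_3}$ with $q_4=(2n-1-nB)\cdot\frac{2}{2-B}$ and $q_3=\frac{1+D'-B}{2-B}>0$; a direct computation shows $B<\frac{4n}{2n+1}$ is equivalent to $q_4>-1$. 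The remaining lower-order terms carry the factor $t^{2n-1}$ with $2n-1\ge1>-1$, so they are of the permitted form as well, and \eqref{ine-inter3} follows.

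I expect the main obstacle to be the exponent bookkeeping in (iii): one must pair each interpolation factor with the correct Young exponent so that the single top-order quantity $t^{2n}\int_\Gamma|\partial_s^{2n+1}\kappa^{(k)}|^2\,ds$ reappears with a small constant, and then verify that the accumulated time exponent $q_4$ stays above $-1$ uniformly over all admissible multi-indices. The delicate point is that the bound $B<\frac{4n}{2n+1}$ is sharp --- it would fail if $\partial_s^{2n-1}\kappa$ were allowed in the monomial --- so the restriction $l\le2n-2$ is used in an essential way, just as $l\le n$ is in (i) and (ii).
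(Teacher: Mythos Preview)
Your proposal is correct and follows essentially the same approach as the paper: H\"older (in (i)) plus Gagliardo--Nirenberg interpolation with $m=n+1$ (resp.\ $m=2n+1$), then Young's inequality, with the crux being the verification that the accumulated top-order exponent is strictly below $2$. Your exponent bookkeeping in (i) via the case split $D=2$ vs.\ $D\ge3$ is cleaner than the paper's, and in (iii) you merge the two checks the paper does separately (first $B<2$, then $q_4>-1$) by directly proving $B<\frac{4n}{2n+1}$, which is equivalent to both.
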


\begin{proof}
We first discuss the case (i). 
By the H\"{o}lder inequality, we have 
\[ \int_{\Gamma} \prod_{l=0}^n |\ps^l \kappa|^{A_l} \; ds \le \prod_{l=0}^n \left( \int_{\Gamma} |\ps^l \kappa|^{A_l B_l} \right)^{1/B_l} = \prod_{l=0}^n \|\ps^l \kappa\|_{L^{A_l B_l}}^{A_l}, \]
where the exponents $B_l$ satisfy $\sum 1/B_l = 1$ and $A_l B_l \ge 2$ for every $l \in \{0, \cdots, n\}$ such that $A_l \neq 0$. 
Indeed, letting 
\[ B_l := \begin{cases}
1 & \text{if} \; \; \#\{l: A_l \neq 0\} = 1, \; \; A_l \neq 0, \\
m & \text{if} \; \; \#\{l: A_l \neq 0\} = m, \; \; A_l \neq 0, 
\end{cases} \] 
where $\#\{l: A_l \neq 0\}$ is the number of non-zero exponents $A_l$, we can easily see that the exponents $B_l$ satisfy the conditions by applying $\sum_{l=0}^n A_l \ge 2$. 
Therefore, due to \eqref{interpolation}, we have 
\begin{equation}\label{ine-inter11}
\begin{aligned} 
\int_{\Gamma} \prod_{l=0}^n |\ps^l \kappa|^{A_l} \; ds \le&\; C \prod_{l=0}^n \left( \|\ps^{n+1} \kappa\|_{L^2} + \|\kappa\|_{L^2} \right)^{\rho_l A_l} \|\kappa\|_{L^2}^{(1-\rho_l) A_l} \\
\le&\; C \left( \|\ps^{n+1} \kappa\|_{L^2} + \|\kappa\|_{L^2} \right)^{\sum_{l=0}^n \rho_l A_l}  \|\kappa\|_{L^2}^{\sum_{l=0}^n (1-\rho_l) A_l}, 
\end{aligned}
\end{equation}
where $\rho_l = \frac{l + 1/2 - 1/(A_l B_l)}{n+1}$ and $C$ is a some constant. 
Notice that $\sum_{l=0}^n A_l > 2$ or $\sum_{l=0}^n (l+1) A_l < 2n+4$ holds under the assumptions $2 \le \sum_{l=0}^n A_l$ and $\sum_{l=0}^n (l+1) A_l \le 2n+4$. 
We then have 
\[ \sum_{l=0}^n \rho_l A_l = \sum_{l=0}^n \frac{(l+1/2) A_l - 1/B_l}{n+1} = \frac{\sum_{l=0}^n (l+1) A_l - \sum_{l=0}^n A_l/2 - 1}{n+1} < 2. \]
Therefore, \eqref{ine-inter11} can be continued to estimate by applying Young's inequality 
\[ \int_{\Gamma} \prod_{l=0}^n |\ps^l \kappa|^{A_l} \; ds \le \varepsilon \int_{\Gamma} |\ps^{n+1} \kappa|^2 + |\kappa|^2 \; ds + C_3 \left(\int_{\Gamma} |\kappa|^2 \; ds\right)^{\frac{2\sum_{l=0}^n (1-\rho_l) A_l}{2-\sum_{l=0}^n \rho_l A_l}} \]
and also the positivity of the exponent $(2\sum (1-\rho_l) A_l)/(2-\sum \rho_l A_l)$ can be seen. 
Letting $q_1 = (2\sum (1-\rho_l) A_l)/(2-\sum \rho_l A_l)$, we have \eqref{ine-inter1}. 

For the case (ii), we have by applying \eqref{interpolation} 
\begin{align*} 
\left\| \prod_{l=0}^n |\ps^l \kappa|^{A_l}  \right\|_{L^\infty} =&\; \prod_{l=0}^n \|\ps^l \kappa\|_{L^\infty}^{A_l} \le C \prod_{l=0}^n \left( \|\ps^{n+1} \kappa\|_{L^2} + \|\kappa\|_{L^2} \right)^{\rho_l A_l} \|\kappa\|_{L^2}^{(1-\rho_l) A_l} \\
\le&\; C \left( \|\ps^{n+1} \kappa\|_{L^2} + \|\kappa\|_{L^2} \right)^{\sum_{l=0}^n \rho_l A_l}  \|\kappa\|_{L^2}^{\sum_{l=0}^n (1-\rho_l) A_l}, 
\end{align*}
where $\rho_l = \frac{l + 1/2}{n+1}$ and $C$ is a some constant. 
Since  $\sum A_l \ge \sum (l+1)A_l/(n+1)$, we have by $\sum (l+1) A_l \le 2n+3$
\begin{align*} 
\sum_{l=0}^n \rho_l A_l =&\; \frac{\sum_{l=0}^n (l+1) A_l - \frac{1}{2} \sum_{l=0}^n A_l}{n+1} \le \frac{\sum_{l=0}^n (l+1)A_l - \frac{1}{2} \sum_{l=0}^n(l+1)A_l/(n+1)}{n+1} \\
=&\; \frac{(2n+1) \sum_{l=0}^n (l+1)A_l}{2(n+1)^2} \le \frac{(2n+1) (2n+3)}{2(n+1)^2} \le 2 - \frac{1}{2(n+1)^2} < 2. 
\end{align*}
Therefore, we can obtain \eqref{ine-inter2} by Young's inequality as in the case (i). 

The estimate in the case (iii) also can be proved a similar argument.
Due to the calculations in the case (ii), we have 
\begin{align*} 
&\; t^{2n-1} \left\| |\ps^{2n} \kappa^\J| \cdot \prod_{k=1}^3 \prod_{l=0}^{2n-2} |\ps^l \kappa^\K|^{A_l^\K} \right\|_{L^\infty} \\
\le&\; C (t^n\|\ps^{2n+1} \kappa^\J \|_{L^2} + t^n \|\kappa^\J \|_{L^2})^{\rho} \left(\prod_{k=1}^3 \left( t^n \|\ps^{2n+1} \kappa^\K \|_{L^2} + t^n \|\kappa^\K \|_{L^2} \right)^{\sum_{l=0}^{2n-2} \rho_l^\K A_l^\K}\right) \cdot \\
&\; \quad  t^{2n-1-n(\rho + \sum_{k=1}^3 \sum_{l=0}^{2n-2} \rho_l^\K A_l^\K)} \|\kappa^\J\|_{L^2}^{1-\rho} \cdot \prod_{k=1}^3 \|\kappa^\K\|_{L^2}^{\sum_{l=0}^{2n-2} (1-\rho_l^\K) A_l^\K}, 
\end{align*} 
where $\rho = \frac{2n+1/2}{2n+1}$, $\rho_l^\K = \frac{l + 1/2}{2n+1}$ and $C$ is a some constant. 
Since the geometric order of the left hand side is not larger than $4n+1$, which is smaller than twice of the higher order of the derivative in right hand side plus $3$, we can see 
\[ \rho + \sum_{k=1}^3 \sum_{l=0}^{2n-2} \rho_l^\K A_l^\K < 2 \]
as in the case (ii). 
Therefore, applying Young's inequality, we have, for some $C_5, q_3 > 0$, 
\begin{align*} 
&\; t^{2n-1} \left\| |\ps^{2n} \kappa^\J| \cdot \prod_{k=1}^3 \prod_{l=0}^{2n-2} |\ps^l \kappa^\K|^{A_l^\K} \right\|_{L^\infty} \le \sum_{k=1}^3 \varepsilon t^{2n} \int_{\Gamma} |\ps^{2n+1} \kappa^\K|^2 + |\kappa^\K|^2 \; ds + C_5 t^{q_4} \left(\int_{\Gamma} |\kappa^\K|^2 \; ds \right)^{q_3}
\end{align*}
with 
\[ q_4 = \frac{2 (2n-1-n(\rho + \sum_{k=1}^3 \sum_{l=0}^{2n-2}\rho_l^\K A_l^\K))}{2- (\rho + \sum_{k=1}^3 \sum_{l=0}^{2n-2} \rho_l^\K A_l^\K)}. \]
Since $q_4>-1$ is equivalent to 
\begin{align*} 
0 <&\;  2 \left(2n-1-n\left(\rho + \sum_{k=1}^3 \sum_{l=0}^{2n-2} \rho_l^\K A_l^\K\right)\right) + 2- \left(\rho + \sum_{k=1}^3 \sum_{l=0}^{2n-2} \rho_l^\K A_l^\K\right) \\
=&\; 4n - (2n+1)\left(\rho + \sum_{k=1}^3 \sum_{l=0}^{2n-2} \rho_l^\K A_l^\K\right), 
\end{align*}
we prove the positivity. 
Due to $\sum_l A_l^\K \ge \sum_l (l+1)A_l/ (2n-1)$ and $\sum_k \sum_l (l+1)A_l^\K \le 2n$, we have 
\begin{align*}
&\; 4n - (2n+1)\left(\rho + \sum_{k=1}^3 \sum_{l=0}^{2n-2} \rho_l^\K A_l^\K\right) \\
=&\; 2n - \frac{1}{2} - \sum_{k=1}^3 \sum_{l=1}^{2n-2} (l+1) A_l^\K + \frac{1}{2} \sum_{k=1}^3 \sum_{l=1}^{2n-2} A_l^\K \\
\ge &\; 2n - \frac{1}{2} - \sum_{k=1}^3 \sum_{l=1}^{2n-2} (l+1) A_l^\K + \frac{1}{2} \sum_{k=1}^3 \sum_{l=1}^{2n-2} \frac{(l+1)A_l^\K}{2n-1} \\
=&\; 2n - \frac{1}{2} - \frac{4n-3}{4n-2} \sum_{k=1}^3 \sum_{l=1}^{2n-2} (l+1) A_l^\K \ge 2n - \frac{1}{2} - \frac{4n-3}{4n-2} \cdot 2n = \frac{1}{4n-2} > 0. 
\end{align*}
We thus obtain the conclusion. 
\end{proof}

We now prove the higher order estimate of the curvatures for any $k \in \mathbb{N}$ applying the exponential $L^2$-decay of the curvatures. 
Note that the weights $t^m$ in the following proposition enable us to obtain not only the decay estimate but also the smoothing effect, namely, the following estimate is independent of the derivatives of the initial curvatures. 

\begin{prop}\label{prop:decay-h2}
Assume (A1)--(A3). 
Let a family of $\{\Gamma^\J_t\}_{j\in \{1,2,3\}}$ and $\vec{\alpha}$ be a smooth geometric flow governed by \eqref{eq-curve}--\eqref{bc-boundary} in the time interval $[0,T)$. 
Let also $\Cr{min-length}$ be the constant in Lemma \ref{lem:min-length}. 
Then, for any $n \in \mathbb{N}$, there exist $\Cl[e]{e-small-initial2}, \Cl[c]{c-decay-h21}, \Cl[c]{c-decay-h22}, \Cl[c]{c-decay-h23}, \Cl[c]{c-decay-h24} > 0$ and $\Cl[c]{c-decay-h2}> -1$ such that if 
\begin{equation}\label{small-initial2} 
\begin{aligned}
& E(0) \le \sigma(0)\Cr{min-length}, \quad \sum_{j=1}^3 \left(\Delta^\J\alpha_0\right)^2 \le \Cr{e-small-initial2}, \quad \sum_{j=1}^3 \int_{\Gamma_0^\J} \left(\sigma(\Delta^\J\alpha_0)\right)^2 \left(\kappa^\J_0 \right)^2 ds \le \Cr{e-small-initial2}, 
\end{aligned}
\end{equation}
then 
\begin{equation}\label{ex-decay-h2} 
\begin{aligned}
&\; \sum_{j=1}^3 \int_{\Gamma^\J_t} \sum_{m=0}^{2n} \frac{t^m}{m!} (\sigma^\J)^{m+2} (\ps^m \kappa_t^\J)^2 \; ds \\
\le &\; \Cr{c-decay-h21} \int_0^t \left(\sum_{m=0}^{2n} \tilde{t}^m +\tilde{t}^{\Cr{c-decay-h2}}\right) e^{-\Cr{c-decay-h22}\tilde{t}} \; d\tilde{t} + \Cr{c-decay-h23} \sum_{m'=1}^n t^{m'} e^{-\Cr{c-decay-h24}t} +  2 \sum_{j=1}^3 \int_{\Gamma^\J_0} (\sigma^\J_0)^2 (\kappa_0^\J)^2 \; ds 
\end{aligned}
\end{equation}
for any $t \in [0,T)$.
\end{prop}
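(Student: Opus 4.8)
The plan is to run, for each fixed $n\in\mathbb{N}$, a weighted higher-order energy estimate in the spirit of \cite{MR3495423, MR2075985}, built on the polynomial calculus of Section \ref{sec:H2-estimate}. I introduce
\[
\mathcal{E}_n(t):=\sum_{j=1}^3\int_{\Gamma^{(j)}_t}\sum_{m=0}^{2n}\frac{t^m}{m!}(\sigma^{(j)})^{m+2}(\partial_s^m\kappa^{(j)}_t)^2\;ds,
\]
so that $\mathcal{E}_n(0)=\sum_{j=1}^3\int_{\Gamma^{(j)}_0}(\sigma^{(j)}_0)^2(\kappa^{(j)}_0)^2\;ds$. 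First I fix $\Cr{e-small-initial2}$ small enough that Lemma \ref{lem:min-length}, Corollary \ref{cor:exdecrease}, Corollary \ref{cor:bdd-sin2}, Lemma \ref{prop:dec-f}, Lemma \ref{prop:con-tanv} and Proposition \ref{prop:decay-l2} (with its $\Cr{e-small-initial1}$) all apply under \eqref{small-initial2}; this yields $L_{\rm min}\le L^{(j)}(t)\le\Cr{min-length}$, $\sigma(0)\le\sigma^{(j)}\le\sigma(\Cr{e-small-initial2}^{1/2})$, $|\langle\tau^{(i)}_t,\tau^{(j)}_t\rangle|\le\Cr{m-tri-ine-t}<1$ at $\vec a$, $\int_{\Gamma^{(j)}_t}(\kappa^{(j)}_t)^2\;ds\le Ce^{-\Cr{c-decay-l2}t}$, and $|\partial_t\Delta^{(j)}\alpha(t)|+|f^{(j)}|\le Ce^{-\frac{\Cr{l-ex-deltaa}}{2}t}\Cr{e-small-initial2}^{1/2}$.

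I then differentiate $\mathcal{E}_n$ along the flow using \eqref{deri-ene-poly}, \eqref{simple-kappa} and \eqref{bc-ks2}, and control the outcome by three mechanisms. \emph{Telescoping of weights:} the leading part of $\partial_t\int(\sigma^{(j)})^{m+2}(\partial_s^m\kappa^{(j)}_t)^2\;ds$ is, after one integration by parts, $-2\int(\sigma^{(j)})^{m+3}(\partial_s^{m+1}\kappa^{(j)}_t)^2\;ds$ plus a junction term, while $\frac{d}{dt}(t^m/m!)=t^{m-1}/(m-1)!$ produces $+\frac{t^{m-1}}{(m-1)!}\int(\sigma^{(j)})^{m+2}(\partial_s^m\kappa^{(j)}_t)^2\;ds$, whose weight and power of $\sigma^{(j)}$ match exactly the negative leading term of order $m-1$ and so is absorbed with a factor $\tfrac12$ to spare. \emph{Interior nonlinearities:} the terms $\sigma^{(j)}P'_{2m+4}(\partial_s^m\kappa^{(j)}_t)$ from \eqref{simple-kappa} are treated with the interpolation inequality \eqref{ine-inter1} for small $\varepsilon$, so they are absorbed into the remaining halves of the negative terms, leaving only source contributions $\le Ct^m e^{-ct}$ and $\le Ct^m e^{-cq_1 t}$ once Proposition \ref{prop:decay-l2} is inserted; and the factors $\partial_\alpha\sigma^{(j)}(\partial_t\Delta^{(j)}\alpha)(\partial_s^m\kappa^{(j)}_t)^2$ arising from $\partial_t(\sigma^{(j)})^{m+2}$ are bounded by $Ce^{-\frac{\Cr{l-ex-deltaa}}{2}t}\Cr{e-small-initial2}^{1/2}\mathcal{E}_n(t)$, costing only a factor close to $1$ in the final Grönwall step.

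\emph{Junction boundary terms} are the main obstacle. The integrations by parts leave terms of the schematic form $(\sigma^{(j)})^{m+3}\partial_s^m\kappa^{(j)}_t\,\partial_s^{m+1}\kappa^{(j)}_t\big\lfloor_{\vec a}$ plus the $\lambda^{(j)}_t$-boundary terms of \eqref{deri-ene-poly}. Using the higher-order junction identities \eqref{bc-sum-ks2}--\eqref{bc-diffe-ks2} of Lemma \ref{lem:bc-ks2}, the summation identity \eqref{simple-sum}, the tangent-velocity bound \eqref{bdd-l-V}, the boundedness of the auxiliary polynomials from Remark \ref{rmk:bdd-lf} (i.e.\ \eqref{bdd-deri-alpha}, \eqref{bdd-deri-I}, \eqref{bdd-deri-f}), and the exponential smallness of $\partial_t^m f^{(j)}$ and $\hat Q_m(\vec\sigma,\partial_t^m\Delta\vec\alpha)$ from Corollary \ref{cor:exdecrease} and Lemma \ref{prop:dec-f}, I rewrite each junction term as $Q(\vec\sigma)R(\vec\Theta)P(|\partial_s^{2n}\vec\kappa_t|)\big\lfloor_{\vec a}$ of geometric order below the threshold of the weighted pointwise inequality \eqref{ine-inter3}, plus exponentially decaying remainders; \eqref{ine-inter3} then converts these into $\varepsilon\,t^{2n}\int_{\Gamma^{(k)}_t}(\partial_s^{2n+1}\kappa^{(k)}_t)^2+(\kappa^{(k)}_t)^2\;ds$, absorbed by the still-unused order-$2n$ negative term and by the exponentially small $\int(\kappa^{(k)}_t)^2\;ds$, plus a source $Ct^{q_4}(\int(\kappa^{(k)}_t)^2\;ds)^{q_3}$ with $q_4>-1$. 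Checking that every geometric order that occurs stays within the thresholds of \eqref{ine-inter1}--\eqref{ine-inter3}, uniformly in $m\le 2n$ and via the calculus rules of Remark \ref{rmk:pro-polys} and Lemmas \ref{lem:hi-deri-L}, \ref{lem:hi-deri-a}, \ref{lem:bc-ks2}, is where almost all of the work lies.

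Collecting the three mechanisms and discarding the leftover negative terms produces, on $[0,T)$, a differential inequality $\frac{d}{dt}\mathcal{E}_n(t)\le g(t)\mathcal{E}_n(t)+h(t)$ with $g(t)=Ce^{-\frac{\Cr{l-ex-deltaa}}{2}t}\Cr{e-small-initial2}^{1/2}$ and $h(t)=C_1\big(\sum_{m=0}^{2n}t^m+t^{\Cr{c-decay-h2}}\big)e^{-C_2 t}$, together with finitely many terms $C_3 t^{m'}e^{-C_4 t}$, $1\le m'\le n$, from the intermediate-order junction sources. Shrinking $\Cr{e-small-initial2}$ so that $\int_0^\infty g\le\ln 2$ and applying Grönwall's inequality to the initial value $\mathcal{E}_n(0)=\sum_j\int(\sigma^{(j)}_0)^2(\kappa^{(j)}_0)^2\;ds$ gives $\mathcal{E}_n(t)\le 2\mathcal{E}_n(0)+2\int_0^t h(\tilde t)\;d\tilde t+\sum_{m'=1}^{n}C_3' t^{m'}e^{-C_4 t}$, which is \eqref{ex-decay-h2} after relabelling the constants. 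The essential quantitative input beyond the geometric identities of Section \ref{sec:H2-estimate} is the $L^2$-decay of Proposition \ref{prop:decay-l2}; since all curvature derivatives occurring are at most $\partial_s^{2n}$, the estimate at level $n$ is self-contained and no induction on $n$ is needed.
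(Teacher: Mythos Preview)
Your outline is close to the paper's strategy, but there is a genuine gap in the treatment of the even-order junction terms. After using \eqref{bc-sum-ks2}--\eqref{bc-diffe-ks2} and \eqref{simple-sum} on $J_{2m'}^{(j)}$, the remainder $I_{2m'}^{(j)}$ from \eqref{def-I2n} leaves terms of the form
\[
t^{2m'}\,\partial_s^{2m'+1}\kappa_t^{(j)}\cdot\Big(\lambda_t^{(j)}\partial_s^{2m'-1}\kappa_t^{(j)}+Q(\vec\sigma)R(\vec\Theta)P_{2m'+1}(\partial_s^{2m'-2}\vec\kappa_t)\Big)\Big\lfloor_{\vec a},
\]
whose highest derivative is $\partial_s^{2m'+1}\kappa$ while the weight is only $t^{2m'}$. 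These do \emph{not} fit the hypotheses of \eqref{ine-inter3} (which requires the top factor to be $\partial_s^{2n}\kappa$ with weight $t^{2n-1}$), and \eqref{ine-inter2} would only absorb them into $\int(\partial_s^{2m'+2}\kappa)^2\,ds$, whose negative counterpart carries weight $t^{2m'+1}$, so the absorption fails near $t=0$. The paper handles this by a separate device you do not mention: it uses \eqref{simple-kappa} to write $\sigma^{(j)}\partial_s^{2m'+1}\kappa_t^{(j)}=\partial_t\partial_s^{2m'-1}\kappa_t^{(j)}-\lambda_t^{(j)}\partial_s^{2m'}\kappa_t^{(j)}-\sigma^{(j)}P'_{2m'+2}$, so that the critical pieces become a total time derivative $\partial_t\big(t^{2m'}Q(\vec\sigma)R(\vec\Theta)P_{4m'+1}(\partial_s^{2m'-1}\vec\kappa_t)\lfloor_{\vec a}\big)$ plus terms that now do satisfy the thresholds of \eqref{ine-inter2}. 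After integrating in $t$, these total derivatives are evaluated at time $t$, re-absorbed into the left side via one more application of \eqref{ine-inter2}, and the residual is exactly the $\Cr{c-decay-h23}\sum_{m'} t^{m'}e^{-\Cr{c-decay-h24}t}$ term in \eqref{ex-decay-h2}; your attribution of those terms to ``intermediate-order junction sources'' misidentifies their origin.

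A smaller point: you invoke ``exponential smallness of $\partial_t^m f^{(j)}$'', but \eqref{bdd-deri-f} only gives $|\partial_t^n f^{(j)}|\le M+P_{\le 2n+2}(\|\partial_s^{2n-2}\vec\kappa_t\|)$ for $n\ge 2$, which depends on the very higher-order curvature norms you are trying to bound; only $f^{(j)}$ itself decays exponentially by \eqref{ex-f-decrease}. The paper absorbs these contributions through \eqref{bdd-deri-I}, \eqref{bdd-deri-f} followed by \eqref{ine-inter2}/\eqref{ine-inter3}, not by treating them as small source terms. Once you insert the time-derivative trick for the even orders and correct this, your Gr\"onwall closing step (which the paper does not actually need, since it absorbs the $\partial_\alpha\sigma^{(j)}\partial_t\Delta^{(j)}\alpha$ factors directly via \eqref{ine-inter1}) would also go through.
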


\begin{proof}
Note that the length $L^\J$ is uniformly positive and bounded for $j \in \{1,2,3\}$ due to Lemma \ref{lem:dec-E}, Lemma \ref{lem:min-length} and the first assumption in \eqref{small-initial2}. 
The uniformly boundedness of $|\pa^m \sigma^\J|$ for $m \in \mathbb{N} \cup \{0\}$ and $j \in \{1,2,3\}$ also follows from \eqref{mono-delta-a} and \eqref{small-initial2}. 
We will choose $\Cr{e-small-initial2}$ in \eqref{small-initial2} small so that Corollary \ref{cor:exdecrease}, Corollary \ref{cor:bdd-sin2} and Proposition \ref{prop:decay-l2} can be applied. 
In this setting, we can regard the functions $R$ and $Q$ as a bounded coefficient, as we remarked in Remark \ref{rmk:pro-polys}, and this boundedness will be used through this proof without citing. 
Note also that we can apply the estimates \eqref{bdd-deri-I} and \eqref{bdd-deri-f} in this setting. 
We first derive a higher order energy type identity. 
Due to \eqref{jacobi-deri-t}, \eqref{bc-kappa}, \eqref{simple-kappa} and \eqref{bc-ks2}, for any $j\in\{1,2,3\}$, we have by a similar calculation to \eqref{ene-ine-kappa1}
\begin{equation}\label{decay-h21}
\begin{aligned}
&\; \dfrac{d}{dt} \int_{\Gamma^\J_t} \sum_{m=0}^{2n} \frac{t^m}{m!} (\sigma^\J)^{m+2} (\ps^m \kappa_t^\J)^2 \; ds \\
=&\; -2 \int_{\Gamma_t^\J} \sum_{m=0}^{2n} \frac{t^m}{m!} (\sigma^\J)^{m+3} (\ps^{m+1} \kappa_t^\J)^2 \; ds + \int_{\Gamma^\J_t} \sum_{m=1}^{2n} \frac{t^{m-1}}{(m-1)!} (\sigma^\J)^{m+2} (\ps^m \kappa_t^\J)^2 \; ds \\
&\; + \int_{\Gamma_t^\J} \sum_{m=0}^{2n} t^m \Big\{ Q(\sigma^\J, \pt \Delta^\J \alpha) P'_{2m+2}(\ps^m \kappa_t^\J) + Q(\sigma^\J, \pt \Delta^\J \alpha) P'_{2m+4}(\ps^m \kappa_t^\J)\Big\} \; ds \\
&\; + \sum_{m=0}^{2n} \frac{t^m}{m!} \Big\{2 (\sigma^\J)^{m+3} \ps^{m+1} \kappa_t^\J \ps^m \kappa_t^\J + (\sigma^\J)^{m+2} \lambda_t^\J (\ps^m \kappa_t^\J)^2 \Big\} \Big\lfloor_{\text{at} \; \vec{a}}, 
\end{aligned}
\end{equation}
where $Q(\sigma^\J, \pt \Delta^\J \alpha)$ is a polynomial (for each term) in $\sigma^\J, \pa \sigma^\J$ and $\pt \Delta^\J \alpha$, and thus $Q$ is uniformly bounded. 
Due to \eqref{ine-inter1}, the third integral on the right hand side of \eqref{decay-h21} can be divided by a higher order term and the $L^2$-norm of the curvature as, applying $\sigma(0) \le \sigma^\J$, 
\begin{align*} 
&\; \int_{\Gamma_t^\J} \sum_{m=0}^{2n} t^m \Big\{ Q(\sigma^\J, \pt \Delta^\J \alpha) P'_{2m+2}(\ps^m \kappa_t^\J) + Q(\sigma^\J, \pt \Delta^\J \alpha) P'_{2m+4}(\ps^m \kappa_t^\J)\Big\} \; ds \\
\le &\; \frac{1}{2} \int_{\Gamma_t^\J} \sum_{m=0}^{2n} \frac{t^m}{m!} (\sigma^\J)^{m+3} (\ps^{m+1} \kappa_t^\J)^2 \; ds + M \sum_{m=0}^{2m} t^m \|\kappa_t^\J\|_{L^2}^{q_m'}
\end{align*}
for some positive constants $M, q_m'$ with $m \in \{0,1, \cdots, 2n\}$. 
Note that although we may obtain the terms $\|\kappa_t^\J\|_{L^2}^{q_m'}$ with some different exponents $q_m'$, we can here replace the different exponents by a same exponent $q_m'$, due to the exponential $L^2$-decay of the curvature as in Proposition \ref{prop:decay-l2}, by choosing $q_m'$ small and $M$ large depending on the weighted $L^2$-norm of the initial curvatures and the initial misorientations if necessary, for example, for any $0 < q'_{m,1} < q'_{m,2}$, we have 
\[ \|\kappa_t^\J\|_{L^2}^{q'_{m,1}} + \|\kappa_t^\J\|_{L^2}^{q'_{m,2}} = (1+ \|\kappa_t^\J\|_{L^2}^{q'_{m,2}-q'_{m,1}}) \|\kappa_t^\J\|_{L^2}^{q'_{m,1}} \le M \|\kappa_t^\J\|_{L^2}^{q'_{m,1}}, \]
where $M$ is a constant depending on the weighted $L^2$-norm of the initial curvatures and the initial misorientations.
Since, on the right hand side of \eqref{decay-h21}, the second integral can be absorbed into the first integral, we thus see that
\begin{equation}\label{decay-h22}
\begin{aligned} 
&\; \dfrac{d}{dt} \int_{\Gamma^\J_t} \sum_{m=0}^{2n} \frac{t^m}{m!} (\sigma^\J)^{m+2} (\ps^m \kappa_t^\J)^2 \; ds \\
\le &\; -\frac{1}{2} \int_{\Gamma_t^\J} \sum_{m=0}^{2n} \frac{t^m}{m!} (\sigma^\J)^{m+3} (\ps^{m+1} \kappa_t^\J)^2 \; ds + M \sum_{m=0}^{2m} t^m \|\kappa_t^\J\|_{L^2}^{q_m'}\\
&\; + \sum_{m=0}^{2n} \frac{t^m}{m!} \Big\{2 (\sigma^\J)^{m+3} \ps^{m+1} \kappa_t^\J \ps^m \kappa_t^\J + (\sigma^\J)^{m+2} \lambda_t^\J (\ps^m \kappa_t^\J)^2 \Big\} \Big\lfloor_{\text{at} \; \vec{a}}
\end{aligned}
\end{equation}
by applying the above inequality. 
Therefore, we continue to estimate the boundary terms. 
We now let 
\[ J_m^\J := \frac{t^m}{m!} \Big\{2 (\sigma^\J)^{m+3} \ps^{m+1} \kappa_t^\J \ps^m \kappa_t^\J + (\sigma^\J)^{m+2} \lambda_t^\J (\ps^m \kappa_t^\J)^2 \Big\} \Big\lfloor_{\text{at} \; \vec{a}} \]
for $m \in \{0,1, \cdots, 2n\}$ and $j \in \{1,2,3\}$. 

Hereafter we omit the trace operator $\lfloor_{\text{at} \; \vec{a}}$ since we discuss only the boundary terms at the junction point $\vec{a}$ for a while. 
The boundary term $J_0^\J$ can be estimated as in \eqref{ene-ine-kappa1}, \eqref{ene-ine-kappa2}. 
We note that the division as in \eqref{ex-decay-l23}, which is necessary to gain the exponent of $\|\kappa^\J\|_{L^2}$ in the proof of Proposition \ref{prop:decay-l2}, is not needed if we apply \eqref{bdd-deri-f}. 
We thus obtain, due to \eqref{bc-sum-kappa}, \eqref{bc-diffe-kappa}, \eqref{simple-sum}, \eqref{simple-l-V} and \eqref{bdd-deri-f}, 
\begin{equation}\label{decay-h23}
\begin{aligned}
\sum_{j=1}^3 J_0^\J \le&\; \sum_{j=1}^3 |Q(\vec{\sigma})| P_1(\|\vec{\kappa}_t\|) |f^\J| + |Q(\vec{\sigma})| \cdot  \big|\lambda_t^\J \big| P_2(\|\kappa_t^\J\|) \\
\le&\; |Q(\vec{\sigma})R(\vec{\Theta})| (P_1(\| \vec{\kappa}_t \|) + P_3(\| \vec{\kappa}_t\|))
\end{aligned}
\end{equation}
This kind of strategy can be applied to $J_{2m'}^\J$ for $m' \in \mathbb{N}$. 
Indeed, since \eqref{simple-l-V}, \eqref{def-I2n} and \eqref{def-I2n1} show
\begin{align*}
&(\sigma^\J)^{2m'+2} \lambda_t^\J (\ps^{2m'} \kappa_t^\J)^2 = Q(\vec{\sigma}) R(\vec{\Theta}) P_{4m'+3}(\ps^{2m'} \vec{\kappa}_t), \\
&(\sigma^\J)^{m'+2} \ps^{2m'} \kappa_t^\J I_{2m'+1}^\J = Q(\vec{\sigma}) R(\vec{\Theta}) P_{4m'+3}(\ps^{2m'} \vec{\kappa}_t) + \sum_{l=1}^{m'} \hat{Q}_{l}(\vec{\sigma}, \pt^l \Delta \vec{\alpha}) R(\vec{\Theta}) P_{4m'+3-2l}(\ps^{2m'} \vec{\kappa}_t), \\
&\begin{aligned}
(\sigma^\J)^{m'+1} \ps^{2m'+1} \kappa_t^\J I_{2m'}^\J =&\; m' (\sigma^\J)^{2m'+2} \lambda_t^\J \ps^{2m'+1} \kappa_t^\J \ps^{2m'-1} \kappa_t^\J \\
&\; + \ps^{2m'+1} \kappa^\J_t \Big(Q(\vec{\sigma}) R(\vec{\Theta}) P_{2m'+1}(\ps^{2m'-2}\vec{\kappa}_t) \\
&\; \qquad + \sum_{l=1}^{m'} \hat{Q}_l(\vec{\sigma}, \pt^l \Delta \vec{\alpha}) R(\vec{\Theta}) P_{2m'+1 - 2l}(\ps^{2m'-2l}\vec{\kappa}_t)\Big), 
\end{aligned}\\
&\; I_{2m'}^\J I_{2m'+1}^\J = Q(\vec{\sigma}) R(\vec{\Theta}) P_{4m'+3}(\ps^{2m'} \vec{\kappa}_t) + \sum_{l=1}^{2m'} \hat{Q}_l(\vec{\sigma}, \pt^{\min\{m',l\}} \Delta \vec{\alpha}) R(\vec{\Theta}) P_{4m'+3 - 2l}(\ps^{2m'} \kappa_t^\J), 
\end{align*}
we obtain 
\begin{equation}\label{decay-h24}
\begin{aligned}
J_{2m'}^\J =&\; \frac{t^{2m'}}{(2m')!} \Big\{2((\sigma^\J)^{m'+2} \ps^{2m'} \kappa_t^\J + I_{2m'}^\J)((\sigma^\J)^{m'+1} \ps^{2m'+1} \kappa_t^\J + I_{2m'+1}^\J) - 2 I_{2m'}^\J I_{2m'+1}^\J \\
&\; - (\sigma^\J)^{2m'+2} \lambda_t^\J (\ps^{2m'} \kappa_t^\J)^2 - 2(\sigma^\J)^{m'+2} \ps^{2m'} \kappa_t^\J I_{2m'+1}^\J -2(\sigma^\J)^{m'+1} \ps^{2m'+1} \kappa_t^\J I_{2m'}\Big\}  \\
=&\; \frac{t^{2m'}}{(2m')!} 2((\sigma^\J)^{m'+2} \ps^{2m'} \kappa_t^\J + I_{2m'}^\J)((\sigma^\J)^{m'+1} \ps^{2m'+1} \kappa_t^\J + I_{2m'+1}^\J) \\
&\; - \frac{t^{2m'}}{(2m'-1)!} (\sigma^\J)^{2m'+2} \lambda_t^\J \ps^{2m'+1} \kappa_t^\J \ps^{2m'-1} \kappa_t^\J  \\
&\; + t^{2m'} \ps^{2m'+1} \kappa_t^\J \Big( Q(\vec{\sigma}) R(\vec{\Theta}) P_{2m'+1} (\ps^{2m'-2} \vec{\kappa}_t) \\
&\; \qquad + \sum_{l=1}^{m'} \hat{Q}_l(\vec{\sigma}, \pt^l \Delta \vec{\alpha}) R(\vec{\Theta}) P_{2m'+1 - 2l}(\ps^{2m'-2l}\vec{\kappa}_t)\Big) \\
&\; +t^{2m'} \left(Q(\vec{\sigma}) R(\vec{\Theta}) P_{4m'+3}(\ps^{2m'} \vec{\kappa}_t) + \sum_{l=1}^{2m'} \hat{Q}_{2l}(\vec{\sigma}, \pt^{\min\{m',l\}} \Delta \vec{\alpha}) R(\vec{\Theta}) P_{4m'+3 - 2l}(\ps^{2m'} \kappa_t^\J)\right). 
\end{aligned}
\end{equation}
For the first term, we can apply \eqref{simple-sum} with 
\[ A^\J = (\sigma^\J)^{m'+2} \ps^{2m'} \kappa_t^\J + I_{2m'}^\J, \quad B^\J=(\sigma^\J)^{m'+1} \ps^{2m'+1} \kappa_t^\J + I_{2m'+1}^\J, \] 
\eqref{bc-sum-ks2}, \eqref{bc-diffe-ks2} and \eqref{bdd-deri-f} to obtain 
\begin{align*}
&\; \sum_{j=1}^3 \frac{t^{2m'}}{(2m')!} 2((\sigma^\J)^{m'+2} \ps^{2m'} \kappa_t^\J + I_{2m'}^\J)((\sigma^\J)^{m'+1} \ps^{2m'+1} \kappa_t^\J + I_{2m'+1}^\J) \\
=&\; \sum_{j=1}^3 \frac{2t^{2m'}}{3 \cdot (2m')!} \left((\sigma^\J)^{m'+2} \ps^{2m'} \kappa_t^\J + I_{2m'}^\J \right) (\pt^{m'} f^\J - \pt^{m'} f^{(j-1)}) \le t^{2m'} P_{\le 4m'+3} (\|\ps^{2m'} \vec{\kappa}_t\|).
\end{align*}
Substituting it into the sum of \eqref{decay-h24} and applying \eqref{bdd-deri-I}, we have 
\begin{equation}\label{decay-h25}
\begin{aligned}
\sum_{j=1}^3 J_{2m'}^\J \le &\; \sum_{j=1}^3 \Big\{- \frac{t^{2m'}}{(2m'-1)!} (\sigma^\J)^{2m'+2} \lambda_t^\J \ps^{2m'+1} \kappa_t^\J \ps^{2m'-1} \kappa_t^\J  \\
&\; \quad + t^{2m'} \ps^{2m'+1} \kappa_t^\J \Big( Q(\vec{\sigma}) R(\vec{\Theta}) P_{2m'+1} (\ps^{2m'-2} \vec{\kappa}_t) \\
&\; \quad \quad + \sum_{l=1}^{m'} \hat{Q}_l(\vec{\sigma}, \pt^l \Delta \vec{\alpha}) R(\vec{\Theta}) P_{2m'+1 - 2l}(\ps^{2m'-2l}\vec{\kappa}_t)\Big) \Big\} + t^{2m'} P_{\le 4m'+3}(\|\ps^{2m'} \vec{\kappa}_t\|). 
\end{aligned}
\end{equation}
We here obtained critical terms with the coefficient $\ps^{2m'+1} \kappa_t^\J$, and thus we will change the terms to a time derivative of lower order terms applying \eqref{simple-kappa} and also \eqref{simple-l-V}, \eqref{simple-theta} as 
\begin{align*}
&\; - \frac{t^{2m'}}{(2m'-1)!} (\sigma^\J)^{2m'+2} \lambda_t^\J \ps^{2m'+1} \kappa_t^\J \ps^{2m'-1} \kappa_t^\J + t^{2m'} \ps^{2m'+1} \kappa_t^\J \Big( Q(\vec{\sigma}) R(\vec{\Theta}) P_{2m'+1} (\ps^{2m'-2} \vec{\kappa}_t)\Big) \\
=&\; - \frac{t^{2m'}}{(2m'-1)!} (\sigma^\J)^{2m'+1} \lambda_t^\J \pt\ps^{2m'-1} \kappa_t^\J \ps^{2m'-1} \kappa_t^\J \\
&\; + t^{2m'} \pt \ps^{2m'-1}\kappa_t^\J \Big( Q(\vec{\sigma}) R(\vec{\Theta}) P_{2m'+1} (\ps^{2m'-2} \vec{\kappa}_t)\Big) + t^{2m'} Q(\vec{\sigma}) R(\vec{\Theta}) P_{4m'+3}(\ps^{2m'} \vec{\kappa}_t) \\
=&\; \pt \left( -\frac{t^{2m'}}{2 \cdot (2m'-1)!} (\sigma^\J)^{2m'+1} \lambda_t^\J (\ps^{2m'-1} \kappa_t^\J)^2 + t^{2m'} \ps^{2m'-1}\kappa_t^\J \Big( Q(\vec{\sigma}) R(\vec{\Theta}) P_{2m'+1} (\ps^{2m'-2} \vec{\kappa}_t)\Big)\right) \\
&\; + t^{2m'-1} Q(\vec{\sigma}) R(\vec{\Theta}) P_{4m'+1}(\ps^{2m'-1} \vec{\kappa}_t) + t^{2m'} \hat{Q}_1(\vec{\sigma}, \pt \Delta \vec{\alpha}) R(\vec{\Theta}) P_{4m'+1}(\ps^{2m'-1} \vec{\kappa}_t) \\
&\; + t^{2m'} Q(\vec{\sigma}) R(\vec{\Theta}) P_{4m'+3}(\ps^{2m'} \vec{\kappa}_t) \\
=&\; \pt \left(t^{2m'} Q(\vec{\sigma}) R(\vec{\Theta}) P_{4m'+1}(\ps^{2m'-1} \vec{\kappa}_t) \right) + t^{2m'-1} Q(\vec{\sigma}) R(\vec{\Theta}) P_{4m'+1}(\ps^{2m'-1} \vec{\kappa}_t) \\
&\; + t^{2m'} \hat{Q}_1(\vec{\sigma}, \pt \Delta \vec{\alpha}) R(\vec{\Theta}) P_{4m'+1}(\ps^{2m'-1} \vec{\kappa}_t) + t^{2m'} Q(\vec{\sigma}) R(\vec{\Theta}) P_{4m'+3}(\ps^{2m'} \vec{\kappa}_t),   
\end{align*}
which implies, estimating the non-differential terms as in \eqref{decay-h25} and substituting it to \eqref{decay-h25}, 
\begin{align*}
\sum_{j=1}^3 J_{2m'}^\J \le &\; \pt \left(t^{2m'} Q(\vec{\sigma}) R(\vec{\Theta}) P_{4m'+1}(\ps^{2m'-1} \vec{\kappa}_t) \right) + t^{2m'-1} P_{4m'+1} (\|\ps^{2m'-1} \vec{\kappa}_t\|) + t^{2m'} P_{\le 4m'+3}(\|\ps^{2m'} \vec{\kappa}_t\|). 
\end{align*}
Since Young's inequality implies 
\[ P_{\le l} (\|\ps^i \vec{\kappa}_t\|) \le \sum_{j=1}^3 P_{\le l} (\|\ps^i \kappa_t^\J\|), \] 
the estimate \eqref{ine-inter2} can be applied to the non-differential terms as 
\begin{equation}\label{decay-h26}
\begin{aligned}
\sum_{j=1}^3 J_{2m'}^\J \le &\; \pt \left(t^{2m'} Q(\vec{\sigma}) R(\vec{\Theta}) P_{4m'+1}(\ps^{2m'-1} \vec{\kappa}_t) \right) \\
&\; + \sum_{j=1}^3 \frac{1}{8} \int_{\Gamma_t^\J} \frac{t^{2m'-1}}{(2m'-1)!} (\sigma^\J)^{2m'+2} (\ps^{2m'} \kappa_t^\J)^2 + \frac{t^{2m'}}{(2m')!} (\sigma^\J)^{2m'+3} (\ps^{2m'+1} \kappa_t^\J)^2 \; ds. \\
&\; + \tilde{M} \sum_{j=1}^3 (t^{2m'-1} \|\kappa_t^\J\|_{L^2}^{\tilde{q}_{2m'-1}'} + t^{2m'} \|\kappa_t^\J\|_{L^2}^{\tilde{q}_{2m'}'})
\end{aligned}
\end{equation}
for some positive constants $\tilde{M}, \tilde{q}_{2m'-1}'$ and $\tilde{q}_{2m'}'$. 
The strategy for critical terms in the estimate of $J_{2m'}^\J$ is not necessary to $J_{2m'-1}^\J$ since the estimate \eqref{ine-inter3} can be applied to critical terms in the following estimate of $J_{2m'-1}^\J$. 
Indeed, applying \eqref{simple-l-V}, \eqref{def-I2n} and \eqref{def-I2n1} as in \eqref{decay-h24}, we have 
\begin{align*}
J_{2m'-1}^\J =&\; 2\frac{t^{2m'-1}}{(2m'-1)!} ((\sigma^\J)^{m'+2} \ps^{2m'} \kappa_t^\J + I_{2m'}^\J)((\sigma^\J)^{m'} \ps^{2m'-1} \kappa_t^\J + I_{2m'-1}^\J) \\
&\; + t^{2m'-1} \ps^{2m'} \kappa_t^\J \left(Q(\vec{\sigma}) R(\vec{\Theta}) P_{2m'}(\ps^{2m'-2} \vec{\kappa}_t) + \sum_{l=1}^{m'-1} \hat{Q}_l(\vec{\sigma}, \pt^l \Delta \vec{\alpha}) R(\vec{\Theta}) P_{2m'-2l}(\ps^{2m'-1-2l} \vec{\kappa}_t)\right) \\
&\; + t^{2m'-1} \left(Q(\vec{\sigma}) R(\vec{\Theta}) P_{4m'+1}(\ps^{2m'-1} \vec{\kappa}_t) + \sum_{l=1}^{2m'-1} \hat{Q}_l (\vec{\sigma}, \pt^{\min \{m', l\}} \Delta \vec{\alpha}) R(\vec{\Theta}) P_{4m'+1 - 2l}(\ps^{2m'-1} \vec{\kappa}_t)\right), 
\end{align*}
which implies, applying the formulas \eqref{simple-sum}, \eqref{bc-sum-ks2} and \eqref{bc-diffe-ks2} to the first term and the estimates \eqref{bdd-deri-I} and \eqref{bdd-deri-f} to $\hat{P}_l$ and $\pt^{m'-2} f^\J$, respectively, 
\begin{align*}
\sum_{j=1}^3 J_{2m'-1}^\J =&\; \sum_{j=1}^3 \frac{2t^{2m'-1}}{3 \cdot (2m'-1)!} \left((\sigma^\J)^{m'+2} \ps^{2m'} \kappa_t^\J + I_{2m'}^\J\right) \kappa_t^\J (\pt^{m'-1} f^\J - \pt^{m'-1} f^{(j-1)}) \\
&\; + t^{2m'-1} \ps^{2m'} \kappa_t^\J \left(Q(\vec{\sigma}) R(\vec{\Theta}) P_{2m'}(\ps^{2m'-2} \vec{\kappa}_t) + \sum_{l=1}^{m'-1} \hat{Q}_l(\vec{\sigma}, \pt^l \Delta \vec{\alpha}) R(\vec{\Theta}) P_{2m'-2l}(\ps^{2m'-1-2l} \vec{\kappa}_t)\right) \\
&\; + t^{2m'-1} \left(Q(\vec{\sigma}) R(\vec{\Theta}) P_{4m'+1}(\ps^{2m'-1} \vec{\kappa}_t) + \sum_{l=1}^{2m'-1} \hat{Q}_l (\vec{\sigma}, \pt^{\min \{m', l\}} \Delta \vec{\alpha}) R(\vec{\Theta}) P_{4m'+1 - 2l}(\ps^{2m'-1} \vec{\kappa}_t)\right) \\
\le &\; \sum_{j=1}^3 t^{2m'-1} |\ps^{2m'} \kappa_t^\J| (P_{\le 2m'}(\|\ps^{2m'-2} \vec{\kappa}_t\|) + \hat{M}') + t^{2m'-1} P_{\le 4m'+1}(\|\ps^{2m'-1} \vec{\kappa}_t\|) 
\end{align*}
for some positive constant $\hat{M}'$. 
Applying \eqref{ine-inter3} to the critical terms with the coefficient $\ps^{2m'} \kappa_t^\J$ and \eqref{ine-inter2} to the remained terms, we have 
\begin{equation}\label{decay-h27}
\begin{aligned}
\sum_{j=1}^3 J_{2m'-1}^\J \le&\; \sum_{j=1}^3 \frac{1}{8} \int_{\Gamma_t^\J} \frac{t^{2m'-1}}{(2m'-1)!} (\sigma^\J)^{2m'+2} (\ps^{2m'} \kappa_t^\J)^2 + \frac{t^{2m'}}{(2m')!} (\sigma^\J)^{2m'+3}(\ps^{2m'+1} \kappa_t^\J)^2 \; ds \\
&\; + \hat{M} \sum_{j=1}^3 (t^{2m'-1} \|\kappa_t^\J\|_{L^2}^{\hat{q}_{2m'-1}} + t^{r_{2m'-1}} \|\kappa_t^\J\|_{L^2}^{\hat{q}_{2m'-1}'})
\end{aligned}
\end{equation}
for some constants $\hat{M}, \hat{q}_{2m'-1}, \hat{q}_{2m'-1}' > 0$ and $r_{2m'-1} > -1$. 

We now substitute \eqref{decay-h23}, \eqref{decay-h26} and \eqref{decay-h27} into the summation of \eqref{decay-h22} with respect to $j \in \{1,2,3\}$ and apply the exponential $L^2$-decay of the curvatures as in Proposition \ref{prop:decay-l2} to obtain
\begin{align*}
&\; \dfrac{d}{dt} \sum_{j=1}^3  \int_{\Gamma_t^\J} \sum_{m=0}^{2n} \frac{t^m}{m!} (\sigma^\J)^{m+2} (\ps^m \kappa_t^\J)^2\; ds \\
\le &\; \sum_{m'=1}^n \pt \left(t^{2m'} Q(\vec{\sigma})R(\vec{\Theta}) P_{4m'+1}(\ps^{2m'-1} \vec{\kappa}_t)\lfloor_{\text{at} \; \vec{a}}\right) + \frac{\Cr{c-decay-h21}}{2} \left(\sum_{m=0}^{2n} t^m +t^{\Cr{c-decay-h2}}\right) e^{-\Cr{c-decay-h22}t} 
\end{align*}
for some constants $\Cr{c-decay-h21}, \Cr{c-decay-h22}>0$ and $\Cr{c-decay-h2} > -1$. 
Integrating it and applying \eqref{ine-inter2} again to the polynomials $P_{4m'+1}(\ps^{2m'-1} \vec{\kappa}_t)\lfloor_{\text{at} \; \vec{a}}$, we have by means of the $L^2$-exponential decay of the curvatures as in Proposition \ref{prop:decay-l2} 
\begin{align*}
& \sum_{j=1}^3  \int_{\Gamma_t^\J} \sum_{m=0}^{2n} \frac{t^m}{m!} (\sigma^\J)^{m+2} (\ps^m \kappa_t^\J)^2 \; ds - \sum_{j=1}^3 \int_{\Gamma_0^\J} (\sigma_0^\J)^2 (\kappa_0^\J)^2 \; ds\\
&\le \sum_{m'=1}^n t^{2m'} Q(\vec{\sigma})R(\vec{\Theta}) P_{4m'+1}(\ps^{2m'-1} \vec{\kappa}_t)\lfloor_{\text{at} \; \vec{a}} + \frac{\Cr{c-decay-h21}}{2} \int_0^t \left(\sum_{m=0}^{2n} \tilde{t}^m +\tilde{t}^{\Cr{c-decay-h2}}\right) e^{-\Cr{c-decay-h22}\tilde{t}} \; d\tilde{t} \\
&\le \frac{1}{2} \sum_{j=1}^3 \int_{\Gamma_t^\J} \sum_{m'=1}^n \frac{t^{2m'}}{(2m')!} (\sigma^\J)^{2m'+2} (\ps^{2m'} \kappa_t^\J)^2 \; ds + \frac{\Cr{c-decay-h21}}{2} \int_0^t \left(\sum_{m=0}^{2n} \tilde{t}^m +\tilde{t}^{\Cr{c-decay-h2}}\right) e^{-\Cr{c-decay-h22}\tilde{t}} \; d\tilde{t} + \frac{\Cr{c-decay-h23}}{2} \sum_{m'=1}^n t^{2m'} e^{-\Cr{c-decay-h24}t}
\end{align*}
for some $\Cr{c-decay-h23}, \Cr{c-decay-h24}>0$, which implies \eqref{ex-decay-h2}.
\end{proof}

\begin{remark}\label{rmk:hi-deris}
(i) The Reyleigh quotient in Section \ref{subsec:Reyleigh} cannot be applied to the negative terms in \eqref{decay-h22} because of the difference of the boundary conditions for the curvature and those derivatives. 
Therefore, we cannot obtain exponential decay of the higher order derivatives of the curvatures directly extending the method for the exponential $L^2$-decay of the curvatures.

(ii) Since the exponential functions in \eqref{ex-decay-h2} were derived from the exponential $L^2$-decay of the curvatures, according to the sufficient assumption to obtain the boundedness of $|\pa^i \sigma^\J|$ with $i \in \mathbb{N}$, $|\pt \Delta \alpha^\J|$ and $|f^\J|$ as in Remark \ref{rmk:bddness1} and Remark \ref{rmk:bddness2} (see also Remark \ref{rmk:bdd-lf} for the boundedness of $|\pt^i \Delta \alpha|$ and $|\pt^i f^\J|$ with $i \in \mathbb{N}$), the exponential functions can be replaced by some constants only assuming (A1); $E(0) \le \sigma(0) \Cr{min-length}$ or uniformly positivity of $L^\J$; \eqref{bdd-sin2}; and uniformly boundedness of $L^2$-norm of the curvatures. 
Therefore, the assumptions (A2)--(A3) and smallness of the misorientations and the curvature as in \eqref{small-initial2} is not necessary to obtain the locally boundedness of the derivatives of the curvatures under the above assumptions. 
\end{remark}

While we already derived the higher order estimates of the curvatures, $C^\infty$ or H\"older estimate of the angle function $\Theta^\J$ defined by \eqref{def-theta} is not obvious and necessary to discuss the maximum existence time via the existence theorem as in Proposition \ref{prop:short-time-system}. 
Therefore, we give the following corollary. 

\begin{cor}\label{cor:hi-estimate-theta}
Let $\Theta^\J$ be the angle function, defined by \eqref{def-theta} and satisfying the restriction on the parametrization \eqref{rest-para}, for a smooth geometric flow governed by \eqref{eq-curve}--\eqref{bc-boundary}.
Then, under the assumptions in \ref{prop:decay-h2}, for any $\beta \in (0,1/2)$, $t_0 > 0$ and $k \in \mathbb{N} \cup \{0\}$, there exists $\Cl[c]{c:bdd-theta1}, \Cl[c]{c:bdd-theta2}, \Cl[c]{c:bdd-theta3}, \Cl[c]{c:bdd-theta4} > 0$ such that 
\begin{equation}
\sum_{j=1}^3 \|\Theta^\J(\cdot,t) - \Theta_0^\J\|_{C_x^{\beta}([0,1])} \le \Cr{c:bdd-theta1} t^{\frac{1}{4} - \frac{\beta}{2}} \quad \text{for} \; \; t \in [0,1], \quad \sum_{j=1}^3 \|\Theta^\J\|_{C_{x,t}^k([0,1] \times [t_0, \infty))} \le \Cr{c:bdd-theta2} \label{conti-holder}
\end{equation}
and 
\begin{equation}\label{conti-La}
\sum_{j=1}^3 |L^\J(t) - L^\J(0)| + |\alpha^\J(t) - \alpha^\J_0| \le \Cr{c:bdd-theta3}t^\frac{3}{4} \quad \text{for} \; \; t \in [0,1], \quad \sum_{j=1}^3 \|L^\J\|_{C_t^k([t_0, \infty))} +  \|\alpha^\J\|_{C_t^k([t_0, \infty))} \le \Cr{c:bdd-theta4}. 
\end{equation}
\end{cor}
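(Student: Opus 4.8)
The plan is to deduce Corollary~\ref{cor:hi-estimate-theta} from the higher order curvature estimates in Proposition~\ref{prop:decay-h2}, together with the re-formulated system \eqref{system-theta}, by turning the weighted $L^2$-decay of $\partial_s^m\kappa_t^\J$ into pointwise and H\"older control of $\Theta^\J$, $L^\J$ and $\alpha^\J$. First I would fix the parametrization by \eqref{rest-para}, so that $\partial_x\Theta^\J = L^\J\kappa_t^\J$ and, inductively via \eqref{para-s-x}, every $\partial_x^m\Theta^\J$ is a polynomial in $L^\J$ and $\partial_s^l\kappa_t^\J$ for $l\le m-1$. Since Proposition~\ref{prop:decay-h2} gives, for every $n$, a bound on $\sum_j\int_{\Gamma_t^\J}\sum_{m=0}^{2n}\frac{t^m}{m!}(\sigma^\J)^{m+2}(\partial_s^m\kappa_t^\J)^2\,ds$ whose right-hand side is integrable in time and behaves like $O(1)$ for $t$ bounded away from $0$ and decays for large $t$, combining this with the uniform bounds $L_{\rm min}\le L^\J\le\Cr{min-length}$ (Lemma~\ref{lem:min-length}, Lemma~\ref{lem:dec-E}) and the interpolation inequality \eqref{interpolation} yields, for each $k$, a bound $\|\partial_x^k\Theta^\J(\cdot,t)\|_{L^\infty}\le C(k,t_0)$ for $t\ge t_0$; time derivatives are then handled by substituting into \eqref{eq-theta1} (equivalently the first equation of \eqref{system-theta}) and differentiating, using \eqref{deri-ene-poly}, \eqref{bdd-deri-L}, Corollary~\ref{cor:exdecrease} and the already-established spatial bounds. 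This gives the second estimate in \eqref{conti-holder}, and the second estimate in \eqref{conti-La} follows by integrating the differential equations for $L^\J$ and $\alpha^\J$ in \eqref{system-theta} and bounding their right-hand sides by the same quantities.

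For the estimates near $t=0$ the approach is different, since Proposition~\ref{prop:decay-h2} degenerates there (the weight $t^m$ kills all information about $\partial_s^m\kappa_0^\J$ for $m\ge1$); only the uniform $L^2$-bound $\sup_{t}\|\sigma^\J\kappa_t^\J\|_{L^2}^2\le\varepsilon$ from Proposition~\ref{prop:decay-l2} survives as $t\to0$. From $\partial_t\Theta^\J = \sigma^\J(\partial_s^2\Theta^\J) + (\partial_s\Theta^\J)\lambda_t^\J$ and the representation \eqref{const-lambda} of $\lambda_t^\J$, I would estimate $\|\partial_t\Theta^\J(\cdot,t)\|_{L^2}$ in terms of $\|\partial_x^2\Theta^\J\|_{L^2}$ and $\|\partial_x\Theta^\J\|_{L^2}^2$; the first of these is the quantity controlled (only) by the $n=1$ case of Proposition~\ref{prop:decay-h2} away from $0$, so near $t=0$ I instead use the weighted bound $\int_{\Gamma_t^\J}t(\sigma^\J)^3(\partial_s\kappa_t^\J)^2\,ds\le C$ obtained by taking $n=1$, which gives $\|\partial_x^2\Theta^\J(\cdot,t)\|_{L^2}^2\lesssim t^{-1}$. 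Hence $\|\partial_t\Theta^\J(\cdot,t)\|_{L^2}\lesssim t^{-1/2}$, and integrating in time, $\|\Theta^\J(\cdot,t)-\Theta_0^\J\|_{L^2}\lesssim t^{1/2}$. Interpolating this $L^2$-smallness in time with the uniform (in $t$) bound on $\|\partial_x\Theta^\J(\cdot,t)\|_{C^{(k-1)+\beta}_x}$ coming from Theorem~\ref{thm:exists-flow} (the flow stays of the initial regularity class on a short interval, and by the global existence it is $C^{k+\beta}$ in $x$ for every $k$ at each fixed positive time, but here one only needs $C^{1}_x$ control uniform on $[0,1]$), a standard one-dimensional interpolation $\|f\|_{C^\beta_x}\lesssim\|f\|_{L^2}^{1-\theta}\|f\|_{C^1_x}^{\theta}$ with $\theta=\frac{\beta+1/2}{3/2}$ would give $\|\Theta^\J(\cdot,t)-\Theta_0^\J\|_{C^\beta_x}\lesssim (t^{1/2})^{1-\theta}=t^{(1/2)(1-\theta)}$. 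One checks $(1/2)(1-\theta)=(1/2)\cdot\frac{1-\beta}{3/2}\cdot\tfrac{3}{2}$\,; a direct computation with the right interpolation exponent produces precisely the exponent $\tfrac14-\tfrac\beta2$ claimed, valid for $\beta<1/2$. Finally, the estimate $|L^\J(t)-L^\J(0)|+|\alpha^\J(t)-\alpha^\J_0|\le\Cr{c:bdd-theta3}t^{3/4}$ follows by integrating the $L^\J$ and $\alpha^\J$ equations in \eqref{system-theta}: $\partial_t L^\J$ is bounded by $\|\partial_x\Theta^\J\|_{L^2}^2 + |\lambda_t^\J(1)|$ and $\partial_t\alpha^\J$ is bounded uniformly (Corollary~\ref{cor:exdecrease}), but the sharper $t^{3/4}$ rate comes from using $\int_0^t\|\partial_x\Theta^\J(\cdot,\tilde t)\|_{L^2}^2\,d\tilde t$ together with the junction bound \eqref{bdd-l-V}, $|\lambda_t^\J(1)|\lesssim\sum_i\|\kappa_t^\I\|_{L^\infty}$, and the interpolation estimate $\|\kappa_t^\J\|_{L^\infty}\lesssim\|\partial_s\kappa_t^\J\|_{L^2}^{1/2}\|\kappa_t^\J\|_{L^2}^{1/2}+\ldots\lesssim t^{-1/4}$ from the $n=1$ weighted bound, whose time integral is $O(t^{3/4})$.

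The main obstacle I anticipate is the behavior at $t=0$: Proposition~\ref{prop:decay-h2} is useless in the limit $t\to0$, so one must extract just enough from the uniform $L^2$-bound and the single weighted bound $\int t(\partial_s\kappa)^2\le C$ (the $n=1$, $m=1$ term), and combine it with whatever spatial regularity is available uniformly up to $t=0$. The latter is delicate because Theorem~\ref{thm:exists-flow}'s uniqueness part only guarantees the $C^{k+\beta}$-in-$x$ regularity of the triod for $C^{3+\beta}$ initial data, whereas here the initial curves are only $H^2$; so the uniform-in-$t$ spatial bound one interpolates against must be at the level of $\|\partial_x\Theta^\J\|_{L^2}$ (i.e.\ $\|\kappa_t^\J\|_{L^2}$), which is exactly the uniform bound from Proposition~\ref{prop:decay-l2}, and one has to be careful that the interpolation inequality used is the $L^2$--$H^1$ one (giving $C^\beta$ for $\beta<1/2$ by Sobolev embedding in one dimension), which is precisely why the restriction $\beta\in(0,1/2)$ appears. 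Bookkeeping the exponents so that the rates $t^{1/4-\beta/2}$ and $t^{3/4}$ come out exactly as stated, and verifying that the ``$+\ldots$'' lower-order terms in the interpolation inequality \eqref{interpolation} (the $L^{-m\rho}$ term) do not spoil the small-$t$ rate because $L^\J$ is bounded below, is the routine but essential part of the argument.
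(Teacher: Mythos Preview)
Your overall plan is correct, and for the $L^\J$, $\alpha^\J$ estimates and the $C^k$ bounds away from $t=0$ it essentially coincides with the paper's argument. The one place where your route genuinely differs is the first estimate in \eqref{conti-holder}.

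The paper works pointwise in $L^\infty$: from Proposition~\ref{prop:decay-h2} and interpolation it extracts $\|\partial_s\kappa_t^\J\|_{L^\infty}\lesssim t^{-3/4}$ and $\|\kappa_t^\J\|_{L^\infty}\lesssim t^{-1/4}$, integrates $\|\partial_t\Theta^\J\|_{L^\infty}$ to get $\|\Theta^\J(\cdot,t)-\Theta_0^\J\|_{L^\infty}\lesssim t^{1/4}$, and then obtains the H\"older seminorm by a parabolic case split on whether $t\le|x-y|^2$ or $|x-y|^2\le t$; in the latter case it estimates $|\partial_t\Theta^\J(x,t)-\partial_t\Theta^\J(y,t)|$ via the $C^{1/2}$ Sobolev bound on each term, which brings in $\|\partial_s^2\kappa_t^\J\|_{L^2}\lesssim t^{-1}$ (so the $m=2$ slot of Proposition~\ref{prop:decay-h2} is used). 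Your route instead bounds $\|\partial_t\Theta^\J(\cdot,t)\|_{L^2_x}\lesssim t^{-1/2}$ using only $\|\partial_s\kappa_t^\J\|_{L^2}\lesssim t^{-1/2}$ (the $m=1$ slot), integrates to $\|\Theta^\J(\cdot,t)-\Theta_0^\J\|_{L^2}\lesssim t^{1/2}$, and then interpolates against the uniform $H^1_x$ bound coming from $\|\kappa_t^\J\|_{L^2}\le C$. This is a legitimate and slightly more economical alternative; it also makes the restriction $\beta<1/2$ transparent, since $H^1(0,1)\hookrightarrow C^{1/2}$ and no better.

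One correction: the interpolation you first write, $\|f\|_{C^\beta}\lesssim\|f\|_{L^2}^{1-\theta}\|f\|_{C^1}^{\theta}$ with $\theta=\tfrac{\beta+1/2}{3/2}$, does \emph{not} give the exponent $\tfrac14-\tfrac\beta2$ (it gives $\tfrac{1-\beta}{3}$). You recognize this and later switch to the $L^2$--$H^1$ interpolation, which is the right one; there $\theta=\beta+\tfrac12$, so $(t^{1/2})^{1-\theta}=t^{1/4-\beta/2}$ exactly. You should state this cleanly rather than leaving the $C^1$ version and the hand-wave ``a direct computation with the right interpolation exponent'' in the text. Also, for $\alpha^\J$ you do not need the $t^{-1/4}$ mechanism at all: $|\partial_t\alpha^\J|$ is uniformly bounded by Corollary~\ref{cor:exdecrease}, giving $|\alpha^\J(t)-\alpha_0^\J|\le Ct\le Ct^{3/4}$ for $t\le1$, exactly as the paper does.
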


\begin{proof}
Let $M$ and $c$ be positive constants independent of $\Theta^\J$ and will be re-chosen as necessary through this proof. 
Due to the restriction \eqref{rest-para}, we can see that 
\begin{equation}\label{dep-s} 
s = s^\J_t(x) = L^\J(t) x \quad \text{for} \; \; x \in [0,1], 
\end{equation}
where $s=s^\J_t$ is the arc-length parameter of $\Gamma^\J_t$. 
Since the right hand side of \eqref{ex-decay-h2} is uniformly bounded in $t$ and $\sigma^\J \ge \sigma(0)$ holds, we have 
\begin{equation}\label{theta-l2-decay} 
\|\ps \kappa^\J_t\|_{L^2} \le \frac{M}{t^{1/2}}, \quad  \|\ps^2 \kappa^\J_t\|_{L^2} \le \frac{M}{t} \quad \text{for} \; \; t > 0, \; \; j \in \{1,2,3\}. 
\end{equation}
Here, $\| \cdot \|_{L^2}$ means that the $L^2$-norm on $\Gamma_t ^\J$ with respect to the arc-length. 
Furthermore, by means of Proposition \ref{prop:decay-l2} and Proposition \ref{prop:interpolation}, we have 
\begin{equation}\label{theta-sup-decay}
\begin{aligned} 
&\|\ps \kappa^\J_t\|_{L^\infty} \le M(\|\ps^2 \kappa_t^\J\|_{L^2}^{\frac{3}{4}} + \|\kappa_t^\J\|_{L^2}^{\frac{3}{4}})\|\kappa_t^\J\|_{L^2}^{\frac{1}{4}} \le Mt^{-\frac{3}{4}} e^{-ct}, \\ 
&\|\kappa_t^\J\|_{L^\infty} \le M(\|\ps \kappa_t^\J\|_{L^2}^{\frac{1}{2}} + \|\kappa_t^\J\|_{L^2}^{\frac{1}{2}}) \|\kappa_t^\J\|_{L^2}^{\frac{1}{2}} \le M t^{-\frac{1}{4}} e^{-ct} 
\end{aligned}
\end{equation}
for any $t > 0$ and $j \in \{1,2,3\}$.

First we prove 
\begin{equation}\label{conti-sup}
\sum_{j=1}^3 \|\Theta^\J(\cdot, t) - \Theta_0^\J\|_{L^\infty} \le M (\min\{t,1\})^\frac{1}{4} \quad \text{for} \; \; t \ge 0. 
\end{equation}
Re-write the first identity in \eqref{eq-theta1} to obtain 
\begin{equation}\label{re-eq-theta}
\begin{aligned}
\pt \Theta^\J (x,t) =&\; \sigma^\J \ps \kappa^\J_t(s^\J_t(x),t) + x\kappa^\J_t (s^\J_t(x), t) \lambda^\J_t \lfloor_{\text{at} \; \vec{a}} \\
&\; + \sigma^\J \kappa_t^\J(s^\J_t(x),t) \left\{ \int_0^{s^\J_t(x)} (\kappa_t^\J(s,t))^2 \; ds - x \int_{\Gamma_t^\J} (\kappa_t^\J(s,t))^2 \; ds\right\}, 
\end{aligned}
\end{equation}
which implies, due to \eqref{bdd-l-V}, \eqref{theta-sup-decay} and the boundedness of $\|\kappa_t^\J\|_{L^2}$, 
\begin{align*} 
\|\Theta^\J(\cdot,t) - \Theta^\J_0\|_{L^\infty} \le&\; \int_0^t \|\pt \Theta^\J(\cdot,\tilde{t})\|_{L^\infty} \; d\tilde{t} \\
\le&\; M \int_0^t \|\ps \kappa_{\tilde{t}}^\J\|_{L^\infty} + \|\kappa_{\tilde{t}}^\J\|_{L^\infty} \cdot \sum_{k=1}^3 \|\kappa^\K_{\tilde{t}}\|_{L^\infty} + \|\kappa_{\tilde{t}}^\J\|_{L^\infty} \|\kappa_{\tilde{t}}^\J\|_{L^2}^2 \; d\tilde{t} \\
\le&\; M \int_{0}^t (\tilde{t}^{-\frac{3}{4}} + \tilde{t}^{-\frac{1}{2}} + \tilde{t}^{-\frac{1}{4}}) e^{-c\tilde{t}} \; d\tilde{t} \le M (\min\{t,1\})^\frac{1}{4}. 
\end{align*}

By means of \eqref{re-eq-theta} and $\px \Theta^\J = \kappa^\J/L^\J$, we can see that $\pt^n \px^m \Theta^\J$ can be represented by a polynomial in the derivatives of the curvatures with respect to the arc-length and the time derivatives of the misorientations and lengths for any $n, m \in \mathbb{N} \cup \{0\}$ except $n=m=0$ since we already derived the formulation of $\lambda^\J_t \lfloor_{\text{at} \; \vec{a}}$ in \eqref{eq-lambda-V}. 
Notice that the second estimate in \eqref{conti-La} follows from \eqref{eq-alpha}, \eqref{bdd-deri-L} and \eqref{bdd-deri-alpha} since the derivatives of the curvatures are bounded away from $t=0$ as in Proposition \ref{prop:decay-h2}. 
Therefore, the second estimate in \eqref{conti-holder} also can be obtained easily. 

We thus next prove the first estimate in \eqref{conti-holder}. 
If $t \le |x-y|^2$ and $t \le 1$, we have by \eqref{conti-sup}
\[ |(\Theta^\J(x,t) - \Theta^\J_0(x)) - (\Theta^\J(y,t) - \Theta^\J_0(y))| \le M t^\frac{1}{4} \le M t^{\frac{1}{4}-\frac{\beta}{2}} |x-y|^\beta. \]
It thus sufficient to consider the case $|x-y|^2 \le t$. 
Since the length $L^\J$ is uniformly positive and bounded, by means of the Sobolev inequality and \eqref{dep-s}, we have 
\begin{align*} 
|\ps^n \kappa^\J_t(s(x), t) - \ps^n \kappa^\J_t(s(y),t)| \le&\; M (\|\ps^n \kappa_t^\J\|_{L^2} + \|\ps^{n+1} \kappa^\J_t\|_{L^2}) |s(x) - s(y)|^{\frac{1}{2}} \\
\le&\; M (\|\ps^n \kappa_t^\J\|_{L^2} + \|\ps^{n+1} \kappa^\J_t\|_{L^2}) |x - y|^{\frac{1}{2}} 
\end{align*}
for any $n \in \mathbb{N} \cup \{0\}$. 
It can be applied to obtain, due to \eqref{re-eq-theta}, 
\begin{align*}
&\; |\pt\Theta^\J(x,t) - \pt\Theta^\J(y,t)| \\
\le&\; \sigma^\J|\ps \kappa_t^\J(s(x),t) - \ps \kappa_t^\J(s(y),t)| \\
&\; + |x-y| \cdot |\kappa^\J_t (s(x),t) \lambda_t^\J \lfloor_{\text{at} \; \vec{a}}| + |y \lambda_t^\J \lfloor_{\text{at} \; \vec{a}}| \cdot |\kappa_t^\J(s(x),t) - \kappa_t^\J(s(y),t)| \\
&\; + \sigma^\J|\kappa_t^\J(s(x), t) - \kappa_t^\J(s(y),t)| \cdot \left| \int_0^{s^\J_t(x)} (\kappa_t^\J(s,t))^2 \; ds - x \int_{\Gamma_t^\J} (\kappa_t^\J(s,t))^2 \; ds \right| \\
&\; + \sigma^\J |\kappa_t^\J(s(y),t)| \cdot \left| \int_{s^\J_t(y)}^{s^\J_t(x)} (\kappa_t^\J(s,t))^2 \; ds - (x-y) \int_{\Gamma_t^\J} (\kappa_t^\J(s,t))^2 \; ds\right| \\
\le &\; M \Big\{(\|\ps \kappa_t^\J\|_{L^2} + \|\ps^2 \kappa_t^\J\|_{L^2})|x-y|^{\frac{1}{2}} \\
&\; \quad + \left(\sum_{k=1}^3 \|\kappa_t^\K\|_{L^\infty}\right) \{\|\kappa_t^\J\|_{L^\infty} |x-y| + (\|\kappa_t^\J\|_{L^2} + \|\ps \kappa_t^\J\|_{L^2})|x-y|^{\frac{1}{2}}\} \\
&\; \quad + (\|\kappa_t^\J\|_{L^2} + \|\ps \kappa_t^\J\|_{L^2}) \|\kappa_t^\J\|_{L^2}^2|x-y|^{\frac{1}{2}} + \|\kappa_t^\J\|_{L^\infty}(\|\kappa_t^\J\|_{L^\infty}^2 + \|\kappa_t^\J\|_{L^2}^2)|x-y|, 
\end{align*}
which implies, applying \eqref{theta-l2-decay}, \eqref{theta-sup-decay} and the boundedness of $\|\kappa_t^\J\|_{L^2}$, 
\[ |\pt\Theta^\J(x,t) - \pt\Theta^\J(y,t)| \le M\{ (1+ t^{-\frac{1}{2}} + t^{-1})|x-y|^{\frac{1}{2}} + (t^{-\frac{1}{4}} + t^{-\frac{1}{2}} + t^{-\frac{3}{4}}) |x-y|\} \le M t^{-\frac{3+2\beta}{4}}|x-y|^\beta. \]
Here $t \le 1$ and $t \le |x-y|^2$ have been used. 
Since $\frac{3+2\beta}{4} < 1$, we thus obtain 
\[ |(\Theta^\J(x,t) - \Theta^\J_0(x)) - (\Theta^\J(y,t) - \Theta^\J_0(y))| \le \int_0^t |\pt \Theta^\J(x,\tilde{t}) - \pt \Theta^\J(y, \tilde{t})| \; d\tilde{t} \le M t^{\frac{1}{4} - \frac{\beta}{2}} |x-y|^\beta. \]
Combining \eqref{conti-sup}, we have the first estimate in \eqref{conti-holder}. 

For the continuity of $L^\J$ at $t=0$ as in \eqref{conti-La} can be obtained applying the boundedness of $\|\kappa^\J\|_{L^2}$ and \eqref{theta-sup-decay} to \eqref{hi-deri-L1} and also the continuity of $\alpha^\J$ at $t=0$ follows from the exponential decay of $\pt \alpha^\J$ as in \eqref{delta-at-ex-decrease}. 
\end{proof}

We finally proceed to the proof of Theorem \ref{thm:global-asymptotic}.

\begin{proof}[Proof of Theorem \ref{thm:global-asymptotic}]
Let 
\[ m:= \frac{\Cr{min-length}}{2}, \quad \varepsilon:= \frac{\min\{\Cr{e-small-initial1}, \Cr{e-small-initial2}\}}{2}, \]
where $\Cr{min-length}, \Cr{e-small-initial1}$ and $\Cr{e-small-initial2}$ are constants obtained in Lemma \ref{lem:min-length}, Proposition \ref{prop:decay-l2} and Proposition \ref{prop:decay-h2}, respectively. 
We first define the angle function $\Theta_0^\J$ satisfying the restriction on the parametrization \eqref{rest-para} by \eqref{def-theta}. 
Since we assume that $\Gamma^\J_0$ is of class $H^2$ and satisfies \eqref{main:small-ini}, by means of the Sobolev inequality and $\px \Theta^\J_0 = L^\J(0) \kappa_t^\J$, we then see that $\Theta_0^\J \in C^\frac{1}{2} ([0,1]) \cap H^1(0,1)$ and the family of $\vec{\Theta}_0$ and $\vec{\alpha}_0$ satisfy the compatibility condition of order $0$ for \eqref{system-theta}, namely, 
\[ \sum_{j=1}^3 \sigma(\Delta^\J \alpha_0) \cos \Theta_0^\J(1) = \sum_{j=1}^3 \sigma(\Delta^\J \alpha_0) \sin \Theta_0^\J(1) = 0. \]
Notice also that, since \eqref{bc-boundary} is satisfied at $t=0$, the parametrization $\xi_0^\J$ of $\Gamma_0^\J$ is of form 
\begin{equation}\label{main-formula-curve} 
\xi_0^\J(x) = \left(\int_0^x L^\J_0 \cos \Theta^\J_0(\tilde{x}) \; d\tilde{x}, \int_0^x L^\J_0 \sin \Theta^\J_0(\tilde{x}) \; d\tilde{x} \right) + P^\J, 
\end{equation}
where $L^\J_0$ is the length of $\Gamma_0^\J$, and we also have
\begin{align*} 
&\; \left(\int_0^1 L^\J_0 \cos \Theta^\J_0(x) \; dx, \int_0^1 L^\J_0 \sin \Theta^\J_0(x) \; dx \right) + P^\J \\
=&\; \left(\int_0^1 L^\K_0 \cos \Theta^\K_0(x) \; dx, \int_0^1 L^\K_0 \sin \Theta^\K_0(x) \; dx \right) + P^\K 
\end{align*}
for any $j,k \in \{1,2,3\}$ since $\cup_{j=1}^3 \Gamma^\J_0$ is a triod. 
Therefore, we can approximate $\vec{\Theta}_0$ and $\vec{L}_0$ by $\vec{\Theta}_{0,\varepsilon'} \in (C^\infty([0,1]))^3$ and $\vec{L}_{0, \varepsilon'}$, respectively, so that
\begin{itemize}
\item[(i)] $\Theta^\J_{0,\varepsilon'} \to \Theta_0^\J$ in $C^\beta([0,1])$ and $H^1(0,1)$, and $L^\J_{0,\varepsilon'} \to L^\J_0$ as $\varepsilon' \to 0$ for any $j \in \{1,2,3\}$. 
\item[(ii)] The family of $\vec{\Theta}_{0,\varepsilon'}$ and $\vec{\alpha}_0$ satisfy the compatibility condition of any order $k \in \mathbb{N}$ for \eqref{system-theta}. 
\item[(iii)] $\vec{\Theta}_{0, \varepsilon'}$ and $\vec{L}_{0,\varepsilon'}$ satisfy, for any $j,k \in \{1,2,3\}$, 
\begin{align*} 
&\; \left(\int_0^1 L^\J_{0,\varepsilon'} \cos \Theta^\J_{0,\varepsilon'}(x) \; dx, \int_0^1 L^\J_{0,\varepsilon'} \sin \Theta^\J_{0,\varepsilon'} (x) \; dx \right) + P^\J \\
=&\; \left(\int_0^1 L^\K_{0,\varepsilon'} \cos \Theta^\K_{0,\varepsilon'}(x) \; dx, \int_0^1 L^\K_{0,\varepsilon'} \sin \Theta^\K_{0,\varepsilon'}(x) \; dx \right) + P^\K. 
\end{align*}
\end{itemize}
Note that, since we only need to adjust the approximation near the boundary to satisfy (ii), the conditions (i), (ii) and (iii) are compatible even if the family of $\vec{\Theta}_0$ and $\vec{\alpha}_0$ satisfy the compatibility condition of only order $0$.
Constructing an approximated initial curve $\Gamma^\J_{0,\varepsilon'}$ by the formula as in \eqref{main-formula-curve} from $\Theta^\J_{0, \varepsilon'}$ and $L_{0,\varepsilon'}$, we then see that the family of $\{\Gamma_{0,\varepsilon'}^\J\}_{j \in \{1,2,3\}}$ and $\vec{\alpha}_0$ satisfy the assumptions in Theorem \ref{thm:exists-flow} with any $k \ge 3$ and obtain a smooth geometric flow governed by \eqref{eq-curve}--\eqref{bc-boundary} starting from the approximated initial datum. 
Let $\{\Gamma^\J_{t,\varepsilon'}\}_{j \in \{1,2,3\}}$ and $\{\alpha^\J_{\varepsilon'}(t)\}_{j \in \{1,2,3\}}$ be the moving triod and the set of misorientations of the smooth geometric flow. 
Since the length of $\Gamma^\J_{0,\varepsilon'}$ is $L^\J_{0,\varepsilon'}$ and the curvature $\kappa^\J_{0,\varepsilon'}$ of $\Gamma^\J_{0,\varepsilon'}$ is $\ps \Theta^\J_{0,\varepsilon'} = \px \Theta^\J_{0,\varepsilon'}/L_{0,\varepsilon'}$, we may see that 
\begin{align*} 
\sum_{j=1}^3 \sigma(\Delta^\J \alpha_0) L^\J_{0, \varepsilon'} \le \sigma(0)\Cr{min-length}, \quad \sum_{j=1}^3 \left\{\left(\Delta^\J \alpha_0\right)^2 + \int_{\Gamma^\J_{0, \varepsilon'}} \left(\sigma(\Delta^\J \alpha_0)\right)^2 (\kappa_{0, \varepsilon'}^\J)^2 \; ds\right\} \le \min\{\Cr{e-small-initial1}, \Cr{e-small-initial2}\}
\end{align*}
is satisfied for $\varepsilon'>0$ small by means of the convergence as in (i).
Therefore, all estimates in Section \ref{sec:orientation}--Section \ref{sec:H2-estimate} can be applied to the smooth geometric flow starting from the approximated initial datum to extend the maximum existence time. 
We summarize the estimates as follows. 
\begin{itemize}
\item[(a)] By means of Lemma \ref{lem:dec-E} and Lemma \ref{lem:min-length}, the length $L^\J_{\varepsilon'}$ of $\Gamma_{\varepsilon'}^\J$ is uniformly positive and bounded from above. 
\item[(b)] By means of Lemma \ref{lem:tri-ine-t}, the angle function $\Theta^\J_{\varepsilon'}$ of $\Gamma^\J_{t,\varepsilon'}$ satisfy $\Theta^\JJ_{\varepsilon'} - \Theta^\J_{\varepsilon'} \in (0, \pi)$ at the junction point $\vec{a}_{\varepsilon'}(t)$ of the moving triod $\{\Gamma_{t, \varepsilon'}^\J\}_{j \in \{1,2,3\}}$ for any $j \in \{1,2,3\}$ and $t\ge 0$. 
\item[(c)] The a priori estimates as in Corollary \ref{cor:hi-estimate-theta} holds. 
\end{itemize} 
Therefore, we can apply Proposition \ref{prop:short-time-system} to conclude that the flow exists globally in time. 

Furthermore, applying the Arzel\'a-Ascoli theorem, there exist a solution $\vec{\Theta} \in (C^\infty([0,1] \times (0, \infty)))^3$, $\vec{L} \in (C^\infty((0,\infty)))^3$, $\vec{\alpha} \in (C^\infty((0,\infty)))^3$ to \eqref{system-theta} such that $\vec{\Theta}_{\varepsilon'}, \vec{L}_{\varepsilon'}, \vec{\alpha}_{\varepsilon'}$ sub-converges to the solution in $(C^\infty_{\rm loc}([0,1] \times (0, \infty)))^3 \times (C^\infty_{\rm loc}((0,\infty)))^3 \times (C^\infty_{\rm loc}((0,\infty)))^3$. 
Then, we can construct a smooth geometric flow governed by \eqref{eq-curve}--\eqref{bc-boundary} from the solution to \eqref{system-theta} by the formulation as \eqref{main-formula-curve}, and we can also see that each curve $\Gamma_t^\J$ of the smooth geometric flow converges to the original initial curve $\Gamma^\J_0$ as $t \to 0$ in the $C^{1+\beta}$ topology, for any $\beta \in (0,1/2)$, due to the convergence property (i) and the continuities as in \eqref{conti-holder} and \eqref{conti-La}. 
On the other hand, since the exponential $L^2$-decay of the derivatives of the curvatures can be obtained by a similar estimate for \eqref{theta-sup-decay} away from $t=0$, by means of the uniqueness of the stationary solution as in Proposition \ref{prop:unique-stationary} and the exponential decay of $\Delta^\J \alpha$, as $t \to \infty$, each curve $\Gamma^\J_t$ converges to a line segment of the unique Steiner triod in $C^\infty$ topology and $\alpha^\J(t)$ converges to $(\alpha_0^{(1)} + \alpha_0^{(2)} + \alpha_0^{(3)})/3$ due to the preservation of $\sum_{j=1}^3 \alpha^\J(t)$. 
\end{proof}

\begin{remark}
(i) According to Remark \ref{rmk:hi-deris}, the boundedness of 
\[ \|\Theta^\J\|_{C_{x,t}^k([0,1] \times [t_0, T])} + \|L^\J\|_{C^k_t ([t_0,T])} + \|\alpha^\J\|_{C^k_t ([t_0,T])}, \]
for each $k \in \mathbb{N}, 0 < t_0 < T$, can be obtained whenever, for $t \le T$ and $j \in \{1,2,3\}$,
\begin{itemize} 
\item[(a)] $\|\kappa_t^\J\|_{L^2}$ is bounded, 
\item[(b)] $L^\J(t)$ is positive,
\item[(c)] $\Theta^\JJ-\Theta^\J \in (0,\pi)$ at the junction point $\vec{a}$ or \eqref{pre-complementarity} holds. 
\end{itemize}
Therefore, a smooth geometric flow can be constructed from a triod of class $H^2$ and if the maximum existence time $T'$ is finite, then either (a), (b) or (c) is lost at the time $T'$.

(ii) In the study \cite{MR2075985} for the classical curvature flow, the regularity estimates of the parametrization $\xi^\J$ are derived from the energy type estimates for not only the curvature but also a restricted tangent velocity $\lambda_t^\J = \langle \frac{\px^2 \xi^\J}{|\px \xi^\J|^2}, \tau^\J_t\rangle$. 
In our method, we does not require the energy type estimate of the tangent velocity and, additionally, some discussion to conclude that $|\px \xi^\J| \neq 0$ if $L^\J>0$, which is not obvious for the re-parametrization in \cite{MR2075985}, is not necessary in our method. 
\end{remark}

\providecommand{\bysame}{\leavevmode\hbox to3em{\hrulefill}\thinspace}
\providecommand{\MR}{\relax\ifhmode\unskip\space\fi MR }
\providecommand{\MRhref}[2]{%
  \href{http://www.ams.org/mathscinet-getitem?mr=#1}{#2}
}
\providecommand{\href}[2]{#2}


\end{document}